\newtheorem{theorem}{Theorem}[section]
\newtheorem{corollary}[theorem]{Corollary}
\newtheorem{lemma}[theorem]{Lemma}
\newtheorem{proposition}[theorem]{Proposition}
\newtheorem{conjecture}[theorem]{Conjecture}
\newtheorem{question}[theorem]{Question}
\theoremstyle{definition}
\newtheorem{definition}[theorem]{Definition}
\newtheorem{remark}[theorem]{Remark}
\newtheorem{claim}[theorem]{Claim}
\newtheorem{construction}[theorem]{Construction}
\newtheorem{variant}[theorem]{Variant}
\newtheorem{example}[theorem]{Example}
\renewcommand{\bar}[1]{\overline{#1}}
\renewcommand{\hat}[1]{\widehat{#1}}
\renewcommand{\tilde}[1]{\widetilde{#1}}
\newcommand{\bA}{\mathbb{A}}
\newcommand{\bC}{\mathbb{C}}
\newcommand{\bD}{\mathbb{D}}
\newcommand{\bE}{\mathbb{E}}
\newcommand{\bF}{\mathbb{F}}
\newcommand{\bG}{\mathbb{G}}
\newcommand{\bP}{\mathbb{P}}
\newcommand{\bS}{\mathbb{S}}
\newcommand{\bZ}{\mathbb{Z}}
\newcommand{\ZT}{{\mathbb{Z}_2}}
\newcommand{\fC}{\mathfrak{C}}
\newcommand{\ND}{\operatorname{N}^{\operatorname{D}}}
\newcommand{\Nhc}{\operatorname{N}^{\operatorname{hc}}}
\newcommand{\sR}{\mathsf{R}}
\newcommand{\sS}{\mathsf{S}}
\newcommand{\vir}{\mathrm{vir}}
\newcommand{\dercat}{D_{\mathit{QCoh}}}
\newcommand{\FonR}{\mathcal{R}_{F,\triangleleft}^{\gen,\Delta}}
\newcommand{\Gproj}{\Pi}
\newcommand{\Gprojtw}{\Pi^{\operatorname{tw}}}
\newcommand{\Gsub}{\Gamma}
\DeclareMathOperator{\Aff}{Aff}
\DeclareMathOperator{\Ann}{Ann}
\DeclareMathOperator{\Aut}{Aut}
\DeclareMathOperator{\End}{End}
\DeclareMathOperator{\Cl}{Cl}
\DeclareMathOperator{\Coh}{Coh}
\DeclareMathOperator{\cone}{cone}
\DeclareMathOperator{\Ex}{Ex}
\DeclareMathOperator{\GL}{GL}
\DeclareMathOperator{\SL}{SL}
\DeclareMathOperator{\G}{G}
\DeclareMathOperator{\gen}{gen}
\DeclareMathOperator{\Grpd}{Grpd}
\DeclareMathOperator{\Hilb}{Hilb}
\DeclareMathOperator{\Hom}{Hom}
\DeclareMathOperator{\id}{id}
\DeclareMathOperator{\Ker}{Ker}
\DeclareMathOperator{\lip}{S\Gamma}
\DeclareMathOperator{\ob}{ob}
\DeclareMathOperator{\Path}{Path}
\DeclareMathOperator{\PGL}{PGL}
\DeclareMathOperator{\pr}{pr}
\DeclareMathOperator{\Sd}{Sd}
\DeclareMathOperator{\SO}{SO}
\DeclareMathOperator{\sn}{sn}
\DeclareMathOperator{\OO}{O}
\DeclareMathOperator{\Spec}{Spec}
\DeclareMathOperator{\Spin}{Spin}
\DeclareMathOperator{\Spincat}{\mathcal{S}\mathit{pin}}
\DeclarePairedDelimiter{\abs}{\lvert}{\rvert}
\providecommand{\leftsquigarrow}{%
	\mathrel{\mathpalette\reflect@squig\relax}%
}
\newcommand{\reflect@squig}[2]{%
	\reflectbox{$\m@th#1\rightsquigarrow$}%
}
\tikzset{%
  vertex/.style={shape=circle,fill=black,minimum size=6pt,inner sep=0},
  framing/.style={shape=rectangle,fill=black,minimum size=6pt,inner sep=0},
  baseline={([yshift=-0.8ex]current bounding box.center)}
}
\title{Spin structures on perfect complexes}
\author{Nikolas Kuhn}
\date{\today}
\begin{document}

\maketitle

\begin{abstract}
  We define spin structures on perfect complexes outside of characteristic two, generalizing the usual notion for vector bundles. We give an explicit local characterization of spin structures, and show that for an oriented quadratic complex $E$ on an algebraic stack, spin structures on $E$ are parametrized by a  degree $2$ gerbe. As an application, we show how to lift the K-theory class of Oh--Thomas in DT4 theory to a genuine (twisted) sheaf.    
\end{abstract}

Let $\mathcal{M}$ be a scheme or algebraic stack away from characteristic $2$. 
\begin{definition}\label{def:quad-comp}
A \emph{quadratic complex} on $\mathcal{M}$ is given by a perfect complex $\mathbb{E}$ on $\mathcal{M}$ together with an isomorphism $\theta: \bE\xrightarrow{\sim} \bE^{\vee}$ to its dual, such that $\theta^{\vee} = \theta$.	
\end{definition}
When $\bE$ is represented by a vector bundle in degree $0$, this is the same as giving an $\OO(m)$-principal bundle, where $m$ is the rank of $\bE$. We may say that $\theta$ gives an \emph{orthogonal} structure on $\bE$. Taking the determinant of $\theta$, one has a natural map $\det \theta: \det\bE \simeq (\det\bE)^{\vee}$.
\begin{definition}
	An \emph{orientation} on the quadratic complex $\bE$ is an isomorphism $\varphi:\det\bE\to \mathcal{O}_M$ such that $\varphi^{\vee} \circ \varphi = \det \theta$.
\end{definition} 
When $\bE$ is a vector bundle, this is the same as giving a reduction of structure group from $\OO(m)$ to $\SO(m)$. We may say that $(\theta,\varphi)$ give a \emph{special orthogonal} structure on $\bE$. 

The spin group $\Spin(m)$ is the unique non-trivial double covering of $\SO(m)$. 
In this paper, we ask and answer the following

\begin{question}\label{que:spin}
	Given an oriented quadratic complex $\bE$, what does it mean to give a \emph{spin structure} on $\bE$? 
\end{question}

Our approach to Question \ref{que:spin} is to give a definition that works locally for suitable ``self-dual'' representatives of the complex $\bE$. We then show that this local definition is essentially independent of choices, and can be globalized by declaring a spin structure to be a compatible system of choices for all local representatives simultaneously. 

The notion of spin structure presented here arose naturally as an obstruction class to the problem of constructing a virtual structure sheaf in DT4 theory. The problem is resolved as follows: Assuming an orientation has been chosen, a virtual structure sheaf always exists as a \emph{twisted} sheaf with respect to the $\ZT$-gerbe parametrizing spin structures on the obstruction complex.  
\tableofcontents

\section{Introduction}
\subsection{Main Results}
The main goal of this paper is to give a good definition of spin structure on a perfect complex over an arbitrary algebraic stack $\mathcal{M}$ (outside of characteristic $2$). This is achieved in Definition \ref{def:spin-structure}. As one might hope, its properties mirror closely the case of spin structures on vector bundles. In particular, a spin structure on a complex $\bE$ over $\mathcal{M}$ induces an underlying structure of oriented quadratic complex on $\bE$. Conversely, given an oriented quadratic complex $\bE$, there is a natural $\ZT:=\bZ/2\bZ$-gerbe $\mathcal{M}^{\operatorname{sp}}$ parametrizing spin structures on $\bE$ (Corollary \ref{cor:spin-gerbe}). In particular, we have a characteristic class
\[o_{\operatorname{sp}}(\bE)\in H^2(\mathcal{M},\ZT)\]
of oriented quadratic complexes, obstructing the existence of a spin structure on $\bE$ (Definition \ref{def:obstr-class}). If this class is zero, the set of spin structures on $\bE$ up to isomorphism is a torsor under $H^1(\mathcal{M}, \ZT)$.
We give a detailed summary of our strategy in \S \ref{sec:sketch}, but here is a very rough sketch: To define a spin structure on a complex on an algebraic stack, we give a definition that works in the case of an affine scheme (Definition \ref{def:spin-structure-affine}) and show that it satisfies descent (Proposition \ref{prop:spin-gerbe}). In the affine case, our definition essentially boils down to picking a good \emph{self-dual} representative $E^{\bullet}$ of $\bE$ (Definition \ref{def:self-dual}) whose middle term is an oriented quadratic \emph{vector bundle} $E$. We then declare a spin structure on $\bE$ to be roughly a ``twisted'' spin structure on $E$, where the twist is by the determinant of the positive part of the representing complex $E^{\bullet}$ (Definition \ref{def:spin-functor}). That the definition obtained in this way is sufficiently independent of choice of representative is the main technical challenge that one needs to address. Most of \S\S \ref{sec:self-dual-cat} and \ref{sec:spin-functor} are dedicated to dealing with this issue, the key result being Corollary \ref{cor:restriction-iso}.
\paragraph{Construction of a DT4 twisted virtual structure sheaf.}
In their paper \cite{ot_counting}, Oh--Thomas construct a $K$-theoretic \emph{twisted virtual structure sheaf} $[\widehat{\mathcal{O}}_M^{\vir}]$ on moduli spaces $M$ of stable sheaves of Calabi--Yau fourfolds. These moduli spaces are characterized by carrying an obstruction theory $\bE[1]\to L_M$ with $\bE$ a quadratic complex \footnote{Which in many cases of interest admits a \emph{canonical} orientation \cite{ju_bord}}, and satisfying an additional \emph{isotropy condition} -- we will call such an object a \emph{DT4 obstruction theory}. The $K$-theory class of Oh--Thomas has the slightly curious feature that -- by construction -- it lives only in $K_0(M)[1/2]$, i.e. it is only well-defined after inverting $2$ in the coefficients of $K$-theory. 

In work of Kool--Rennemo \cite{kr_mag}, the authors make crucial use of the fact that in special cases there exists a line bundle $L$ on $M$ with formal square root $L^{1/2}$ such that $[\mathcal{O}_M^{\vir}]:=[\widehat{\mathcal{O}}_M^{\vir}]\otimes L^{1/2}$ admits a refinement to a $\bZ_2$-graded sheaf $(\mathcal{O}_M^{\vir,+}, \mathcal{O}_M^{\vir,-})$. In such cases $[\mathcal{O}_M^{\vir}] = [\mathcal{O}_M^{\vir,+}] - [\mathcal{O}_M^{\vir,-}]$, and this is a class in integral $K$-theory. 

The definition of spin structure undertaken in the present work arose as an obstruction to lifting the class of Oh--Thomas to a genuine ($\ZT$-graded) sheaf in full generality. Our main result here may be stated as follows (cf. Theorem \ref{thm:DT4-local}, although the comparison with Oh--Thomas happens only via Theorem \ref{thm:twisted-str-sheaf} and Theorem \ref{thm:str-shvs-compatible}.)
\begin{theorem}
	Let $M$ be a quasi-projective scheme with an oriented DT4 obstruction theory $\bE$. Then a choice of spin structure on $\bE$ gives rise to a lift of $[\widehat{\mathcal{O}}_M^{\vir}]$ to a $\bZ_2$-graded sheaf. 
\end{theorem}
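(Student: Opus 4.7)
The plan is to argue locally and then descend using the spin gerbe. On an affine étale open $U\to M$, choose a self-dual representative $E^{\bullet}$ of $\bE|_U$ (Definition~\ref{def:self-dual}), with middle term an oriented quadratic vector bundle $E$; the given spin structure restricts to a twisted spin datum on $E$ in the sense of Definition~\ref{def:spin-functor}, that is, a spinor bundle $\mathcal{S} = \mathcal{S}^+ \oplus \mathcal{S}^-$ for the Clifford algebra of $E$, twisted by $\det E^{>0}$. Using the isotropy condition of the DT4 obstruction theory, shrink $U$ so that $E$ admits a maximal isotropic subbundle $\Lambda$; the Oh--Thomas construction then represents $[\widehat{\mathcal{O}}_M^{\vir}]|_U$ as the class of a Koszul complex built from $\Lambda$ and formally twisted by $(\det E^{>0})^{1/2}$. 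The spin datum is exactly what is needed to replace this formal square root by an honest line bundle, producing a genuine $\bZ_2$-graded coherent sheaf $\mathcal{F}_U = (\mathcal{F}_U^+, \mathcal{F}_U^-)$ on $U$.

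Next I would glue the $\mathcal{F}_U$ over overlaps. Given two self-dual representatives on a common refinement, Corollary~\ref{cor:restriction-iso} supplies canonical comparison isomorphisms between the resulting twisted spin data, and the compatibility axiom in a global spin structure (equivalently, the gerbe descent in Proposition~\ref{prop:spin-gerbe}) upgrades these to a cocycle of isomorphisms between the local $\mathcal{F}_U$. Quasi-projectivity of $M$ guarantees effective étale descent for coherent sheaves, so one obtains a global $\bZ_2$-graded sheaf $(\mathcal{O}_M^{\vir,+}, \mathcal{O}_M^{\vir,-})$. Finally, one identifies its K-theory class with $[\widehat{\mathcal{O}}_M^{\vir}]$ by comparing the local spinor--Koszul model with Oh--Thomas's prescription; this comparison is essentially the content of Theorems~\ref{thm:twisted-str-sheaf} and~\ref{thm:str-shvs-compatible}, so at this stage one only needs to match local data.

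The main obstacle is the gluing step, and more precisely the verification that the spinor--Koszul construction depends on the choice of self-dual representative only through data canonically carried by the spin structure. This is exactly what all the technical work in \S\S~\ref{sec:self-dual-cat}--\ref{sec:spin-functor} is aimed at, with Corollary~\ref{cor:restriction-iso} as the decisive input: once one has canonical and cocycle-compatible isomorphisms between the twisted spinor bundles for different self-dual representatives, the gluing and the descent cocycle become automatic. A secondary subtlety -- that the formal square root $(\det E^{>0})^{1/2}$ is honestly realized by the spin datum -- is built into Definition~\ref{def:spin-functor} and so requires no extra argument.
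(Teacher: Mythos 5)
Your overall plan is the paper's own: construct a $\bZ_2$-graded sheaf affine-locally from the spin datum via a Clifford-module/matrix-factorization model, show the construction is canonical over the category of self-dual representatives (the decisive input being Corollary~\ref{cor:restriction-iso}, which rests on Proposition~\ref{prop:simply-connected}), and then compare with the Oh--Thomas K-theory class via the isotropic flag bundle trick of Theorem~\ref{thm:str-shvs-compatible}. The ``Koszul complex built from $\Lambda$'' that you describe is precisely the Polishchuk--Vaintrob complex $C^{\bullet}(E,s,\sS_{\Lambda})$ of Construction~\ref{constr:matrix-fac}, so the local model matches.

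Two things in your sketch are off and one is missing. First, you invoke the isotropy condition to produce a maximal isotropic subbundle $\Lambda\subset E$ locally. That is a misattribution: the isotropy condition (ISO) of Definition~\ref{def:DT4-ob-thy} concerns the intrinsic sub-cone $\fC_{X,E}\subset E$ attached to the obstruction theory (Construction~\ref{constr:sub-cone}, Remark~\ref{rem:sub-cone-iso}), and its role is to make the tautological section $s_{\fC}$ satisfy $q(s_{\fC})=0$, which is exactly what turns the matrix factorization into an honest $2$-periodic complex. The existence of $\Lambda$ is a separate, standard fact about even-rank quadratic bundles, and in the paper it is arranged not by shrinking $U$ but by passing to a bundle of isotropic flags at the comparison stage; it plays no role in the sheaf construction itself. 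Second, and more substantively, your local model omits the cone entirely: the $2$-periodic complex lives over $\fC_{X,E}$, its cohomology is supported on the zero section, and only after push-forward does it become a $\bZ_2$-graded sheaf on $U$. Without the cone, your local object is not the virtual structure sheaf. Finally, the gluing step is asserted rather than argued. Corollary~\ref{cor:restriction-iso} gives the comparison between representatives over a single affine, but the cocycle condition on triple overlaps, together with compatibility between the cone data, the matrix factorization, and restriction of the spin structure, is precisely what \S\ref{sec:str-sheaf} verifies explicitly (and what Lemma~\ref{lem:matfac-functorial}, Proposition~\ref{prop:matfac-iso}, and Theorem~\ref{thm:spin-functor-structure} are for). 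Deferring all of this to ``the compatibility axiom'' and to Theorem~\ref{thm:twisted-str-sheaf} is acceptable in a sketch, but note that the latter is part of what the statement claims, so the deference does not reduce the technical burden.
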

One important advantage of working with sheaves instead of $K$-theory classes is that they satisfy smooth descent and our construction of virtual structure sheaf generalizes to algebraic stacks $\mathcal{M}$ with a DT4 obstruction theory carrying a spin structure. In particular, one may always assume that a spin structure exists by working on the associated gerbe $\mathcal{M}^{\operatorname{sp}}$, on which a virtual structure sheaf exists as a $1$-twisted sheaf (Theorem \ref{thm:twisted-str-sheaf}). 

That it is necessary to invert $2$ in the construction of $[\widehat{\mathcal{O}}^{\vir}_M]$ is now explained by the fact that, in general, a distinguished (or even any) choice of spin structure may not exist, so that a canonical virtual structure sheaf exists only on the spin gerbe. To recover the Oh--Thomas class from the twisted sheaf, we develop in Appendix \ref{app:untw} a general  \emph{untwisted push forward} procedure for $K$-theory classes along a $\mu_n$ gerbe. 

\subsection{Further Directions}

\paragraph{Characteristic $2$.}
The approach taken in this paper runs into two serious issues in characteristic two: Firstly, in the construction of symmetric representatives in \S \ref{sec:symm-rep}, we divide by $2$ to find a representative of $\theta$ that is self-dual on the level of complexes. Just as in the classical algebraic theory of spin groups, if one wants to work in all characteristics, it becomes necessary to distinguish between the notions of ``quadratic structure'' and ``symmetric bilinear pairing'', and Definition \ref{def:quad-comp} used here corresponds to the latter. Secondly, in showing independence of representatives, we use in \S \ref{sec:spin-functor} that, away from characteristic $2$, \'etale cohomoloy with $\ZT$-coefficients is invariant under taking products with affine spaces. This does not hold anymore in characteristic $2$. Additionally, the relevant finite group scheme in that case would be $\mu_2$ in place of $\ZT$.

Nevertheless, we expect that there should still be a good theory of spin structures on complexes in all characteristics, mirroring the classical case.
  
\paragraph{$-2$-shifted symplectic stacks.}
In more established enumerative theories, such as curve counting or sheaf-counting on surfaces and CY3s, a virtual structure sheaf is obtained from a $2$-term \emph{perfect obstruction theory} on some scheme $X$. In almost all interesting cases, the perfect obstruction theory comes from an underlying \emph{quasi-smooth} derived scheme $\mathcal{X}$, of which $X$ is the classical truncation, by restricting the cotangent complex of $\mathcal{X}$. In these cases, the virtual structure sheaf on $X$ is obtained as the collection of cohomology groups of the genuine structure sheaf $\mathcal{O}_{\mathcal{X}}$. 

	A similar picture is expected to hold for the twisted virtual structure sheaf constructed in \S \ref{sec:str-sheaf}: The interesting examples of DT4 obstruction theories are obtained as classical truncations of derived schemes or stacks with a \emph{$-2$-shifted symplectic structure} \cite{ptvv_shifted}. By analogy, there should be some enhanced construction on the derived stack itself, from which the virtual structure sheaf in our sense can be recovered. This is also a special case of a conjecture of D. Joyce regarding derived Lagrangians over $-1$-shifted symplectic stacks.
	We only give a rough version here, without defining all terms.
\begin{conjecture}\label{conj:der-stack}
	Let $\mathcal{X}$ be a (possibly higher) derived algebraic stack over $\bC$ with a $-2$-shifted symplectic structure. Then a choice of spin structure on the $1$-shifted tangent bundle induces an element of a category of $2$-periodic complexes on $\mathcal{X}$.
\end{conjecture}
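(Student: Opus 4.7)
The plan is to reduce to a local situation via a Darboux-type normal form for $-2$-shifted symplectic derived stacks, produce a $\ZT$-graded ($2$-periodic) complex from the spinor bundle and Clifford multiplication, and then globalize using the descent properties of spin structures established in the body of the paper.

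First, I would invoke a local structure theorem in the spirit of Brav--Bussi--Joyce: étale-locally on $\mathcal{X}$, present $\mathcal{X}$ so that the quadratic complex $\mathbb{T}_{\mathcal{X}}[1]$ has an explicit self-dual representative in the sense of Definition \ref{def:self-dual}. In the cleanest case, such a model comes from a triple $(E, Q, s)$ consisting of an orthogonal vector bundle $(E, Q)$ on a smooth scheme $U$ together with an isotropic section $s$, with $\mathcal{X}$ recovered (as a derived enhancement) from the zero locus of $s$, and $\mathbb{T}_{\mathcal{X}}[1]$ represented locally by $E|_{Z(s)}$ placed in the middle degree.

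Second, in this local model, the global spin structure on $\mathbb{T}_{\mathcal{X}}[1]$ together with Corollary \ref{cor:restriction-iso} yields a classical spin structure on $E$, i.e., a $\ZT$-graded spinor bundle $\mathsf{S} = \mathsf{S}^+\oplus \mathsf{S}^-$ on $U$ equipped with a Clifford action of $E$. Clifford multiplication by $s$ then defines an odd endomorphism $c(s):\mathsf{S}\to \mathsf{S}$ satisfying $c(s)^2 = Q(s)\cdot \id = 0$ by the isotropy of $s$. The pair $(\mathsf{S}, c(s))$ is thus a $\ZT$-graded ($2$-periodic) complex, supported set-theoretically on the classical truncation of $\mathcal{X}$, and naturally acquires the structure of a module (in the Koszul / matrix-factorization sense, for the zero potential) over the derived structure sheaf $\mathcal{O}_{\mathcal{X}}$. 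This is the candidate object of the desired category on the local model.

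Third, to globalize, I would verify that different choices of local Darboux chart produce canonically isomorphic $2$-periodic complexes. Here the descent result for spin structures (Proposition \ref{prop:spin-gerbe}) supplies the transition data: the isomorphisms identifying two classical spinor bundles coming from two self-dual representatives of $\mathbb{T}_{\mathcal{X}}[1]$ are precisely the data of the spin structure, and they are automatically compatible with the Clifford differentials since they preserve the underlying orthogonal structure on $E$ and hence intertwine $c(s)$.

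The main obstacle, and the reason this is stated as a conjecture rather than a theorem, is twofold. First, a sufficiently strong local normal form for $-2$-shifted symplectic derived stacks is subtle: the $-1$-shifted case is well understood, but the $-2$-shifted case (and especially the higher-stacky generalization) demands a refinement of existing Brav--Bussi--Joyce-type theorems that produces the explicit self-dual representative $(E, Q, s)$ compatible with the notions of Section \ref{sec:self-dual-cat}. Second, one must pin down the target category of ``$2$-periodic complexes on $\mathcal{X}$''---candidates include $\ZT$-graded dg-modules over $\mathcal{O}_{\mathcal{X}}$, a matrix-factorization-style category for the zero potential, or a Joyce-style categorification of $-2$-shifted symplectic stacks---and the descent step works cleanly only for the right choice.
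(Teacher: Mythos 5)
This statement is labeled as a conjecture in the paper, and no proof is given: the author explicitly writes that it is ``a rough version here, without defining all terms.'' You correctly recognize this and instead outline a program of attack together with the obstacles that keep it conjectural, which is the right posture.

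Your sketch is in fact essentially the derived-geometric globalization of what the paper actually does in the classical/schematic setting. Section \ref{sec:str-sheaf} together with Theorem \ref{thm:DT4-local} constructs a $\ZT$-graded virtual structure sheaf on a scheme or Artin stack with an oriented DT4 obstruction theory by choosing a self-dual representative, forming the Polishchuk--Vaintrob matrix factorization from the spinor module and the tautological isotropic section restricted to the cone (Constructions \ref{constr:matrix-fac} and \ref{constr:matfac-cone} are precisely your pair $(\mathsf{S}, c(s))$), and then gluing via the descent of spin structures established in Proposition \ref{prop:spin-gerbe}. The two obstacles you flag are exactly the missing ingredients: (a) a Darboux-type normal form for $-2$-shifted symplectic structures on (possibly higher) derived stacks that yields the self-dual representative and isotropic datum intrinsically, and (b) a precise, descent-compatible definition of the target category of $2$-periodic complexes on a derived stack. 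Both are genuinely open; the paper only handles the $1$-categorical, classical-truncation version, and the introduction points to the derived Clifford algebra approach (after Preygel, Rozenblyum, and ongoing work with Bouaziz) as a possible route. So treat your text as a faithful elaboration of the paper's intended program rather than as a proof, and don't be tempted to submit it as one.
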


\paragraph{Derived Clifford Algebras.}
Following a suggestion by Nick Rozenblyum, an alternative approach to spin structures on complexes is via module categories over a derived version of Clifford algebras. Relevant prior work on this concept has been done by Preygel \cite[\S 9]{preygel}, who in particular gives a construction of Clifford algebras that is likely to generalize from bundles to complexes.\footnote{A construction of derived Clifford algebra has also been suggested by Vezzosi \cite{vezzosi_quad}, although it does not appear to be the correct one.} 
In characteristic zero, one can also mirror the usual construction of Clifford algebras using generators and relations in a differential graded setting (cf. \cite[Remark 2.1.7]{lurie2011dag}). 

One advantage of this fully ``derived'' approach is that it lends itself more easily to genuine derived constructions (working in the stable infinity category of complexes, rather than its homotopy category) that can be used directly on derived schemes and stacks. In particular, we hope is that it provides an inroad to solving Conjecture \ref{conj:der-stack} (this is ongoing work with Emile Bouaziz).

Another possible advantage is that -- in analogy to the classical situation -- an approach using derived Clifford algebras can be expected to include the characteristic $2$ case without additional difficulties. 

\paragraph{Existence of Spin structures in DT4 theory.}
Due to their importance of constructing genuine structure sheaves in DT4 theory, the following question is natural.
\begin{question}
	Do moduli spaces of sheaves on a Calabi--Yau fourfold $Y$ (sometimes) admit spin structures? If so, canonical ones?
\end{question}

Computations indicate that already $\Hilb^n Y$ does not admit a spin structure for $n>1$  -- as was pointed out to us by Arkadij Bojko, Woonam Lim, Felix Thimm and Jørgen Rennemo. A direct argument establishing this fact via reduction to $Y=\bC^4$ was explained to us by Rennemo. 

A promising alternate question -- inspired by the application to factorization in \cite{kr_mag} -- would be whether there exist ``twisted'' spin structures with twists given by natural line bundles on the moduli space (see Remark \ref{rem:spinc-structures}). We thank Thimm and Rennemo for pointing this out.

\paragraph{Generalized principal bundles.}
The category of perfect complexes provides a natural enlargement of the category of $\GL_n$-principal bundles (for varying $n$), which is useful e.g. for many applications in moduli theory. Similarly, one has natural enlargements of the categories of principal bundles for other classes of algebraic groups, the last of which is provided in this paper.   
\begin{center}
	\begin{tabular}{c|c}
	Structure group	& Generalized principal bundle\\ 
		\hline 
	$\GL$ & perfect complex \\
	$\SL$ & oriented perfect complex \\
	$\OO$  & quadratic (self-dual) complex \\
	$\SO$ & oriented quadratic complex \\
	$\operatorname{Sp}$ & anti-self-dual complex \\	
	$\PGL$ & twisted perfect complex \\
	$\Spin$ &  complex with spin structure 
	\end{tabular}
\end{center}
This perspective has previously been employed for the groups $\PGL$ by Lieblich \cite{lie_mod} and for the orthogonal and symplectic groups by Bu \cite{bu_enumI}, \cite{bu_enumII}. However, note that all the entries on the right hand side are ad-hoc.

\begin{question}
	Do categories of generalized principal bundles exist for all reductive (or just linear) algebraic groups $G$? Is there a conceptual way of defining them for all $G$?
\end{question}

\subsection{Sketch of the construction}\label{sec:sketch}
We explain the main ideas in the case that $X$ is a quasi-projective scheme over $\bC$, and where $(\bE,\theta)$ be a quadratic complex on $X$ with $\bE$ having perfect amplitude in $[-1,1]$. 
\paragraph{Self-dual Representatives.}
By a construction of Oh--Thomas, we can find a \emph{self-dual representative} of $\bE$. Here, a three-term complex $E^{\bullet}$ of locally free sheaves is \emph{self-dual}, if it is endowed with an isomorphism $\theta_{\bullet}: E^{\bullet} \to E_{\bullet}:= (E^{\bullet})^{\vee}$ that is self-dual in the sense that $\theta_i^{\vee} = \theta_{-i}$ for all $i\in \bZ$. Equivalently, $E^{\bullet}$ is of the form 
\begin{equation}\label{eq:self-dual-3term}
	B_{-1}\xrightarrow{d_{-1}} E\xrightarrow{d_0} B^{1},
\end{equation}
for vector bundles $B_{-1},E,B^1$, where $E$ carries an orthogonal structure, $B_{-1}$ is identified with $(B^1)^{\vee}$, and where $d_{-1} = d_0^{\vee}$ under these identifications. A self-dual complex represents $\bE$, if it is endowed with a specified isomorphism $\eta:[E^{\bullet}]\to \bE$ in $D(X)$ that intertwines $[\theta_{\bullet}]$ and $\theta$, i.e. such that we have an equality of maps  $\eta^{\vee}\circ [\theta_{\bullet}] = \theta \circ \eta$ in $D(X)$. 

\paragraph{Isotropic Reductions.}
Now fix the self-dual representative $E^{\bullet}$. Suppose $K\subset B_{-1}$ is a sub-vector bundle, i.e. that the quotient sheaf is again locally free. Suppose further that the composition $K\to B_{-1}\to E$ is again injective. In particular, the acyclic complex 
\[K^{\bullet}:= [K\to K]\] 
in degrees $[-1,0]$ is a subcomplex of $E^{\bullet}$. Dually, we have a degreewise surjection $E^{\bullet}\to K_{\bullet}:=(K^{\bullet})^{\vee}$. Together, these fit in the diagram
\begin{equation}\label{eq:K-reduction-diag}
	\begin{tikzcd}
		K\ar[r,equals]\ar[d]&K\ar[d]\ar[r]&0 \ar[d] \\
		B_{-1}\ar[r]\ar[d]& E\ar[r]\ar[d] & B^1\ar[d]\\
		0 \ar[r]& K^{\vee} \ar[r, equals] & K^{\vee}
	\end{tikzcd}
\end{equation}
One sees that the vertical composition $K\to E\to K^{\vee}$ is zero. In particular, $K\subset K^{\perp} = \Ker(E\to K^{\vee})$ is contained in its orthogonal complement, hence is an isotropic sub-bundle. The bundle $K^{\perp}/K$ is a quadratic bundle in its own right, and we obtain a new self-dual representative of $\bE$ by taking vertical subquotients of \eqref{eq:K-reduction-diag}:
\begin{equation}\label{eq:iso-red-3term}	
	B_{-1}/K \to K^{\perp}/K \to (B_{-1}/K)^{\vee}.
\end{equation}
We call \eqref{eq:iso-red-3term} the \emph{isotropic reduction of $E^{\bullet}$ along $K$}. More generally, we say that 
\begin{equation*}
	F^{\bullet} = [A_{-1}\to F \to A^1]
\end{equation*} is an isotropic reduction of $E^{\bullet}$, if it is equipped with an isomorphism of self-dual representatives (i.e. compatible with the self-duality maps and the identifications with $\bE$) from a complex \eqref{eq:iso-red-3term} for some $K$.

The collection of self-dual representatives of $\bE$ naturally forms a category $\mathcal{R}= \mathcal{R}_\bE$ in which morphisms from $F^{\bullet}$ to $E^{\bullet}$ are the isotropic reductions from $E^{\bullet}$ to $F^{\bullet}$. One can show that the category $\mathcal{R}$ is \emph{connected}, i.e. any two self-dual representatives are related by a sequence of isotropic reductions (cf. Corollary \ref{cor:repcat-connected}). 
\paragraph{Orientations.}
Given a self-dual representative $E^{\bullet}$, giving an orientation on $\bE$ is the same as giving one on the quadratic bundle $E$ via the isomorphism 
\begin{equation}\label{eq:det-isom}
	\det \bE \simeq \det E\otimes \det B_{-1}^{\vee} \otimes \det (B^1)^{\vee}\simeq \det E
\end{equation}
induced by $\eta$ and where we use the pairing 
\begin{align*}
	\det B_{-1}^{\vee} \otimes \det (B^1)^{\vee} = \det B_{-1}^{\vee}\otimes \det B_{-1} &\to \mathcal{O}\\
	b_1^*\wedge \cdots \wedge b_n^*\otimes b_n\wedge \cdots \wedge b_1& \mapsto (-1)^n .
\end{align*}
An isotropic sub-bundle $K\subseteq K^{\perp}\subseteq E$ induces an isomorphism
\begin{equation*}
	\det E \simeq \det (K^{\perp}/K \oplus K\oplus K^{\vee}) \simeq \det K^{\perp}/K.
\end{equation*}
This isomorphism is compatible with the identifications obtained from applying \eqref{eq:det-isom} to $E^{\bullet}$ and its reduction \eqref{eq:iso-red-3term} respectively.
\paragraph{Associated principal bundles.}
To any special orthogonal bundle $E$ of rank $m$, let $P_{E}^{\SO}$ denote the associated $\SO(m)$ principal bundle. The choice of an isotropic rank $k$ sub-bundle $K\subset E$ induces a reduction of structure group of $P_E^{\SO}$ to the subgroup $\G(m, k)\subset \SO(m)$ preserving a rank $k$ isotropic subspace of the standard oriented quadratic bundle.\footnote{To simplify the discussion, we restrict to the cases $m\neq 0$ and $m> 2k$. }  See \S \ref{sec:standard-forms} in the Appendix for conventions. We denote the associated principal bundle by $P^{\G}_{E,K}$.
Finally, an isomorphism of special orthogonal bundles $K^{\perp}/K  \to F$ induces a diagram of principal bundles 
\begin{equation}\label{eq:torsor-diag}
	P_F^{\SO}\leftarrow P_{E,K}^{\G}\to P_E^{\SO}
\end{equation}
which is equivariant over the diagram of structure groups
\begin{equation}\label{eq:group-diag}
	\SO(m-2k)\xleftarrow{\Gproj_{m,k}} \G(m,k) \subseteq \SO(m).
\end{equation}
More concretely, the elements of $\G(m,k)$ are block-diagonal matrices of the form 
\[\begin{bmatrix}
	A & * & * \\
	0 & M & * \\
	0 & 0 & (A^T)^{-1}
\end{bmatrix}\]
where $A\in \GL(k)$ and $M\in \SO(m-2k)$. The map $\Gproj$ is simply restriction to the middle block. 

In particular, any isotropic reduction between self-dual representatives \eqref{eq:iso-red-3term} of an oriented quadratic complex gives rise to a diagram \eqref{eq:torsor-diag} by considering the middle terms. 
\paragraph{Spin structures with a twist.}
One may hope that a good notion of spin structure on the oriented quadratic complex $\bE$ can be defined simply by declaring it to be a spin structure on the middle term of any self-dual representative $E^{\bullet}$ (as the analogous statement holds for orientations). If this is well-behaved, then given a spin structure $\sigma$ on the middle term $E$ of $E^{\bullet}$, and an isotropic reduction $F^{\bullet}$ of $E^{\bullet}$, we should be able to produce a spin structure $\tau$ on $F^{\bullet}$. The following strategy suggests itself. The given data provides the following diagram of principal bundles with solid arrows
\begin{equation*}
	\begin{tikzcd}
		*\ar[d, dashed]&? \ar[l, dashed]\ar[r,dashed]\ar[d,dashed]& P_{\sigma}^{\Spin}\ar[d,"2:1"] \\
		P_F^{\SO}&P_{E,K}^{\G}\ar[l]\ar[r] & P_E^{\SO}.
	\end{tikzcd}
\end{equation*}
If we could fill out the upper row and dashed arrows in a way that all vertical arrows are double coverings, then the term $*$ would give a spin structure on $F$, as desired. This could easily be achieved (via base change, followed by induction of structure group) if it held that the spin groups fit into a corresponding diagram of double coverings extending \eqref{eq:group-diag}
\begin{equation*}
	\begin{tikzcd}
		\Spin(m-2k)\ar[d,"2:1"]& ?? \ar[l,dashed]\ar[r,dashed]\ar[d,dashed,"2:1"]& \Spin(m)\ar[d,"2:1"] \\
		\SO(m-2k) &\G(m,k)\ar[l]\ar[r] & \SO(m)
	\end{tikzcd}
\end{equation*}
However, such a diagram cannot exist: Up to homotopy, we have $\G(m,k)\simeq \SO(m-2k)\times \GL(k)$, and passing to fundamental groups in \eqref{eq:group-diag}, we get \footnote{Say $m-2k \geq 3$ here, for convenience.}
\begin{equation*}
	\begin{tikzcd}[row sep = tiny]
		\bZ_{2}&\bZ_2\times \bZ\ar[l]\ar[r]& \bZ_2 \\
		\overline{a}&(\overline{a},b)\ar[l, mapsto]\ar[r, mapsto] & \overline{a}+\overline{b}.
	\end{tikzcd}
\end{equation*}
The preimages of the trivial subgroup from each side give are the subgroups $\langle \overline{1},1\rangle$ and $\langle 0,1 \rangle$ respectively, which are distinct. 

By restricting only to the middle term of $E^{\bullet}$, we have discarded too much information. The remaining piece contributes a twist with which the above discussion goes through. For $m\geq 3$, define the (complex) $\Spin^{\bC}$-group $\Spin^{\bC}(m)$ as the double cover of $\SO(m)\times \bC^*$ defined by the sub-group $\langle \overline{1}, 1\rangle \subset \bZ_2 \times \bZ$. In particular, it restricts to $\Spin(m)$ over $\SO(m)\times\{\id\}$ and to a non-trivial double covering over $\{\id\}\times \bC^*$.

\begin{definition}\label{def:spin-str-rep}
	A spin \emph{structure on the representative $E^{\bullet}$} is a $\Spin^{\bC}(m)$-structure on the $\SO(m)\times \bC^*$-principal bundle $P_E^{\SO}\times P_{\det B^1}^{\bC^*}$.  
\end{definition}

In place of \eqref{eq:group-diag}, consider the modified diagram
\begin{equation}\label{eq:group-diag-twisted}
	\SO(m-2k)\times \bC^* \xleftarrow{\Gprojtw_{m,k}} \G(m,k)\times \bC^* \to \SO(m)\times \bC^*
\end{equation}
where 
\[\Gprojtw_{m,k}: \left( \begin{bmatrix}
	A & * & * \\ 0 & M & * \\ 0 & 0 & (A^T)^{-1}
\end{bmatrix}, t\right) \mapsto (M, t \cdot \det A).\]
The corresponding diagram of fundamental groups is
\begin{equation*}
	\begin{tikzcd}[row sep = tiny]
		\bZ_{2}\times \bZ&\bZ_2\times \bZ \times \bZ\ar[l]\ar[r]& \bZ_2 \times \bZ \\
		(\overline{a}, b + c)&(\overline{a},b, c)\ar[l, mapsto]\ar[r, mapsto] & (\overline{a}+\overline{b}, c).
	\end{tikzcd}
\end{equation*}
One checks that the respective subgroups defining the $\Spin^{\bC}$-groups have the
same preimage in $\bZ_2\times \bZ\times \bZ$, consisting of those triples $(\overline{a}, b , c)$ for which $\overline{a} + \overline{b} + \overline{c} = 0 $ in $\bZ_2$. 

Letting $\widetilde{\G(m,k)\times \bC^*}\to \G(m,k)\times \bC^*$ denote the associated double covering group, we therefore do have a diagram
\begin{equation}\label{eq:diag-groups-extended}
	\begin{tikzcd}
		\Spin^{\bC}(m-2k)\ar[d,"2:1"]& \widetilde{\G(m,k)\times \bC^*} \ar[l,dashed]\ar[r,dashed]\ar[d,dashed,"2:1"]& \Spin^{\bC}(m)\ar[d,"2:1"] \\
		\SO(m-2k) \times \bC^* &\G(m,k)\times \bC^*\ar[l]\ar[r] & \SO(m)\times \bC^*.
	\end{tikzcd}
\end{equation}

\paragraph{Transfer of spin structures.}
Given an isotropic reduction from $E^{\bullet}$ to $F^{\bullet}$, with
\[F^{\bullet} = [A_{-1}\to F\to A^1],\] we have a natural isomorphism 
\[\det A^1 \simeq \det B^1\otimes  \det K \] using which we can extend \eqref{eq:torsor-diag} to a diagram of principal bundles over \eqref{eq:group-diag-twisted}
\begin{equation}\label{diag:principal-bundles-pairs}
	P_{F}^{\SO}\times P_{\det A^1}^{\bC^*} \leftarrow P_{E,K}^{\G}\times P_{\det B^1}^{\bC^*}\to P_{E}^{\SO}\times P_{\det B^1}^{\bC^*}
\end{equation}
Now, a spin structure on $E^{\bullet}$ gives rise to the diagram of solid arrows
\begin{equation}\label{diag:spin-structure-transfer}
	\begin{tikzcd}
		P_{\tau}^{\Spin^{\bC}}\ar[d, dashed, "2:1"]&\tilde{P} \ar[l, dashed]\ar[r,dashed]\ar[d,dashed, "2:1"]& P_{\sigma}^{\Spin^{\bC}}\ar[d,"2:1"] \\
		P_F^{\SO}\times P_{\det A^1}^{\bC^*}&P_{E,K}^{\G}\times P_{\det B^1}^{\bC^*}\ar[l]\ar[r] & P_E^{\SO}\times P_{\det B^1}^{\bC^*}.
	\end{tikzcd}
\end{equation}
By base change, and extension of structure group respectively, we obtain the terms $\tilde{P}$ and $P_{\tau}^{\Spin^{\bC}}$ as well as the dashed arrows, making this a diagram of principal bundles over \eqref{eq:diag-groups-extended}.
The reverse procedure produces a spin structure on $E^{\bullet}$, starting from one on $F^{\bullet}$. 

Regarding an isotropic reduction as a morphism $\xi:F^{\bullet}\rightsquigarrow E^{\bullet}$ in the category $\mathcal{R}$, we denote the spin structure on $F^{\bullet}$ obtained by the above construction by $\xi^*\sigma$. 

\paragraph{Spin structures on quadratic complexes, affine case.}
The collection of spin structures on a self-dual representative $E^{\bullet}$ forms a groupoid, denoted $\bS(E^{\bullet})$. Let $\Grpd$ denote the category of groupoids, and $\Grpd^{\simeq}$ its sub-category of equivalences between groupoids. 
\begin{theorem}[cf. Definition \ref{def:spin-functor}]
	There is a natural contravariant pseudo-functor $\bS:\mathcal{R}\to \Grpd$ that sends an object $E^{\bullet}$ to $\bS(E^{\bullet})$ and an isotropic reduction $\xi:F^{\bullet}\to E^{\bullet}$ to $\xi^*$. Moreover, it factors through the subcategory $\Grpd^{\simeq}$.   
\end{theorem}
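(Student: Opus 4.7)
The plan is to construct the pseudo-functor $\bS$ directly from the transfer procedure sketched above, then verify the coherence axioms and show each $\xi^{*}$ is an equivalence.

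For a morphism $\xi:F^{\bullet}\to E^{\bullet}$ in $\mathcal{R}$---an isotropic reduction of $E^{\bullet}$ by some $K\subseteq B_{-1}$ of rank $k$---I would define $\xi^{*}:\bS(E^{\bullet})\to\bS(F^{\bullet})$ by the construction of diagram \eqref{diag:spin-structure-transfer}: given $\sigma\in\bS(E^{\bullet})$, pull back the $\Spin^{\bC}(m)$-cover along the right arrow of \eqref{diag:principal-bundles-pairs} to a $\widetilde{\G(m,k)\times\bC^{*}}$-cover $\tilde{P}$ of $P_{E,K}^{\G}\times P_{\det B^{1}}^{\bC^{*}}$, then extend the structure group along the left map of \eqref{eq:diag-groups-extended}, paired with the left arrow of \eqref{diag:principal-bundles-pairs}, to a $\Spin^{\bC}(m-2k)$-cover of $P_{F}^{\SO}\times P_{\det A^{1}}^{\bC^{*}}$. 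Morphisms of spin covers pull back and extend functorially, so this gives a functor of groupoids.

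For composable morphisms $F^{\bullet}\xrightarrow{\xi}G^{\bullet}\xrightarrow{\eta}E^{\bullet}$ in $\mathcal{R}$, the underlying isotropic reductions compose to a nested flag in $B_{-1}(E)$, hence a chain of parabolic inclusions $\G(m,k_{\eta}+k_{\xi})\subseteq\G(m,k_{\eta})\subseteq\SO(m)$. The twisted projections \eqref{eq:group-diag-twisted} respect this chain, as can be checked from the block-matrix description: in particular $\Gprojtw_{m-2k_{\eta},k_{\xi}}\circ\Gprojtw_{m,k_{\eta}}$ agrees with $\Gprojtw_{m,k_{\eta}+k_{\xi}}$ up to the canonical identification of the determinant twists. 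Consequently both $(\eta\circ\xi)^{*}\sigma$ and $\xi^{*}\eta^{*}\sigma$ are produced by the same pullback-and-extension operation applied to $\sigma$, yielding a canonical natural isomorphism. Unitality is immediate from the empty isotropic reduction, and the pentagon and triangle axioms reduce to associativity and unit compatibility for these iterated flag reductions.

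To show each $\xi^{*}$ is an equivalence, I would construct a quasi-inverse $\xi_{!}:\bS(F^{\bullet})\to\bS(E^{\bullet})$ by reversing the transfer. The key observation is that the two commutative squares of groups in \eqref{eq:diag-groups-extended} are both pullback squares: computing fundamental groups, one checks that both $\widetilde{\G(m,k)\times\bC^{*}}\to \Spin^{\bC}(m)$ over $\G(m,k)\times\bC^{*}\to\SO(m)\times\bC^{*}$ and the analogous left-hand square are cartesian, since the relevant subgroup of $\bZ_{2}\times\bZ\times\bZ$ is $\{(\bar{a},b,c):\bar{a}+\bar{b}+\bar{c}=0\}$ in both cases. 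Consequently the round-trip compositions $\xi^{*}\xi_{!}$ and $\xi_{!}\xi^{*}$ are naturally isomorphic to identities: pulling back a double cover along a map and re-extending through a pullback square of groups recovers the original cover up to canonical isomorphism.

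The main obstacle I anticipate is bookkeeping the pseudo-functorial coherence, especially tracking the interplay between the $\SO$-parts and the $\bC^{*}$-twists when nested isotropics simultaneously alter $\det A^{1}$ and $\det B^{1}$. Conceptually the coherence is forced by the parabolic structure of $\SO(m)$ together with the twisted projections $\Gprojtw$, but verifying the pentagon identity cleanly across all nested reductions and self-dual representatives will likely require careful diagrammatic care.
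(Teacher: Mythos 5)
Your overall architecture is the right one and conceptually matches the paper: transfer spin structures via pullback of double covers along $P_{E,K}^{\G}\times P_{\det B^1}^{\bC^*}\to P_E^{\SO}\times P_{\det B^1}^{\bC^*}$, then extension of structure group along the other leg; and for invertibility, use that both squares in \eqref{eq:diag-groups-extended} are cartesian. The cartesian-square argument for $\xi^*$ being an equivalence is a nice, correct observation that the paper in fact leaves implicit, so that part is a genuine contribution. The paper also inserts an intermediate category $\mathcal{P}$ of pairs $(E,L)$ (Definition \ref{def:pair-cat}), so that $\mathcal{R}\to\mathcal{P}$ is a strict functor and all pseudo-functorial coherence lives in a single $\mathcal{P}\to\Grpd^{\simeq}$; that factorization would help with the bookkeeping concern you raise at the end, and you may want to adopt it.

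There is, however, a concrete error in your coherence argument. You assert a chain of parabolic inclusions $\G(m,k_\eta+k_\xi)\subseteq\G(m,k_\eta)\subseteq\SO(m)$, but $\G(m,k+j)$ is \emph{not} a subgroup of $\G(m,k)$: an element stabilizing the isotropic $\cO^{k+j}\times\{0\}$ need not stabilize $\cO^{k}\times\{0\}$. Relatedly, $\Gprojtw_{m-2k_\eta,k_\xi}\circ\Gprojtw_{m,k_\eta}$ is not even defined, since the image of $\Gprojtw_{m,k_\eta}$ is all of $\SO(m-2k_\eta)\times\bG_m$ rather than the source $\G(m-2k_\eta,k_\xi)\times\bG_m$. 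What the paper does (proof of Lemma \ref{lem:spin-to-pairs-functor}) is introduce the two-step flag stabilizer $\G(m,k,j)=\G(m,k)\cap\G(m,k+j)$, which maps to all of $\G(m,k)$, $\G(m,k+j)$, and $\G(n,j)$ (with $n=m-2k$), yielding the commutative diagram \eqref{eq:big-group-diag}; the 2-morphism $\mu_{\xi,\zeta}$ identifying $\zeta^*\xi^*$ with $(\xi\zeta)^*$ is then manufactured by lifting this diagram to the $\Spin^{\bC}$-level. Your phrase "up to the canonical identification" is hiding exactly this step; without the intermediate group $\G(m,k,j)$ there is no diagram in which the two composites can be compared, and no explicit $2$-isomorphism is produced. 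The associativity constraint then comes from a further tetrahedral diagram over $\G(m,k,j,\ell)$, which is the content you correctly anticipate will need diagrammatic care.
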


Via the Grothendieck construction \cite[Part 1, \S 3.1]{FGAE}, the pseudo-functor $\bS$ determines a category $\mathcal{S} = \mathcal{S}_{\bE}$, fibered in groupoids over $\mathcal{R}$, whose fiber over an object $\bE$ is the groupoid $\bS(E^{\bullet})$. 

This allows us to give a definition of spin structure on a perfect complex without picking out a single representative.
\begin{definition}\label{def:spin-str-indep} 
	A \emph{spin structure on the oriented quadratic complex $\bE$} is a section of the functor $\mathcal{S}\to \mathcal{R}$. 
\end{definition}
Effectively, a spin structure on $\bE$ is a choice of spin structure $\sigma_{E^{\bullet}}$ on \emph{each} representative $E^{\bullet}$ together with identifications $\xi^*\sigma_{E^{\bullet}}\xrightarrow{\sim} \sigma_{F^{\bullet}}$ for each isotropic reduction, satisfying natural compatibilities. 
The collection of spin structures on $\bE$ forms a groupoid, which we denote by $\bS(\bE)$. 

One main result of this paper is that Definitions \ref{def:spin-str-rep} and \ref{def:spin-str-indep} are compatible, i.e. locally nothing is lost by working with a single representative:
\begin{theorem}[cf. Corollary \ref{cor:restriction-iso}]
	For any choice of representative $E^{\bullet}$, the natural restriction map $\bS(\bE)\to \bS(E^{\bullet})$ that evaluates a section of $\mathcal{S}\to \mathcal{R}$ at $E^{\bullet}$ is an equivalence of groupoids. 
\end{theorem}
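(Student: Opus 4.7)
The plan is to exploit two features established earlier: the pseudofunctor $\bS$ factors through $\Grpd^{\simeq}$ (every morphism in $\mathcal{R}$ acts as an equivalence on fibers), and $\mathcal{R}$ is connected (Corollary~\ref{cor:repcat-connected}). Abstractly, $\bS(\bE)$ is the pseudo-limit of $\bS:\mathcal{R}\to\Grpd^{\simeq}$, and evaluation at $E^{\bullet}$ is the comparison map I want to invert.

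For essential surjectivity, given $\sigma\in \bS(E^{\bullet})$, I construct a section $s_{\sigma}$ by zigzag transport. For each representative $F^{\bullet}$, connectedness lets me pick a zigzag of morphisms in $\mathcal{R}$ joining $F^{\bullet}$ to $E^{\bullet}$; I then apply $\bS$ along this zigzag, inverting equivalences where needed, to produce a candidate $s_{\sigma}(F^{\bullet})\in\bS(F^{\bullet})$. Full faithfulness proceeds in parallel: an isomorphism between two sections is determined by its value at $E^{\bullet}$, and any morphism at $E^{\bullet}$ is propagated to all other fibers by the same transport mechanism.

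The main obstacle is coherence---ensuring that the transported $s_{\sigma}(F^{\bullet})$ is well-defined up to canonical isomorphism independent of the choice of zigzag, and that these pointwise data organize into an honest section satisfying the cocycle conditions. I would resolve this by strengthening connectedness to a cofiltered-like structure on $\mathcal{R}^{\op}$: (a) any pair $F_1^{\bullet}, F_2^{\bullet}$ admits a common enlargement, namely a self-dual representative $G^{\bullet}$ equipped with morphisms $G^{\bullet}\to F_i^{\bullet}$ in $\mathcal{R}$; and (b) any parallel pair $G^{\bullet}\rightrightarrows F^{\bullet}$ can be equalized by precomposition with some $H^{\bullet}\to G^{\bullet}$. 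Given (a) and (b), every zigzag between $E^{\bullet}$ and $F^{\bullet}$ factors through a single common enlargement, and transport becomes manifestly canonical.

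The hardest step will be (a): constructing a self-dual representative $G^{\bullet}$ dominating two given ones while preserving all self-duality, orientation, and identification-with-$\bE$ data. The natural strategy is to enlarge each $F_i^{\bullet}$ by acyclic self-dual blocks (hyperbolic summands of the form $[K\to K\oplus K^{\vee}\to K^{\vee}]$, or the elementary acyclic pieces $[K\xrightarrow{\id}K]$ in the correct degree) so that both $F_i^{\bullet}$ arise as isotropic reductions of a single common $G^{\bullet}$ with compatible orientation and quasi-isomorphism to $\bE$. Matching (b) then amounts to a lattice-theoretic argument refining two isotropic sub-bundles defining the same reduction. Once (a) and (b) are secured, the cocycle conditions for $s_{\sigma}$ follow from the pseudofunctor structure of $\bS$, and the functor $\bS(E^{\bullet})\to\bS(\bE)$ constructed from $\sigma\mapsto s_{\sigma}$ is a quasi-inverse to restriction by construction.
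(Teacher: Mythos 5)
Your overall strategy---reduce to a (co)filteredness assertion on $\mathcal{R}$ and then transport a spin structure on $E^{\bullet}$ to every other representative---is the right shape of argument, and your point (a) is exactly Proposition~\ref{prop:iso-red}/Corollary~\ref{cor:repcat-connected}. But your point (b), the claim that a parallel pair $\xi_0,\xi_1:F^{\bullet}\rightsquigarrow E^{\bullet}$ can be strictly equalized by precomposition with a single $\upsilon:G^{\bullet}\rightsquigarrow F^{\bullet}$, is a genuine gap: it is not true, and it is not what the paper proves. An isotropic reduction records, among other data, an isotropic sub-bundle $K_i\subset E^0$, and the composition rule (Definition~\ref{def:pair-cat}) produces from $\xi_i\circ\upsilon$ an enlarged isotropic $\Lambda_i\supseteq K_i$. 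If $K_0\neq K_1$ there is generally no $\upsilon$ with $\Lambda_0=\Lambda_1$ on the nose, so no ``lattice-theoretic refinement'' will equalize the pair. What the paper actually establishes is the strictly weaker Proposition~\ref{prop:connect-reductions}: after precomposing with some $\upsilon$, the two morphisms become connected by a \emph{family} of (generalized) isotropic reductions parametrized by $\bA^1$. This is the $\infty$-categorical cofilteredness of Proposition~\ref{prop:cofiltered}, not 1-categorical filteredness of $\mathcal{R}^{\op}$.

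Once (b) is replaced by the $\bA^1$-homotopy version, your transport argument no longer closes up by itself: you further need to show that the spin functor $\bS$ sends any such $\bA^1$-family of morphisms to a \emph{single} natural transformation, i.e.\ that transported spin structures are canonically isomorphic along homotopies, not just equivalent. That is exactly what Construction~\ref{constr:pullback-simplicial-pairs} accomplishes via the invariance $H^1(X,\ZT)\xrightarrow{\sim} H^1(\bA^1\times X,\ZT)$ away from characteristic 2, and it is the reason for the entire simplicial enrichment of $\mathcal{R}$ and $\mathcal{P}$ in \S\ref{subsec:enriched-functors} and \S\ref{sec:simpl-struc}, the passage through $\Ex^{\infty}$, and the formulation via Duskin and homotopy-coherent nerves. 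A purely 1-categorical zigzag transport does not supply the coherence cells needed to trivialize a \emph{pseudo}-functor into $\Grpd^{\simeq}$. In short: your plan would be correct if $\mathcal{R}^{\op}$ were filtered as an ordinary category, but it is only filtered as an $\infty$-category, and the missing ingredients are exactly the paper's $\bA^1$-homotopy machinery and the spin functor's homotopy invariance.
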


\paragraph{Spin structures on quadratic complexes}
Let now $\bE$ be an oriented quadratic complex on an algebraic stack $\mathcal{M}$.  Let $\Aff_{\mathcal{M}}$ denote the category of affine schemes over $\mathcal{M}$ whose objects are pairs $(X,x)$ where $X$ is an affine scheme and $x\in \mathcal{M}(X)$. To any $(x: X \to \mathcal{M})\in \Aff_{\mathcal{M}}$, we have the oriented quadratic complex $x^*\bE$ on $X$ and can form the associated categories $\mathcal{S}_{x^*\bE}\to \mathcal{R}_{x^*\bE}$. Since all constructions are compatible with pullbacks along maps of affine schemes, these assemble into fibered categories $\underline{\mathcal{S}}_{\bE}\to \underline{\mathcal{R}}_{\bE}$ over $\Aff_{\mathcal{M}}$ (cf. Definition \ref{def:R-cat-stack}, Definition \ref{def:S-cat-stack} and Lemma \ref{lem:fibered-cats-stacks}). 
\begin{definition}\label{def:spin-str-stack-intro}
	A \emph{spin structure} on $\bE$ is a section of the forgetful map $\underline{\mathcal{S}}_{\bE}\to \underline{\mathcal{R}}_{\bE}$. 
\end{definition}
If the stack $\mathcal{M}$ is the functor of points of an affine scheme, one can show that this agrees with Definition \ref{def:spin-str-indep} up to natural equivalences (Theorem \ref{thm:spin-str-defs-compatible}). 

Using that Definition \ref{def:spin-str-stack-intro} makes sense for any scheme and behaves well under pullback, we obtain a new category fibered over schemes $\mathcal{M}^{S}$, whose $T$-points consist of pairs $(t,\sigma)$, where $t:T\to \mathcal{M}$, and where $\sigma\in \bS(t^*\bE)$. The category $\mathcal{M}^{S}$ parametrizes spin structures on $\bE$. We have
\begin{theorem}[Proposition \ref{prop:spin-gerbe} \& Corollary \ref{cor:spin-gerbe}]
	The category $\mathcal{M}^{S}$ fibered over schemes is an algebraic stack. The forgetful map $\mathcal{M}^{S}\to \mathcal{M}$ exhibits $\mathcal{M}^{S}$ as a $\ZT$-gerbe over $\mathcal{M}$.  
\end{theorem}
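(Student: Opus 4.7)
The plan is to reduce all three claims—stackiness, the $\ZT$-gerbe property, and algebraicity—to the restriction equivalence $\bS(\bE) \xrightarrow{\sim} \bS(E^\bullet)$ of Corollary \ref{cor:restriction-iso}. Via that equivalence, statements about spin structures on $\bE$ translate into statements about lifts of principal $\SO(m)\times \bC^*$-bundles along the étale double covering $\Spin^\bC(m) \to \SO(m)\times \bC^*$, where the classical theory applies.

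First I would verify that $\mathcal{M}^S$ is a stack. Unwinding Definition \ref{def:spin-str-stack-intro}, a $T$-point of $\mathcal{M}^S$ is a pair $(t,\sigma)$ with $t \in \mathcal{M}(T)$ and $\sigma$ a section of $\underline{\mathcal{S}}_{t^*\bE}\to \underline{\mathcal{R}}_{t^*\bE}$ as fibered categories over $\Aff_T$. Since both are stacks by Lemma \ref{lem:fibered-cats-stacks}, and the property of being a section is itself local, the resulting groupoid-valued presheaf $T\mapsto \mathcal{M}^S(T)$ satisfies descent by a general formal argument.

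For the gerbe property, I would fix an arbitrary morphism $t:T\to \mathcal{M}$ with $T$ an affine scheme and work étale-locally on $T$. After refinement, the Oh--Thomas construction produces a self-dual representative $E^\bullet=[B_{-1}\to E\to B^1]$ of $t^*\bE$. Corollary \ref{cor:restriction-iso} then identifies $\bS(t^*\bE)$ with the groupoid of $\Spin^\bC(m)$-lifts of $P_E^{\SO}\times P_{\det B^1}^{\bC^*}$. The three axioms of a $\ZT$-gerbe now follow from standard properties of reductions along étale covers of algebraic groups: local existence of lifts (since $\Spin^\bC(m)\to \SO(m)\times \bC^*$ is étale surjective), étale-local isomorphy of any two lifts (their difference is a $\ZT$-torsor on $T$, trivializable on a further étale cover by vanishing of $H^1(T,\ZT)$), and identification of automorphism groups with the deck group $\ZT$ of the covering.

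Algebraicity is then a formal consequence: any $\ZT$-gerbe over an algebraic stack is itself algebraic, because a local section trivializes the gerbe étale-locally to $B\ZT\times U$ and an atlas on $\mathcal{M}$ pulls back through such trivializations to an atlas on $\mathcal{M}^S$. The main obstacle I foresee is not in any individual step, but in the bookkeeping that ensures the restriction equivalence is natural enough in the base $T$ to transport the étale-local verification uniformly—concretely, one must argue that the formation of $\bS(t^*\bE)$ commutes with pullback along étale covers of $T$ compatibly with the chosen local representative. This compatibility is built into the fibered structure of $\underline{\mathcal{S}}_\bE\to \underline{\mathcal{R}}_\bE$ constructed in the preceding sections, and is what makes the reduction to the classical principal bundle picture legitimate.
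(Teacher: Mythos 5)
Your treatment of the gerbe property and algebraicity tracks the paper's proof of Corollary \ref{cor:spin-gerbe} closely: fix an affine $T$ and a self-dual representative $E^\bullet$, pass through Corollary \ref{cor:restriction-iso} to the groupoid of $\Spin^{\bC}(m)$-lifts, and invoke \'etale-local triviality of principal bundles plus the identification of the automorphism group with the deck group $\ZT$. And the remark that a $\ZT$-gerbe over an algebraic stack is algebraic is exactly how the paper closes Corollary \ref{cor:spin-gerbe}.

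The stackiness argument, however, has a genuine gap. You claim that $\underline{\mathcal{R}}_{\bE}$ and $\underline{\mathcal{S}}_{\bE}$ are stacks by Lemma \ref{lem:fibered-cats-stacks}, but that lemma only asserts that they are \emph{fibered categories}; it says nothing about descent. In fact $\underline{\mathcal{R}}_{\bE}$ is genuinely not a stack: given a smooth cover $(U_i\to V)$ and compatible self-dual representatives $E_i^\bullet$ on the $U_i$, there is in general no self-dual representative of $\bE|_V$ restricting to the $E_i^\bullet$, since the middle term and the choice of splitting are not \'etale-local data. So the formal ``sections of a map of stacks'' reasoning you invoke does not get off the ground. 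The paper circumvents this in Proposition \ref{prop:spin-gerbe} precisely by \emph{not} asking descent of $\underline{\mathcal{R}}_{\bE}$ or $\underline{\mathcal{S}}_{\bE}$; instead it first fixes a single representative $E^\bullet$ on the affine base $V$, uses the restriction equivalences $\bS(\bE)\to\operatorname{Sec}(\underline{\bS}(E^\bullet)/\Aff_V)\leftarrow\bS(E^\bullet)$ (compatible with pullback), and reduces to effectivity of descent for $\bS(E^\bullet)$, which is descent for double covers of a fixed $\SO(m)\times\bG_m$-bundle. That reduction to a fixed representative is the essential move you are missing in the stackiness step; without it one cannot appeal to ``a general formal argument,'' because the objects one would need to glue live over a non-stack of representatives.
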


\subsection{Notation and Conventions}
We will use symbols $A^{\bullet}, B^{\bullet}, C^{\bullet}$ to denote complexes of quasi-coherent sheaves on a scheme or algebraic stack. Except where stated otherwise, all complexes will be bounded complexes of locally free sheaves. We reserve letters $E^{\bullet}, F^{\bullet}, G^{\bullet}$ to denote \emph{self-dual} complexes as defined in the beginning of \S \ref{sec:self-dual-cat}.

If $A^{\bullet}$ is a complex of locally free sheaves, we use the notation $A_{\bullet}$ to denote the dual of $A^{\bullet}$, with $i$-th entry $A_i:= A^{-i}$. We define the differential of $A_{\bullet}$ to be the dual of the one on $A^{\bullet}$, \emph{without} intervention of signs. This deviates from common conventions, such as described in \cite[\href{https://stacks.math.columbia.edu/tag/0FNG}{Section 0FNG}]{stacks-project}. For a complex $A^{\bullet}$, we let $A_+^{\bullet}$ (resp. $A_-^{\bullet}$) denote its bad truncation in degrees $\geq 1$ and by $A_-^{\bullet}$ its bad truncation in degrees $\leq -1$.
For sign conventions regarding determinants, we follow \cite[p. 8 \& pp. 27-28]{ot_counting}.

We use $\dercat(\mathcal{X})$ to denote the derived category of quasi-coherent sheaves on a stack $\mathcal{X}$ -- mostly we will only be dealing with the full subcategory of perfect complexes.

To address set-theoretic issues, we follow the approach of the stacks project by fixing an appropriately large set of schemes, and similarly for each affine scheme in this set a set of finite locally free sheaves on them.  

Throughout, all stacks and schemes will be over $\Spec \bZ[1/2]$.  

\subsection{Acknowledgements}
The line of research undertaken here was sparked by discussions with Felix Thimm regarding the possibility of a DT4 virtual structure sheaf, and by a presentation of Martijn Kool at Oslo STEW 2023. Our results can be seen as developing ideas already present in \cite{ot_counting} and, particularly, \cite{kr_mag}. 
We thank Nick Rozenblyum for discussions about spin structures and derived Clifford algebras, and for pointing out Conjecture \ref{conj:der-stack}. 
We thank Henry Liu for helpful discussions, particularly regarding $K$-groups of spaces, and Nikolai Opdan and Håkon Kolderup for discussions about functors of infinity-categories.  
Throughout, and particularly in the beginning stages of this project, we have benefitted immensely from frequent conversations with Jørgen Rennemo, who contributed several suggestions putting the constructions undertaken here in a more conceptual light. We are particularly grateful for suggesting Question \ref{que:spin} as a guiding principle. 

The author further thanks Kavli IPMU, Academia Sinica and the Lorentz Center for their hospitality in Summer 2024 while part of this paper was being written. 
This research was funded by Research Council of Norway grant number 302277 - ”Orthogonal gauge duality and non-commutative geometry”.
\section{The category of self-dual representatives}\label{sec:self-dual-cat}

Let $X$ denote an affine scheme and let $(\bE, \theta)$ be a quadratic complex on $X$. \footnote{More generally, one may assume $X$ to be quasi-projective over some affine scheme.}

Following Oh--Thomas \cite{ot_counting}, we say that finite length complex of locally free sheaves $E^{\bullet}$ is \emph{self-dual} if it is equipped with a map $\theta_{\bullet}:E^{\bullet}\to E_{\bullet}$ to its dual, satisfying $\theta_{\bullet}^{\vee} = \theta_{\bullet}$ under the identification $E_{\bullet}^{\vee}  \simeq E^{\bullet}$. Equivalently, a self-dual complex is given by a quadratic bundle $(E,q)$ and a finite length complex of vector bundles $A^{\bullet}$ in degrees $\geq 1$, together with a map $a:E\to A^{1}$ satisfying $d\circ a = 0$ and $a\circ a^{\vee} = 0$. Then one recovers a self-dual complex $E^{\bullet}$ as
\[E^{\bullet}:= [A_{\bullet}\xrightarrow{a^{\vee}} E \xrightarrow{a} A^{\bullet}].\] 
\begin{definition}\label{def:self-dual}
 A \emph{self-dual representative} of $\bE$ consists of a self-dual complex $(E^{\bullet}, \theta_{\bullet})$ together with an isomorphism $\alpha_E:E^{\bullet}\to \bE$ in the derived category, such that $\theta = \alpha_E\circ [\theta_{\bullet}]\circ \alpha_E^{-1}$.
\end{definition}
In \cite[Proposition 4.1]{ot_counting}, Oh--Thomas show that -- at least if $\bE$ has amplitude in $[-1,1]$ -- one can represent $(\bE, \theta)$ by a self-dual complex, and go on to show that any two self-dual representatives can be related via a notion of \emph{isotropic reduction} -- at least if one identifies complexes that are related via a deformation over $\bA^1$. 

In this section, we significantly elaborate on their results: Define a category $\mathcal{R}$ whose objects are self-dual representatives of $(\bE,\theta)$ and whose morphisms are isotropic reductions. In \S \ref{sec:symm-rep}, we define a splicing construction and use it to show that self-dual representatives exist (i.e. $\mathcal{R}$ is non-empty) without any boundedness assumption. In \S \ref{sec:iso-red}, we define isotropic reductions in general (Lemma \ref{def:iso-red}) and show that any two representatives are connected via isotropic reductions, i.e. the category $\mathcal{R}$ is connected (Corollary \ref{cor:repcat-connected}). 

For our purpose of defining spin structures, we need a certain ``higher'' connectedness result for $\mathcal{R}$. In \S \ref{sec:families}, we show that given an arbitrary family of isotropic reductions, one can always, in a certain sense, ``contract'' it, which for us means find a deformation over $\bA^1$ to a constant family.   
 In \S \ref{sec:simpl-struc}, we use the notion of family of isotropic reductions to define a simplicial enrichment $\mathcal{R}^{\Delta}$ of $\mathcal{R}$ and show that this enriched category is weakly contractible.

\subsection{Existence of symmetric representatives} \label{sec:symm-rep}

Since $\bE$ is a perfect complex on a quasi-projective scheme, we may represent $\bE$ by a finite-length complex of locally free sheaves 
\[A^{\bullet}= [A^{-m}\to \cdots \to A^{-1}\to A^0\to A^1\to  \cdots\to A^m]\] for some $m\geq  0$.

Similarly, we may represent the map $\theta: \bE\to \bE^{\vee}$ on the level of complexes by a roof 
\[A^{\bullet}\xleftarrow{\varphi_{\bullet}} B^{\bullet}\xrightarrow{\theta'_{\bullet}} A_{\bullet},\] where $B^{\bullet}$ is again a finite length complex of locally free sheaves with the indicated quasi-isomorphism to $A^{\bullet}$. We are free to replace $A^{\bullet}$ by $B^{\bullet}$ as our chosen representative, and one checks that $\varphi^{\vee}_{\bullet}\circ \theta'_{\bullet}$ is a representative of $\theta$. 
Thus, without loss of generality, we may assume that we have chosen $A^{\bullet}$ together with a map of complexes $\theta_{\bullet}:A^{\bullet}\to A_{\bullet}$ representing $\theta$. Replacing $\theta_{\bullet}$ by $(\theta_{\bullet}+\theta_{\bullet}^{\vee})/2)$ if necessary, we may assume that $\theta_{\bullet}$ is \emph{self-dual}, i.e. that $\theta_i^{\vee} = \theta_{-i}$.

Since $\theta_{\bullet}$ represents the isomorphism $\theta$ in $\dercat(X)$, it is a quasi-isomorphism. 
We have arranged a quasi-isomorphism of complexes
\begin{equation}\label{eq:self-dual-representative-splicing-setup}
	\begin{tikzcd}
		\cdots\ar[r]&A^{-1}\ar[r]\ar[d, "\theta_{1^{\vee}}"]&A^0\ar[r]\ar[d, "\theta_0 = \theta_0^{\vee}"]& A^1\ar[r]\ar[d, "\theta_1"]& \cdots \\
		\cdots\ar[r]&A_{-1}\ar[r]&A_0\ar[r] & A_{1}\ar[r] & \cdots
	\end{tikzcd}
\end{equation}

As in \cite[Eq.~(52)]{ot_counting}, we obtain a self-dual representative from \eqref{eq:self-dual-representative-splicing-setup} by conjoining the positive entries of the upper and negative entries of the lower row along the $0$-th term, (see Construction \ref{constr:self-dual-complex}). This uses a \emph{splicing} construction, which we now introduce more generally.

\paragraph{Splicing. }
Let $f_{\bullet}:A^{\bullet}\to B^{\bullet}$ be a quasi-isomorphism of complexes (for now in any abelian category). We construct a new complex $A_f^{\bullet}$, together with quasi-isomorphisms \[A^{\bullet}\xrightarrow{\eta^-_{\bullet}(f)} A_f^{\bullet}\xrightarrow{\eta^+_{\bullet}(f)} B^{\bullet}\] factoring $f_{\bullet}$, as follows:   
\begin{construction}\label{constr:splicing}
	Since $f_{\bullet}$  is a quasi-isomorphism, the associated mapping cone $\cone(f_{\bullet})$ is acyclic. In particular, the sequence 
	\begin{equation}\label{eq:cone-exact}
		B^{-2}\oplus A^{-1} \xrightarrow[c_{-1}(f)]{\begin{bmatrix}
				d& -f_{-1}\\0 &d 
		\end{bmatrix}} B^{-1}\oplus A^0\xrightarrow[c_0(f)]{\begin{bmatrix}
				d & f_0 \\ 0 & d	
		\end{bmatrix}} B^0\oplus A^1 \xrightarrow[c_1(f)]{\begin{bmatrix}
				d & -f_1 \\ 0 & d
		\end{bmatrix}} B^1\oplus A^{2}
	\end{equation}
	is exact (the maps $c_i(f)$ are the differentials in $\cone(f_{\bullet})$ up to some sign changes). This implies that $c_0$ descends to an isomorphism
	
	\begin{equation}\label{eq:coker-ker-isom}
		\operatorname{Coker}\left(B^{-2}\oplus A^{-1}\xrightarrow{c_{-1}(f)}B^{-1}\oplus A^0\right) \xrightarrow{\overline{c_0(f)}} \operatorname{Ker}\left(B^0\oplus A^1\xrightarrow{c_1(f)}B^1\oplus A^2\right) = \colon A_f^0.
	\end{equation}
	
	By construction, $A_f^0$ comes with maps 
	\[ B^{-1} \to  A_f^0 \to A^1\]
	induced by the inclusion $B^{-1}\to B^{-1}\oplus A^0$ and projection $B^0\oplus A^1\to A^1$.
	Similarly, we have maps
	\begin{equation}\label{eq:f_zero_fac}
		A^0 \xrightarrow{f^-_0} A^0_f\xrightarrow{f^+_0} B^0
	\end{equation}
	factoring $f_0$.
	
	We form the complex $A_f^{\bullet}$ as
	\begin{equation}
		A_f^{\bullet}:= [\cdots \to B^{-2}\to B^{-1}\to E_f\to A^1\to A^2\to \cdots].
	\end{equation}
	This is indeed a complex: The compositions $B^{-2}\to B^{-1}\to A_f^0$ and $A^0_f\to A^1\to A^2$ factor through $c_{-1}(f)$ and $c_1(f)$ respectively, hence are zero by construction. The composition $B^{-1}\to A^1$ is the lower left coordinate of the matrix defining $c_0(f)$ in \eqref{eq:cone-exact}, which is zero.
	
	Define the maps $\eta^-_{\bullet}(f): A^{\bullet}\to A_f^{\bullet}$ and $\eta^+_{\bullet}(f):A_f^{\bullet}\to B^{\bullet}$ as in the following diagram:
	
	\begin{equation*}
		\begin{tikzcd}
			\cdots\ar[r]&A^{-1}\ar[r]\ar[d, "f_{-1}"]&A^0\ar[r]\ar[d,"f_0^-"]& A^1\ar[r]\ar[d, equals]& \cdots \\
			\cdots\ar[r]&B^{-1}\ar[r]\ar[d, equals]&A_f^0\ar[r]\ar[d, "f_0^+"]& A^1\ar[r]\ar[d, "f_1"]& \cdots \\
			\cdots\ar[r]&B^{-1}\ar[r]&B^0\ar[r] & B^{1}\ar[r] & \cdots
		\end{tikzcd}
	\end{equation*} 
	Using \eqref{eq:cone-exact}, one checks that $\eta^-_{\bullet}(f), \eta^+_{\bullet}(f)$ indeed define maps of complexes and, using \eqref{eq:f_zero_fac}, that  
	\begin{equation}\label{eq:etas-factor-f}
		f_{\bullet} = \eta^+_{\bullet}(f)\circ \eta^-_{\bullet}(f).
	\end{equation} To see that they are quasi-isomorphisms, we only need to check in degrees $-1,0$ and $1$.  It suffices to check for, say, $\eta^-_{\bullet}(f)$. Writing out the mapping cone and writing $A_f^0=\operatorname{Coker}(c_{-1}(f))$, one has
	\[\cdots \to B^{-2}\oplus A^{-1}\xrightarrow{c_{-1}(f)} B^{-1}\oplus A^0 \to \operatorname{Coker}(c_{-1}(f))\oplus A^1 \to A^1\oplus A^2 \to A^2\oplus A^3 \to \cdots\]
	which one checks to be exact by a direct calculation. 
\end{construction} 

For our purposes, we are working with finite length complexes of locally free sheaves. This is preserved by splicing:
\begin{lemma}
	Suppose that $A^{\bullet}$ and $B^{\bullet}$ in Construction \ref{constr:splicing} are finite length complexes of vector bundles. Then so is $A_f^{\bullet}$.  
\end{lemma}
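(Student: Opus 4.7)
The plan is to note that $A_f^{\bullet}$ coincides with $B^{\bullet}$ in degrees $\leq -1$ and with $A^{\bullet}$ in degrees $\geq 1$, so it is automatically a finite length complex of vector bundles away from degree zero. The only nontrivial task is therefore to show that the middle term $A_f^0$ is locally free of finite rank.

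For this, I would use the identification $A_f^0 \simeq \Ker(c_1(f))$ from \eqref{eq:coker-ker-isom}. Since $f_{\bullet}$ is a quasi-isomorphism between bounded complexes of vector bundles, the mapping cone $\cone(f_{\bullet})$ is a bounded acyclic complex of locally free sheaves of finite rank. The key step is then the standard fact that kernels (equivalently, images and cokernels) appearing inside such a complex are again locally free of finite rank. Since local freeness may be checked locally, one reduces to $X = \Spec R$ and the assertion that every bounded exact sequence of finitely generated projective $R$-modules is split exact. This follows by induction on the length of the complex: the rightmost nontrivial surjection $P_1 \twoheadrightarrow P_0$ splits because $P_0$ is projective, so the sequence truncates to an exact sequence of one shorter length terminating in $\Ker(P_1 \to P_0)$, which by the splitting is a direct summand of $P_1$ and hence projective.

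Applied to $\cone(f_{\bullet})$, this yields that $\Ker(c_1(f))$ is a direct summand of the vector bundle $B^0 \oplus A^1$ and hence is itself a vector bundle. Therefore $A_f^0$ is locally free of finite rank, and together with the degree-wise observation at the start this gives the lemma.

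There is no real obstacle here: the argument is pure bookkeeping on top of the standard splitting principle for bounded acyclic complexes of vector bundles. The only thing to be a little careful about is to record this splitting principle (or cite it) before extracting local freeness of $A_f^0$ from the acyclicity of the mapping cone.
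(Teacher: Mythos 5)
Your proof is correct and takes essentially the same route as the paper: you identify $A_f^0 = \Ker c_1(f)$ as the kernel of a differential in the bounded acyclic complex $\cone(f_\bullet)$ of vector bundles, and then conclude local freeness from standard homological algebra. The only cosmetic difference is the last step: the paper argues that $\Ker c_1(f)$ has vanishing Tor-dimension because it sits in an exact sequence whose other terms are locally free, while you invoke the (equivalent) splitting of bounded acyclic complexes of finitely generated projectives over an affine base.
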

\begin{proof}
	The only thing to be checked is that the middle term $A_f^0$ is a vector bundle. But $A_f^0 = \operatorname{Ker} c_{1}(f)$, where $c_1(f)$ can be identified with a differential in the acyclic complex $\cone(f_{\bullet})$. By our assumptions, $\cone(f)$ is itself a finite length complex of vector bundles. In particular, the kernel of any of its differentials $d^{C}_i$ sits in a long exact sequence in which all other terms are locally free:
	\[0\to \operatorname{Ker} d_i^C\to C_i\to C_{i+1}\to \cdots \to C_M\to 0.\]
	It follows form this that $\Ker d_i^C$ has vanishing Tor-dimension, hence is locally free. In particular, this applies ot $A_f^0$.  
\end{proof}

We record the following properties of Construction \ref{constr:splicing}:
\begin{lemma}\label{lem:splicing-properties}
	\begin{enumerate}[label = \roman*)]
		\item \label{lem:splicing-propertiesi} The construction of $A_f^{\bullet}, \eta^-_{\bullet}(f)$ and $\eta^+_{\bullet}(f)$ is functorial in $f_{\bullet}$.
		\item \label{lem:splicing-propertiesii} If $A^{\bullet}$ and $B^{\bullet}$ are complexes of locally free sheaves, we have a natural identification 
		\begin{equation}\label{eq:duality-isom}
			A_{f^{\vee}}^{\bullet} \xrightarrow{\sim} (A^{\bullet}_{f})^{\vee}.
		\end{equation} 
		It makes the following diagram commute
		\begin{equation}\label{eq:etas-diagram}
			\begin{tikzcd}[row sep = small, column sep = large]
				&A_{f^{\vee}}^{\bullet} \ar[dr, "\eta_{\bullet}^+(f^{\vee})"]\ar[dd,"\sim"]&  \\
				B_{\bullet}\ar[ur, "\eta_{\bullet}^-(f^{\vee})"]\ar[dr,"(\eta_{\bullet}^+(f))^{\vee}"']& &A_{\bullet}\\
				&(A_f^{\bullet})^{\vee}\ar[ur, "(\eta_{\bullet}^-(f))^{\vee}"'] & \,
			\end{tikzcd}
		\end{equation}
		
		\item \label{lem:splicing-propertiesiii} The homomorphism $A_{f}^{\bullet}\to (A_{f^{\vee}}^{\bullet})^{\vee}$ obtained by applying \eqref{eq:duality-isom} to $f_{\bullet}^{\vee}$ in place of $f_{\bullet}$ is the same as the one obtained by dualizing \eqref{eq:duality-isom} applied to $f_{\bullet}$ itself. 
	\end{enumerate}
\end{lemma}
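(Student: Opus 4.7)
My approach is to treat the three assertions as a sequence of essentially formal verifications, with the middle one carrying most of the content.

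For \ref{lem:splicing-propertiesi}, I interpret functoriality as saying that the splicing construction extends to a functor from the arrow category to the category of complexes together with a factorization. A morphism from $f_{\bullet}:A^{\bullet}\to B^{\bullet}$ to $g_{\bullet}:C^{\bullet}\to D^{\bullet}$ is a commuting square $(a_{\bullet},b_{\bullet})$. Such a square induces a chain map of cones $\cone(f_{\bullet})\to \cone(g_{\bullet})$, hence functorial maps on the kernels and cokernels appearing in \eqref{eq:cone-exact}--\eqref{eq:coker-ker-isom}. Assembling these produces a chain map $A_f^{\bullet}\to C_g^{\bullet}$, and compatibility with the $\eta^{\pm}$ is a degreewise inspection. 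No subtlety here.

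For \ref{lem:splicing-propertiesii}, I construct \eqref{eq:duality-isom} entry by entry. In degrees $\lvert i\rvert \geq 1$, both $A_{f^{\vee}}^{\bullet}$ and $(A_f^{\bullet})^{\vee}$ have the same terms: $A_i$ for $i\leq -1$ and $B_i$ for $i\geq 1$, so the identification is tautological. The interesting piece is degree $0$. I will check that under the standard identifications $(B^0\oplus A^1)^{\vee}=B_0\oplus A_{-1}$ and $(B^1\oplus A^2)^{\vee} = B_{-1}\oplus A_{-2}$, the dual of $c_1(f)$ agrees with $c_{-1}(f^{\vee})$; this is an immediate matrix transpose, noting that $f^{\vee}$ has components $f_{-i}^{\vee}:B_i\to A_i$. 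Since $\cone(f_{\bullet})$ is an acyclic bounded complex of locally free sheaves, dualizing preserves exactness, so taking duals converts kernels into cokernels; thus
\[(A_f^0)^{\vee} = \bigl(\Ker c_1(f)\bigr)^{\vee} \simeq \Coker\bigl(c_1(f)^{\vee}\bigr) \simeq \Coker c_{-1}(f^{\vee}) \simeq A_{f^{\vee}}^0,\]
where the last isomorphism is \eqref{eq:coker-ker-isom} applied to $f^{\vee}$. Compatibility with the differentials $B^{-1}\to A_f^0$, $A_f^0\to A^1$ and their duals is then a direct check using the explicit formulas. Finally, for the commutativity of \eqref{eq:etas-diagram}, I check each triangle in degrees $-1$, $0$, $1$ separately: in degrees $\pm 1$ it is the identity on the relevant summands; in degree $0$ it reduces to the fact that under the identification above, the dual of the inclusion $B^{-1}\hookrightarrow B^{-1}\oplus A^0\twoheadrightarrow A_f^0$ equals the projection $A_{f^{\vee}}^0\hookrightarrow A^0\oplus B_{-1}\twoheadrightarrow B_{-1}$, which is visible from the matrix description of $c_0(f)$ and $c_0(f^{\vee})$.

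For \ref{lem:splicing-propertiesiii}, both maps under comparison are endomorphisms of the canonical isomorphism $A_f^{\bullet}\xrightarrow{\sim}(A_{f^{\vee}}^{\bullet})^{\vee}$, constructed either by substituting $f\leftrightarrow f^{\vee}$ in \eqref{eq:duality-isom} or by dualizing and invoking $(\cdot)^{\vee\vee}=\id$ on locally free sheaves. The outer degrees $\lvert i\rvert\geq 1$ are trivial since both maps reduce to the double-dual identification. In degree $0$, both recipes produce the isomorphism $A_f^0\to (A_{f^{\vee}}^0)^{\vee}$ by dualizing the kernel-cokernel identification \eqref{eq:coker-ker-isom}; the equality is a symmetry statement for the identification of $c_i(f)^{\vee}$ with $c_{-i}(f^{\vee})$, which is self-inverse under double-dualization. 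The main risk throughout is sign bookkeeping in the differentials of the cone; following the paper's stated convention that dualization introduces no signs (so that $c_i(f)^{\vee}=c_{-i}(f^{\vee})$ on the nose) keeps everything compatible, and this is really the only step where I expect to have to be careful.
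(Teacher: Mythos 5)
Your proposal is correct and follows essentially the same route as the paper's proof: identity away from degree $0$, degree $0$ via observing that dualizing the cone exact sequence \eqref{eq:cone-exact} (using exactness and local freeness to convert kernels to cokernels) recovers the corresponding sequence for $f^{\vee}$, and \ref{lem:splicing-propertiesiii} by noting that a second dualization transposes the construction. The only cosmetic difference is that the paper chooses the order-reversing identification $(B^i\oplus A^{i+1})^{\vee}\simeq A_{-i-1}\oplus B_{-i}$ so that $c_i(f)^{\vee}$ becomes $c_{-i}(f^{\vee})$ on the nose, whereas your identification $(B^0\oplus A^1)^{\vee}=B_0\oplus A_{-1}$ requires an additional swap of the two summands before the ``immediate matrix transpose'' becomes literal; that is fine, but worth flagging to keep the bookkeeping honest.
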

\begin{proof}
	Point \ref{lem:splicing-propertiesi} is straightforward from the construction. 
	In \ref{lem:splicing-propertiesii}, the map \eqref{eq:duality-isom} is given by identity morphisms in degrees $\neq 0$. To define the map $A^0_{f^{\vee}}\to (A_f^0)^{\vee}$, note that dualizing the sequence \eqref{eq:cone-exact} and identifying $(B^{i} \oplus A^{i+1})^{\vee}\simeq A_{-i-1} \oplus B_{-i}$ recovers $\eqref{eq:cone-exact}$ with $f_{\bullet}$ replaced by $f_{\bullet}^{\vee}$. 
	
	This isomorphism descends to \eqref{eq:coker-ker-isom}, resulting in a commutative diagram
	\begin{equation*}
		\begin{tikzcd}
			(A_f^{0})^{\vee}\ar[r,equals]&\Ker(c_1(f))^{\vee}\ar[r,"\overline{c_0(f)}^{\vee}"]\ar[d, "\sim"]& \operatorname{Coker}(c_{-1}(f))^{\vee}\ar[d, "\sim"]& \\
			&\operatorname{Coker}(c_{-1}(f^{\vee}))\ar[r, "\bar{c_0(f^{\vee})}"] & \Ker(c_1(f^{\vee}))\ar[r,equals]&A_{f^{\vee}}^{0}
		\end{tikzcd}
	\end{equation*}
	The induced isomorphism $A_{f^\vee}^0\to (A_f^{0})^{\vee}$ is the one we want. 
	Moreover, since dualizing this diagram again transposes it, point \ref{lem:splicing-propertiesiii} follows immediately. 
	Commutativity of \eqref{eq:etas-diagram} can again be checked in degree zero and is straightforward.  
\end{proof}

We can now finish the construction of a self-dual representative of a quadratic complex
\begin{construction}\label{constr:self-dual-complex}
	Starting from the self-dual map $\theta_{\bullet}: A^{\bullet}\to A_{\bullet}$ as in \eqref{eq:f_zero_fac}, form the complex $A_{\theta}^{\bullet}$, which comes with the quasi-isomorphisms 
	\[A^{\bullet}\xrightarrow{\eta_{\bullet}^-(\theta) } A_{\theta}^{\bullet} \xrightarrow{\eta^+_{\bullet}(\theta)}A_{\bullet}.\]
	Since $\theta_{\bullet}^{\vee} = \theta_{_{\bullet}}$, the morphism \eqref{eq:duality-isom} of Lemma \ref{lem:splicing-properties} \ref{lem:splicing-propertiesii} provides a map \[\theta'_{\bullet}:A_{\theta}^{\bullet}\to A_{\theta}^{\bullet},\]
	which by \ref{lem:splicing-propertiesiii} of that lemma is self-dual. Thus, $A_{\theta}^{\bullet}$ has the structure of self-dual complex. To see that this is compatible with the map $\theta_{\bullet}$, note that the outer edges of diagram \eqref{eq:etas-diagram} compose to $\theta_{\bullet}$, which yields the commutative diagram   
	\begin{equation}\label{eq:self-dual-compatible}
		\begin{tikzcd}
			A^{\bullet}\ar[r,"\eta_{\bullet}^-(\theta)"]\ar[d, "\theta_{\bullet}"]& A_{\theta}^{\bullet}\ar[d, "\theta_{\bullet}'"] \\
			A_{\bullet}\ar[r, leftarrow, "(\eta^-_{\bullet}(\theta))^{\vee}"] & (A_{\theta}^{\bullet})^{\vee}.
		\end{tikzcd}
	\end{equation}
\end{construction}

\paragraph{More on splicing.}
In Construction \ref{constr:splicing}, it suffices to specify $A^{\bullet}$ and $f^{\bullet}$ in positive degrees only (or alternatively, $B^{\bullet}$ and $f^{\bullet}$ in negative degrees). This gives rise to the following stronger variant, which we will use repeatedly.
\begin{construction}\label{constr:splicing-truncated}
	Let $f^+_{\bullet}:A_+^{\bullet}\to B^{\bullet}$ be a morphism of complexes, where $A^{\bullet}$ is concentrated in degrees $\geq 1$. Suppose that $f^+_{\bullet}$ induces isomorphisms on cohomology sheaves $h^i$ for $i\geq 2$ and a surjection on $h^1$. 
	Define 
	\[A_f^0 := \Ker c_1,\]
	where 
	\[c_1= B^0\oplus A^1 \xrightarrow{\begin{bmatrix}
			d & -f_1 \\ 0 & d
	\end{bmatrix}} B^1\oplus A^2.\]
	and consider the maps 
	\[B^{-1}\xrightarrow{(d,0)} A_f^0 \xrightarrow{\operatorname{pr}_2} A^1.\]
	We define the complex $A_f^{\bullet}$ as 
	\[\cdots \to B^{-2}\to B^{-1}\to A_f^0\to A^1\to A^2\to \cdots \]
	It comes with a natural quasi-isomorphism of complexes
	
	\begin{equation*}
		\begin{tikzcd}
			\cdots\ar[r]&B^{-1}\ar[r]\ar[d, equals]&A^0_f\ar[r]\ar[d,"\operatorname{pr}_1"]& A^1\ar[r]\ar[d, "f_1"]& \cdots \\
			\cdots\ar[r]&B^{-1}\ar[r]&B^0\ar[r] & B^{1}\ar[r] & \cdots
		\end{tikzcd}
	\end{equation*} 
\end{construction}

As before, if $A^{\bullet}$ and $B^{\bullet}$ are bounded complexes of vector bundles, then so is $A_f^{\bullet}$, and the construction is functorial in $f_{\bullet}^+$. 

To conclude, we prove some lemmas regarding how splicing behaves with respect to chain homotopies. For this, let $f_{\bullet},g_{\bullet}:A^{\bullet}\to B^{\bullet}$ be two maps that are related by a Chain homotopy $h_{\bullet}:A^{\bullet}\to B^{\bullet}[-1]$ (note that $h_{\bullet}$ is \emph{not} a map of complexes), in the sense that for all $i$, we have $f_i = g_i + d h_i + h_{i+1} d$. We write this as $f_{\bullet} = g_{\bullet} + d_{\bullet} h_{\bullet} + h_{\bullet} d_{\bullet}$. Assume that either $f_{\bullet}, g_{\bullet}$ are quasi-isomorphisms or that they satisfy the assumptions of Construction \ref{constr:splicing-truncated}.

In either situation, the homotopy $h_{\bullet}$ induces an isomorphism $\alpha^h_{\bullet}:A_f^{\bullet}\to A_g^{\bullet}$: 
\begin{construction}\label{constr:homotopy-isom}
	Define a map of complexes $\alpha^h_{\bullet}:A_f^{\bullet}\to A_g^{\bullet}$ with components $\alpha^h_i$ as follows:
	\begin{itemize}
		\item In degrees $i\neq 0$, define $\alpha_i^h$ to be the identity on $A^i$ ($i>0$) and $B^i$ ($i<0)$ respectively.
		\item To define $\alpha^h_0$, note that the chain homotopy $h_{\bullet}$ induces a map of cones
		\[\cone(f_{\bullet})\to \cone(g_{\bullet}),\]
		and hence a map $A_f^0\to A_g^0$ of their sheaves of zero-cycles. We take $\alpha_0^h$ to be this map. To make the signs completely explicit, we have
		
		\begin{align*}
			A_f^0 &= \Ker \left(\begin{bmatrix}
				d & -f_1 \\ 0 & d
			\end{bmatrix}: B^0\oplus A^1\to B^1\oplus A^2\right);\\
			\quad A_g^0 &= \Ker \left(\begin{bmatrix}
				d & -g_1 \\ 0 & d
			\end{bmatrix}: B^0\oplus A^1\to B^1\oplus A^2\right)
		\end{align*}
		and $\alpha^h_0$ is the restriction of
		\[B^0\oplus A^1\xrightarrow{\begin{bmatrix}
				\id & h_1 \\0 &\id 
		\end{bmatrix}} B^0\oplus A^1.\] 
		Clearly, $\alpha_0^h$ is an isomorphism. 
		
	\end{itemize} 
	
	One checks that $\alpha_{\bullet}^h$ defines a map of complexes, hence an isomorphism of complexes.  
	
	Further, there are natural homotopies for the following diagram 
	\begin{equation*}
		\begin{tikzcd}[row sep = small]
			&A_f^{\bullet}\ar[dr]\ar[dd,"\alpha_h" {yshift=0.5ex}]&  \\
			A^{\bullet}\ar[ur]\ar[dr]\ar[r, Rightarrow, shorten <=1.5ex, shorten >=1.5ex]& \ar[r, Rightarrow, shorten >=1.5ex, shorten <=1.5ex]&B^{\bullet}\\
			&A_g^{\bullet}\ar[ur] & \,
		\end{tikzcd}
	\end{equation*}
	These are, respectively,
	\begin{equation*}
		\begin{tikzcd}
			\cdots\ar[r]&A^{-1}\ar[r]\ar[dl, "h_{-1}"]&A^0\ar[r]\ar[dl,"h_0"]& A^1\ar[r]\ar[dl, "0" ]& \cdots \ar[dl,"0"]\\
			\cdots\ar[r]&B^{-1}\ar[r]&A_g^0\ar[r] & A^{1}\ar[r] & \cdots
		\end{tikzcd}
	\end{equation*} 
	and 
	\begin{equation*}
		\begin{tikzcd}
			\cdots\ar[r]&B^{-1}\ar[r]\ar[dl, "0"]&A_f^0\ar[r]\ar[dl,"0"]& A^1\ar[r]\ar[dl, "h_1" ]& \cdots \ar[dl,"h_2"]\\
			\cdots\ar[r]&B^{-1}\ar[r]&B^0\ar[r] & B^1\ar[r] & \cdots
		\end{tikzcd}
	\end{equation*} 
	
\end{construction}

\begin{lemma}\label{lem:dual-homotopies}
	Suppose that $A^{\bullet}$ and $B^{\bullet}$ are bounded complexes of vector bundles. Denote $h_{\bullet}^{\vee}$ the dual homotopy of $h_{\bullet}$, whose term at degree $i$ is $h_{1-i}^{\vee}$. Then, under the identifications $(A_f^{\bullet})^{\vee}\simeq A_{f^{\vee}}^{\bullet}$ and $(A_g^{\bullet})^{\vee}\simeq A_{g^{\vee}}^{\bullet}$ given by Lemma \ref{lem:splicing-properties} \ref{lem:splicing-propertiesii}, we have $(\alpha^h_{\bullet})^{\vee} = \alpha^{h^{\vee}}_{\bullet}$. In other words, the following diagram commutes
	\begin{equation*}
		\begin{tikzcd}
			(A_{f}^{\bullet})^{\vee}\ar[d]& (A_g^{\bullet})^{\vee}\ar[l,"(\alpha^h_{\bullet})^{\vee}"']\ar[d] \\
			A_{f^{\vee}}^{\bullet}\ar[r, "\alpha_{\bullet}^{h^{\vee}}"] & A_{g^{\vee}}^{\bullet}\,
		\end{tikzcd}
	\end{equation*} 
\end{lemma}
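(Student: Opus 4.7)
The plan is to verify the equality degree-by-degree, reading the diagram as commutativity after invoking the identifications of Lemma~\ref{lem:splicing-properties}\ref{lem:splicing-propertiesii}. In every degree $i \neq 0$, both $\alpha^h_i$, $\alpha^{h^\vee}_i$ and the corresponding components of the isomorphisms $(A_f^\bullet)^\vee \simeq A_{f^\vee}^\bullet$, $(A_g^\bullet)^\vee \simeq A_{g^\vee}^\bullet$ are identity maps on the underlying vector bundles (up to relabeling of direct summands), so the equality reduces to a tautology in those degrees.

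The only substantive check is in degree $0$. Here, $\alpha^h_0$ is the restriction to the relevant kernels of the unipotent matrix $M := \begin{bmatrix} \id & h_1 \\ 0 & \id \end{bmatrix}$ acting on $B^0 \oplus A^1$. Dualizing yields the transpose $M^\vee = \begin{bmatrix} \id & 0 \\ h_1^\vee & \id \end{bmatrix}$ on $(B^0 \oplus A^1)^\vee$, and the factor swap in the identification $(B^0 \oplus A^1)^\vee \simeq A_{-1} \oplus B_0$ restores the upper-triangular shape with $h_1^\vee$ in the off-diagonal position. On the other side, $\alpha^{h^\vee}_0$ is the restriction of the analogous matrix built from the dual data and the dual homotopy $h^\vee$. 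To compare these, one invokes the explicit description of the isomorphism $A_{f^\vee}^0 \simeq (A_f^0)^\vee$ as the cokernel-to-kernel isomorphism \eqref{eq:coker-ker-isom} applied inside the dualized cone sequence; once this is done, the off-diagonal entry of the resulting matrix coincides with $h_1^\vee$ by virtue of the defining relation $(h^\vee)_j = h^\vee_{1-j}$.

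The hardest part of the argument is keeping the index and sign conventions aligned. One must consistently track the re-indexing $c_i(f)^\vee \leftrightarrow c_{-i}(f^\vee)$ implicit in Lemma~\ref{lem:splicing-properties}, the factor swap in $(B^i \oplus A^{i+1})^\vee \simeq A_{-i-1} \oplus B_{-i}$, the passage between kernel and cokernel realizations via \eqref{eq:coker-ker-isom}, and the index shift in the definition of $h^\vee$. Once these identifications are verified to be mutually compatible, the degree-$0$ comparison reduces to a purely mechanical matrix identity. Functoriality of the splicing construction established in Lemma~\ref{lem:splicing-properties}\ref{lem:splicing-propertiesi} provides an a-priori structural reason to expect such a compatibility, which can also serve as a sanity check during the computation.
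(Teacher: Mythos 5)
Your overall strategy — reduce to degree $0$, dualize $\alpha^h_0$, use the degree-$0$ part of the identification from Lemma~\ref{lem:splicing-properties}\ref{lem:splicing-propertiesii}, and chase matrices — matches the paper's. But there is a substantive gap at the crucial step. You claim that, after applying the cokernel-to-kernel isomorphism, the off-diagonal entry of the relevant matrix "coincides with $h_1^\vee$" by the bookkeeping relation $(h^\vee)_j = h_{1-j}^\vee$. That is not what happens: $(\alpha^h_0)^\vee$ has off-diagonal entry $h_1^\vee$, while $\alpha^{h^\vee}_0$ has off-diagonal entry $(h^\vee)_1 = h_0^\vee$, and these do not coincide. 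The vertical identifications in degree $0$ are induced by the maps $c_0(f^\vee)$ and $c_0(g^\vee)$, whose off-diagonal entries are $f_0^\vee$ and $g_0^\vee$ respectively — the homotopy $h$ does not appear in them. Consequently, commutativity of the degree-$0$ square is \emph{not} a tautological alignment of indices; the two compositions yield $2\times 2$ matrices with off-diagonal entries $h_0^\vee d^\vee + f_0^\vee$ and $g_0^\vee + d^\vee h_1^\vee$, and their equality is precisely the dualized chain homotopy relation $g_0^\vee = f_0^\vee + h_0^\vee d^\vee + d^\vee h_1^\vee$. This is the genuine input the argument requires, and it is absent from your proposal — the phrase "purely mechanical matrix identity" papers over it, and the appeal to functoriality as a "sanity check" does not supply it either. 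A correct write-up would record the four explicit matrices and display the homotopy relation as the content of the degree-$0$ commutativity.
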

\begin{proof}
	All maps are term-wise identities except in degree $0$. Thus, we are reduced to checking commutativity in degree $0$. By construction of the vertical isomorphisms, the isomorphism $(A_f)^{\vee}\to A_{f^{\vee}}$ is induced from the map  
	\[B_0\oplus A_{-1}\xrightarrow{\begin{bmatrix}
			d^{\vee} & 0 \\ f_0^{\vee} & d^{\vee}
	\end{bmatrix}} B_1\oplus A_0,\]
	and similarly for $f$ replaced by $g$. The claimed commutativity then follows from  commutativity of the following diagram 
	\begin{equation*}
		\begin{tikzcd}[ampersand replacement=\&, row sep = large]
			B_0\oplus A_{-1}\ar[d, "{\begin{bmatrix}
					d^{\vee} & 0 \\ f_0^{\vee} & d^{\vee}
			\end{bmatrix}}"']\& B_0\oplus A_{-1} \ar[d, "{\begin{bmatrix}
					d^{\vee} & 0 \\ g_0^{\vee} & d^{\vee}
			\end{bmatrix}}"]\ar[l, "{\begin{bmatrix}
					\id & 0 \\ h_1^{\vee} & \id
			\end{bmatrix}}"'] \\
			B_1\oplus A_0\ar[r,"{\begin{bmatrix}
					\id & 0 \\ h_0^* & \id
			\end{bmatrix}}"]\& B_1\oplus A_0,
		\end{tikzcd}
	\end{equation*}

	It reduces to the equation $g_0^{\vee} =f_0^{\vee} + h_0^{\vee}\circ d^{\vee} + d^{\vee} \circ h_1^{\vee} $, which holds by assumption. 
	
\end{proof}

We will use the following basic result several times
\begin{lemma}\label{lem:isomorphic}
	Let $f_{\bullet}:A^{\bullet}\to B^{\bullet}$ be a quasi-isomorphism of complexes that is an isomorphism in each degree $i\neq 0$. Then $f_{\bullet}$ is an isomorphism of complexes.  
\end{lemma}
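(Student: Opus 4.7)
The plan is to show that $f_0 \colon A^0 \to B^0$ is both injective and surjective, via a direct diagram chase using the exactness of the four-term sequence \eqref{eq:cone-exact}. Since the mapping cone of $f_\bullet$ is acyclic (by the quasi-isomorphism hypothesis), the formulas for $c_{-1}(f), c_0(f), c_1(f)$ combined with the fact that $f_i$ is an isomorphism for $i = \pm 1$ and $\pm 2$ are enough to force $\ker f_0 = 0$ and $\coker f_0 = 0$. The argument is essentially mechanical, so no real conceptual obstacle appears; the only care needed is bookkeeping of the signs in the cone differentials.

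For surjectivity, I would start with an arbitrary $b \in B^0$. Using that $f_1$ is an isomorphism, set $a^1 := f_1^{-1}(db)$. Then $f_2(da^1) = d f_1(a^1) = d^2 b = 0$, so $da^1 = 0$ by injectivity of $f_2$. Hence $c_1(f)(b, a^1) = (db - f_1(a^1), da^1) = 0$, and by exactness of \eqref{eq:cone-exact} at $B^0 \oplus A^1$ the pair $(b, a^1)$ lies in $\im c_0(f)$: there exist $b^{-1} \in B^{-1}$ and $a^0 \in A^0$ with $db^{-1} + f_0(a^0) = b$ and $da^0 = a^1$. Finally, using $f_{-1}$ surjective, write $b^{-1} = f_{-1}(a^{-1})$; then $db^{-1} = df_{-1}(a^{-1}) = f_0(da^{-1})$, so $b = f_0(da^{-1} + a^0) \in \im f_0$.

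For injectivity, let $a \in \ker f_0$. Since $f_1(da) = d f_0(a) = 0$ and $f_1$ is injective, $da = 0$. Hence $c_0(f)(0, a) = (f_0(a), da) = 0$, and by exactness of \eqref{eq:cone-exact} at $B^{-1} \oplus A^0$ the pair $(0, a)$ lies in $\im c_{-1}(f)$: there exist $b^{-2} \in B^{-2}$ and $a^{-1} \in A^{-1}$ with $f_{-1}(a^{-1}) = db^{-2}$ and $da^{-1} = a$. Using $f_{-2}$ surjective, write $b^{-2} = f_{-2}(a^{-2})$; then $f_{-1}(da^{-2}) = d f_{-2}(a^{-2}) = db^{-2} = f_{-1}(a^{-1})$, so $a^{-1} = da^{-2}$ by injectivity of $f_{-1}$. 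Therefore $a = da^{-1} = d^2 a^{-2} = 0$.

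The only ``hard'' part is keeping the signs in the matrices defining $c_{-1}(f), c_0(f), c_1(f)$ straight through the two chases; this is handled by the explicit formulas in \eqref{eq:cone-exact}. For bounded complexes where some $A^{\pm 2}$ vanishes, the argument simplifies further since $b^{-2}$ (resp. $a^1$) is forced to be zero, bypassing the step that uses $f_{\pm 2}$.
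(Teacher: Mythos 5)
The paper states Lemma~\ref{lem:isomorphic} without proof, treating it as a routine fact. Your mapping-cone chase is correct and self-contained: the matrix formulas you quote match the sign conventions of \eqref{eq:cone-exact}, the chain-map identities $f_{i+1}\circ d = d\circ f_i$ are invoked exactly where needed, and exactness of the acyclic cone at the two middle nodes is precisely what your surjectivity and injectivity arguments use. The closing remark about the special case where $A^{\pm 2}$ vanishes is also correct. Since the proof is phrased directly in terms of the sequence \eqref{eq:cone-exact} already set up in Construction~\ref{constr:splicing}, it slots into the paper's framework naturally; the alternative proof via the snake or five lemma on the short exact sequences $0\to \im d^{-1}\to\ker d^0\to H^0\to 0$ and $0\to\ker d^0\to C^0\to\im d^0\to 0$ requires separate preliminary arguments (using that $f$ is a quasi-isomorphism in degrees $-1$ and $1$) to see that $\im d_A^{-1}\to\im d_B^{-1}$ and $\im d_A^0\to\im d_B^0$ are isomorphisms, so your version is at least as economical.
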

\subsection{Isotropic Reductions}\label{sec:iso-red}

We define morphisms in the category of representatives and show that it is connected. 
\begin{definition}\label{def:isotropic-red}
	Let $F^{\bullet}$ and $E^{\bullet}$ be self-dual representatives of $\bE$. An \emph{isotropic reduction} from $E^{\bullet}$ to $F^{\bullet}$ consists of a complex $A^{\bullet}$ of locally free sheaves, together with quasi-isomorphisms of complexes 
	\begin{equation}\label{eq:iso-red-diag}
		F^{\bullet} \xleftarrow{f_{\bullet}}A^{\bullet} \xrightarrow{e_{\bullet}} E^{\bullet},
	\end{equation}
	compatible with the isomorphisms to $\bE$, such that the following hold
	\begin{enumerate}[label = \roman*)]
		\item $f_{\bullet}$ is an isomorphism in degrees $\geq 1$ and $e_{\bullet}$ is an isomorphism in degrees $\leq -1$,\label{item:iso-redi}
		\item \label{item:iso-redii} the induced map $A^{1}\xrightarrow{(e_1,d)} E^1\oplus A^2$ is an inclusion of vector bundles (i.e. has surjective dual),
		\item the following diagram commutes:\label{item:iso-rediii}
		\begin{equation*}
			\begin{tikzcd}
				F^{\bullet}\ar[r, leftarrow, "f_{\bullet}"]\ar[d, "f_{\bullet}^{\vee}"]& A^{\bullet}\ar[d, "e_{\bullet}"] \\
				A_{\bullet}\ar[r, leftarrow ,"e_{\bullet}^{\vee}",] & E^{\bullet}.
			\end{tikzcd}
		\end{equation*} 
	\end{enumerate}
	
	We will indicate denote the data of an isotropic reduction from $E^{\bullet}$ to $F^{\bullet}$ by letters $\xi,\zeta$ and a squiggly arrow: \footnote{Throughout, we will silently identify isotropic reductions that are equivalent, in the sense that there is an isomorphism $A_1^{\bullet}\to A_2^{\bullet}$ compatible with the rest of the data.}
	\[\xi: F^{\bullet} \rightsquigarrow E^{\bullet}.\]
\end{definition}
Here, condition \ref{item:isotropic-constrii} guarantees that in degree zero, the diagram $F^0\leftarrow {A}^0 \to E^0$ is an isotropic reduction of quadratic bundles (Lemma \ref{lem:iso-red-gives-iso-red}).
To simplify some constructions, we will also consider a generalization, where one doesn't impose this condition (Variant \ref{var:generalized-reduction}).  

A large supply of isotropic reductions comes from the following result, which is a substantial strengthening of the technique in \cite[p. 33]{ot_counting}
\begin{construction}\label{constr:isotr-red}
	Let $E^{\bullet}$ be a self-dual representative of $\bE$. Let $A_+^{\bullet}$ be a complex of locally free sheaves concentrated in degrees $\geq 1$ and let $e^+_{\bullet}: A_{+}^{\bullet}\to E^{\bullet}$ be a morphism satisfying
	\begin{enumerate}[label = \roman*)]
		\item The map $e^+_{\bullet}$ is an isomorphism on cohomology sheaves $h^i$ for $i\geq 2$ and surjective on $h^1$. 
		\label{item:isotropic-constri}
		\item The induced map $A^1\xrightarrow{(e_1,d)}E^1\oplus A^2$ is an inclusion of vector bundles.\label{item:isotropic-constrii}
	\end{enumerate}
	
	We obtain a complex $A^{\bullet}:=A_{e}^{\bullet}$ together with a map $e^{\bullet}: A^{\bullet}\to E^{\bullet}$ via Construction \ref{constr:splicing-truncated}. 
	Applying Construction \ref{constr:self-dual-complex} to the map $e_{\bullet}^{\vee}\circ  e_{\bullet}: A^{\bullet}\to A_{\bullet}$, we obtain a self-dual complex $F_{e}^{\bullet}:= A_{e^{\vee}\circ e}^{\bullet}$, together with a map $f^e_{\bullet}:=\eta^-_{{\bullet}}(e^{\vee}\circ e)$.
	We claim that the maps  
	\[F^{\bullet}_e\xleftarrow{f_{\bullet}^e} A^{\bullet}\xrightarrow{e_{\bullet}}E^{\bullet}\]
	define an isotropic reduction (with the induced identification $F^{\bullet}_e\xrightarrow{\sim}\bE$).
	Indeed, properties \ref{item:iso-redi} and \ref{item:iso-redii} are immediate from the construction and our assumption \ref{item:isotropic-constrii}, while \ref{item:iso-rediii} follows from combining \eqref{eq:self-dual-compatible} and \eqref{eq:etas-factor-f}.

\end{construction}
\begin{remark}\label{rem:iso-red-comparison}
	Let $E^{\bullet}$ and $F^{\bullet}$ be self-dual representatives of $\bE$. 
	Giving an isotropic reduction from $E^{\bullet}$ to $F^{\bullet}$ is equivalent to giving a map of complexes $e^+_{\bullet}:A^{\bullet}_+ \to E^{\bullet}$ satisfying the conditions of Construction \ref{constr:isotr-red}, and an isomorphism of complexes $\varphi_{\bullet}:F_{e}^{\bullet}\to F^{\bullet}$ (compatible with the maps to $\bE$) satisfying
	\begin{enumerate}[label=\roman*)]
		\setcounter{enumi}{2}
		\item $\varphi_{\bullet}$ is an isomorphism of self-dual complexes, i.e. $\varphi_{\bullet}^{\vee} = \varphi_{\bullet}^{-1}$\label{item:isotropic-constriii}
	\end{enumerate}
	Indeed, since the notion of isotropic reduction is preserved under isomorphisms of self-dual complexes, taking $f_{\bullet}:=\varphi_{\bullet}\circ f^e_{\bullet}$ gives an isotropic reduction from $E^{\bullet}$ to $F^{\bullet}$.

	Conversely, given an isotropic reduction \eqref{eq:iso-red-diag}, we take $(A^{\bullet}_+, e^+_{\bullet})$ to be the restrictions of $A^{\bullet}$ and $e_{\bullet}$ to positive degrees. 
	Since $e_{\bullet}$ is an isomorphism in degrees $\leq -1$, one has that the induced map $A^{\bullet}\to A_{e}^{\bullet}$ is an isomorphism of complexes, and we may assume that $A^{\bullet} = A_{e}^{\bullet}$. Then, since $f_{\bullet}^{\vee}\circ f_{\bullet} = e_{\bullet}^{\vee}\circ e_{\bullet}$, we have induced maps of complexes
	\[F_e^{\bullet} = A^{\bullet}_{e^{\vee}\circ e}= A^{\bullet}_{f^{\vee}\circ f} \leftarrow A_{f}^{\bullet}\to F^{\bullet}.\]
	The two maps here are quasi-isomorphisms of complexes, and isomorphisms in degrees $\neq 0$, hence isomorphisms of complexes by Lemma \ref{lem:isomorphic}.
	Using Lemma \ref{lem:splicing-properties}, one checks that the induced isomorphism $\varphi_{\bullet}: F_e^{\bullet}\to F^{\bullet}$ satisfies \ref{item:isotropic-constriii}.
\end{remark}

\begin{variant}\label{var:generalized-reduction}
	We define a \emph{generalized isotropic reduction} by leaving out \ref{item:iso-redii} in Definition \ref{def:isotropic-red}. Then Construction \ref{constr:isotr-red} and Remark \ref{rem:iso-red-comparison} go through for generalized isotropic reductions if one drops property \ref{item:iso-redii} throughout. We will denote generalized isotropic reductions also as $F^{\bullet}\rightsquigarrow E^{\bullet}$ and make it clear which notion we are talking about. 
\end{variant}

\begin{example}\label{ex:reductions}
	\begin{enumerate}
		\item \label{item:reductionsi} Let $K^{\bullet}$ be an acyclic complex of locally free sheaves in degrees $\geq 0$, so that
		\[K^{\bullet}\oplus K_{\bullet}\]
		is a self-dual acyclic complex. 
		
		Then any self-dual complex $E^{\bullet}$ is naturally an isotropic reduction of $E^{\bullet}\oplus K^{\bullet}\oplus K_{\bullet}$, given by the maps \[ E^{\bullet}\leftarrow E^{\bullet}\oplus K_{\bullet}\to E^{\bullet}\oplus K_{\bullet}\oplus K^{\bullet}\]
		which are the natural projection and inclusion respectively. 
		
		\item \label{item:reductionsii} Let 
		\[\zeta: F^{\bullet}\xleftarrow{f_{\bullet}}A^{\bullet} \xrightarrow{e_{\bullet}}E^{\bullet}\]
		be a generalized isotropic reduction. Take $K^{\bullet}= [F^1\xrightarrow{\id} F^1]$ in degrees $[0,1]$. Then we have an induced isotropic reduction
		\begin{equation}\label{eq:make-reduction}
			\hat{\zeta}:F^{\bullet}\leftarrow A^{\bullet}\oplus K_{\bullet} \to E^{\bullet}\oplus K_{\bullet}\oplus K^{\bullet} 
		\end{equation}
		which in degrees $-1,0$ and $1$ respectively is given by 
		\begin{equation*}
			\begin{tikzcd}[column sep=large]
				F_{-1}& A^{-1}\oplus F_{-1} \ar[l,"{f_{-1}\oplus \id}"']\ar[r,"{e_{-1}\oplus \id }"]& E_{-1}\oplus F_{-1} \\
				F_0 & A^0\oplus F_{-1} \ar[l,"{f_0\oplus d}"'] \ar[r, "{(e_0, f_1\circ d)\oplus  \id}"]& E^0\oplus F^1 \oplus F_{-1} \\
				F^1 & A^1\ar[l,"f_1"']\ar[r,"{(e_1,f_1)}"] & {E^1\oplus F^1}
			\end{tikzcd}
		\end{equation*}
		\item \label{item:reductionsiii} Let $\zeta$ be as in the previous point, and let $F^{\bullet}_+$ be the bad truncation of $F$ in degrees $\geq 1$, and $F_-^{\bullet}$, the truncation in degrees $\leq -1$ (note that $(F^{\bullet}_+)^{\vee}\simeq F_-^{\bullet}$). Let $K^{\bullet}:=\cone(F_+^{\bullet}\xrightarrow{\id}F_+^{\bullet})$. Then we have an induced isotropic reduction 
		\[\widehat{\zeta}:F^{\bullet} \leftarrow A^{\bullet}\oplus K_{\bullet} \rightarrow E^{\bullet}\oplus K_{\bullet}\oplus K^{\bullet}.\]
		Here, the induced map $K_{\bullet}\simeq \cone(F_-^{\bullet}\xrightarrow{\id} F_-^{\bullet})[-1]\to F^{\bullet}$ is induced from the universal property of mapping cones by the map $F_-^{\bullet}[-1]\xrightarrow{d} F^{\bullet}$, which is just the restriction of the differential, and the homotopy $d\Rightarrow 0$ given by the inclusion $F_-^{\bullet}[-1]\to F^{\bullet}[-1]$. The map $A^{\bullet}\to K^{\bullet}$ factorizes as $A^{\bullet}\to F^{\bullet}\to K^{\bullet}$, where the latter is dual to the map $K_{\bullet}\to F^{\bullet}$ just constructed. 
		\item Any isomorphism of self-dual complexes $\varphi_{\bullet}:F^{\bullet}\to E^{\bullet}$ canonically gives rise to an isotropic reduction (which we will also denote $\varphi_{\bullet}$):
		\[F^{\bullet} \xleftarrow{=}F^{\bullet}\xrightarrow{\varphi_{\bullet}}E^{\bullet}\]
		
	\end{enumerate}
\end{example}

We come to the first key result of this section: Any two self-dual representatives can be related via isotropic reductions. 

\begin{proposition}[cf. Eq. (75) and thereafter in \cite{ot_counting}]\label{prop:iso-red}
	Let $E^{\bullet}$ and $F^{\bullet}$ be self-dual representatives of $\mathbb{E}$. 
	There exists a self-dual representative $G^{\bullet}$ of $\bE$, together with generalized isotropic reductions 
	\[ F^{\bullet}\leftsquigarrow G^{\bullet} \rightsquigarrow E^{\bullet}.\]
\end{proposition}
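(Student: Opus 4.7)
The idea is to build $G^\bullet$ by applying Construction \ref{constr:isotr-red} to a common positive-degree resolution of $\bE$, carried out for $E^\bullet$ and for $F^\bullet$ separately, and then to identify the two outputs via the chain-homotopy tools developed in \S \ref{sec:symm-rep}.

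Since $X$ is affine and $\bE$ is perfect, fix a complex $A_+^\bullet$ of free $\mathcal{O}_X$-modules concentrated in degrees $\geq 1$ together with a morphism $\alpha_+: A_+^\bullet \to \bE$ in $\dercat(X)$ that is surjective on $h^1$ and an isomorphism on $h^i$ for $i\geq 2$. Projectivity of the $A^i$ lets us lift $\alpha_+$ to chain maps $e_+: A_+^\bullet \to E^\bullet$ and $f_+: A_+^\bullet \to F^\bullet$ such that $\alpha_E\circ e_+$ and $\alpha_F\circ f_+$ both realize $\alpha_+$. Applying Construction \ref{constr:isotr-red}, in the generalized form of Variant \ref{var:generalized-reduction}, to $e_+$ and to $f_+$ separately produces self-dual representatives $G_E^\bullet := F_e^\bullet$ and $G_F^\bullet := F_f^\bullet$ of $\bE$ together with generalized isotropic reductions $G_E^\bullet \rightsquigarrow E^\bullet$ and $G_F^\bullet \rightsquigarrow F^\bullet$. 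By construction, both have positive part $A_+^\bullet$ and negative part $(A_+^\bullet)^\vee$; they differ only in their middle terms, which are quadratic bundles computing the same reduction of $\bE$ along $A_+^\bullet$.

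The main obstacle is now to produce an isomorphism $G_E^\bullet \xrightarrow{\sim} G_F^\bullet$ of self-dual complexes compatible with the identifications to $\bE$; setting $G^\bullet := G_E^\bullet$ and composing with $G_F^\bullet \rightsquigarrow F^\bullet$ then yields the second generalized isotropic reduction $G^\bullet \rightsquigarrow F^\bullet$. At the derived level the comparison is automatic, since the two middle terms compute the same object of $\dercat(X)$. To realize it at the chain level while preserving self-duality, one invokes Construction \ref{constr:homotopy-isom}, which records how splicing transports chain homotopies to explicit isomorphisms of spliced complexes, together with Lemma \ref{lem:dual-homotopies}, which ensures this transport is compatible with dualization. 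Concretely, after fixing a chain-level model of $\bE$ --- say $E^\bullet$ itself, so that $\alpha_E$ becomes the identity and $\alpha_F$ is realized by a zig-zag of quasi-isomorphisms --- the chain maps $\alpha_E\circ e_+$ and $\alpha_F\circ f_+$ represent the same morphism in $\dercat(X)$ and are therefore related by a chain homotopy. Feeding this homotopy through the splicing and self-dualization procedures of Constructions \ref{constr:splicing-truncated} and \ref{constr:self-dual-complex} produces the desired chain-level isomorphism of self-dual complexes, completing the construction of $G^\bullet$ and its two generalized isotropic reductions.
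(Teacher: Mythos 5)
Your high-level strategy — pick a common positive-degree resolution, splice on both sides via Construction \ref{constr:isotr-red}, and then compare the two outputs via a chain homotopy — is close in spirit to what the paper does, but the comparison step has a genuine gap, and it is exactly the step that the paper handles differently.

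The problem is in the sentence ``the chain maps $\alpha_E\circ e_+$ and $\alpha_F\circ f_+$ represent the same morphism in $\dercat(X)$ and are therefore related by a chain homotopy.'' There is no chain homotopy here, because $e_+$ and $f_+$ are chain maps into \emph{different} complexes $E^\bullet$ and $F^\bullet$. Fixing ``$E^\bullet$ itself'' as the chain-level model of $\bE$ does not help: $\alpha_F$ is then a zig-zag of quasi-isomorphisms $F^\bullet\leftarrow\,\cdot\,\rightarrow E^\bullet$, not a chain map, so $\alpha_F\circ f_+$ is still not a chain map and cannot be compared to $e_+$ by a chain homotopy. Even if you replace $\alpha_F\circ f_+$ by a chain-level representative $g_+:A_+^\bullet\to E^\bullet$ and invoke Lemma \ref{lem:homotopy-reduction-isom} to compare the reductions of $E^\bullet$ along $e_+$ and along $g_+$, you have only related two reductions of $E^\bullet$ to each other. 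You still have not related the reduction along $f_+:A_+^\bullet\to F^\bullet$ to anything on the $E^\bullet$ side, because Lemma \ref{lem:homotopy-reduction-isom} requires both positive-degree maps to have the \emph{same} target. That is where the argument stalls.

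The paper sidesteps this by never trying to homotope $e$ against $f$. Instead it chooses a common bounded quasi-isomorphism $e_\bullet:A^\bullet\to E^\bullet$, $f_\bullet:A^\bullet\to F^\bullet$, and then compares the two \emph{symmetric} composites $e_\bullet^{\vee}\circ e_\bullet$ and $f_\bullet^{\vee}\circ f_\bullet$, which are both chain maps $A^\bullet\to A_\bullet$ with the same source and the same target. These agree in $\dercat(X)$, so after a further resolution $B^\bullet\to A^\bullet$ they become chain homotopic; averaging $h_i\mapsto (h_i+h_{-i+1})/2$ makes the homotopy self-dual; and then Construction \ref{constr:homotopy-isom} together with Lemma \ref{lem:dual-homotopies} gives the chain-level isomorphism of self-dual complexes that you are after. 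In short: the comparison has to be made between the symmetrized maps $e^\vee e$ and $f^\vee f$, not between $e_+$ and $f_+$, and your proof currently does not do this.
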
 
In view of Example \ref{ex:reductions} points \ref{item:reductionsi} and \ref{item:reductionsii},  we obtain the following
\begin{corollary}\label{cor:repcat-connected}
	Any two self-dual representatives $F^{\bullet}$ and $E^{\bullet}$ are connected via a sequence of (genuine) isotropic reductions.
	\begin{equation*}
		\begin{tikzcd}[row sep = small]
			&    & G^{\bullet}\ar[dr, rightsquigarrow]\ar[dl, rightsquigarrow]&    &					   \\
			E^{\bullet } \ar[r, rightsquigarrow]& E'^{\bullet}     &  							 &  F'^{\bullet}  & F^{\bullet}\ar[l, rightsquigarrow]
		\end{tikzcd}
	\end{equation*}	
\end{corollary}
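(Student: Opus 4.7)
The plan is to combine Proposition \ref{prop:iso-red} with the two explicit constructions from Example \ref{ex:reductions} in order to promote the \emph{generalized} isotropic reductions produced by the proposition into a zig-zag of \emph{genuine} isotropic reductions. The strategy is essentially a bookkeeping exercise: the hard work already lies in the proposition, while the example provides standard ways to adjust a reduction by stacking acyclic self-dual complexes onto the larger end of the roof.

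First, apply Proposition \ref{prop:iso-red} to obtain a self-dual representative $G^{\bullet}$ of $\bE$ equipped with generalized isotropic reductions
\[
\zeta_E: G^{\bullet}\rightsquigarrow E^{\bullet},\qquad \zeta_F: G^{\bullet}\rightsquigarrow F^{\bullet}.
\]
Each of these consists of a roof of quasi-isomorphisms whose middle term represents $\bE$, but fails in general to satisfy condition \ref{item:iso-redii} of Definition \ref{def:isotropic-red}, which is precisely the point at which the notion of generalized reduction differs from a genuine one.

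Next, apply the construction of Example \ref{ex:reductions}\ref{item:reductionsii} to each of $\zeta_E$ and $\zeta_F$ separately. For $\zeta_E$, choose the acyclic complex $K^{\bullet}=[E^1\xrightarrow{\id}E^1]$ in degrees $[0,1]$; this produces a \emph{genuine} isotropic reduction
\[
\widehat{\zeta}_E:G^{\bullet}\rightsquigarrow E'^{\bullet},\qquad E'^{\bullet}:=E^{\bullet}\oplus K_{\bullet}\oplus K^{\bullet}.
\]
Carrying out the same procedure for $\zeta_F$ with $K'^{\bullet}=[F^1\xrightarrow{\id}F^1]$ yields a genuine isotropic reduction $\widehat{\zeta}_F:G^{\bullet}\rightsquigarrow F'^{\bullet}$, where $F'^{\bullet}:=F^{\bullet}\oplus K'_{\bullet}\oplus K'^{\bullet}$. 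By Example \ref{ex:reductions}\ref{item:reductionsi}, the natural embeddings also furnish genuine isotropic reductions $E^{\bullet}\rightsquigarrow E'^{\bullet}$ and $F^{\bullet}\rightsquigarrow F'^{\bullet}$, namely the roofs $E^{\bullet}\leftarrow E^{\bullet}\oplus K_{\bullet}\to E^{\bullet}\oplus K_{\bullet}\oplus K^{\bullet}$ and analogously for $F$.

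Concatenating these four genuine reductions yields the zig-zag
\[
E^{\bullet}\rightsquigarrow E'^{\bullet}\leftsquigarrow G^{\bullet}\rightsquigarrow F'^{\bullet}\leftsquigarrow F^{\bullet},
\]
which is exactly the diagram claimed by the corollary. There is no real obstacle: Proposition \ref{prop:iso-red} already produces the intermediate representative $G^{\bullet}$, and Example \ref{ex:reductions} supplies the two universal devices (enlarging by acyclic complexes) needed to force condition \ref{item:iso-redii} and to regard each representative as a reduction of itself-plus-acyclic-complex. The only care required is to track that the identifications with $\bE$ remain compatible throughout, which is built into the statements of Proposition \ref{prop:iso-red} and of both parts of Example \ref{ex:reductions}.
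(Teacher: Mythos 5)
Your overall strategy is exactly the paper's: the paper's proof is literally the single remark ``In view of Example~\ref{ex:reductions} points \ref{item:reductionsi} and \ref{item:reductionsii}, we obtain the following,'' and your write-up is the intended unfolding of that remark. However, there is a small but genuine slip in how you invoke Example~\ref{ex:reductions}\ref{item:reductionsii}. In that example the acyclic complex is $K^{\bullet} = [F^1\xrightarrow{\id} F^1]$, built from the degree-$1$ term of the \emph{source} of the squiggly arrow (the small end of the reduction), not the target. When you apply this to $\zeta_E\colon G^{\bullet}\rightsquigarrow E^{\bullet}$, the source is $G^{\bullet}$, so you must take $K^{\bullet}=[G^1\xrightarrow{\id}G^1]$ --- not $[E^1\xrightarrow{\id}E^1]$ as you wrote. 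Likewise for $\zeta_F$ the acyclic complex should again be $[G^1\xrightarrow{\id}G^1]$, not $[F^1\xrightarrow{\id}F^1]$; indeed the same acyclic complex works for both arms.

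This is not a purely cosmetic point. The whole purpose of Example~\ref{ex:reductions}\ref{item:reductionsii} is to enforce condition~\ref{item:iso-redii} of Definition~\ref{def:isotropic-red}: in the displayed formulas for $\widehat{\zeta}$ the map out of degree $1$ becomes $(e_1,f_1)\colon A^1\to E^1\oplus F^1$, and it is the component $f_1$ --- which is an \emph{isomorphism} by condition~\ref{item:iso-redi} --- that guarantees this is a sub-bundle inclusion. If you instead stack on $[E^1\to E^1]$, the only natural extra component you could add to the map out of $A^1$ is $e_1$ again, which is redundant with the original $(e_1,d)$ and does nothing to repair condition~\ref{item:iso-redii}; the resulting roof need not be a genuine isotropic reduction. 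With the corrected choice $K^{\bullet}=[G^1\to G^1]$ everything goes through exactly as you describe, and the resulting zig-zag $E^{\bullet}\rightsquigarrow E'^{\bullet}\leftsquigarrow G^{\bullet}\rightsquigarrow F'^{\bullet}\leftsquigarrow F^{\bullet}$ is the one the corollary asserts.
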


\begin{proof}[Proof of Proposition \ref{prop:iso-red}]
	Since $E^{\bullet}$ and $F^{\bullet}$ represent the same object $\bE$ in the derived category, we can find a bounded complex of vector bundles $A^{\bullet}$ and quasi-isomorphisms $e_{\bullet}:A^{\bullet}\to E^{\bullet}$ and $f_{\bullet}:A^{\bullet}\to F^{\bullet}$ such that $\alpha_F\circ [f_{\bullet}] = \alpha_E\circ [e_{\bullet}]$. It follows that $e_{\bullet}^{\vee}\circ e_{\bullet}$ and $f_{\bullet}^{\vee}\circ f_{\bullet}$ represent the same map $A^{\bullet}\to A_{\bullet}$ in the derived category. Therefore, we can  find a further bounded complex of vector bundles $B^{\bullet}$ together with a quasi-isomorphism $B^{\bullet}\xrightarrow{a_{\bullet}} A^{\bullet}$, such that $e^{\vee}_{\bullet}\circ e_{\bullet}\circ a_{\bullet}$ and $f_{\bullet}\circ f_{\bullet}\circ a_{\bullet}$ are chain homotopic. Consequently, the maps $a_{\bullet}^{\vee}\circ e_{\bullet}^{\vee}\circ e_{\bullet}\circ a_{\bullet}$ and $a_{\bullet}^{\vee}\circ f_{\bullet}^{\vee}\circ f_{\bullet}\circ a_{\bullet}$ are chain homotopic. Replacing $A^{\bullet}$ by $B^{\bullet}$, we may therefore assume without loss of generality that there exists a chain homotopy $h_{\bullet}$ relating $e_{\bullet}^{\vee}\circ e_{\bullet}$ and $f_{\bullet}^{\vee}\circ f_{\bullet}$, i.e. morphisms $h_i:A^i\to A_{i-1}$ satisfying \[e_{-i}^{\vee}\circ e_i = f_{-i}^{\vee}\circ f_i + d \circ h_i + h_{i+1}\circ d\] 
	By dualizing, we also have
	\[e_{-i}^{\vee}\circ e_i = f_{-i}^{\vee}\circ f_i + d \circ h^{\vee}_{-i+1} + h_{-i}^{\vee}\circ d.\]
	By replacing each $h_i$ by $(h_{i}+h_{-i+1})/2$, we may assume that the homotopy $h_{\bullet}$ is \emph{self-dual} in the sense that $h_{i}^{\vee} = h_{1-i}$ for all $i$.

	By Construction \ref{constr:homotopy-isom}, the homotopy $h_{\bullet}$ induces an isomorphism of complexes $\alpha_h:A_{e}\to A_{f}$. We claim that the diagram
	\begin{equation*}
		\begin{tikzcd}
			A_e^{\bullet}\ar[r]\ar[d]& A_f^{\bullet}\ar[d] \\
			(A_e^{\bullet})^{\vee} & (A_f^{\bullet})^{\vee}\ar[l]
		\end{tikzcd}
	\end{equation*} 
	commutes. By self-duality of $h_{\bullet}$, this follows immediately from Lemma \ref{lem:dual-homotopies}. 
\end{proof}

\paragraph{Composition.}
We show how to compose (generalized) isotropic reductions. Suppose we have generalized isotropic reductions \[\xi:G^{\bullet}\rightsquigarrow F^{\bullet}\quad \mbox{ and } \quad \zeta:F^{\bullet} \rightsquigarrow E^{\bullet}.\] 
These correspond to the following commutative diagram of solid arrows, which are all quasi-isomorphisms
\begin{equation}\label{eq:composition-diag}
	\begin{tikzcd}[sep = small]
		& & A_{\zeta\xi }^{\bullet}\ar[dr,dashed]\ar[dl,dashed] & &\\
		& A_{\xi}^{\bullet}\ar[dl,, "g_{\bullet}"']\ar[dr, "f_{\bullet}"]& & A_{\zeta}^{\bullet}\ar[dl, "f'_{\bullet}"'] \ar[dr, "e_{\bullet}"]& \\
		G^{\bullet}\ar[dr]& & F^{\bullet}\ar[dl]\ar[dr]& & E^{\bullet} \ar[dl]\\
		&(A_{\xi}^{\bullet})^{\vee} & &(A_{\zeta}^{\bullet})^{\vee} & 
	\end{tikzcd}
\end{equation}
We claim that we can canonically fill in $A_{\zeta\xi }^{\bullet}$ and the dashed arrows, so that the induced maps define a generalized isotropic reduction $G^{\bullet}\rightsquigarrow E^{\bullet}$. 

As in Remark \ref{rem:iso-red-comparison}, consider the maps $e^+_{\bullet}:F_+^{\bullet}\to E^{\bullet}$ and $f^+_{\bullet}: G_+^{\bullet}\to F^{\bullet}$ obtained by restricting $e_{\bullet}$ and $f_{\bullet}$ to positive degree. Composing degree-wise, we get a map $c^+_{\bullet}:G_+^{\bullet}\to E^{\bullet}$. One checks that it is an isomorphism on $h^i$ for $i\geq 2$ and surjective on $h^1$. Moreover, if $e_{\bullet}^+$ and $f_{\bullet}^+$ satisfy \ref{item:isotropic-constrii} of Construction \ref{constr:isotr-red}, then so does $c^+_{\bullet}$ as one can check directly. 

Hence the version of Construction \ref{constr:isotr-red} for generalized isotropic reductions applies to $c_{\bullet}^+$ and we get an isotropic reduction 
\[A_{c^{\vee}\circ c}^{\bullet}\xleftarrow{\eta^-_{\bullet}(c)} A_{c}^{\bullet}\xrightarrow{c_{\bullet}} E^{\bullet}.\] 
We take $A_{\zeta\xi}^{\bullet}:=A_{c}^{\bullet}$. To conclude, we show how it fits into the diagram \eqref{eq:composition-diag}.

Note that up to isomorphism, $A^{\bullet}_{\xi}$ and $A^{\bullet}_{\zeta}$ are obtained by splicing along $f^+_{\bullet}$ and $e_{\bullet}^+$ respectively. 
By functoriality of the splicing construction, we get a map 
\begin{equation}	\label{eq:composition-first-map} 	
	A_{c}^{\bullet}\xrightarrow{c'_{\bullet}} A_{\zeta}^{\bullet} 
\end{equation}	
which moreover satisfies $c_{\bullet} = e_{\bullet}\circ c_{\bullet}'$.
Since $e_i$ is an isomorphism for $i\geq 0$, we have a unique lift  $b_{\bullet}^+:G^{\bullet}_+\to A_{\zeta}^{\bullet}$ of $c_{\bullet}^+$ along $e_{\bullet}$. Then, again from functoriality, we get maps 
\begin{equation}\label{eq:composition-second-map}
	A_{c}^{\bullet}\xleftarrow{\sim} A_b^{\bullet}\to A^{\bullet}_{\xi}.
\end{equation} 
Here, the first arrow is an isomorphism of complexes by Lemma \ref{lem:isomorphic}. 
We take \eqref{eq:composition-second-map} and \eqref{eq:composition-first-map}  as the dashed maps in the diagram \eqref{eq:composition-diag}. We claim that this makes the diagram commute. This can be argued by functoriality of the splicing construction. To illustrate this, let $\operatorname{Spl}$ denote the splicing functor. Then the upper part of \eqref{eq:composition-diag} can be re-rewritten as
\begin{equation}
	\begin{tikzcd}[row sep = small, column sep = tiny]
		& & \operatorname{Spl}(G_+^{\bullet} \to E^{\bullet})\ar[dr,dashed]\ar[dl,dashed] & & \\
		& \operatorname{Spl}(G_+^{\bullet}\to F^{\bullet})\ar[dl,, "g_{\bullet}"']\ar[dr, "f_{\bullet}"]& & \operatorname{Spl}(F_+^{\bullet}\to E^{\bullet})\ar[dl, "f'_{\bullet}"'] \ar[dr, "e_{\bullet}"]& \\
		G^{\bullet}& & \operatorname{Spl}(F_+^{\bullet}\to F^{\bullet})& & E^{\bullet} 
	\end{tikzcd}
\end{equation}
Moreover, the map $f_{\bullet}'$ arises via functoriality as the composition
\[A_{\zeta}^{\bullet} = \operatorname{Spl}(F_+^{\bullet}\to E^{\bullet})\to  \operatorname{Spl}(F_+^{\bullet}\to (A^{\bullet}_{\zeta})^{\vee})\xleftarrow{\sim} \operatorname{Spl}(F_+^{\bullet}\to F^{\bullet}) = F^{\bullet}.\]
In view of \eqref{eq:composition-second-map}, the commutativity then comes down to commutativity of the following diagram, which is immediate
\begin{equation*}
	\begin{tikzcd}
		\operatorname{Spl}(G_+^{\bullet}\to A_{\zeta}^{\bullet})\ar[r]\ar[d, "\sim"]&  \operatorname{Spl} (G_+^{\bullet}\to F^{\bullet})\ar[r]&  \operatorname{Spl}(F_+^{\bullet}\to F^{\bullet}) \ar[d, "\sim"]\\
		\operatorname{Spl}(G_+^{\bullet}\to E^{\bullet})\ar[r] &  \operatorname{Spl}(F_+^{\bullet}\to E^{\bullet}) \ar[r]& \operatorname{Spl}(F_+^{\bullet}\to (A_{\zeta}^{\bullet})^{\vee}).
	\end{tikzcd}
\end{equation*}

This establishes commutativity of \eqref{eq:composition-diag} for our choice of dashed maps. Dualizing \eqref{eq:composition-diag}, we find that moreover the diagram 
\begin{equation*}
	\begin{tikzcd}
		G^{\bullet}\ar[r, leftarrow]\ar[d]& A_{c}^{\bullet}\ar[d] \\
		(A_c^{\bullet})^{\vee}\ar[r, leftarrow] &  E^{\bullet}
	\end{tikzcd}
\end{equation*}
commutes, and one sees that we indeed have a generalized isotropic reduction. Moreover, if we start with genuine isotropic reductions, $\xi, \zeta$ we obtain another one.   

\begin{definition}
	We will denote the (generalized) isotropic reduction $G^{\bullet}\rightsquigarrow E^{\bullet}$ obtained in this way by $\zeta\circ \xi$, and call it the \emph{composition} of $\xi$ and $\zeta$. 
\end{definition}	

This makes (generalized) isotropic reduction into the morphisms of a category:
\begin{lemma}
	Composition of generalized isotropic reductions is associative.
\end{lemma}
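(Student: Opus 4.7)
The plan is to show that both parenthesizations $\eta\circ(\zeta\circ\xi)$ and $(\eta\circ\zeta)\circ\xi$ of a composable triple of (generalized) isotropic reductions $\xi\colon H^\bullet\rightsquigarrow G^\bullet$, $\zeta\colon G^\bullet\rightsquigarrow F^\bullet$, $\eta\colon F^\bullet\rightsquigarrow E^\bullet$ arise as the splice of one and the same positive-degree map $\gamma^+_\bullet\circ\beta^+_\bullet\circ\alpha^+_\bullet\colon H^\bullet_+\to E^\bullet$, where $\alpha^+_\bullet$, $\beta^+_\bullet$, $\gamma^+_\bullet$ denote the positive-degree data of $\xi$, $\zeta$, $\eta$ respectively in the sense of Remark \ref{rem:iso-red-comparison}. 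Associativity will then be an incarnation of the associativity of composition of chain maps, together with functoriality of the splicing construction.

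More precisely, the composition procedure just given produces a reduction whose middle complex is the splice of the degreewise composition of the positive-degree data of the two factors. Combined with the fact that the positive truncation of a middle complex $A^\bullet_f = \operatorname{Spl}(f^+\colon A^\bullet_+\to B^\bullet)$ is naturally identified with $A^\bullet_+$ (and that the restriction of the structural map $A^\bullet_f\to B^\bullet$ to positive degrees recovers $f^+$), iterating the construction from either side produces the same positive-degree datum $\gamma^+_\bullet\circ\beta^+_\bullet\circ\alpha^+_\bullet\colon H^\bullet_+\to E^\bullet$ and hence, via Lemma \ref{lem:splicing-properties}\ref{lem:splicing-propertiesi} applied in the truncated variant, the same middle complex up to canonical isomorphism. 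The first step would be a short bookkeeping lemma making this identification precise.

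The main obstacle is to verify that this canonical identification of middle complexes intertwines all of the auxiliary data: the structural maps to the extremal complexes $H^\bullet$ and $E^\bullet$, the self-dual structure built via Construction \ref{constr:self-dual-complex}, and most notably the unique-lift maps (the $b^+_\bullet$ of the composition procedure) used to produce the map toward the more-reduced end. Since each of these pieces of data is itself constructed by further applications of the splicing functor to maps factoring through the triple composition, every required compatibility reduces to commutativity of a diagram of the form $\operatorname{Spl}(H^\bullet_+\to B^\bullet)\to \operatorname{Spl}(H^\bullet_+\to C^\bullet)$ induced by a morphism $B^\bullet\to C^\bullet$, exactly the style of diagram chase closing the binary composition argument around \eqref{eq:composition-diag}. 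I would organize the full verification as a single large commutative diagram for each parenthesization and compare them; the universal characterization of the lifts $b^+_\bullet$ by uniqueness along a map that is an isomorphism in degrees $\geq 0$ makes the remaining checks automatic once the image in the terminal complex $E^\bullet$ -- in both cases the triple composition $\gamma^+_\bullet\circ\beta^+_\bullet\circ\alpha^+_\bullet$ -- has been pinned down.
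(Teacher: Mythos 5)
Your proposal is correct and is essentially the paper's own argument rephrased: the paper exhibits both parenthesizations as restrictions of a common refined diagram whose apex $A^{\bullet}_{\xi_3\circ\xi_2\circ\xi_1}$ is unique up to unique isomorphism, and that apex is precisely the splice of the common positive-degree datum $\gamma^+_{\bullet}\circ\beta^+_{\bullet}\circ\alpha^+_{\bullet}$ that you identify, with the remaining compatibilities coming from functoriality of splicing exactly as you describe. One point to be slightly careful about if you were to write this out in full: the unique lifts $b^+_{\bullet}$ in the two parenthesizations are taken along \emph{different} roof maps (that of $\eta$ in one case, that of $\eta\circ\zeta$ in the other), so uniqueness does not immediately identify them; one really does need to pass to a common further refinement (the paper's upper vertex) and compare both lifts there, but this is the same style of splicing-functoriality diagram chase you invoke and matches what the paper's sketch silently does.
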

\begin{proof}[Proof sketch]
	Say, one has $\xi_1:J^{\bullet}\rightsquigarrow G^{\bullet}$, $\xi_2:G^{\bullet}\rightsquigarrow F^{\bullet}$ and $\xi_3:F^{\bullet}\rightsquigarrow E^{\bullet}.$
	The composition $(\xi_3\circ \xi_2)\circ \xi_1$ corresponds to a diagram
	\begin{equation*}
		\begin{tikzcd}[sep = tiny]
			& & &A_{(\xi_3\circ \xi_2)\circ \xi_1}^{\bullet}\ar[ddll]\ar[dr] & & & \\
			& & & & A_{\xi_3\circ \xi_2 }^{\bullet}\ar[dr]\ar[dl] & &\\
			& A_{\xi_1}^{\bullet}\ar[dl]\ar[dr]& & A_{\xi_2}^{\bullet}\ar[dl]\ar[dr]& & A_{\xi_3}^{\bullet}\ar[dl] \ar[dr]& \\
			J^{\bullet}& &G^{\bullet}& & F^{\bullet}& & E^{\bullet} 
		\end{tikzcd}
	\end{equation*}
	One has an analogous diagram for $\xi_3\circ (\xi_2\circ \xi_1)$, with a complex $A^{\bullet}_{\xi_3\circ (\xi_2\circ \xi_1)}$ at the top. Now, the key point is that both of these are obtained from a refined diagram (which is unique up to unique isomorphism)
	\begin{equation*}
		\begin{tikzcd}[sep = small]
			& & &A_{\xi_3\circ \xi_2\circ \xi_1}^{\bullet}\ar[dl]\ar[dr] & & & \\
			& &A_{\xi_2\circ \xi_1}^{\bullet} \ar[dl]\ar[dr]& & A_{\xi_3\circ \xi_2 }^{\bullet}\ar[dr]\ar[dl] & &\\
			& A_{\xi_1}^{\bullet}\ar[dl]\ar[dr]& & A_{\xi_2}^{\bullet}\ar[dl]\ar[dr]& & A_{\xi_3}^{\bullet}\ar[dl] \ar[dr]& \\
			J^{\bullet}& &G^{\bullet}& & F^{\bullet}& & E^{\bullet} 
		\end{tikzcd}
	\end{equation*}
	
\end{proof}

\begin{definition}\label{def:iso-red}
	We let $\mathcal{R}^{\gen}$ denote the category whose objects are self-dual representatives of $\bE$ and whose morphisms are generalized isotropic reductions. We let $\mathcal{R}$ denote the subcategory with the same objects and whose morphisms are isotropic reductions.
\end{definition}	
From the construction of composition in $\mathcal{R}^{\gen}$, one also has
\begin{lemma}\label{lem:reduction-compose-equal}
	Let $\zeta: F^{\bullet}\rightsquigarrow E^{\bullet}$ be a generalized isotropic reduction, and let  $a^+_{\bullet}:A_+^{\bullet}\to F^{\bullet}$ be a map satisfying \ref{item:isotropic-constri} of Construction \ref{constr:isotr-red}. Let $\zeta:G^{\bullet}\rightsquigarrow F^{\bullet}$ be the induced generalized isotropic reduction. Let moreover $\zeta':G'^{\bullet}\rightsquigarrow E^{\bullet}$ be the generalized isotropic reduction obtained by applying Construction \ref{constr:isotr-red} to the composition 
	\[b_{\bullet}^+: A_{+}^{\bullet}\xrightarrow{a^+_{\bullet}} F_+^{\bullet}\to E^{\bullet},\]
	where the last map is obtained from $\xi$. 
	
	Then, there is a canonical isomorphism of self-dual complexes $G'^{\bullet}\simeq G^{\bullet}$ under which $\zeta'$ is identified with $\zeta \circ \xi$. 
\end{lemma}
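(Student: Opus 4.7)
My plan is to reduce the statement to an application of the characterization given by Remark \ref{rem:iso-red-comparison}, which identifies a generalized isotropic reduction $F^{\bullet}\rightsquigarrow E^{\bullet}$ with a pair consisting of a positive-degree map $e^+_{\bullet}:A_+^{\bullet}\to E^{\bullet}$ (satisfying the hypotheses of Construction \ref{constr:isotr-red}) together with an iso of self-dual complexes $\varphi_{\bullet}:E_{e^{\vee}\circ e}^{\bullet}\to F^{\bullet}$ compatible with the identifications to $\bE$.

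\textbf{Step 1.} I first match up the positive-degree data of the two reductions. Since $\xi$ is built from $a^+_{\bullet}$ via Construction \ref{constr:isotr-red}, the intermediate complex $A_{\xi}^{\bullet}$ coincides in positive degrees with $A_+^{\bullet}$ (as the splicing $\eta^-_{\bullet}(a^{\vee}\circ a)$ is the identity there), so the restriction $f^+_{\bullet}:G_+^{\bullet}\to F^{\bullet}$ used in the composition equals $a^+_{\bullet}$ under the identification $G_+^{\bullet}\simeq A_+^{\bullet}$. Composing degreewise with the positive-degree restriction of $e_{\bullet}$ from $\zeta$ therefore yields exactly $b^+_{\bullet}$, i.e.\ $c^+_{\bullet}=b^+_{\bullet}$.

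\textbf{Step 2.} Since both $\zeta\circ \xi$ and $\zeta'$ are obtained by applying the same splicing procedure (Constructions \ref{constr:splicing-truncated} and \ref{constr:self-dual-complex}) to the same map $b^+_{\bullet}=c^+_{\bullet}$, functoriality of splicing (Lemma \ref{lem:splicing-properties}\ref{lem:splicing-propertiesi}) yields a canonical equality of intermediate self-dual complexes $E^{\bullet}_{c^{\vee}\circ c}=E^{\bullet}_{b^{\vee}\circ b}=G'^{\bullet}$. By Remark \ref{rem:iso-red-comparison} applied to the reduction $\zeta\circ \xi:G^{\bullet}\rightsquigarrow E^{\bullet}$, the construction of the composition also produces a canonical iso of self-dual complexes $\varphi_{\bullet}:E^{\bullet}_{c^{\vee}\circ c}\xrightarrow{\sim} G^{\bullet}$; this is precisely the desired canonical iso $G'^{\bullet}\simeq G^{\bullet}$.

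\textbf{Step 3.} Under this identification, both reductions correspond to the same pair $(b^+_{\bullet}, \id)$ in the sense of Remark \ref{rem:iso-red-comparison}, so they agree as generalized isotropic reductions. The main obstacle, which Step 2 addresses, is checking that the isomorphism $\varphi_{\bullet}$ extracted from the composition procedure in \eqref{eq:composition-diag} is indeed canonical and respects the self-dual structure; this rests on the uniqueness statement in Lemma \ref{lem:isomorphic} (used to identify the intermediate $A_b^{\bullet}$ with $A_c^{\bullet}$ during the composition) and on the compatibility of splicing with duals provided by Lemma \ref{lem:splicing-properties}\ref{lem:splicing-propertiesii},\ref{lem:splicing-propertiesiii}.
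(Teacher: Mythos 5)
Your proof is correct and fills in exactly the argument the paper declares to follow ``from the construction of composition'' but does not spell out. The crux — that in positive degrees the splicing constructions act as the identity, so $G_+^{\bullet}\simeq A_+^{\bullet}$, $f^+_{\bullet}=a^+_{\bullet}$, and hence $c^+_{\bullet}=b^+_{\bullet}$ — is the right observation, and invoking Remark \ref{rem:iso-red-comparison} to produce the canonical iso $\varphi_{\bullet}:A^{\bullet}_{c^{\vee}\circ c}\to G^{\bullet}$ and to compare $\zeta'$ with $\zeta\circ\xi$ as pairs is the intended mechanism.

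Two small imprecisions worth flagging. First, you write $E^{\bullet}_{c^{\vee}\circ c}$ where the paper's notation is $A^{\bullet}_{c^{\vee}\circ c}$; the base complex being spliced is $A^{\bullet}_c$, not $E^{\bullet}$. Second, in Step 3 it is cleaner to say that $\zeta'$ is represented by the pair $(b^+_{\bullet},\id_{G'^{\bullet}})$ while $\zeta\circ\xi$ is represented by $(c^+_{\bullet},\varphi_{\bullet})$, and that since $b^+_{\bullet}=c^+_{\bullet}$ the isomorphism $\varphi_{\bullet}$ carries the first representation to the second — which is precisely what ``identified under $\varphi_{\bullet}$'' means. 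As written, ``both reductions correspond to the same pair $(b^+_{\bullet},\id)$'' conflates the two sides of the identification you are trying to establish. Neither of these affects the substance of the argument.
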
	

\begin{lemma}\label{lem:iso-red-gives-iso-red}
	Let 
	\[F^{\bullet} \xleftarrow{f_{\bullet}} A^{\bullet} \xrightarrow{e_{\bullet}}E^{\bullet}\]
	be an isotropic reduction. Then 
	\begin{enumerate}[label = \roman*)]
		\item \label{item:isoiso-i} \[K:= \Ker(e^{\vee}_0:E^0\to A_0) \]
		is an isotropic sub-bundle of $E^0$. The morphism $e_0:A^0\to E^0$ is an isomorphism onto $K^{\perp}\subseteq E$, and the composition 
		\[K^{\perp}\simeq A^0\xrightarrow{f_0}F^0\]
		is surjective with kernel $K$. 
		\item \label{item:isoiso-ii}Let $\sigma_{\geq 1}(e_{\bullet}):A^{\bullet}_+\to E^{\bullet}_+$ be the bad truncation of $e_{\bullet}$ to positive degrees. Then, the mapping cone $\cone(\sigma_{\geq 1} e_{\bullet})$ is isomorphic in $\dercat(X)$ to the complex $K^{\vee}[-1]$. 
	\end{enumerate}
\end{lemma}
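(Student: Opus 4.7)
The plan is to pass to an explicit splicing model via Remark~\ref{rem:iso-red-comparison} and then compute directly. We may assume $A^\bullet = A_e^\bullet$ arises from Construction~\ref{constr:isotr-red} applied to some $e^+_\bullet: A^\bullet_+ \to E^\bullet$, so that $A^0 = \Ker(c_1(e_\bullet)) \subseteq E^0 \oplus A^1$ with $e_0$ the first projection, and similarly $F^\bullet = A^\bullet_\phi$ via Construction~\ref{constr:self-dual-complex} for $\phi := e_\bullet^\vee \circ \theta^E_\bullet \circ e_\bullet$, so that $F^0 = \Ker(c_1(\phi_\bullet))$ and $f_0(a) = (\phi_0(a), d_A^0(a))$.

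For part~\ref{item:isoiso-i}, I would first show that $e_0$ is an inclusion of vector bundles. Since $\cone(e_\bullet)$ is a bounded acyclic complex of locally free sheaves, $A^0 = Z^0(\cone(e_\bullet))$ is a vector sub-bundle of $E^0 \oplus A^1$. A fiberwise element $(0,b) \in \Ker(e_0)_x$ satisfies $(e_1, d_A^1)_x(b) = 0$, which contradicts condition~(ii) of Definition~\ref{def:isotropic-red} (an inclusion of vector bundles is fiberwise injective). Hence $e_0$ is fiberwise injective, and so an inclusion of vector bundles, with $\im(e_0) \subseteq E^0$ a sub-bundle. Direct unpacking gives $K = \Ker(e_0^\vee \circ \theta^E_0) = \im(e_0)^\perp$, which is a sub-bundle by non-degeneracy of $\theta^E_0$, and $K^\perp = \im(e_0)$ by the standard double-perp identity.

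The isotropy of $K$ and the surjectivity of $f_0$ with kernel $K$ constitute the main technical content. The key computations are $\phi_0(a)(a') = \langle e_0(a), e_0(a')\rangle_{\theta^E_0}$ and, using condition~(iii) together with $\theta^F_0$ being an isomorphism, $\Ker(f_0) = \Ker(\phi_0) = e_0^{-1}(K)$. For isotropy, my plan is to exploit the symmetric dual of the splicing: the quasi-isomorphism $\tilde{e}_\bullet := e_\bullet^\vee \circ \theta^E_\bullet: E^\bullet \to A_\bullet$ is an isomorphism in degrees $\geq 1$, and the dual version of Construction~\ref{constr:isotr-red} lifts every $y \in K = \Ker(\tilde{e}_0)$ to some $a \in A^0$ with $e_0(a) = y$, giving $K \subseteq \im(e_0)$. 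A rank count using $\chi(F^\bullet) = \chi(E^\bullet)$ together with the self-duality of $E^\bullet$ yields $\rk(F^0) = \rk(E^0) - 2\rk(K)$; combined with $\Ker(f_0) \simeq K$, this forces $f_0$ to be fiberwise, and hence globally, surjective.

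For part~\ref{item:isoiso-ii}, I would compute the cohomology of $\cone(\sigma_{\geq 1}e_\bullet)$ via the long exact sequence of the distinguished triangle $\sigma_{\geq 1}A^\bullet \to \sigma_{\geq 1}E^\bullet \to \cone(\sigma_{\geq 1}e_\bullet)$. Since $e_\bullet$ is a quasi-isomorphism, $H^i$ of the two truncated complexes are isomorphic for $i \geq 2$, so $H^i(\cone) = 0$ there. Condition~(ii) makes the restricted map $Z^1(e_\bullet): Z^1(A^\bullet) \to Z^1(E^\bullet)$ injective, giving $H^0(\cone) = 0$. The only remaining cohomology is $H^1(\cone) = Z^1(E^\bullet)/e_1(Z^1(A^\bullet))$, which I would identify with $K^\vee$ via the non-degenerate pairing on $E^\bullet$ coming from self-duality together with the description of $K$ from part~\ref{item:isoiso-i}.

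The main obstacle is the isotropy argument in part~\ref{item:isoiso-i}, which requires setting up the dual version of the splicing construction and carefully combining it with condition~(iii) of Definition~\ref{def:isotropic-red}.
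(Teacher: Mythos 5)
Your overall strategy—reduce to the explicit splicing model via Remark~\ref{rem:iso-red-comparison} and compute directly—is the same as the paper's, which also works with the snake lemma applied to the presentation $A^0 = \Ker(c_1)\subset E^0\oplus A^1$. Your argument that $e_0$ is an inclusion of vector bundles with $\mathrm{im}(e_0) = K^\perp$ is fine. But there are two genuine gaps, and they both trace to the same missing fact.

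First, your isotropy argument is not a proof. You write that ``the dual version of Construction~\ref{constr:isotr-red} lifts every $y\in K$ to some $a\in A^0$ with $e_0(a)=y$'', but you never say what this dual splicing is or how it produces the lift, and I do not see how to extract such a lift from it. What is actually needed is the observation—obtained in the paper ``by inspection of the connecting morphism''—that the snake-lemma connecting map $E^0\to K^\vee=\mathrm{Coker}(e_1,d_1)$ is given by $x\mapsto [d_E^0(x),0]$, hence factors through $d_E^0:E^0\to E^1$. Dualizing, the inclusion $K\hookrightarrow E^0$ factors through $d_E^{-1}:E^{-1}\to E^0$, and then the pairing $K\to E^0\to K^\vee$ factors through $d_E^0\circ d_E^{-1}=0$. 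This also gives you $K\subseteq \Ker(d_E^0)$, which you will need below. None of this is visible from your outline.

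Second, the claim $\Ker(f_0)=\Ker(\phi_0)$ is not justified and is in fact stronger than what the splicing formula gives you. With $f_0(a)=(\phi_0(a), d_A^0(a))$ and $F^0\hookrightarrow A_0\oplus A^1$, one has $\Ker(f_0)=\Ker(\phi_0)\cap\Ker(d_A^0)$, not $\Ker(\phi_0)$. The argument you gesture at—using condition~(iii) and $\theta^F_0$ being an isomorphism—would deduce $\Ker(f_0)=\Ker(\phi_0)$ from injectivity of $f_0^\vee$ on the relevant subspace, which is essentially surjectivity of $f_0$; since surjectivity of $f_0$ is what you are about to prove by the rank count, this is circular. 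Unpacking the splicing, one finds $\Ker(f_0)$ corresponds to $K\cap K^\perp\cap Z^0(E^\bullet)$ inside $E^0$, and equating this with $K$ requires precisely isotropy of $K$ and $K\subseteq\Ker(d_E^0)$—i.e., the connecting-morphism inspection again. Once that is in place, your rank count for surjectivity and your computation for part~\ref{item:isoiso-ii} would go through; the paper instead reads both $\Ker(f_0)$ and $\mathrm{Coker}(f_0^\vee)$ off a second application of the snake lemma, avoiding the Euler characteristic bookkeeping.
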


\begin{proof}
	For \ref{item:isoiso-i}, recall that $A^0= \Ker(E^0\oplus A^1\to E^1\oplus A^2)$.
	Applying the snake lemma to the diagram 
	\begin{equation*}
		\begin{tikzcd}
			0\ar[r]&A^1\ar[r]\ar[d,"{(e_1,d_1)}"]& E^0\oplus A^1\ar[r]\ar[d]& E^0\ar[d]\ar[r]&0 \\
			0\ar[r]&E^1\oplus A^2\ar[r] & E^1\oplus A^2\ar[r] & 0\ar[r]&0
		\end{tikzcd}
	\end{equation*}
	shows that the induced map $A^0\to E^0$ is injective and has the same quotient $K^{\vee}$ as $A^1\to E^1\oplus A^2$, which is a vector bundle by \ref{item:iso-redii} of Definition \ref{def:isotropic-red}. This also shows that $A^0\simeq K^{\perp}$.  Moreover, by inspection of connecting morphism, one finds that the natural morphisms fit into a commutative diagram 
	\begin{equation*}
		\begin{tikzcd}
			E^0\ar[r, "d_0"]\ar[d]&E^1\ar[d] \\
			K^{\vee}\ar[r, equal] & K^{\vee}.
		\end{tikzcd}
	\end{equation*}
	This, and its dual, show that the composition $K\to E^0\to K^{\vee}$ vanishes, so that $K\subseteq K^{\perp}$ and $K$ is isotropic. Using Remark \ref{rem:iso-red-comparison}, we have an isomorphism of $F^{\bullet}$ with the quadratic complex $A_{e^{\vee}\circ e}$ obtained by splicing. This means that $F^0 \simeq \Ker(A_0 \oplus A^1\to A_11\oplus A^2)$, where the morphism is the middle column of the diagram   
	
	\begin{equation*}
		\begin{tikzcd}[ampersand replacement=\&, sep = large]
			0\ar[r]\&A^1\ar[r]\ar[d,"{(-e_1^{\vee}e_1,d_1)}"]\& A_0\oplus A^1\ar[r]\ar[d, "{\begin{bmatrix}
					d & -e_1^{\vee}e_1 \\
					0 & d
			\end{bmatrix}}"]\& A_0\ar[d]\ar[r]\&0 \\
			0\ar[r]\&A_1\oplus A^2\ar[r] \& A_1\oplus A^2\ar[r] \& 0\ar[r]\&0
		\end{tikzcd}
	\end{equation*}
	The left column can be recognized as $A^1\to E^1\oplus A^2$. So, by another application of the snake Lemma, we see that $f_0^{\vee}:F^0\to A_0$ is injective with cokernel $K^{\vee}$. This, combined with the dual construction yield the isomorphism $F^0\simeq K^{\perp}/K$.
	
	For \ref{item:isoiso-ii}, it is clear that $\cone(\sigma_{\geq 1} e_{\bullet})$ has non-trivial cohomology sheaves at most in degrees $\geq 0$. Moreover, using $\sigma_{\geq _i}$ to denote the bad truncation functor, it fits into an exact sequence of complexes
	\[0\to  \cone(\sigma_{\geq 1} e_{\bullet})\to \sigma_{\geq 0}(\cone(e_{\bullet}^+)) \to E^0[0] \to 0.\]
	Here $e^+_{\bullet}$ is as in Construction \ref{constr:isotr-red}. By  \ref{item:isotropic-constri} there and the induced long exact sequence, we conclude that $h^i(\cone(\sigma_{\geq 1} e_{\bullet})) = 0$ for $i\geq 2$. Using further that $A^0$ is defined in terms of $e_{\bullet}^+$, we obtain the exact sequence 
	\[0\to h^0(\cone(\sigma_{\geq 1} e_{\bullet})) \to A^0 \xrightarrow{e_0} E^0\to h^1(\cone(\sigma_{\geq 1} e_{\bullet})) \to 0.\]
	By what we have already shown, the $h^0$-term vanishes and the $h^1$-term is identified with $K^{\vee}$.   
\end{proof}

To conclude this sub-section, we explain how Construction \ref{constr:isotr-red} interacts with chain homotopies.
\begin{lemma}\label{lem:homotopy-reduction-isom}
	Let $E^{\bullet}$ be a symmetric representative of $\bE$, and let $a^+_{\bullet}, b^+_{\bullet}:A^{\bullet}_+\to E^{\bullet}$ be two maps satisfying condition \ref{item:isotropic-constri} of Construction \ref{constr:isotr-red}. Let $\xi:F^{\bullet}\rightsquigarrow {E^{\bullet}}$ and $\zeta:G^{\bullet}\rightsquigarrow E^{\bullet}$ be the generalized isotropic reductions associated to $a_{\bullet}^+$ and $b_{\bullet}^+$ respectively. Then any chain homotopy $h_{\bullet}$ relating $a^+_{\bullet}$ and $b_{\bullet}^+$ gives rise to an isomorphism of self-dual representatives $\beta^h_{\bullet}:F^{\bullet}\to G^{\bullet}$. Moreover, it restricts to $\id_{A^i}$ in positive degrees, and $\beta^0_{\bullet}$ is the identity. 
\end{lemma}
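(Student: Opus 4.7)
The plan is to apply Construction \ref{constr:homotopy-isom} twice in succession: first upgrading the chain homotopy $h_\bullet$ to an isomorphism at the level of the truncated splicing, and then invoking it again at the level of the full self-dual splicing. By Construction \ref{constr:homotopy-isom} applied to $h_\bullet$, one obtains an isomorphism of complexes $\alpha^h_\bullet \colon A_{a^+}^\bullet \to A_{b^+}^\bullet$ between the complexes produced by Construction \ref{constr:splicing-truncated}, which equals the identity in every degree $\neq 0$; the same construction also supplies a natural chain homotopy $\tilde{h}_\bullet$ exhibiting $a_\bullet$ and $b_\bullet \circ \alpha^h_\bullet$ as homotopic maps into $E^\bullet$, where $a_\bullet, b_\bullet$ denote the canonical maps from the truncated splicings down to $E^\bullet$.

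Write $F^\bullet$ and $G^\bullet$ as the self-dual splicings (Construction \ref{constr:self-dual-complex}) of $a^\vee_\bullet \circ a_\bullet$ and $b^\vee_\bullet \circ b_\bullet$ respectively. Using $\alpha^h_\bullet$ and its dual, transport the latter into a self-dual map $(\alpha^h_\bullet)^\vee \circ b^\vee_\bullet \circ b_\bullet \circ \alpha^h_\bullet \colon A_{a^+}^\bullet \to A_{a^+,\bullet}$ with the same source and target as $a^\vee \circ a$. From $\tilde{h}_\bullet$ one algebraically produces a chain homotopy
\[
H_\bullet \;:=\; a^\vee_\bullet \circ \tilde{h}_\bullet \;+\; \tilde{h}^\vee_\bullet \circ b_\bullet \circ \alpha^h_\bullet
\]
between these two maps; $H_\bullet$ need not itself be self-dual, but its symmetrization $H'_\bullet := (H_\bullet + H^\vee_\bullet)/2$ is, using that $2$ is invertible. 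Applying Construction \ref{constr:homotopy-isom} to $H'_\bullet$ yields an isomorphism $\gamma_\bullet \colon F^\bullet \to A_{a^+,\, (\alpha^h)^\vee b^\vee b \alpha^h}^\bullet$, and composing with the identification $A_{a^+,\, (\alpha^h)^\vee b^\vee b \alpha^h}^\bullet \simeq G^\bullet$ coming from functoriality of splicing (Lemma \ref{lem:splicing-properties} \ref{lem:splicing-propertiesi}) applied to $\alpha^h_\bullet$ defines $\beta^h_\bullet$.

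It remains to check the properties of $\beta^h_\bullet$. Self-duality is Lemma \ref{lem:dual-homotopies} applied to the self-dual $H'_\bullet$, combined with the identity $(\alpha^h_\bullet)^\vee = \alpha^{h^\vee}_\bullet$ from that same lemma; compatibility with the identifications to $\bE$ is automatic, since the constructions only alter representatives up to homotopy. That $\beta^h_\bullet$ restricts to $\id_{A^i}$ in positive degrees follows from the fact that both $\alpha^h_\bullet$ and $H'_\bullet$ vanish there, and the case $h_\bullet = 0$ gives $\beta^0_\bullet = \id$ by the same inspection. The main obstacle, and the only place where the hypothesis on the characteristic enters, is the symmetrization step: one must track signs carefully enough to verify that $H_\bullet$ really is a chain homotopy between the two indicated self-dual maps and that averaging with its dual produces a self-dual homotopy; everything else is then formal from the two applications of Construction \ref{constr:homotopy-isom} and the duality compatibility of Lemma \ref{lem:splicing-properties}.
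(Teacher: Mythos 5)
Your proof is correct and follows the same overall strategy as the paper: apply Construction \ref{constr:homotopy-isom} once to obtain $\alpha^h_\bullet \colon A_a^\bullet \to A_b^\bullet$ together with a chain homotopy between $a_\bullet$ and $c_\bullet := b_\bullet\circ\alpha^h_\bullet$, then produce a \emph{self-dual} homotopy between $a^\vee_\bullet a_\bullet$ and $c^\vee_\bullet c_\bullet$, apply Construction \ref{constr:homotopy-isom} again, and finish with Lemma \ref{lem:dual-homotopies} and the functorial identification $A_{c^\vee c}^\bullet \simeq A_{b^\vee b}^\bullet$.

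The one genuinely different ingredient is how you manufacture the self-dual homotopy. The paper writes down an explicit self-dual formula
\[
I_\bullet = H^\vee_\bullet a_\bullet + a^\vee_\bullet H_\bullet + H^\vee_\bullet d_\bullet H_\bullet + \tfrac{1}{2}\bigl(d_\bullet H^\vee_\bullet H_\bullet + H^\vee_\bullet H_\bullet d_\bullet\bigr),
\]
whereas you first build the simpler homotopy $J_\bullet := a^\vee_\bullet\tilde h_\bullet + \tilde h^\vee_\bullet c_\bullet$ from $c^\vee c$ to $a^\vee a$ (which is not self-dual) and then symmetrize: since $(c^\vee c)^\vee = c^\vee c$ and $(a^\vee a)^\vee = a^\vee a$, the dual $J^\vee_\bullet$ is again a homotopy between the \emph{same} pair of maps in the same direction, so $(J_\bullet + J^\vee_\bullet)/2$ is a self-dual homotopy. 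This is a clean observation, arguably easier to verify than the paper's closed formula, and both routes lean on the invertibility of $2$ in exactly one place. Note that the two self-dual homotopies are genuinely different -- yours is $J_\bullet$ minus $\tfrac{1}{2}(d H^\vee H + H^\vee H d)$ in one expansion -- but both are valid inputs to Construction \ref{constr:homotopy-isom}, and the resulting isomorphism differs only by an automorphism of $G^\bullet$, which is immaterial for what the lemma claims.

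Two small imprecisions worth fixing. First, $\alpha^h_\bullet$ does not ``vanish'' in degrees $\neq 0$: it is the \emph{identity} there, and so is $\alpha^{H'}_\bullet$; the conclusion that $\beta^h_\bullet$ restricts to $\id_{A^i}$ in positive degrees is correct, but the reason is that each factor is the identity away from degree $0$. Second, you reuse the name $H_\bullet$ for your constructed homotopy while the paper already uses $H_\bullet$ for the homotopy coming out of Construction \ref{constr:homotopy-isom} (what you call $\tilde h_\bullet$); not a mathematical error, but it invites confusion when reading alongside the paper.
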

\begin{proof}
	Recall that by construction, we first form 
	\[a_{\bullet}: A_a^{\bullet}\to E^{\bullet}\]
	by splicing along $a^+_{\bullet}$, and obtain $F^{\bullet}$ by splicing along $a^{\vee}\circ a$, and similarly for $b^+_{\bullet}$.
	
	By Construction \ref{constr:homotopy-isom}, we obtain an isomorphism $\alpha^h_{\bullet}:A_a^{\bullet}\to A_b^{\bullet}$, together with a chain homotopy $H_{\bullet}: a_{\bullet} \Rightarrow b_{\bullet}\circ \alpha^h_{\bullet}$. Write $c_{\bullet}:= b_{\bullet}\circ \alpha^h_{\bullet}$. Using $H_{\bullet}$, and its dual, we obtain a \emph{self-dual} homotopy $I_{\bullet}: a^{\vee}\circ a_{\bullet}\Rightarrow c^{\vee}_{\bullet}\circ c_{\bullet}$, namely
	\[I_{\bullet}:=H^{\vee}_{\bullet}a_{\bullet} + a_{\bullet}^{\vee}H_{\bullet} + H_{\bullet}^{\vee}d_{\bullet}H_{\bullet} + \frac{1}{2}(d_{\bullet}H^{\vee}_{\bullet}H_{\bullet} + H^{\vee}_{\bullet}H_{\bullet} d_{\bullet}).\]
	Applying Construction \ref{constr:homotopy-isom}, one obtains an isomorphism of complexes
	\[\alpha^I_{\bullet}:A_{a^{\vee}\circ a}^{\bullet}\to A_{c^{\vee}\circ c}^{\bullet},\]
	compatible with the identifications with $\bE$.
	By Lemma \ref{lem:dual-homotopies}, it also respects the self-dual structures. Together with the identification $A_{c^{\vee}\circ c}^{\bullet}\simeq A_{b^{\vee}\circ b}^{\bullet}$ obtained from $\alpha^{h}_{\bullet}$, this gives the desired isomorphism $F^{\bullet}\to G^{\bullet}$.  
	
\end{proof}

\subsection{Families of morphisms}\label{sec:families}
Let $T$ be an affine scheme, and let $F^{\bullet},E^{\bullet}\in \mathcal{R}$ be self-dual representatives of $\bE$. 
\begin{definition}\label{def:family-reductions}
	A \emph{family of (generalized) isotropic reductions} from $E^{\bullet}$ to $F^{\bullet}$ over $T$ is given by a (generalized) isotropic reduction from  $\operatorname{pr}_2^*E^{\bullet}$ to $\operatorname{pr}_2^*F^{\bullet}$, regarded as self-dual representatives of the quadratic complex $\operatorname{pr}_2^*\bE$ on $T\times X$. 
\end{definition} 
Families of morphisms can be pulled back along arbitrary morphisms of affine schemes $T_1\to T_2$.

The notion of family allows us to relate morphisms:
\begin{proposition}	\label{prop:connect-reductions}
	Let $\xi_0,\xi_1:F^{\bullet}\rightsquigarrow E^{\bullet}$ be morphisms in $\mathcal{R}^{\gen}$. Then there exists a morphism $\upsilon:G^{\bullet}\rightsquigarrow F^{\bullet}$ in $\mathcal{R}^{\gen}$, and a family of generalized isotropic reductions $\zeta_t$ from $E^{\bullet}$ to $G^{\bullet}$ over $\bA^1$, such that $\zeta_i = \xi_i \circ \upsilon $ for $i=0,1$. 
\end{proposition}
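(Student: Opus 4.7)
The plan is to build a common refinement $\upsilon$ of $\xi_0$ and $\xi_1$, and then to interpolate the two composed reductions over $\bA^1$ using a chain homotopy.

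First, represent each $\xi_i$ by splicing data $a_i^+\colon A_{i,+}^{\bullet}\to E^{\bullet}$ together with an isomorphism $\varphi_i\colon F^{\bullet}_{a_i}\xrightarrow{\sim} F^{\bullet}$ of self-dual representatives, as in Remark \ref{rem:iso-red-comparison}. Set $U_+^{\bullet}:=A_{0,+}^{\bullet}\oplus A_{1,+}^{\bullet}$ and let $u^+\colon U_+^{\bullet}\to F^{\bullet}$ have restriction to $A_{i,+}^{\bullet}$ equal to the composition $A_{i,+}^{\bullet}\xrightarrow{\varphi_i} F_+^{\bullet}\hookrightarrow F^{\bullet}$. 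Each $\varphi_i$ alone is already a quasi-isomorphism, so $u^+$ satisfies condition \ref{item:isotropic-constri} of Construction \ref{constr:isotr-red}. Working with generalized isotropic reductions (so that condition \ref{item:isotropic-constrii} may be dropped), this produces a morphism $\upsilon\colon G^{\bullet}\rightsquigarrow F^{\bullet}$ in $\mathcal{R}^{\gen}$.

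By Lemma \ref{lem:reduction-compose-equal}, the composition $\xi_i\circ\upsilon$ is canonically identified with the generalized reduction produced by Construction \ref{constr:isotr-red} from the map $v_i^+:= a_i^+\circ\varphi_i^{-1}\circ u^+\colon U_+^{\bullet}\to E^{\bullet}$. Because $\xi_0$ and $\xi_1$ are compatible with the fixed identifications of $F^{\bullet}$ and $E^{\bullet}$ with $\bE$, both $v_0^+$ and $v_1^+$ represent the same morphism in $\dercat(X)$. Since $U_+^{\bullet}$ and $E^{\bullet}$ are bounded complexes of vector bundles on the affine scheme $X$, equality in $\dercat(X)$ implies chain homotopy, so we may choose $h_{\bullet}$ with $v_1^+-v_0^+ = dh+hd$. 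Now define $\tilde V^+\colon \pr_2^*U_+^{\bullet}\to \pr_2^*E^{\bullet}$ on $\bA^1\times X$ by
\[\tilde V^+:=\pr_2^*v_0^+ + d(th)+(th)d,\]
which is a chain map restricting to $v_i^+$ at $t=i$. Applying Construction \ref{constr:isotr-red} over $\bA^1\times X$ produces a generalized isotropic reduction $\tilde\zeta\colon \tilde G^{\bullet}\rightsquigarrow \pr_2^*E^{\bullet}$. The chain homotopy $th$ between $\pr_2^*v_0^+$ and $\tilde V^+$, combined with Lemma \ref{lem:homotopy-reduction-isom} and Lemma \ref{lem:dual-homotopies}, yields an isomorphism $\beta^{th}\colon \tilde G^{\bullet}\xrightarrow{\sim}\pr_2^*G^{\bullet}$ of self-dual complexes restricting to the identity at $t=0$. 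Transporting $\tilde\zeta$ along $\beta^{th}$ yields the family $\zeta_t$.

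The main obstacle is verifying the endpoint identity $\zeta_i=\xi_i\circ\upsilon$ on the nose, not merely up to isomorphism. At $t=0$ this is immediate since $\beta^{0}=\mathrm{id}$ and Lemma \ref{lem:reduction-compose-equal} directly identifies $\xi_0\circ\upsilon$ with the reduction from $v_0^+$. At $t=1$ one must check that the composite of $\beta^{h}$ with the canonical identification $G^{\bullet}_{v_0^+}\simeq G^{\bullet}$ (from Lemma \ref{lem:reduction-compose-equal} applied to $\xi_0\circ\upsilon$) coincides with the canonical identification $G^{\bullet}_{v_1^+}\simeq G^{\bullet}$ (from Lemma \ref{lem:reduction-compose-equal} applied to $\xi_1\circ\upsilon$). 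This compatibility is expected to follow from the functoriality of the splicing construction used in both Lemma \ref{lem:homotopy-reduction-isom} and Lemma \ref{lem:reduction-compose-equal}, since all three isomorphisms arise from pushing the same underlying data through the same construction, but its verification is the computational heart of the proof.
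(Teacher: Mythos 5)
Your overall strategy—construct a common refinement $\upsilon$, produce a chain homotopy $h$ between the two resulting truncated maps, interpolate over $\bA^1$, and then invoke Lemma \ref{lem:homotopy-reduction-isom}—matches the paper's. However, there are two genuine gaps.

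First, the construction of $\upsilon$ via $U_+^{\bullet}:=A_{0,+}^{\bullet}\oplus A_{1,+}^{\bullet}$ does not satisfy condition \ref{item:isotropic-constri} of Construction \ref{constr:isotr-red}. Each component $A_{i,+}^{\bullet}\to F_+^{\bullet}\hookrightarrow F^{\bullet}$ is indeed an isomorphism on $h^j$ for $j\geq 2$, but the sum $h^j(A_{0,+}^{\bullet})\oplus h^j(A_{1,+}^{\bullet})\to h^j(F^{\bullet})$, $(a,b)\mapsto a+b$, is then merely surjective, with kernel the antidiagonal. So $u^+$ is not an isomorphism on $h^{\geq 2}$ whenever $\bE$ has cohomology in degrees $\geq 2$, which is not excluded in \S\ref{sec:self-dual-cat}. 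Your inference ``each $\varphi_i$ alone is a quasi-isomorphism, so $u^+$ satisfies (i)'' conflates the condition holding for each summand with it holding for their sum. The paper instead chooses a single $a_{\bullet}^+:A_+^{\bullet}\to F_+^{\bullet}$ satisfying (i) together with the chain homotopy; in the affine case one can simply take $a_{\bullet}^+$ to be the inclusion $F_+^{\bullet}\hookrightarrow F^{\bullet}$, since $F_+^{\bullet}$ is then K-projective so that $e_{0,\bullet}^+$ and $e_{1,\bullet}^+$ are already homotopic.

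Second, and more seriously, the endpoint identity at $t=1$ is not merely ``expected to follow from functoriality''—the paper's own proof explicitly states that it does \emph{not} hold in general. The two maps $G^{\bullet}\to G_{v_1^+}^{\bullet}$ one must compare (the composite $\Phi_{1,\bullet}\circ(\varphi_{0,\bullet})^{-1}$ coming from Lemma \ref{lem:homotopy-reduction-isom}, and the identification $(\varphi_{1,\bullet})^{-1}$ from Lemma \ref{lem:reduction-compose-equal}) agree only in degrees $\neq 0$ and in the derived category. The paper resolves this not by proving they coincide, but by a further interpolation: it sets $\psi_t:=t\,\varphi_{1,\bullet}\circ\Phi_{1,\bullet}\circ(\varphi_{0,\bullet})^{-1}+(1-t)\,\id_{G^{\bullet}}$ (an isomorphism of self-dual complexes for every $t$, by Lemma \ref{lem:isomorphic}) and replaces the interpolating family $\zeta_t$ with $\zeta_t\circ\psi_t$, so that the endpoints match on the nose. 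Leaving this step as ``the computational heart of the proof'' is not a verification—it is precisely the gap, and the resolution requires a different idea than the functoriality argument you propose.
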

\begin{proof}
	By definition, for $i=0,1$ the morphism $\xi_i$ is given by a diagram
	\[F^{\bullet}\xleftarrow{f_{i,\bullet}} A_{\xi_i}^{\bullet} \xrightarrow{e_{i,\bullet}} E^{\bullet}.\]
	
	In particular, we have the two truncated maps $e^{+}_{0,\bullet},  e^{+}_{1,\bullet}:F_+^{\bullet}\to E^{\bullet}$. These induce the \emph{same} map in the derived category, namely the bad truncation $F_+^{\bullet}\to F^{\bullet}$ composed with $\alpha_{E}^{-1}\circ \alpha_F$. Thus, we can choose a map of complexes $a_{\bullet}^+:A_+^{\bullet}\to F_+^{\bullet}$ surjective on $h^1$ and an isomorphism on $h^i$ for $i\geq 2$ with $A_+^{\bullet}$ again concentrated in positive degrees, such that the compositions $b^+_{0,\bullet}:= e^+_{0,\bullet}\circ  a^+_{\bullet}$ and $b^+_{1,\bullet}:=a^+_{\bullet}\circ e^+_{1,\bullet}$ are related by a chain homotopy $h_{\bullet}: A^{\bullet}_+\to E^{\bullet}[-1]$\footnote{We give a proof that works for quasi-projective $X$. The proof in the affine case can be simplified somewhat.}, i.e.
	\[b^+_{1,\bullet} = b^+_{0,\bullet} +d_{\bullet}h_{\bullet} + h_{\bullet} d_{\bullet}.\]
	
	Let $\upsilon: G^{\bullet} \rightsquigarrow F^{\bullet}$ denote the generalized isotropic reduction associated to $a_{\bullet}^+$. For $i=0,1$, let $\zeta_i: G_i^{\bullet}\rightsquigarrow E^{\bullet}$ denote the generalized isotropic reduction associated to $b^+_{i, \bullet}$.
	By Lemma \ref{lem:reduction-compose-equal}, there is an isomorphism $\varphi_{i,\bullet}:G_i^{\bullet}\simeq G^{\bullet}$ identifying $\zeta_i$ with $\xi_i \circ \upsilon$.  
	
	We will now use $h_{\bullet}$ to interpolate between $\zeta_0$ and $\zeta_1$:
	Consider the morphism  of complexes $b^+_{t,\bullet}: \pr_{2}^*A_+^{\bullet}\to \pr_2^*E^{\bullet}$ on $\bA^1\times X$, given by 
	\[b^+_{t,\bullet}:=\pr_2^*b^+_{0,\bullet} + t (d_{\bullet} \pr_2^*h_{\bullet} + \pr_2^*h_{\bullet} d),\]
	where $t$ is the coordinate on $\bA^1$. 
	
	Then $b^+_{t,\bullet}$ restricts to $b^+_{0,\bullet}$ over $t=0$ and to $b^+_{1,\bullet}$ over $t=1$. Moreover, the maps $\pr_2^*b_{0,\bullet}^+$ and $b_{t,\bullet}^+$ are chain homotopic via the chain homotopy
	\[h_{t,\bullet}:=t\pr_2^*h_{\bullet}.\]
	Let $\zeta_{t}:G^{t}_{\bA^1}\rightsquigarrow \pr_2^*E^{\bullet}$ be the  generalized isotropic reduction on $\bA^1\times X$ associated to $b_{t,\bullet}^+$. Note that its restrictions to $0,1\in \bA^1$ are canonically identified with $\zeta_0$ and $\zeta_1$ respectively. 
	
	By Lemma \ref{lem:homotopy-reduction-isom}, there is an isomorphism of self-dual complexes $\Phi_{t,\bullet}:\pr_2^*G_0^{\bullet}\to G_t^{\bullet}$ induced by $h_{t,\bullet}$, which restricts to the identity at $t=0$.
	
	Using the identification $G^{\bullet}\simeq G_0^{\bullet}$, we have now constructed a family of morphisms $\zeta_t\circ \Phi_{t,\bullet}\circ (\phi_{0,\bullet})^{-1}:G^{\bullet}\rightsquigarrow E^{\bullet}$ in $\mathcal{R}^{\gen}$ over $\bA_1$, such that:
	\begin{itemize}
		\item At $t=0$, we recover $\xi_0\circ \upsilon = \zeta_1\circ (\varphi_{0, \bullet})^{-1}$,
		\item At $t=1$, we recover $ \zeta_2 \circ \Phi_{1,\bullet } \circ (\varphi_{0,\bullet})^{-1}$  
	\end{itemize}
	
	Now, if for some reason, one had $\Phi_{1,\bullet } \circ (\varphi_{0,\bullet})^{-1} = \varphi_{1,\bullet}^{-1}$ as maps $G^{\bullet}\to G_1^{\bullet}$, then we would be done. In general, we only know that both maps are isomorphisms of self-dual complexes, agree in degrees $\neq 0$, and have the same image in the derived category. Hence, setting
	\[\psi_t:=t \varphi_{1,\bullet}\circ \Phi_{1,\bullet}\circ (\varphi_{0,\bullet})^{-1} + (1-t) \id_{G^{\bullet}}.\]
	
	and replacing $\zeta_t$ with $\zeta_t\circ \psi_{t\bullet}$ gives us the desired family.
\end{proof}

We can further generalize the previous result: Given any \emph{family} of generalized isotropic reductions over some $T$, there exists a single $\upsilon:G^{\bullet}\rightsquigarrow F^{\bullet}$, such that after precomposition with $\upsilon$, the family becomes contractible. 

In the following proposition, let $\bA^n$ denote affine space over any base ring over which $X$ is defined (e.g. the integers). All fiber products are to be understood over this base. 

\begin{proposition}	\label{prop:connect-families}
	Let $\xi_T$ be a family of generalized isotropic reductions $F^{\bullet}\rightsquigarrow E^{\bullet}$ over an affine scheme $T$, and let $\xi:F^{\bullet}\rightsquigarrow E^{\bullet}$ be another generalized isotropic reduction. 
	Then there exists a further $\upsilon:G^{\bullet}\rightsquigarrow F^{\bullet}$ in $\mathcal{R}^{\gen}$, and a family $\zeta_{T\times \mathbb{A}^1}$ of generalized isotropic reductions from $E^{\bullet}$ to $G^{\bullet}$, such that $\zeta_{T\times \{1\}}$ is equal to the constant family with value $\xi\circ \upsilon$, and such that $\zeta_{T\times \{0\}}$ is equal to the family $\xi_T \circ \upsilon$. 
\end{proposition}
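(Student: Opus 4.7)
The plan is to adapt the argument of Proposition \ref{prop:connect-reductions}, now treating the family $\xi_T$ on $T\times X$ and the pullback $\pr_T^*\xi$ of the single reduction as the two reductions to be connected, and interpolating between them over an auxiliary $\bA^1$-parameter placed alongside $T$.

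By (the generalized-reduction analogue of) Remark \ref{rem:iso-red-comparison}, the family $\xi_T$ is determined, up to self-dual isomorphism of its source, by a truncated map $e_{T,\bullet}^+:\pr_2^*F_+^{\bullet}\to \pr_2^*E^{\bullet}$ on $T\times X$, and $\xi$ is determined by a truncated map $e_{\bullet}^+:F_+^{\bullet}\to E^{\bullet}$ on $X$. Both $\pr_2^*e_{\bullet}^+$ and $e_{T,\bullet}^+$ represent the same morphism in $\dercat(T\times X)$, namely the bad truncation $\pr_2^*F_+^{\bullet}\to \pr_2^*F^{\bullet}$ composed with $\pr_2^*(\alpha_E^{-1}\circ \alpha_F)$. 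Following the strategy of Proposition \ref{prop:connect-reductions}, I would choose a map $a_{\bullet}^+:A_+^{\bullet}\to F_+^{\bullet}$ on $X$, concentrated in degrees $\geq 1$ and satisfying \ref{item:isotropic-constri} of Construction \ref{constr:isotr-red}, such that, setting $b_{\bullet}^+:=e_{\bullet}^+\circ a_{\bullet}^+$ on $X$ and $b_{T,\bullet}^+:=e_{T,\bullet}^+\circ \pr_2^*a_{\bullet}^+$ on $T\times X$, the two maps $\pr_2^*b_{\bullet}^+$ and $b_{T,\bullet}^+$ are related on $T\times X$ by a chain homotopy $h_{\bullet}$. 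Let $\upsilon:G^{\bullet}\rightsquigarrow F^{\bullet}$ denote the generalized isotropic reduction on $X$ associated to $a_{\bullet}^+$.

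Working on $T\times \bA^1\times X$ with coordinate $t$ on $\bA^1$ (and writing $\pr$ for the evident projections), one defines
\[b_{t,\bullet}^+ \;:=\; \pr^*b_{T,\bullet}^+ \;+\; t\cdot (d\circ \pr^*h_{\bullet} + \pr^*h_{\bullet}\circ d).\]
This restricts to $\pr^*b_{T,\bullet}^+$ at $t=0$ and to $\pr^*b_{\bullet}^+$ at $t=1$, and satisfies \ref{item:isotropic-constri} fiberwise (since it differs from $\pr^*b_{T,\bullet}^+$ by a chain map inducing zero on cohomology). Applying the generalized version of Construction \ref{constr:isotr-red} produces a family $\zeta_{T\times \bA^1}$ of generalized isotropic reductions from $\pr^*E^{\bullet}$ to a self-dual representative $G_{T\times \bA^1}^{\bullet}$. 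Combining Lemma \ref{lem:reduction-compose-equal} with Lemma \ref{lem:homotopy-reduction-isom}, one obtains canonical isomorphisms of the restrictions $G_{T\times \bA^1}^{\bullet}|_{t=0}\simeq \pr^*G^{\bullet}\simeq G_{T\times \bA^1}^{\bullet}|_{t=1}$ under which $\zeta_{T\times \{0\}}$ is identified with the pullback family $\xi_T\circ \upsilon$ and $\zeta_{T\times \{1\}}$ with the constant family $\pr_T^*(\xi\circ \upsilon)$.

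The main technical obstacle, mirroring the final paragraph of the proof of Proposition \ref{prop:connect-reductions}, is that the identifications at $t=0,1$ produced above need not be literally equal to the tautological ones built into the given pullback and constant families: they agree in degrees $\neq 0$ and induce the same map in the derived category, so they differ by self-dual automorphisms $\psi_0,\psi_1$ of $\pr^*G^{\bullet}$ equal to the identity outside degree $0$. Linearly convex-combining these as $\psi_t:=(1-t)\psi_0 + t\psi_1$ and precomposing $\zeta_{T\times \bA^1}$ with the pullback of $\psi_t$ absorbs the discrepancy and forces strict equality at both endpoints. A secondary subtlety, harmless in the affine case but requiring care in the quasi-projective setting, is producing the chain homotopy $h_{\bullet}$ on $T\times X$ while keeping $a_{\bullet}^+$ itself on $X$; this will be arranged by resolving $F_+^{\bullet}$ by a sufficiently projective bounded complex on $X$, whose pullback to $T\times X$ then realizes the derived-category equality of the two maps as a chain homotopy.
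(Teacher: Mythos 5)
Your proposal is correct and takes essentially the same route as the paper: reduce to the proof of Proposition \ref{prop:connect-reductions} with $X$ replaced by $T\times X$, taking care to choose $a_{\bullet}^+$ (and hence $\upsilon$) on $X$ rather than on $T\times X$, and then interpolate over $\bA^1$. The only cosmetic differences are that the paper arranges the construction so that $t=0$ matches on the nose and only absorbs a discrepancy at $t=1$ with a single linear $\psi_t$, whereas you allow discrepancies at both endpoints; and the paper makes the "secondary subtlety" more explicit by first choosing a finite complex $A_{T,+}^{\bullet}$ on $T\times X$ realizing the homotopy and then picking $a_{\bullet}^+$ on $X$ to lift (up to homotopy) through it, which cleanly reduces the needed vanishing to finitely many cohomology groups.
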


\begin{proof}
	The proof is essentially the same as the one of Proposition \ref{prop:connect-reductions} with $X$ replaced by $T\times X$ with one additional subtlety:
	
	To guarantee that one can choose $G^{\bullet}\rightsquigarrow F^{\bullet}$ on $X$ rather than $T\times X$, one proceeds as follows:
	Suppose one has chosen $A_{T,+}^{\bullet}\to \pr_2^*F_+^{\bullet}$ (on $T\times X$) as in the proof of Proposition \ref{prop:connect-reductions} such that the two respective compositions $A_{T,+}^{\bullet}\to \pr_2^*E$ become homotopic, 
	
	Now we can choose $a^+_{\bullet}:A_{+}^{\bullet}\to F_+^{\bullet}$ on $X$ with $A_+^{\bullet}$ sufficiently negative\footnote{Assuming $X$ is quasi-projective.}, such that its pullback to $T\times X$ possesses a lift $\pr_2^*A^{\bullet}\to A_{T,+}^{\bullet}$ up to homotopy -- the argument here uses is the same for lifting maps from projective resolutions along quasi-isomorphisms and uses vanishing of finitely many elements in sheaf cohomology groups. 
	Then everything goes through with $\upsilon: G^{\bullet}\rightsquigarrow E^{\bullet}$ constructed from $a^+_{\bullet}$.
\end{proof}

By taking the argument in the proof of Proposition \ref{prop:connect-reductions} in a different direction, we obtain the following connectedness result
\begin{proposition}\label{prop:simply-connected}
	The category $\mathcal{R}$ is weakly simply connected, in the sense that any functor $\mathcal{R}\to \mathcal{G}$ into a groupoid $\mathcal{G}$ is equivalent to a constant functor. 
\end{proposition}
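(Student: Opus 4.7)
The plan is to reduce to a standard coequalization property of the category $\mathcal{R}$: for any pair of parallel morphisms $\xi_0,\xi_1:F^\bullet\rightsquigarrow E^\bullet$ in $\mathcal{R}$, I would produce a morphism $\upsilon:G^\bullet\rightsquigarrow F^\bullet$ in $\mathcal{R}$ with $\xi_0\circ\upsilon=\xi_1\circ\upsilon$. Given this, applying any functor $F:\mathcal{R}\to\mathcal{G}$ to a groupoid yields $F(\xi_0)\circ F(\upsilon)=F(\xi_1)\circ F(\upsilon)$, and cancelling the isomorphism $F(\upsilon)$ in $\mathcal{G}$ forces $F(\xi_0)=F(\xi_1)$. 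Combined with connectedness of $\mathcal{R}$ (Corollary~\ref{cor:repcat-connected}), which already implies $F$ factors through a single isomorphism class of objects, this shows that $F$ is equivalent to a constant functor.

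To produce the coequalizer $\upsilon$ I would first invoke Proposition~\ref{prop:connect-reductions} to obtain a $\upsilon_0:G^\bullet\rightsquigarrow F^\bullet$ in $\mathcal{R}^{\gen}$ and an $\mathbb{A}^1$-family $\zeta_t:G^\bullet\rightsquigarrow E^\bullet$ with $\zeta_0=\xi_0\circ\upsilon_0$ and $\zeta_1=\xi_1\circ\upsilon_0$. The generalized reduction $\upsilon_0$ can be promoted to a genuine one by direct-summing the source and target with an acyclic self-dual summand $K^\bullet\oplus K_\bullet$, as in Example~\ref{ex:reductions}(\ref{item:reductionsii}); the family $\zeta_t$ and the compositions with $\xi_0,\xi_1$ adjust functorially. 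What remains is to rigidify the resulting $\mathbb{A}^1$-family into an actual equality of morphisms in~$\mathcal{R}$.

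For the rigidification step I would apply Proposition~\ref{prop:connect-families} to the family $\zeta$ (viewed as a family over $T=\mathbb{A}^1$) with reference morphism $\xi:=\zeta_0$. This yields a further $\upsilon_1:H^\bullet\rightsquigarrow G^\bullet$ and a two-parameter family $\zeta'_{s,t}$ over $\mathbb{A}^1\times\mathbb{A}^1$ whose restriction at $s=1$ is the constant family $\zeta_0\circ\upsilon_1$ and whose restriction at $s=0$ is $\zeta_t\circ\upsilon_1$. The constant edge at $s=1$, together with the values at the remaining corners, exhibits $\zeta_0\circ\upsilon_1$ and $\zeta_1\circ\upsilon_1$ as specializations of a common universal reduction over a square. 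Lemma~\ref{lem:homotopy-reduction-isom}, applied to the chain-homotopy data underlying the family in the proof of Proposition~\ref{prop:connect-reductions}, should then yield an explicit isomorphism of self-dual complexes intertwining these two specializations by the identity; setting $\upsilon:=\upsilon_0\circ\upsilon_1$ would complete the coequalization.

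The main technical obstacle is precisely this last passage from \emph{existence of an $\mathbb{A}^1$-family} to \emph{literal equality in $\mathcal{R}$}. A naive iteration of Proposition~\ref{prop:connect-families} merely produces a new family of the same shape and so does not terminate. Termination has to come from an explicit chain-homotopy calculation, tracking how Construction~\ref{constr:isotr-red} interacts with the specific homotopies $h_t=t\,\pr_2^*h$ appearing in the proof of Proposition~\ref{prop:connect-reductions} and using Lemma~\ref{lem:dual-homotopies} to keep the self-duality manifest. Alternatively, and probably more cleanly, one can anticipate the simplicial enrichment $\mathcal{R}^{\Delta}$ of \S\ref{sec:simpl-struc}: weak contractibility of $\mathcal{R}^{\Delta}$, proved using the affine spaces $\mathbb{A}^n$ as parameter spaces for higher simplices, implies weak simple connectivity of the underlying $1$-category $\mathcal{R}$ formally.
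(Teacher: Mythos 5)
The overall architecture you propose (reduce to a single pair $\xi_0,\xi_1$, post-compose with some $\upsilon$ so that they become identified, then cancel the invertible $F(\upsilon)$ and invoke connectedness) is essentially the same as the paper's; the paper likewise reduces to showing $F(\xi_0\circ\zeta)=F(\xi_1\circ\zeta)$ for a single $\zeta$. Where your argument breaks is the rigidification step.

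You claim Lemma~\ref{lem:homotopy-reduction-isom} will produce a literal coequalizer: $\xi_0\circ\upsilon=\xi_1\circ\upsilon$ in $\mathcal{R}$. This is not what that lemma gives, and the paper's own calculation shows that no such coequalizer exists in the category. Lemma~\ref{lem:homotopy-reduction-isom} only produces an isomorphism $\Phi_\bullet\colon G_0^\bullet\to G_1^\bullet$ of the \emph{source} complexes of the two reductions. In the paper's proof this is precisely what appears, but it is not enough: the resulting diagram \eqref{eq:final-contraction-diag} has the square $\Psi\circ\widehat{\zeta}_0=\widehat{\zeta}_1\circ\Phi_\bullet$, in which $\Psi_\bullet$ is a genuinely non-identity automorphism of the enlarged target $E_K^\bullet$ (its degree-$i$ block is an explicit unipotent matrix built out of the chain homotopy $h$). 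So one does \emph{not} obtain $\widehat{\zeta}_0=\widehat{\zeta}_1\circ\Phi_\bullet$ on the nose; only after applying $F$ does one get $F(\widehat{\zeta}_0)=F(\widehat{\zeta}_1)\circ F(\Phi_\bullet)$. The extra observation that makes this work is the commuting \emph{triangle} in \eqref{eq:final-contraction-diag}: $\Psi_\bullet$ fixes the canonical reduction $E^\bullet\rightsquigarrow E_K^\bullet$ of Example~\ref{ex:reductions}~\ref{item:reductionsi}, and since $F$ is groupoid-valued this forces $F(\Psi_\bullet)=\mathrm{id}$, after which $\Psi_\bullet$ drops out. Your plan is missing both the need to allow an automorphism of the target and the argument that this automorphism becomes trivial under $F$. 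Relatedly, the paper's promotion to genuine reductions uses Example~\ref{ex:reductions}~\ref{item:reductionsiii} (with $K^\bullet=\cone(\mathrm{id}_{G_+^\bullet})$), not~\ref{item:reductionsii}, because $\Psi_\bullet$ has to live on the larger target, not just on $G_i^\bullet$.

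Your suggested alternative --- deduce the statement ``formally'' from weak contractibility of $\Nhc(\mathcal{R}^\Delta)$ --- is also not as clean as you indicate. Theorem~\ref{thm:contractible} shows the simplicial set $\Nhc(\mathcal{R}^\Delta)$ is weakly contractible, but an arbitrary functor $\mathcal{R}\to\mathcal{G}$ is only a map $\mathrm{N}(\mathcal{R})\to \mathrm{N}(\mathcal{G})$, and the inclusion $\mathrm{N}(\mathcal{R})\hookrightarrow\Nhc(\mathcal{R}^\Delta)$ is a mere subcomplex inclusion, not anodyne and not a priori a weak equivalence. A subcomplex inclusion into a weakly contractible simplicial set does not force maps from the subcomplex into a Kan complex to be nullhomotopic (think $S^1\subset D^2$). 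In the paper's later use of contractibility (Theorem~\ref{thm:spin-functor-structure}), the map to $\mathrm{N}^{\mathrm{D}}(\Grpd^{\simeq})$ is constructed directly on $\Nhc(\mathcal{R}^\Delta)$ via Proposition~\ref{prop:pairfunc-simplicial} and Construction~\ref{constr:pullback-simplicial-pairs}; no such extension is supplied here for an arbitrary $F$, and producing one would essentially amount to reproving Proposition~\ref{prop:simply-connected}. So neither route, as you have written it, closes.
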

\begin{proof}
	We've seen in Corollary \ref{cor:repcat-connected} that $\mathcal{R}$ is connected. Let $F:\mathcal{R}\to \mathcal{G}$ be a functor into a groupoid. 
	It suffices to show that any pair of morphisms $\xi_0,\xi_1:F^{\bullet}\rightsquigarrow E^{\bullet}$ become equal under $F$. 
	Let $\xi_0,\xi_1$ be given. Using Example \ref{ex:reductions} \ref{item:reductionsi}, It suffices to show that $\xi_i\oplus K^{\bullet}\oplus K_{\bullet}$ are identified under $F$ for a single acyclic complex $K^{\bullet}$ in non-negative degrees.
	Furthermore, it suffices to show that $F(\xi_0\circ \zeta) = F(\xi_1 \circ\zeta)$ for a single morphism $\zeta:G^{\bullet}\rightsquigarrow F^{\bullet}\oplus K$ in $\mathcal{R}$.
	
	Exactly as in the proof of Proposition \ref{prop:connect-reductions}, write out $\xi_i$ as
	\[F^{\bullet}\xleftarrow{f_{i,\bullet}}A^{\bullet}_{\xi_i}\xrightarrow{e_{i,\bullet}}E^{\bullet},\]
	and choose a quasi-isomorphism $a_{\bullet}:A_+^{\bullet}\to F_+^{\bullet}$ together with a chain-homotopy $h_{\bullet}:A_+^{\bullet}\to E^{\bullet}[-1]$ satisfying
	\[b_{1,{\bullet}}^+ = b_{0,\bullet}^+ +d_{\bullet}h_{\bullet}+h_{\bullet}d_{\bullet}.\]
	Here, $b_{i,\bullet}^+ = a_{\bullet}^+\circ a_{i,\bullet}^+$.
	
	Let $v:G^{\bullet}\rightsquigarrow F^{\bullet}$ be the generalized isotropic reduction associated to $a_{\bullet}^+$, and $\zeta_i:G_i^{\bullet}\rightsquigarrow E^{\bullet}$ the one associated to $b_{i,+}^{\bullet}$ for $i=0,1$. Lemma \ref{lem:reduction-compose-equal} provides isomorphisms $\varphi_{i,\bullet}:G_i^{\bullet}\simeq G^{\bullet}$. 

	 Explicitly, in degree zero,  we have 
	\[G^0_i = h^0(A_{-2}\oplus E_{-1}\xrightarrow{\begin{bmatrix}
			d &  -b_{i,1}^{\vee} \\
			0 & d \\
			0& 0			
	\end{bmatrix}} A_{-1}\oplus E\oplus A^1 \xrightarrow{\begin{bmatrix}
			0& d & -b_{i,1}\\
			0& 0 & d
	\end{bmatrix}} E^1\oplus A^2).\]
	Lemma \ref{lem:homotopy-reduction-isom} provides us with a further isomorphism of self-dual representatives $\Phi_{\bullet}:G_0^{\bullet}\to G_1^{\bullet}$. It is given by the identity in degrees $\neq 0$, and in degree zero it is induced by
	\begin{equation}\label{eq:isom-formula}
		A_{-1}\oplus E\oplus A^1\xrightarrow{\begin{bmatrix}
				\id & -h^*& -\frac{1}{2}h^*h\\
				& \id & h\\
				& & \id
		\end{bmatrix}} A_{-1}\oplus E\oplus A^1.
	\end{equation}
	Writing out $\zeta_i$ as $G^{\bullet}_i\leftarrow A^{\bullet}_i\to E^{\bullet}$, we also have an automorphism $\phi^{\bullet}:A^{\bullet}_0\to A_1^{\bullet}$, and homotopies provided by Construction \ref{constr:homotopy-isom}
	\begin{equation*}
		\begin{tikzcd}[row sep = small]
			G_0^{\bullet}\ar[dd,"\sim"',"\Phi_{\bullet}"]& A_0^{\bullet}\ar[dr]\ar[l]\ar[dd,"\sim"', "\varphi_{\bullet}"]\ar[ddl,Rightarrow]&  \\
			&\, \ar[r,Rightarrow, shorten >=0.9ex, shorten <=0.9ex] &E^{\bullet}\\
			G_1^{\bullet}&	A_1^{\bullet}\ar[l]\ar[ur] & \,
		\end{tikzcd}
	\end{equation*}
	
	We apply Example \ref{ex:reductions} point \ref{item:reductionsiii} to $\zeta_i$ and $v$, and denote the resulting isotropic reduction by $\widehat{\zeta_i}$ and $\hat{v}$. Note that $\widehat{\zeta}_i$  Let $K^{\bullet} = \cone(\id_{G_+^{\bullet}})$, so that $\widetilde{\zeta_i}$ has target $E_K^{\bullet}:= K_{\bullet}\oplus E\oplus K^{\bullet}$.
	We define an automorphism $\Psi_{\bullet}$ of this complex as follows: Set $A^i = 0$ for $i\leq 0$. Then, in degree $i$, we have 
	\[E_K^i =  A_{-i-1} \oplus A_{-i}\oplus E^i\oplus  A^{i}\oplus A^{i+1}.\]
	We define 
	\[\Psi_i:=\begin{bmatrix}
		\id & \quad & - h_{i+1}^* & & -\frac{1}{2}h_{i+1}^*h_{i+1}\\
		\quad& \id& -d h_{i+1}^* - h_i^* d & & \\
		& & \id &dh_{i} + h_{i+1}d &h_{i+1}\\
		& & & \id& \\
		& &  & & \id 
	\end{bmatrix}.\] 
	
	We claim that this fits into a \emph{commutative} diagram in $\mathcal{R}$:
	\begin{equation}\label{eq:final-contraction-diag}
		\begin{tikzcd}[row sep = small]
			G_0^{\bullet}\ar[r, squiggly,"{\widehat{\zeta}_0}"]\ar[dd,"\sim"', "\Phi_{\bullet}"]&E_K^{\bullet}\ar[dd,"\sim"',"\Psi_{\bullet}"]&  \\
			& & E^{\bullet}\ar[ul,squiggly]\ar[dl,squiggly]\\ 
			G_1^{\bullet} \ar[r, squiggly,"{\widehat{\zeta}_1}"]&E_K^{\bullet}&
		\end{tikzcd}
	\end{equation}
	where both maps $E^{\bullet}\rightsquigarrow E_K^{\bullet}$ are the one of Example \ref{ex:reductions} point \ref{item:reductionsi}. This implies that $\widetilde{\zeta}_0 = \widetilde{\zeta}_1\circ \Phi_{\bullet}$ in the total localization of $\mathcal{R}$. 
	That the square in \eqref{eq:final-contraction-diag} commutes follows from the commutativity of the diagram 
	\begin{equation*}
		\begin{tikzcd}
			G_0^{\bullet}\ar[r]\ar[d,"\Phi_{\bullet}"]& K_{\bullet}\oplus A_0^{\bullet}\ar[r]\ar[d,"{\tilde{\varphi}_{\bullet}}"]& E_K\ar[d,"\Psi_{\bullet}"] \\
			G_1^{\bullet}\ar[r] & K_{\bullet}\oplus A_1^{\bullet}\ar[r]&E_K
		\end{tikzcd}
	\end{equation*} 
	where the map denoted by $\widetilde{\varphi}_{\bullet}$ is given by 
	\[A_{-i-1}\oplus A_{-i}\oplus A_0^{-i}\xrightarrow{\begin{bmatrix}
			\id& & -h_{i+1}^*|_{A_0^{-i}}\\
			& \id& -d h_{i+1}^* -h_i^* d\\
			& & \varphi_{\bullet}
	\end{bmatrix}}A_{-i-1}\oplus A_{-i}\oplus A_1^{-i}.\]
	The commutativity of the triangle in \eqref{eq:final-contraction-diag} is checked similarly.
\end{proof}
\subsection{Simplicial structure}\label{sec:simpl-struc}
We endow $\mathcal{R}$ and $\mathcal{R}^{\gen}$ with a simplicial enrichment \cite[\href{https://kerodon.net/tag/00JQ}{Subsection 00JQ}]{kerodon}.
For this, we will use the following model of $n$-simplices by affine spaces. 
\begin{definition}
	We define a co-simplicial object $\mathbb{A}^{\Delta}$ in affine schemes as follows:
	\begin{enumerate}
		\item Its value in degree $n$ is the affine space $\mathbb{A}^n$.
		\item For any morphism $\varphi: [m]\to [n]$ in the simplex category, we associate the morphism $\mathbb{A}^m\to \mathbb{A}^{n}$ given by
		\[(x_1,\ldots,x_m)\mapsto \sum_{i=0}^m x_i \mathsf{v}_{\varphi(i)},\]
		where $\mathsf{v}_0 = 0$ and $\mathsf{v}_{i} = e_{i}$ is the $i$-th coordinate for $i\geq 1$, and where $x_0:=1-\sum_{i=1}^n x_i$.
	\end{enumerate}
\end{definition}
We will collect more related construction and background in \S \ref{sec:affine-models}.

For $E^{\bullet},F^{\bullet}\in \mathcal{R}$, let $\underline{\Hom}_{\mathcal{R}}(F^{\bullet},E^{\bullet})$ denote the presheaf that sends a scheme $T$ to the set of families of maps $F^{\bullet}\to E^{\bullet}$ over $T$, and similarly for $\mathcal{R}^{\gen}$. 

\begin{definition}\label{def:simplicial-enrichment-reductions}
	Let $\tilde{\mathcal{R}}^{\Delta}$ denote the simplicial category whose objects are the objects of $\mathcal{R}$ and such that for $E^{\bullet}, F^{\bullet}\in \mathcal{R}$, the associated Hom- simplicial set is given by the composition 
	\[\Delta \xrightarrow{\mathbb{A}^{\Delta}} \operatorname{Schemes} \xrightarrow{\underline{\Hom}_{\mathcal{R}}(F^{\bullet},E^{\bullet}) } \operatorname{Set}.\] 	
	Similarly, define $\tilde{\mathcal{R}}^{\gen, \Delta}$ by replacing $\mathcal{R}$ with $\mathcal{R}^{\gen}$ in the above.	
\end{definition}

It is clear that $\tilde{\mathcal{R}}^{\Delta}$ defines a simplicial enrichment over $\mathcal{R}$, i.e. the morphisms $F^{\bullet}\to E^{\bullet}$ in $\mathcal{R}$ are precisely the $0$-simplices in $\Hom_{\mathcal{R}^{\Delta}}(F^{\bullet}, E^{\bullet})$, and similarly for $\tilde{\mathcal{R}}^{\gen, \Delta}$. Moreover, $\tilde{\mathcal{R}}^{\Delta}$ is a simplicial sub-category of $\tilde{\mathcal{R}}^{\gen,\Delta}$ in the obvious way. 

We now employ some basic tools from simplicial homotopy theory.
For technical reasons, it is more convenient to work with simplicial categories that are \emph{locally Kan}, in the sense that every Hom-simplicial set between two objects is a Kan complex \cite[\href{https://kerodon.net/tag/00JY}{Definition 00JY}]{kerodon}. There is an explicit functorial construction $\Ex^{\infty}$ which to any simplicial set $Y$ associates a Kan complex $\Ex^{\infty}(Y)$ together with an injective weak equivalence of simplicial sets $Y\to \Ex^{\infty}(Y)$ \cite[\href{https://kerodon.net/tag/00ZA}{Subsection 00ZA}]{kerodon}. Moreover, $\Ex^{\infty}$ preserves finite products. 

Given any simplicial category $\mathcal{C}^{\Delta}$, one can construct a locally Kan simplicial category $\mathcal{C}^{\Delta, \Ex^{\infty}}$ by declaring the objects of $\mathcal{C}^{\Delta,\Ex^{\infty}}$ to be the objects of $\mathcal{C}^{\Delta}$, and for any pair of objects $x,y$ taking 
\begin{equation}\label{eq:hom-spaces-locally-kan-replacement}
	\Hom_{ \mathcal{C}^{\Delta, \Ex^{\infty}}}(x,y) :=\Ex^{\infty}\left(\Hom_{\mathcal{C}^{\Delta}}(x,y)\right).
\end{equation}
\begin{definition}\label{def:kan-enrichment-reductions}
	We define 
	\[\mathcal{R}^{\Delta}:= \tilde{\mathcal{R}}^{\Delta, \Ex^{\infty}} \mbox{ and } \mathcal{R}^{\gen,\Delta}:= \tilde{\mathcal{R}}^{\gen,\Delta,\Ex^{\infty}}.\]
\end{definition} 

To any simplicial category $\mathcal{C}^{\Delta}$, one can associate a simplicial set $\Nhc(\mathcal{C}^{\Delta})$ via the \emph{homotopy-coherent nerve} construction \cite[\href{https://kerodon.net/tag/00KM}{Subsection 00KM}]{kerodon}. If the simplicial category is locally Kan, then its homotopy coherent nerve is an \emph{infinity category}. 

The reason for considering the simplicial enrichment are the following contractibility results. 

\begin{proposition}\label{prop:cofiltered}
	The $\infty$-category $\Nhc(\mathcal{R}^{\gen,\Delta})$ is \emph{cofiltered} (cf. \cite[Definition 5.3.1.7]{lurie2006higher} for the opposite notion). In particular, it is weakly contractible as a simplicial set.  
\end{proposition}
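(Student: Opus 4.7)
The plan is to verify the standard extension criterion for cofilteredness: for every finite simplicial set $K$ and diagram $K \to \Nhc(\mathcal{R}^{\gen,\Delta})$, an extension to $K^{\triangleleft}$ exists. Via the rigidification/homotopy-coherent-nerve adjunction, this is equivalent to the following concrete problem -- for every finite simplicially enriched category $\mathfrak{D}$ and every map $\mathfrak{D} \to \mathcal{R}^{\gen,\Delta}$, construct a compatible extension to $\{v\} \star \mathfrak{D}$ adjoining a new initial object $v$. I would prove this directly by induction on the cellular dimension of $\mathfrak{D}$, accumulating a common source $G^{\bullet}$ by iterated pre-composition.

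On the $0$-skeleton, repeated application of Corollary \ref{cor:repcat-connected} to the finitely many objects in the image produces a single self-dual representative $G_0^{\bullet}$ with morphisms to every object $E_j^{\bullet}$. For each $1$-cell $E_p^{\bullet} \rightsquigarrow E_q^{\bullet}$ of $\mathfrak{D}$, the composite $G_0^{\bullet} \rightsquigarrow E_p^{\bullet} \rightsquigarrow E_q^{\bullet}$ and the already-chosen $G_0^{\bullet} \rightsquigarrow E_q^{\bullet}$ are parallel arrows in $\mathcal{R}^{\gen}$; by Proposition \ref{prop:connect-reductions} they become connected by a family over $\bA^1$ after pre-composition with some $\upsilon : G_1^{\bullet} \rightsquigarrow G_0^{\bullet}$. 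Iterating over the finitely many $1$-cells and composing the resulting $\upsilon$'s gives a single dominating source. For $n$-cells with $n \geq 2$, which by Definition \ref{def:simplicial-enrichment-reductions} are ultimately built (after subdivision via $\Ex^{\infty}$) from families of morphisms over affine models $\bA^k$, Proposition \ref{prop:connect-families} supplies exactly the interpolation required: any existing family over $T$ can be deformed into a constant family over $T \times \bA^1$ after further pre-composition with a suitable $\upsilon$. Finite iteration over all higher cells yields a single $G^{\bullet}$ equipped with a coherent cone on the image of $\mathfrak{D}$, giving the desired extension to $\{v\} \star \mathfrak{D}$.

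The main obstacle is the coherence bookkeeping: each new pre-composition by a $\upsilon$ must preserve the coherence data already built at lower dimensions. This is in fact automatic, because pre-composition by $\upsilon$ is functorial and fixes the targets $E_p^{\bullet}$, while Propositions \ref{prop:connect-reductions} and \ref{prop:connect-families} are formulated so that the new witnessing family restricts to the old data at the appropriate face of the parameter affine space. The role of the $\Ex^{\infty}$ Kan-replacement is essential throughout: it guarantees that every partial higher coherence can actually be completed to a simplex, so the inductive construction never stalls. Finally, the weak contractibility assertion is a formal consequence of cofilteredness: applying the extension property to boundary inclusions $\partial \Delta^n \to \Delta^n$ null-homotopes every sphere in the nerve.
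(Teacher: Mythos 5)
Your proposal identifies the right supporting lemmas (Corollary \ref{cor:repcat-connected}, Proposition \ref{prop:connect-reductions}, Proposition \ref{prop:connect-families}, the $\Ex^{\infty}$ replacement and the affine models) and the overall spirit is right, but the coherence issue you wave away in the third paragraph is a genuine gap, and it is precisely the gap the paper's proof is organized to avoid. You set out to verify the cone-extension characterization of cofilteredness directly -- that every finite $K\to \Nhc(\mathcal{R}^{\gen,\Delta})$ extends to $K^{\triangleleft}$ -- and, after rigidification, to build a cone on a finite simplicial category $\mathfrak{D}$ by accumulating precompositions $\upsilon$ cell by cell. The claim that coherence is ``automatic because precomposition by $\upsilon$ is functorial'' does not hold up: each $\upsilon$ changes the common source, so all the witnessing families produced at lower skeleta must be recompatibilized at every stage, and higher cells of $\mathfrak{D}$ impose compatibilities among the families over products of affine models, not just restrictions to faces. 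Carrying this out carefully is, in effect, reproving \cite[Proposition 5.3.1.13]{lurie2006higher} from scratch.

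The paper's proof sidesteps exactly this: it invokes \cite[Proposition 5.3.1.13]{lurie2006higher} to reduce cofilteredness of the $\infty$-category to the much more local criterion of \cite[Definition 5.3.1.1]{lurie2006higher} for a locally Kan simplicial category -- namely (i) finite collections of objects admit a common source, and (ii) for any map $\partial\Delta^{n+1}\to \Hom_{\mathcal{R}^{\gen,\Delta}}(F^{\bullet},E^{\bullet})$ there is a precomposition $a$ making $a^*\varphi$ null. Criterion (ii) lives entirely in a single Hom-space and carries no hidden coherence with other parts of a larger diagram, which is why Proposition \ref{prop:connect-families} (combined with the finite-stage $\Ex^k$ factorization, Proposition \ref{prop:geom-real-adjunction}, and Example \ref{ex:subdivs}) suffices and the argument closes cleanly. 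If you want to keep your direct route, you would need to actually supply the inductive coherence argument (essentially internalizing the proof of HTT 5.3.1.13); otherwise the cleaner path is to cite that result as the paper does and concentrate the work on verifying the two conditions.
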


Intuitively, this means that the simplicial category obtained from $\mathcal{R}^{\gen,\Delta}$ by inverting all morphisms is equivalent to the ordinary category with one object and only the identity morphism.

\begin{proof}
	We will show that the \emph{simplicial category} $\mathcal{R}^{\gen,\Delta}$ is cofiltered in the following sense \cite[Definition 5.3.1.1 and Remark 5.3.1.4]{lurie2006higher}
	\begin{enumerate}
		\item For any finite collection of objects $E^{\bullet}_i$ in $\mathcal{R}^{\gen,\Delta}$, there exists an object $F^{\bullet}\in \mathcal{R}^{\gen,\Delta}$, and morphisms $F^{\bullet}\rightsquigarrow E_i^{\bullet}$ for each $i$.\label{item:cofilteredi}
		\item Given $F^{\bullet}, E^{\bullet}\in \mathcal{R}^{\gen,\Delta}$ and a map of simplicial sets $\varphi:\partial \Delta^{n+1}\to \Hom_{\mathcal{R}^{\gen,\Delta}}(F^{\bullet}, E^{\bullet})$, where $n\geq 0$,  there exists a morphism $a:G^{\bullet}\rightsquigarrow F^{\bullet}$, such that the induced map $a^*\varphi:\partial \Delta^{n+1}\to \Hom_{\mathcal{R}^{\gen,\Delta}}(F^{\bullet},E^{\bullet})$ is weakly contractible. \label{item:cofilteredii} 
	\end{enumerate}
	
	Then we can conclude by (the opposite of) \cite[Proposition 5.3.1.13 and Lemma 5.3.1.18]{lurie2006higher}, which state, respectively, that a locally Kan simplicial category $\mathcal{C}^{\Delta}$ is filtered if and only if $\Nhc(\mathcal{C}^{\Delta})$ is filtered as an infinity category, and that a filtered infinity category is weakly contractible.
	
	Point \ref{item:cofilteredi} follows by a repeated application of Proposition \ref{prop:iso-red}.  
	
	For point \ref{item:cofilteredii} we use Proposition \ref{prop:connect-families}: Let $\varphi:\partial\Delta^{n+1}\to \Hom_{\mathcal{R}^{\gen,\Delta}}(F^{\bullet}, E^{\bullet})$ be a map of simplicial sets. By definition, the target of $\varphi$ is $\Ex^{\infty}(\Hom_{\tilde{\mathcal{R}}^{\gen,\Delta}}(F^{\bullet}, E^{\bullet}))$. Now we use some standard properties of the $\Ex^{\infty}$-functor. 
	Since $\partial\Delta^{n+1}$ is a finite simplicial set, the map $\varphi$ factors through some finite stage $\operatorname{Ex}^k(\Hom_{\mathcal{R}^{\gen,\Delta}}(F^{\bullet}, E^{\bullet}))$, and therefore corresponds to an adjoint map 
	\[\operatorname{Sd}^k(\partial\Delta^n)\to \Hom_{\mathcal{R}^{\gen,\Delta}}(F^{\bullet},E^{\bullet}).\] 
	Now, by Proposition \ref{prop:geom-real} and Example \ref{ex:subdivs} below, we can find a geometric realization of $\operatorname{Sd}^k(\partial\Delta^{n+1})$ (in the language of that section) as a closed subscheme $\mathbb{A}(\operatorname{Sd}^k(\partial\Delta^{n+1}))\subseteq\bA^N$ for some $N$. By Proposition \ref{prop:geom-real-adjunction} there, the map $\varphi$ corresponds to a map $\mathbb{A}(\operatorname{Sd}^k(\partial\Delta^{n+1}))\to \underline{\Hom}_{\mathcal{R}^{\gen}}(F^{\bullet}, E^{\bullet})$ (also denoted $\varphi$). Proposition \ref{prop:connect-families} now states that we can find $G^{\bullet}\to F^{\bullet}$ and a map 
	\[\Phi: \mathbb{A}(\operatorname{Sd}^k(\partial\Delta^{n+1})) \times \mathbb{A}^1\to \underline{\Hom}_{\mathcal{R}^{\gen}}(G^{\bullet}, E^{\bullet})\]  
	restricting to $a^*\varphi$ at $0\in \mathbb{A}^1$ and to a constant map at $1\in \mathbb{A}^1$. 
	
	Via the last vertex map \cite[\href{https://kerodon.net/tag/00YR}{Subsection 00YR}]{kerodon} and functoriality of subdivision, we have morphisms
	\[\Sd^k(\partial\Delta^{n+1} \times \Delta^1)\to \Sd^k(\partial\Delta^{n+1})\times \Sd^k\Delta^1\to \Sd^k(partial\Delta^{n+1})\times \Delta^1.\]
	Taking $\bA(-)$ and composing with $\Phi$, we get a map
	\[\Psi: \bA(\Sd^k(\partial\Delta^{n+1} \times \Delta^1)) \to \underline{\Hom}_{\mathcal{R}^{\gen}}(G^{\bullet}, E^{\bullet}),\]
	which corresponds to a map 
	\[\psi: \partial \Delta^{n+1} \times \Delta^1 \to \Hom_{\mathcal{R}^{\gen,\Delta}}(G^{\bullet},E^{\bullet}).\]
	One checks that $\psi$ restricts to $a^*\varphi$ at $0\in \Delta^1$ and is constant at $1\in \Delta^1$. This shows that $a^*\varphi$ is homotopy equivalent to a constant map, hence trivial on homotopy groups. 
\end{proof}

Using a generalization of Example \ref{ex:reductions} which we state in Lemma \ref{lem:ex-red-generalized}, one can conclude the same for the category of isotropic reductions
\begin{theorem}\label{thm:contractible}
	The $\infty$-category $\Nhc(\mathcal{R}^{\Delta})$ is weakly contractible.
\end{theorem}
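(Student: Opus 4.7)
The plan is to repeat the argument of Proposition \ref{prop:cofiltered} verbatim in $\mathcal{R}^{\Delta}$ in place of $\mathcal{R}^{\gen,\Delta}$, showing that $\mathcal{R}^{\Delta}$ is cofiltered as a simplicial category and then concluding via \cite[Proposition 5.3.1.13 and Lemma 5.3.1.18]{lurie2006higher}. The two conditions to check are: (i) for any finite collection $E_1^{\bullet},\ldots,E_k^{\bullet}$ of objects in $\mathcal{R}$, there exists $G^{\bullet}\in \mathcal{R}$ together with genuine isotropic reductions $G^{\bullet}\rightsquigarrow E_i^{\bullet}$, and (ii) for any map $\varphi\colon \partial\Delta^{n+1}\to \Hom_{\mathcal{R}^{\Delta}}(F^{\bullet}, E^{\bullet})$, there exists $a\colon G^{\bullet}\rightsquigarrow F^{\bullet}$ in $\mathcal{R}$ such that $a^{*}\varphi$ is null-homotopic.

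The obstruction is that the existence results driving the proof of Proposition \ref{prop:cofiltered}, namely Propositions \ref{prop:iso-red} and \ref{prop:connect-families}, produce only \emph{generalized} isotropic reductions. To upgrade to genuine ones, we invoke the forthcoming Lemma \ref{lem:ex-red-generalized}, a family-theoretic strengthening of Example \ref{ex:reductions}: given any family over an affine base $T$ of generalized isotropic reductions $\zeta\colon F^{\bullet}\rightsquigarrow E^{\bullet}$, one canonically builds a genuine isotropic reduction $\hat{\zeta}\colon F^{\bullet}\rightsquigarrow E^{\bullet}\oplus K^{\bullet}\oplus K_{\bullet}$ together with the genuine reduction $E^{\bullet}\rightsquigarrow E^{\bullet}\oplus K^{\bullet}\oplus K_{\bullet}$ of Example \ref{ex:reductions} \ref{item:reductionsi}, both naturally in $T$.

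With this tool in hand, condition (i) is handled by first applying Proposition \ref{prop:iso-red} iteratively to obtain a $G'^{\bullet}$ with generalized reductions to each $E_i^{\bullet}$, and then running Lemma \ref{lem:ex-red-generalized} to promote these to genuine reductions (possibly after replacing each $E_i^{\bullet}$ by its thickening $E_i^{\bullet}\oplus K_i^{\bullet}\oplus K_{i,\bullet}$, which is equivalent to $E_i^{\bullet}$ in $\mathcal{R}^{\Delta}$ via Example \ref{ex:reductions} \ref{item:reductionsi}). For (ii), one first applies the inclusion $\mathcal{R}^{\Delta}\hookrightarrow \mathcal{R}^{\gen,\Delta}$ and uses Proposition \ref{prop:connect-families} to produce a contracting family $\Phi$ over $T\times \bA^1$ in the generalized category; Lemma \ref{lem:ex-red-generalized} in families promotes $\Phi$ to a family in $\mathcal{R}^{\Delta}$, after which the subdivision and last-vertex-map argument of Proposition \ref{prop:cofiltered} delivers the required null-homotopy.

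The hard part is to formulate Lemma \ref{lem:ex-red-generalized} with sufficient naturality in the base $T$: the subdivision and last-vertex-map construction requires transporting contracting families along pullbacks between the affine realizations $\bA(\Sd^k(-))$, and this only carries through cleanly if the upgrade from generalized to genuine reductions is functorial in $T$ and compatible with the simplicial enrichment of Definition \ref{def:kan-enrichment-reductions}.
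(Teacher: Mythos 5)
Your plan hinges on showing that $\mathcal{R}^{\Delta}$ itself is cofiltered, using the hat operation to promote generalized reductions to genuine ones. This fails because the hat changes the \emph{target} of a morphism, not its source: from a generalized $\zeta\colon G^{\bullet}\rightsquigarrow E^{\bullet}$, Example \ref{ex:reductions}\ref{item:reductionsii} produces a genuine $\widehat{\zeta}\colon G^{\bullet}\rightsquigarrow E^{\bullet}\oplus K^{\bullet}\oplus K_{\bullet}$, while Example \ref{ex:reductions}\ref{item:reductionsi} gives the genuine morphism $E^{\bullet}\rightsquigarrow E^{\bullet}\oplus K^{\bullet}\oplus K_{\bullet}$. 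What you obtain is a \emph{cospan} with a common target, whereas cofilteredness condition (i) requires a common \emph{source} mapping to the given $E_i^{\bullet}$ themselves. There is no arrow in $\mathcal{R}$ from the thickening back down to $E_i^{\bullet}$, and ``replacing $E_i^{\bullet}$ by its thickening'' is not available: the condition must be checked for the original objects, which are connected to their thickenings only by an arrow pointing the wrong way. The constraint \ref{item:iso-redii} of Definition \ref{def:isotropic-red} that makes a reduction genuine (injectivity of $A^1\to E^1\oplus A^2$) lives entirely at the target end, so no modification of the source can repair it. This is precisely why Corollary \ref{cor:repcat-connected} produces only a zigzag rather than a span, and the paper never claims $\mathcal{R}^{\Delta}$ is cofiltered. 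The same obstruction defeats your handling of condition (ii): applying the hat to the contracting family from Proposition \ref{prop:connect-families} yields a family valued in $\Hom_{\mathcal{R}}(G^{\bullet},\Sigma_K E^{\bullet})$, not in $\Hom_{\mathcal{R}}(G^{\bullet},E^{\bullet})$, so it no longer witnesses a null-homotopy of $a^{*}\varphi$.

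The paper's actual argument is structurally different: it does not prove cofilteredness of $\mathcal{R}^{\Delta}$ but instead contracts each putative obstruction cycle directly. Given a finite $\varphi\colon K\to\Nhc(\mathcal{R}^{\Delta})$, one first uses cofilteredness of $\mathcal{R}^{\gen,\Delta}$ to cone it off to $\varphi^{\triangleleft}\colon K^{\triangleleft}\to\Nhc(\mathcal{R}^{\gen,\Delta})$ with apex $F^{\bullet}$. The under-category operator $\Omega_{F^{\bullet}}$ of Lemma \ref{lem:ex-red-generalized} then applies the hat to every cone edge at once, producing a genuine cone $\tilde{\varphi}^{\triangleleft}\colon K^{\triangleleft}\to\Nhc(\mathcal{R}^{\Delta})$ whose base is $\Sigma_{F^1}(\varphi)$ (shifted, not $\varphi$). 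Finally, the natural transformation $\Xi_{F^1}\colon\id\Rightarrow\Sigma_{F^1}$, which restricts to $\mathcal{R}^{\Delta}$, supplies a cylinder from $\varphi$ to $\Sigma_{F^1}(\varphi)$; gluing it to $\tilde{\varphi}^{\triangleleft}$ along $K$ produces a map out of the weakly contractible set $K\times\Delta^1\amalg_K K^{\triangleleft}$ restricting to $\varphi$, which kills $\varphi$ on homotopy sets. The crucial point is that the shifted cone plus the cylinder between $\varphi$ and its shift substitutes for the span you were trying to build, without ever needing a genuine common source in $\mathcal{R}^{\Delta}$.
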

\begin{proof}
	Let $y$ be an arbitrary vertex of $\Nhc(\mathcal{R}^{\Delta})$ (which exists by \S \ref{sec:symm-rep}). As in the proof of \cite[Lemma 5.3.1.18]{lurie2006higher}, it is enough to show that the homotopy sets $\pi_i(\Nhc(\mathcal{R}^{\Delta}),y)$ are singletons for $i\geq 0$. 
	If not, there exists a map $\varphi:K\to \Nhc(\mathcal{R}^{\Delta})$ from a (pointed) finite simplicial set inducing a non-trivial map on $\pi_i$ for some $i$. 
	
	By Proposition \ref{prop:cofiltered}, we can extend $\varphi$ to a map $\varphi^{\triangleleft}:K^{\triangleleft}\to \Nhc(\mathcal{R}^{\gen,\Delta})$, which we may regard as a map $\varphi^{\triangleleft}:K\to  \Nhc(\mathcal{R}^{\gen,\Delta})_{F^{\bullet}/}$. 
	Note that composing $\varphi^{\triangleleft}$ with the forgetful map $\pi:\Nhc(\mathcal{R}^{\gen,\Delta})_{F^{\bullet}/}\to \Nhc(\mathcal{R}^{\gen,\Delta})$ recovers $\varphi$.
	We consider the map 
	\[\Omega_{F^{\bullet}}:\Nhc(\mathcal{R}^{\gen,\Delta})_{F^{\bullet}/}\to \Nhc(\mathcal{R}^{\gen, \Delta})_{F^{\bullet}/}\]
	 of Lemma \ref{lem:ex-red-generalized}.
	 By part \ref{item:exredgen2b} of the same lemma, we find that $\Omega_{F^{\bullet}}\circ \varphi^{\triangle}$ factors through $\Nhc(\mathcal{R}^{\Delta})_{F^{\bullet}/}$. We obtain a corresponding map $\tilde{\varphi}^{\triangleleft}:K^{\triangleleft}\to \Nhc(\mathcal{R}^{\Delta})$ whose restriction to $K$ is $\Sigma_{F^1}(\varphi)$ by Lemma \ref{lem:ex-red-generalized} \ref{item:exredgen2a}. 
	By Lemma \ref{lem:ex-red-generalized} \ref{item:exredgen1b}, we also have the map $\psi:\Nhc(\Xi_{F^1})\circ (\varphi\times \id_{\Delta^1}):K\times \Delta^1\to \Nhc(\mathcal{R}^{\Delta})$. 
	By glueing $\tilde{\varphi}^{\triangleleft}$ and $\psi$ along $K$, we obtain a map 
	\[\tilde{\psi}:K\times \Delta^1 \coprod_{K} K^{\triangleleft}\to \Nhc(\mathcal{R}^{\Delta}),\]
	whose restriction to $K\times\{0\}\subset K\times \Delta^1$ recovers $\varphi$. Since the domain of $\tilde{\psi}$ is weakly contractible, it induced trivial maps on homotopy sets. Hence, $\varphi$ induces trivial maps on homotopy sets, which produces the desired contradiction. 
\end{proof}

In the following, let $[1]$ denote the simplicial category associated to the partially ordered set $\{0,1\}$.
\begin{lemma}\label{lem:ex-red-generalized}
	\begin{enumerate}
		\item Let $K$ be a vector bundle on $X$, and consider the acyclic complex $K^{\bullet} := [K\to K]$ in degrees $[0,1]$. 
		\begin{enumerate}
			\item The operation $E^{\bullet}\mapsto E^{\bullet}\oplus K^{\bullet}\oplus K_{\bullet}$ defines a functor of simplicial categories $\Sigma_K:\mathcal{R}^{\gen, \Delta}\to  \mathcal{R}^{\gen, \Delta}$, which restricts to a functor $\mathcal{R}^{\Delta}\to \mathcal{R}^{\Delta}$. 
			\item \label{item:exredgen1b} There is a natural functor of simplicial categories 
			\[\Xi_K: \mathcal{R}^{\gen, \Delta}\times [1]\to \mathcal{R}^{gen,\Delta}\] that restricts to the identity over $0\in [1]$ and to $\Sigma_K$ over $1\in [1]$, and whose value at a morphism $(\id_{E^{\bullet}}, (0\to 1))$ is the isotropic reduction $\xi_{K}(E^{\bullet}):E^{\bullet}\rightsquigarrow \Sigma_{K}E^{\bullet}$ of Example \ref{ex:reductions} \ref{item:reductionsi}. 
			Moreover, $\Xi_K$ restricts to a functor 
			\[\mathcal{R}^{\Delta}\times [1]\to \mathcal{R}^{\Delta}.\]
		\end{enumerate}
		\item \label{item:exredgen2} Let $F^{\bullet}\in \mathcal{R}$ be arbitrary. There is a natural map of simplicial sets
		\[\Omega_{F^{\bullet}}:\Nhc(\mathcal{R}^{\gen,\Delta})_{F^{\bullet}/}\to \Nhc(\mathcal{R}^{\gen, \Delta})_{F^{\bullet}/}\]
		with the following properties
		\begin{enumerate}
			\item \label{item:exredgen2a} There is a commutative diagram
			\begin{equation*}
				\begin{tikzcd}
					\Nhc(\mathcal{R}^{\gen,\Delta})_{F^{\bullet}/}\ar[r,"\Omega_{F^{\bullet}}"]\ar[d,"\pi"]& \Nhc(\mathcal{R}^{\gen,\Delta})_{F^{\bullet}/}\ar[d,"\pi"] \\
					\Nhc(\mathcal{R}^{\gen,\Delta})\ar[r, "{\Nhc(\Sigma_{F^1})}"] & \Nhc(\mathcal{R}^{\gen,\Delta})\,
				\end{tikzcd}
			\end{equation*}
			where $\pi$ denotes the natural forgetful map from the under-category.
			\item \label{item:exredgen2b} It restricts to a map 
			\[\Nhc(\mathcal{R}^{\gen, \Delta})_{F^{\bullet}/}\supseteq \pi^{-1}(\Nhc(\mathcal{R}^{\Delta}))\to \Nhc(\mathcal{R}^{\Delta})_{F^{\bullet}/}.\]
		\end{enumerate}
		
	\end{enumerate}
\end{lemma}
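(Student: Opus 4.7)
For part (1a), I verify that taking the direct sum with the fixed acyclic self-dual complex $K^{\bullet}\oplus K_{\bullet}$ preserves all the structure defining $\mathcal{R}^{\gen}$ and $\mathcal{R}$ and commutes with composition of (generalized) isotropic reductions. For part (1b), I identify $\xi_K$ as a natural transformation $\id\Rightarrow \Sigma_K$ of simplicial functors, from which $\Xi_K$ arises formally. For part (2), I use the hat construction of Example \ref{ex:reductions} \ref{item:reductionsii} to define a functor of simplicial slice categories, from which $\Omega_{F^{\bullet}}$ is obtained via the homotopy-coherent nerve.

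\textbf{Part (1a).} On objects, set $\Sigma_K E^{\bullet} := E^{\bullet}\oplus K^{\bullet}\oplus K_{\bullet}$, with self-dual structure obtained by direct-summing that on $E^{\bullet}$ with the canonical swap isomorphism on $K^{\bullet}\oplus K_{\bullet}$; the identification $\Sigma_K E^{\bullet}\simeq \bE$ in $\dercat(X)$ descends from that on $E^{\bullet}$ since the additional summand is acyclic. On a (generalized) isotropic reduction $\xi:F^{\bullet}\xleftarrow{f_{\bullet}}A^{\bullet}\xrightarrow{e_{\bullet}}E^{\bullet}$, set $\Sigma_K\xi$ to be given by
\[\Sigma_K F^{\bullet}\xleftarrow{f_{\bullet}\oplus \id}A^{\bullet}\oplus K^{\bullet}\oplus K_{\bullet}\xrightarrow{e_{\bullet}\oplus \id}\Sigma_K E^{\bullet}.\]
The defining conditions of a generalized isotropic reduction are preserved, since direct-summing by an acyclic complex preserves quasi-isomorphisms and respects self-duality. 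For genuine reductions, the additional degree-$1$ contribution to condition \ref{item:iso-redii} is just the identity on $K$, which has locally free cokernel. Functoriality under composition reduces to the observation -- readable off the splicing-based definition of composition in \S\ref{sec:iso-red} -- that splicing commutes with direct summing by acyclic self-dual complexes. Extension to families is by pullback, and to the simplicial enrichments is automatic by functoriality of $\Ex^{\infty}$.

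\textbf{Part (1b).} The isotropic reductions $\xi_K(E^{\bullet}):E^{\bullet}\rightsquigarrow \Sigma_K E^{\bullet}$ of Example \ref{ex:reductions} \ref{item:reductionsi} assemble into a natural transformation $\xi_K:\id \Rightarrow \Sigma_K$: for any $\xi:F^{\bullet}\rightsquigarrow E^{\bullet}$ with middle complex $A^{\bullet}$, both compositions $\Sigma_K\xi\circ \xi_K(F^{\bullet})$ and $\xi_K(E^{\bullet})\circ \xi$ admit equivalent representatives of the form
\[F^{\bullet}\leftarrow A^{\bullet}\oplus K_{\bullet}\to E^{\bullet}\oplus K^{\bullet}\oplus K_{\bullet},\]
namely the ``hat'' of Example \ref{ex:reductions} \ref{item:reductionsii} applied to $\xi$; this can be seen by applying the composition recipe of \S\ref{sec:iso-red} together with Lemma \ref{lem:reduction-compose-equal} on both sides. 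Naturality extends to families and, by functoriality of $\Ex^{\infty}$, to the simplicial enrichment. Given a natural transformation between simplicial functors, $\Xi_K:\mathcal{R}^{\gen,\Delta}\times [1]\to \mathcal{R}^{\gen,\Delta}$ is then defined formally: $(E^{\bullet},i)\mapsto \Sigma_K^i E^{\bullet}$ on objects, $\Sigma_K^i\xi$ on pure morphisms at level $i$, and $\Sigma_K\xi\circ \xi_K(F^{\bullet})$ on mixed morphisms $(\xi,0\to 1)$. Since $\xi_K(E^{\bullet})$ is genuine and $\Sigma_K$ preserves genuineness by part (1a), the restriction to $\mathcal{R}^{\Delta}\times [1]$ factors through $\mathcal{R}^{\Delta}$.

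\textbf{Part (2) and main obstacle.} Given $\zeta:F^{\bullet}\rightsquigarrow E^{\bullet}$, Example \ref{ex:reductions} \ref{item:reductionsii} produces a \emph{genuine} isotropic reduction $\hat{\zeta}:F^{\bullet}\rightsquigarrow \Sigma_{F^1}E^{\bullet}$. Crucially, $K=F^1$ depends only on $F^{\bullet}$, so the assignment $\zeta\mapsto \hat{\zeta}$ is natural in the ``target'' direction: any morphism $\phi:\zeta_1\to \zeta_2$ in the slice $(\mathcal{R}^{\gen,\Delta})_{F^{\bullet}/}$ -- a morphism $\phi:E_1^{\bullet}\rightsquigarrow E_2^{\bullet}$ together with the data identifying $\phi\circ \zeta_1$ with $\zeta_2$ -- gives rise, after applying $\Sigma_{F^1}$ to $\phi$ and the hat construction to $\zeta_1,\zeta_2$, to a morphism $\hat{\zeta_1}\to \hat{\zeta_2}$ in the same slice. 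This defines a functor of simplicial slice categories
\[\Psi_{F^{\bullet}}:(\mathcal{R}^{\gen,\Delta})_{F^{\bullet}/}\to (\mathcal{R}^{\gen,\Delta})_{F^{\bullet}/},\]
and we set $\Omega_{F^{\bullet}}:=\Nhc(\Psi_{F^{\bullet}})$. Condition (2a) is built into the construction, since on underlying targets $\pi\circ \Psi_{F^{\bullet}}$ coincides with $\Sigma_{F^1}\circ \pi$. For (2b): on the preimage $\pi^{-1}(\Nhc(\mathcal{R}^{\Delta}))$, the non-$F^{\bullet}$ edges are already genuine; $\Sigma_{F^1}$ preserves genuine reductions by (1a), and the hat construction produces genuine reductions from $F^{\bullet}$, so the image simplex lies in $\Nhc(\mathcal{R}^{\Delta})_{F^{\bullet}/}$. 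The principal technical difficulty is precisely the verification that the hat construction is functorial at the simplicial level, i.e.~that it commutes, up to the silent identifications of \S\ref{sec:iso-red}, with composition of morphisms in $\mathcal{R}^{\gen}$ and with pullback along morphisms of affine schemes. This entails tracking the splicing recipe for composition through the hat operation; once this bookkeeping is accomplished, the rest of the argument is formal.
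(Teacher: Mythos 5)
Parts (1a) and (1b) follow the paper's approach and are essentially correct. One small caveat in (1b): you attribute the common representative of the two compositions to ``the hat of Example \ref{ex:reductions} \ref{item:reductionsii} applied to $\xi$,'' but the hat there is built specifically from $K=F^1$, whereas here $K$ is arbitrary. The displayed formula $F^{\bullet}\leftarrow A^{\bullet}\oplus K_{\bullet}\to E^{\bullet}\oplus K^{\bullet}\oplus K_{\bullet}$ is still the right common representative for general $K$; one just verifies it directly from the composition recipe and Lemma \ref{lem:reduction-compose-equal} rather than by citing Example \ref{ex:reductions} \ref{item:reductionsii}.

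Part (2) has a real gap. You propose a ``functor of simplicial slice categories'' $\Psi_{F^{\bullet}}:(\mathcal{R}^{\gen,\Delta})_{F^{\bullet}/}\to (\mathcal{R}^{\gen,\Delta})_{F^{\bullet}/}$ and then set $\Omega_{F^{\bullet}}:=\Nhc(\Psi_{F^{\bullet}})$. This presupposes that the homotopy coherent nerve of whatever simplicial slice category you write down is the under-quasicategory $\Nhc(\mathcal{R}^{\gen,\Delta})_{F^{\bullet}/}$ -- but $\Nhc$ does not commute with slicing. Concretely: an edge of $\Nhc(\mathcal{R}^{\gen,\Delta})_{F^{\bullet}/}$ is by definition a $2$-simplex of $\Nhc(\mathcal{R}^{\gen,\Delta})$ with zeroth vertex $F^{\bullet}$, which via $\Path^{\Delta}(\Delta^2)$ carries a homotopy $h$ witnessing $\phi\circ\zeta_1\simeq\zeta_2$ (an edge in a Hom-simplicial set), not a strict identification. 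Your description of a morphism in the slice as ``$\phi$ together with the data identifying $\phi\circ\zeta_1$ with $\zeta_2$'' is a $1$-categorical (or at best a too-strict) notion, and one still has to check that the hat construction is compatible with these coherence homotopies and with all face/degeneracy maps. The paper sidesteps this by defining the auxiliary simplicial category $\FonR$ (a cone construction over $[1]$, not a slice), proving that $n$-simplices of $\Nhc(\mathcal{R}^{\gen,\Delta})_{F^{\bullet}/}$ correspond to simplicial functors $\Path^{\Delta}(\Delta^{n+1})\to\FonR$ over $[1]$, and then postcomposing with a functor $\tilde{\Omega}$ of $\FonR$ whose action on Hom-spaces is the hat map. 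The crucial verification that makes this a simplicial functor is the identity $\widehat{\xi_T\circ\zeta_T}=\Sigma_{F^1}(\xi_T)\circ\widehat{\zeta_T}$ for families; the $\Path^{\Delta}$ combinatorics then delivers all higher coherences for free. Without some such device, ``functoriality of the hat construction at the simplicial level'' -- which you correctly identify as the main obstacle -- is not actually resolved.
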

\begin{proof}
	\begin{enumerate}
		\item 
		\begin{enumerate}
			\item 
			We first note that simply adding on $K^{\bullet}\oplus K_{\bullet}$ (respectively its pullback to some $\mathbb{A}^n\times X$) gives a well-defined functor $\Sigma_K:\tilde{\mathcal{R}}^{\gen,\Delta}\to \tilde{\mathcal{R}}^{\gen,\Delta}$. This extends to $\mathcal{R}^{\gen,\Delta}$ by the functoriality of the $\Ex^{\infty}$-construction. Moreover, it is straightforward to check that this preserves $\mathcal{R}^{\Delta}$. 
			\item 
			Giving $\Xi_K$ is the same as giving a natural transformation of the simplicial functors $\xi_K:\id_{\mathcal{R}^{\gen,\Delta}}\Rightarrow \Sigma_K$ with the specified values $\xi_K(E^{\bullet})$. It remains to show that the specified data indeed gives a natural transformation, i.e. that for any $F^{\bullet},E^{\bullet}\in \mathcal{R}^{\gen,\Delta}$ and any simplex $\zeta:\Delta^n\to \Hom_{\mathcal{R}^{\gen,\Delta}}(F^{\bullet}, E^{\bullet})$, one has 
			\[\xi_K(E^{\bullet})\circ \zeta = \zeta\circ \xi_K(F^{\bullet}).\]
			Unraveling the definitions (as in the proof of Proposition \ref{prop:cofiltered}), one has that $\zeta$ is given by a map $\operatorname{Sd}^k(\Delta^n)\to \Hom_{\tilde{\mathcal{R}}^{\gen,\Delta}}(F^{\bullet},E^{\bullet})$, or equivalently a family of generalized isotropic reductions $F^{\bullet}\rightsquigarrow E^{\bullet}$ over $\bA(\operatorname{Sd}^k(\Delta^n))$, and the compositions we need to compute take place as compositions of isotropic reductions on $\bA(\operatorname{Sd}^k(\Delta^n))\times X$.
			
			But for any generalized isotropic reduction \[\zeta_T:\pr_2^*F^{\bullet}\leftarrow{f^T_{\bullet}} A_T^{\bullet}\rightarrow{e^T_{\bullet}} \pr_2^*E^{\bullet}\] over some $T\times X$ for some affine scheme $T$, one checks that the compositions $\zeta_T \circ \pr_2^*\xi_{K}(F^{\bullet})$ and $\pr_2^*\xi_{K}(E^{\bullet})\circ \zeta_T$ are both equal to 
			\[\pr_2^*F^{\bullet} \xleftarrow{f^T_{\bullet}\oplus 0} A^{\bullet}\oplus K^{\bullet} \xrightarrow{e_{\bullet}\oplus \id_{K^{\bullet}}\oplus 0} E^{\bullet}\oplus K^{\bullet}\oplus K_{\bullet}.\]
			
			That $\Xi_K$ preserves isotropic reductions follows from the fact that both the identity and $\Sigma_K$, preserve isotropic reductions, and that each $\xi_{K}(E^{\bullet})$ is itself an isotropic reduction. 
		\end{enumerate}
		\item 
		
		Let $\mathcal{R}^{\gen,\Delta}_{F\triangleleft}$ denote the simplicial sub-category of $\mathcal{R}^{\gen,\Delta}\times [1]$ on the objects $(F^{\bullet},0)$ and $\{(E^{\bullet},1)\}_{E^{\bullet}\in \mathcal{R}^{\gen,\Delta}}$ and with morphism spaces 
		\[\Hom((E_1^{\bullet},i), (E_2^{\bullet},j)) = \begin{cases}
			\emptyset, & i>j\\
			\id_{F^{\bullet}}, & i = j = 0\\
			\Hom_{\mathcal{R}^{\gen, \Delta}}(E_1^{\bullet},E_2^{\bullet}), & j=1.
		\end{cases}\]

		We have a functor $\tilde{\Omega}: \mathcal{R}^{\gen,\Delta}_{F\triangleleft}\to  \mathcal{R}^{\gen,\Delta}_{F\triangleleft}$ that respects the projections to $[1]$, with
		
		\[\tilde{\Omega}|_{\{F^{\bullet}\}\times \{0\}} =\operatorname{Id}_{\{F^{\bullet}\}\times \{0\}};\qquad \tilde{\Omega}|_{\mathcal{R}^{\gen,\Delta}\times \{1\}} = \Sigma_{F^1}\times \id_{\{1\} }.\]
		and such that the morphism  
		\begin{equation*}
			\begin{tikzcd}
				\Hom_{\mathcal{R}^{\gen,\Delta}}(F^{\bullet},E^{\bullet})\ar[r, dashed]\ar[d, equals]& \Hom_{\mathcal{R}^{\gen,\Delta}}(F^{\bullet},\Sigma_{F^1}(E^{\bullet}))\ar[d, equals] \\
				\Hom_{\FonR}((F^{\bullet},0), (E^{\bullet},1)) \ar[r,"\tilde{\Omega}"] & \Hom_{\FonR}((F^{\bullet},0),(\Sigma_{F^1}E^{\bullet},1))
			\end{tikzcd}
		\end{equation*}
		is obtained by taking the dashed map to be induced by the morphism of presheaves on affine schemes 
		\begin{align}\label{eq:hat-functor-def}
			\widehat{ }:\underline{\Hom}_{\mathcal{R}^{\gen}}(F^{\bullet},E^{\bullet}) & \to \underline{\Hom}_{\mathcal{R}}(F^{\bullet},\Sigma_{F^1}E^{\bullet}) 
		\end{align}
		given on $T$-points by
		\begin{align*}
			\zeta_T &\mapsto \widehat{\zeta_T} 
		\end{align*}
		for $\widehat{\zeta_T}$ is as in \eqref{eq:make-reduction} of Example \ref{ex:reductions} \ref{item:reductionsii}, applied on $T\times X$. That this gives a well-defined functor $\tilde{\Omega}$ follows from the formula
		\[\widehat{\xi_T\circ \zeta_T}  = \Sigma_{F^1}(\xi_T)\circ \widehat{\zeta_T}.\]
		
		By definition, an $n$-simplex of $\Nhc(\mathcal{R}^{\gen,\Delta})_{F^{\bullet}/}$ is an $n+1$-simplex of $\Nhc(\mathcal{R}^{\gen,\Delta})$ whose zero-th vertex is $F^{\bullet}$. By definition of $\Nhc$, this is the same as a map of simplicial categories $\Path^{\Delta}(\Delta^{n+1}) \to \mathcal{R}^{\gen,\Delta}$, where $\Path^{\Delta}$ denotes the simplicial path category, sending the object $0$ of $\Path^{\Delta}(\Delta^{n+1})$ to $F^{\bullet}$. 
		
		This, in turn is the same as a map of simplicial categories over $[1]$
		\[\Path^{\Delta}(\Delta^{n+1})\to \FonR\]
		where we take $\Path^{\Delta}(\Delta^{n+1})\to [1]$ to be the unique map with $i\mapsto \max \{0,i\}$.
		Moreover, this correspondence is compatible with face and degeneracy maps. 
		We take $\Omega_{F^{\bullet}}$ to be the functor that -- under this correspondence -- sends 
		\[x: \Path^{\Delta}(\Delta^{n+1})\to \FonR\]
		to the composition $\tilde{\Omega}\circ x$.
		By construction, $\Omega_{F^{\bullet}}$ satisfies the desired property a).
		To see that it satisfies b), we note that \eqref{eq:hat-functor-def} in fact lands in $\underline{\Hom}_{\mathcal{R}}(F^{\bullet},E^{\bullet})$, and that $\Sigma_{F^1}$ preserves $\mathcal{R}^{\Delta}$.
	\end{enumerate}
\end{proof}
\subsection{Affine models for simplicial sets}\label{sec:affine-models}
We elaborate a bit on modeling certain finite simplicial sets by glueing affine spaces. 
Let $K$ be a finite simplicial set. Its \emph{category of simplices} of $K$, denoted $(\Delta/K)$ is the category whose objects are simplices $\Delta^n\to K$, and whose morphisms are morphisms in the simplex category compatible with the given map to $K$.  
The co-simplicial object $\bA^{\Delta}$ defines a functor $(\mathbb{A}^{\Delta}/K):(\Delta/K)\to \Aff$. 

We say that an affine scheme $\bA(K)$ \emph{is a geometric realization for} $K$, if it comes with morphisms $\bA^n\to \bA(K)$ for each $n$-simplex of $K$ which realize $\bA(K)$ as a colimit over $(\mathbb{A}^{\Delta}/K)$ in the category of schemes. 
We can find a nice class of simplicial for which we have geometric realizations:

\begin{definition}
	Let $K$ be a simplicial sub-set of $\Delta^N$. We define $\mathbb{A}(K)$ to be the union of the images of all non-degenerate simplices of $K$ with its reduced structure as a closed sub-scheme of $\mathbb{A}^N$. 
\end{definition} 

\begin{lemma}\label{lem:geom-real}
	The scheme $\mathbb{A}(K)$ is a geometric realization of $K$. 
\end{lemma}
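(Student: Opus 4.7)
My plan is to prove the universal property of $\bA(K)$ as the colimit by first restricting the diagram to non-degenerate simplices and then arguing by induction on the number of these.

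\emph{Reduction to non-degenerate simplices.} Any morphism in $\Delta$ factors as a surjection followed by an injection. When induced by a surjection $\varphi:[m]\twoheadrightarrow [n]$, the map $\bA^m\to\bA^n$ is a linear surjection of affine spaces admitting a section (coming from any splitting of $\varphi$), hence a split epimorphism in schemes. Combined with the Eilenberg--Zilber lemma, this yields a bijection between cones on $(\bA^\Delta/K)$ and cones on the full subcategory on non-degenerate simplices with morphisms induced by face inclusions.

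\emph{Geometric analysis.} Non-degenerate $n$-simplices of $K\subseteq \Delta^N$ correspond bijectively to subsets $V\subseteq \{0,\dots,N\}$ of cardinality $n+1$, forming a collection $\cV$ closed under taking non-empty subsets. For each $V\in \cV$, the composition $\bA^n\to \bA^N$ is a closed immersion onto an affine subspace $Z_V$ cut out by linear equations (namely $\{y_j=0\}_{j\notin V}$, plus $\sum_{j\in V}y_j=1$ when $0\notin V$). The scheme-theoretic intersection $Z_{V_1}\cap Z_{V_2}$ is therefore also cut out by linear equations, hence reduced; direct computation gives $Z_{V_1}\cap Z_{V_2}=Z_{V_1\cap V_2}$ when the intersection of vertex sets is non-empty and the empty scheme otherwise. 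In particular, the intersection lattice of the $Z_V$'s matches the intersection lattice of the vertex sets, and every non-empty such intersection is again some $Z_W$ with $W\in \cV$.

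\emph{Induction.} Set $L(K):=\mathrm{colim}_{(\Delta/K)}\bA^\Delta$ and argue $L(K)\simeq \bA(K)$ by induction on the number of non-degenerate simplices. The base case $K=\Delta^N$ is immediate since $(\Delta/\Delta^N)$ has the terminal object $\mathrm{id}_{\Delta^N}$, giving $L(\Delta^N)=\bA^N=\bA(\Delta^N)$. For the inductive step, choose a top-dimensional non-degenerate simplex $\sigma$ with vertex set $V_\sigma$ and write $K=K'\cup_{\partial\Delta^n}\Delta^n$; since colimits commute with colimits, $L(K)=L(K')\cup_{L(\partial\Delta^n)}\bA^n$, which by induction equals $\bA(K')\cup_{\bA(\partial\Delta^n)}Z_{V_\sigma}$. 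This pushout of affine schemes along closed immersions is computed as $\Spec$ of the ring-theoretic fibre product, and its identification with $\bA(K)$ reduces to the exact sequence $0\to R/(I_1\cap I_2)\to R/I_1\oplus R/I_2\to R/(I_1+I_2)\to 0$, which holds here because by the previous paragraph the sum $I_{\bA(K')}+I_{Z_{V_\sigma}}$ equals $I_{\bA(\partial\Delta^n)}$ (being the reduced ideal of a reduced scheme-theoretic intersection which set-theoretically is $\bA(\partial\Delta^n)$). The main technical obstacle is this reducedness assertion for the scheme-theoretic intersection $\bA(K')\cap Z_{V_\sigma}$, which depends essentially on the linearity of the defining equations of the $Z_V$'s and on the distributivity of the sublattice they generate.
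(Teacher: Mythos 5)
Your overall strategy is the same as the paper's: reduce to non-degenerate simplices, induct by peeling off a top-dimensional cell, convert the pushout of schemes to a fibre product of rings, and identify the scheme-theoretic intersection $\bA(K')\cap Z_{V_\sigma}$ with $\bA(\partial\Delta^n)$. The crux in both cases is the ideal-theoretic identity
\[
\mathfrak{a}_{V_\sigma} + \bigcap_{W}\mathfrak{a}_W \;=\; \bigcap_{W}\bigl(\mathfrak{a}_{V_\sigma}+\mathfrak{a}_W\bigr),
\]
which is exactly the paper's Lemma~\ref{lem:ideal-intersection}.

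Here is where the gap lies. You correctly name this identity as ``the main technical obstacle,'' but then you discharge it by appealing to ``the linearity of the defining equations of the $Z_V$'s and \ldots{} the distributivity of the sublattice they generate.'' That is not a proof but a restatement of the claim: the distributivity of this particular sublattice of ideals \emph{is} the thing that needs proving. It is not a formal consequence of linearity. The lattice of arbitrary linear subspaces (equivalently, ideals generated by linear forms) is modular but not distributive; distributivity is special to \emph{coordinate} subspaces, and the ideals $\mathfrak{a}_V=(X_j)_{j\notin V}$ are genuinely not monomial ideals here because $X_0 = 1-\sum_i X_i$ is not a coordinate variable. Passing to a coordinate system in which $X_0$ is a variable does not help, since then one of the other $X_i$ becomes an affine form instead. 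So one cannot invoke a Stanley--Reisner style argument off the shelf; some actual work is required. (The paper's proof, for instance, reduces mod the unit ideal relation $(X_0,\ldots,X_N)=(1)$ by localizing at each $X_i$ in turn, which is a nontrivial trick.) Your pairwise verification $\mathfrak{a}_{V_1}+\mathfrak{a}_{V_2}=\mathfrak{a}_{V_1\cap V_2}$ is correct but is strictly weaker than the distributivity you need: it does not follow from it that $\mathfrak{a}_{V_\sigma}+\bigcap_W\mathfrak{a}_W$ equals $\bigcap_W(\mathfrak{a}_{V_\sigma}+\mathfrak{a}_W)$.

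A secondary, smaller issue: you frame the induction as being ``on the number of non-degenerate simplices'' with base case $K=\Delta^N$, but $\Delta^N$ has $2^{N+1}-1$ non-degenerate simplices, so it is not the base of that induction. It works out because the simplex category of $\Delta^N$ has a terminal object, so the colimit is immediate in that case, but the phrasing should be tidied up (e.g., induct on dimension and then on the number of maximal cells, as in the paper, or correctly set the base at $K=\emptyset$).
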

\begin{proof}
	This is true if $K$ is empty, so assume otherwise. 
	
	We proceed by induction on the maximal dimension of a non-degenerate simplex of $K$, and on the number of maximal-dimensional simplices of $K$. 
	
	Let $(\Delta/K)^{\operatorname{nd}}\subseteq (\Delta/K)$ denote the full sub-category whose objects are the non-degenerate simplices of $K$. By the argument of \cite[Variant 4.2.3.15]{lurie2006higher}, the inclusion $(\Delta/K)^{\operatorname{nd}}\to (\Delta/K)$ is a cofinal functor (see e.g. \cite[\href{https://stacks.math.columbia.edu/tag/09WN}{Tag 09WN}]{stacks-project}). In particular, it suffices to consider the restriction of $(\bA^{\Delta}/K)$ to $(\Delta/K)^{\operatorname{nd}}$ to compute the colimit (and whether it exists).

	Let $x:\Delta^n\to K$ be a non-degenerate simplex of $K$ of maximal dimension, and let $K'\subset K$ be the simplicial subset generated by all other non-degenerate simplices. Since $K$ is a subset of $\Delta^N$, we have that $x$ is a monomorphism, and that we have a pushout-diagram 
	\begin{equation}\label{eq:simplex-glueing-diag}
		\begin{tikzcd}
			\partial \Delta^n \ar[r]\ar[d]& \Delta^n\ar[d, "x"] \\
			K'\ar[r] & K
		\end{tikzcd}
	\end{equation} 
	where the upper horizontal map is the natural boundary inclusion.
	
	By inductive assumption, we have the geometric realizations $\bA(\partial \Delta^n)$ and $\bA(K')$ inside $\bA^N$. It is then enough to show
	that the induced diagram 
	\begin{equation*}
		\begin{tikzcd}
			\bA(\partial \Delta^N)\ar[r]\ar[d]& \bA^n\ar[d] \\
			\bA(K')\ar[r] & \bA(K)
		\end{tikzcd}
	\end{equation*}
	is a pushout-diagram of schemes. 
	By \cite[Theorem 3.11]{schwede2005gluing} it is enough to show that the intersection of $\bA(K')$ with the image of $\bA^n$ inside $\bA^N$ is equal to $\bA(\partial\Delta^n)$. 
	Let $X_1, \ldots,X_N$ denote the coordinates on $\bA^N$, with $X_0:= 1 - X_1-\cdots -X_N$.
	Then for any subset $I\subseteq [0,N]$ the corresponding embedding $\bA^{\abs{I}}\to \bA^N$ has image cut out by the ideal $\mathfrak{a}_I:=(X_j)_{j\in [0,N]\setminus I}$. 
	Let $I_x$ be the subset corresponding to $x:\Delta^n\to K\to \bA^N$. Up to an affine symmetry of $\bA^N$ and re-ordering, we may assume that $I_x = [0,n]$, so that $\mathfrak{a}_{I_x}= (X_{n+1}, \ldots,X_N)$. Then $\bA(K')$ is cut out by the ideal 
	\[\mathfrak{a}_K:= \bigcap_{I\subset K'} \mathfrak{a}_I.\]
	By Lemma \ref{lem:ideal-intersection} below, we have: 
	\begin{equation}\label{eq:ideal-equality}
		(X_{n+1}, \ldots X_N) + \bigcap_{I\subset K'} \mathfrak{a}_I  = \bigcap_{I\subset K'} \left(\mathfrak{a}_I + (X_{n+1}, \ldots X_N)\right)
	\end{equation}
	
	Here, the minimal ideals appearing on the right hand side are exactly those where $[0,n]\setminus I$ is a single element, and since $\partial x\subset K'$, all of these occur. Thus, we may simplify \eqref{eq:ideal-equality} to 
	\[\bigcap_{i=0}^n (X_i,X_{n+1}, \ldots,X_N),\]
	which is exactly the ideal of $\mathbb{A}(\partial\Delta^n)$ in $\bA^N$  as desired. 	
\end{proof}
We can strengthen the result as follows:
\begin{proposition}\label{prop:geom-real}
	For any scheme $X$, the product $\bA(K)\times X$ is a co-limit of the functor \begin{align*}
		(\bA^{\Delta}/K)\times X:(\Delta/K)&\to \operatorname{Sch}\\
		x &\mapsto (\bA^{\Delta}/K)(x) \times X
	\end{align*}
\end{proposition}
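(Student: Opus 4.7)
The plan is to reduce to the case where $X$ is affine and then repeat the inductive argument from Lemma \ref{lem:geom-real} verbatim, working inside $\mathbb{A}^N \times X$ throughout.

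First, I would observe that the assertion is local on $X$ in the Zariski topology. For any target scheme $Y$, the Zariski sheaf property of $\operatorname{Hom}(-,Y)$ on $\bA(K) \times X$ and on each $\bA^n \times X$ allows one to reassemble maps out of $\bA(K) \times X$ from the maps out of $\bA(K) \times U_\alpha$ for an affine open cover $\{U_\alpha\}$ of $X$ (together with their intersections). Granting the proposition on each such affine open, a standard swap of limits then yields the universal property for arbitrary $X$.

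Assuming now $X = \Spec R$, I would mirror Lemma \ref{lem:geom-real}: first reduce to non-degenerate simplices by the same cofinality argument, then induct on the number and maximal dimension of non-degenerate simplices of $K$. The inductive step comes down to showing that the square
\begin{equation*}
\begin{tikzcd}
\bA(\partial\Delta^n) \times X \ar[r]\ar[d] & \bA^n \times X \ar[d] \\
\bA(K') \times X \ar[r] & \bA(K) \times X
\end{tikzcd}
\end{equation*}
remains a pushout of schemes. I would deduce this from \cite[Theorem 3.11]{schwede2005gluing}, which reduces the problem to verifying that
\[\bA(\partial\Delta^n)\times X \;=\; (\bA^n \times X) \cap (\bA(K') \times X)\]
inside $\bA^N \times X$, together with the corresponding identity of defining ideals in $R[X_1,\ldots,X_N]$.

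The critical observation is that the four ideals of $R[X_1,\ldots,X_N]$ cutting out the terms of the square are extensions, along $\mathbb{Z}[1/2] \to R$, of the ideals appearing in the proof of Lemma \ref{lem:geom-real}, and are generated by subsets of the coordinate functions. Since such ideals are of monomial type in $X_1, \ldots, X_N$, their sums and intersections commute with arbitrary base change $\mathbb{Z}[1/2] \to R$. Consequently, both the key identity \eqref{eq:ideal-equality} and the application of Lemma \ref{lem:ideal-intersection} transfer verbatim from $\mathbb{Z}[1/2][X_1,\ldots,X_N]$ to $R[X_1,\ldots,X_N]$. The main point to verify is exactly this stability of the relevant ideal-theoretic operations under base change; once one isolates it, the rest of the argument is a formal repetition of Lemma \ref{lem:geom-real}.
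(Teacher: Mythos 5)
Your proof is correct and follows essentially the same strategy as the paper: reduce to the pushout square via the inductive argument of Lemma~\ref{lem:geom-real}, invoke \cite[Theorem 3.11]{schwede2005gluing}, and verify the scheme-theoretic intersection $\bA^n\cap\bA(K')=\bA(\partial\Delta^n)$ inside $\bA^N\times X$. There are two minor departures worth noting. First, you reduce the entire colimit statement to the case $X=\Spec R$ via Zariski descent for $\operatorname{Hom}(-,Y)$, whereas the paper keeps $X$ general through the induction and only localizes at the very end to verify the intersection of ideal sheaves; both work, but the paper's route sidesteps the swap-of-limits bookkeeping (and the mild care needed when $U_\alpha\cap U_\beta$ is not affine). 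Second, your base-change argument for the ideals is sound but unnecessary: Lemma~\ref{lem:ideal-intersection} is already stated over an arbitrary ring $R$, so the ideal identity \eqref{eq:ideal-equality} and hence the intersection computation hold verbatim in $R[X_1,\ldots,X_N]$ with no transfer from $\bZ[1/2]$ required. If you do want to keep the base-change justification, be a little careful with the phrase ``of monomial type'': the relevant ideals are generated by subsets of $\{X_0,\ldots,X_N\}$ where $X_0=1-X_1-\cdots-X_N$ is not a monomial; what actually makes the intersections commute with base change is that the quotients $\bZ[1/2][X]/\mathfrak{a}_J$ are flat (polynomial rings or zero) over $\bZ[1/2]$.
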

\begin{proof}
	By the same arguments as in Lemma \ref{lem:geom-real}, one reduces to checking that for any diagram \eqref{eq:simplex-glueing-diag}, the induced diagram 
	\begin{equation*}
		\begin{tikzcd}
			\bA(\partial \Delta^n)\times X\ar[r]\ar[d]& \bA^n\times X\ar[d] \\
			\bA(K')\times X \ar[r] & \bA(K)\times X
		\end{tikzcd}
	\end{equation*} 
	is a pushout diagram. 	
	In the category of schemes, this follows from \cite[Theorem 3.11]{schwede2005gluing}. Indeed, as sub-schemes of $\bA^N\times X$, we have $\bA^n\cap \bA(K') = \bA(\partial\Delta^n)$: This can be checked affine-locally on $X$, and the affine case was treated in the proof of Lemma \ref{lem:geom-real}
\end{proof}

\begin{proposition}\label{prop:geom-real-adjunction}
	Let $X$ $\bE$ be as in \S \ref{sec:simpl-struc}. 
	For any simplicial set $K$ that is isomorphic to a simplicial subset of some $\Delta^N$, and for any two self-dual representatives $E^{\bullet},F^{\bullet}$of $\bE$ there is a bijection between maps  of simplicial sets
	\[K\to \Hom_{\tilde{\mathcal{R}}^{\gen,\Delta}}(F^{\bullet},E^{\bullet})\] and maps of pre-sheaves
	\[\mathbb{A}(K)\to \underline{\Hom}_{\mathcal{R}^{\gen}}(F^{\bullet}, E^{\bullet}).\]
	Moreover, this correspondence is functorial in $K$ and respects the sub-categories of genuine isotropic reductions. 
\end{proposition}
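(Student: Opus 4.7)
The plan is to unwind the definitions and recognize both sides as co-cones over the diagram defining $\mathbb{A}(K)$. Given a map of simplicial sets $\varphi: K \to \Hom_{\tilde{\mathcal{R}}^{\gen,\Delta}}(F^{\bullet},E^{\bullet})$, unwinding Definition \ref{def:simplicial-enrichment-reductions} shows that $\varphi$ is the same data as a natural transformation of functors $\Delta^{\operatorname{op}} \to \mathsf{Set}$
\[
K(-) \Longrightarrow \underline{\Hom}_{\mathcal{R}^{\gen}}(F^{\bullet},E^{\bullet})(\mathbb{A}^{-}),
\]
i.e.\ a compatible system of maps $K_n \to \underline{\Hom}_{\mathcal{R}^{\gen}}(F^{\bullet},E^{\bullet})(\mathbb{A}^n)$. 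By Yoneda, this is the same as a compatible system of maps of presheaves $\mathbb{A}^n \to \underline{\Hom}_{\mathcal{R}^{\gen}}(F^{\bullet},E^{\bullet})$ indexed by the simplices of $K$.

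Next, I would reorganize the indexing by $n$-simplices into one by the category $(\Delta/K)$: a compatible family as above is precisely a co-cone under the functor $(\mathbb{A}^{\Delta}/K):(\Delta/K)\to \operatorname{Sch}$ with apex the presheaf $\underline{\Hom}_{\mathcal{R}^{\gen}}(F^{\bullet},E^{\bullet})$. Since the Yoneda embedding is fully faithful and preserves co-limits of representables to the extent they exist as schemes, giving such a co-cone is the same as giving a morphism of presheaves from $\operatorname{colim}_{(\Delta/K)} (\mathbb{A}^{\Delta}/K)$ to $\underline{\Hom}_{\mathcal{R}^{\gen}}(F^{\bullet},E^{\bullet})$.

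Here I invoke the main input: Proposition \ref{prop:geom-real} (applied with $X = \Spec \bZ$) says that this colimit exists in schemes and is represented by $\mathbb{A}(K)$. Hence the data of such a co-cone corresponds bijectively to a map of presheaves $\mathbb{A}(K) \to \underline{\Hom}_{\mathcal{R}^{\gen}}(F^{\bullet},E^{\bullet})$, which is the sought bijection.

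For functoriality in $K$: a map of simplicial subsets $K \to K'$ (inside some $\Delta^{N'}$) induces a functor $(\Delta/K)\to (\Delta/K')$ over $(\mathbb{A}^{\Delta}/-)$ and hence a morphism of co-limits $\mathbb{A}(K)\to \mathbb{A}(K')$; this is compatible with both sides of the bijection by construction. Finally, for the compatibility with the sub-categories of genuine isotropic reductions, I would observe that the same argument runs verbatim with $\mathcal{R}^{\gen}$ replaced by $\mathcal{R}$, since the Hom-simplicial sets of $\tilde{\mathcal{R}}^{\Delta}$ are defined by the analogous formula using $\underline{\Hom}_{\mathcal{R}}$, and $\underline{\Hom}_{\mathcal{R}}(F^{\bullet},E^{\bullet})$ is a sub-presheaf of $\underline{\Hom}_{\mathcal{R}^{\gen}}(F^{\bullet},E^{\bullet})$; thus a map $\mathbb{A}(K)\to \underline{\Hom}_{\mathcal{R}^{\gen}}(F^{\bullet},E^{\bullet})$ lands in the sub-presheaf precisely when each of the constituent $\mathbb{A}^n$-points does, which translates on the simplicial side to the corresponding map factoring through $\Hom_{\tilde{\mathcal{R}}^{\Delta}}(F^{\bullet},E^{\bullet})$. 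The only real content is Proposition \ref{prop:geom-real}; everything else is formal bookkeeping.
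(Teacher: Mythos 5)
There is a genuine gap. Your key step asserts that ``the Yoneda embedding is fully faithful and preserves co-limits of representables to the extent they exist as schemes,'' but this is false: the Yoneda embedding from schemes into presheaves does \emph{not} preserve colimits. The colimit of $(\bA^{\Delta}/K)$ in the category of schemes (which Proposition \ref{prop:geom-real} identifies with $\bA(K)$) is different, in general, from the colimit of the corresponding representable presheaves in the presheaf category. A co-cone of presheaf maps $\bA^n\to \underline{\Hom}_{\mathcal{R}^{\gen}}(F^{\bullet},E^{\bullet})$ corresponds to a map out of the \emph{presheaf}-colimit, and there is no a priori reason for this to agree with a map out of the scheme $\bA(K)$. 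The relevant pushouts building up $\bA(K)$ are along closed immersions, so even Zariski-sheafiness of the target presheaf would not bridge this gap.

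The paper closes the gap by first observing that $\underline{\Hom}_{\mathcal{R}^{\gen}}(F^{\bullet},E^{\bullet})$ is represented in a scheme-theoretic sense: a family of generalized isotropic reductions over $T$ is the same as a morphism of schemes $T\times X\to M^{\mathrm{iso-red}}$ for a moduli scheme $M^{\mathrm{iso-red}}$. Representability is precisely what guarantees that the presheaf sends scheme-level colimits to limits of sets, so that the co-cone can be assembled into a single scheme map. Moreover, because of the extra factor of $X$ in this description, what is actually needed is the strengthened statement of Proposition \ref{prop:geom-real}, namely that $\bA(K)\times X$ is the colimit of $(\bA^{\Delta}/K)\times X$ --- products do not commute with colimits of schemes in general, and this is exactly why Proposition \ref{prop:geom-real} is stated for arbitrary $X$ rather than deduced from the $X=\Spec\bZ$ case. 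Your invocation of Proposition \ref{prop:geom-real} ``applied with $X=\Spec\bZ$'' therefore misses the point of that proposition and leaves this step unjustified. The remainder of your argument (unwinding the definition of the simplicial enrichment, functoriality in $K$, restriction to $\mathcal{R}\subset\mathcal{R}^{\gen}$) is fine.
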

\begin{proof}
	The datum of a family of generalized isotropic reductions on $T\times X$ consists of a collection of maps between vector bundles subject to a number of locally closed conditions. Thus, there is a moduli scheme $M^{iso-red}$ parametrizing such objects, and a family of generalized isotropic reductions over $T$ is the same as a map $T\times X\to M^{iso-red}$. 
	
	By definition, a map $K\to \Hom_{\tilde{\mathcal{R}}^{\gen,\Delta}}(F^{\bullet},E^{\bullet})$ is then the datum of a compatible collection of maps $(\bA/K)(x)\to \mathcal{M}$ over the index category $(\Delta/K)$. By Proposition \ref{prop:geom-real}, this is the same as a map $\bA(K)\to M^{iso-red}$. 
\end{proof}
We will need to consider how affine realizations interact with products and subdivisions
\begin{example}
	Let $K\subset \Delta^N$ be a simplicial set with geometric realization $\bA(K)\subseteq \bA^N$. The product $K\times \Delta^1$ is naturally a subset of $\Delta^{2N+1}$ via the inclusion $\Delta^N\times \Delta^1\to \Delta^{2N+1}$, given by $(i,j)\mapsto i+(N+1)j$ for $i\in [0,N]$ and $j\in [0,1]$. In particular, we have a geometric realization $\bA(K\times\Delta^1)$ for $K\times\Delta^1$. 
	
	On the other hand, we can consider the product $\bA(K)\times \bA(\Delta^1) = \bA(K)\times \bA^1$. We have a comparison map $\Gamma:\bA(K\times\Delta^1)\to \bA(K)\times \bA^1$, obtained as the restriction of the affine linear map $\bA^{2N+1}\to \bA^{N+1}$ sending $\mathsf{v}_{i+(N+1)j}$ to $\mathsf{v}_i\times \{j\}$ for $0\leq i\leq N$ and $j\in \{0,1\}$. The map $\Gamma$ agrees with the map obtained by functoriality of colimits and hence is independent of the choice of embedding $K\subseteq \Delta^N$.
	
	In particular, any morphism $\bA(K)\times \bA^1\to \underline{\Hom}(F^{\bullet}, E^{\bullet})$ induces a morphism $K\times \Delta^1\to \Hom_{\tilde{\mathcal{R}}^{\gen,\Delta}}(F^{\bullet},E^{\bullet})$ by composition with $\Gamma$ and the correspondence of Proposition \ref{prop:geom-real-adjunction}. 
\end{example}

\begin{example}\label{ex:subdivs}
	Let $K\subseteq \Delta^N$ be a simplicial subset. Applying the simplicial subdivision functor gives rise to a simplicial subset 
	\[\Sd(K)\subseteq \Sd(\Delta^N)\subseteq \Delta^M,\]
	where $M = 2^{n+1}-1$. In particular, $\Sd(K)$ possesses a geometric realization, as does any iterated subdivision. 
\end{example}

\begin{lemma}\label{lem:ideal-intersection}
	Let $R$ be a ring and $\mathfrak{J}$ be a set of subsets of $[0,N]$ and let $0\leq n\leq N$. Then we have the following equality of ideals of $R[X_1,\ldots,X_N]$: 
	\begin{equation}\label{eq:ideals-equal}
		(X_{n+1}, \ldots, X_N) + \bigcap_{J\in \mathfrak{J}} (X_i)_{i\in J} = \bigcap_{J\in \mathfrak{J}} \left((X_i)_{i\in J} + (X_{n+1}, \ldots, X_N)\right),
	\end{equation}
	where we set $X_0:= 1 - X_1 -\cdots -X_N$. 
\end{lemma}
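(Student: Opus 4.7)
The inclusion $\supseteq$ is formal. For the reverse inclusion, my plan is to lift the problem to the polynomial ring $S := R[X_0, X_1, \ldots, X_N]$ in which $X_0$ is an independent variable, so that the relevant ideals become genuinely monomial. Concretely, let $\pi : S \twoheadrightarrow R[X_1, \ldots, X_N]$ be the surjection with kernel $(r)$, where $r := X_0 + X_1 + \cdots + X_N - 1$, so that $\pi(X_0) = 1 - X_1 - \cdots - X_N$. Write $I_J^{\mathrm{mon}} := (X_i)_{i \in J}$ and $\mathfrak{b}^{\mathrm{mon}} := (X_{n+1}, \ldots, X_N)$ for the corresponding honest monomial ideals in $S$; note $\pi(I_J^{\mathrm{mon}}) = (X_i)_{i\in J}$ (the ideal appearing in the statement) and $\pi(\mathfrak{b}^{\mathrm{mon}}) = (X_{n+1},\ldots,X_N)$.

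I will rely on three ingredients, all established inside $S$. First, the monomial analog of the lemma, $\mathfrak{b}^{\mathrm{mon}} + \bigcap_J I_J^{\mathrm{mon}} = \bigcap_J (\mathfrak{b}^{\mathrm{mon}} + I_J^{\mathrm{mon}})$, is immediate: sums and intersections of monomial ideals correspond to unions and intersections of their monomial supports, so the identity reduces to the set-theoretic distributive law $A \cup \bigcap_k B_k = \bigcap_k (A \cup B_k)$. Second, $r$ is a non-zero-divisor on $S/\mathfrak{c}$ for any monomial ideal $\mathfrak{c} = (X_i)_{i \in K}$ with $K \subsetneq [0,N]$: the quotient is a polynomial ring in the remaining variables, and $r \bmod \mathfrak{c}$ is a degree-one polynomial with nonzero constant term, hence a non-zero-divisor. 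Third, a Chinese Remainder Theorem for monomial ideals in $S$: given monomial ideals $\mathfrak{c}_1,\ldots,\mathfrak{c}_\ell$ and elements $s_k \in S$ satisfying $s_k - s_{k'} \in \mathfrak{c}_k + \mathfrak{c}_{k'}$ for all $k,k'$, there exists $s \in S$ with $s \equiv s_k \pmod{\mathfrak{c}_k}$ for every $k$. This is proven monomial-by-monomial: for each $\alpha$, the hypothesis forces the coefficients $[s_k]_\alpha$ to agree across all $k$ with $X^\alpha \notin \mathfrak{c}_k$, and setting $[s]_\alpha$ to this common value (and $0$ otherwise) manifestly works.

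Given $f$ in the RHS of the lemma, I lift to $\tilde f \in S$ and, for each $J$, write $\tilde f = m_J + r s_J$ with $m_J \in \mathfrak{b}^{\mathrm{mon}} + I_J^{\mathrm{mon}}$. Subtracting two such presentations yields $r(s_{J'} - s_J) = m_J - m_{J'} \in \mathfrak{b}^{\mathrm{mon}} + I_{J \cup J'}^{\mathrm{mon}}$, and the non-zero-divisor property lets me cancel $r$ to conclude $s_{J'} - s_J \in \mathfrak{b}^{\mathrm{mon}} + I_{J \cup J'}^{\mathrm{mon}}$. The monomial CRT then produces $s \in S$ with $\tilde f - rs \in \bigcap_J (\mathfrak{b}^{\mathrm{mon}} + I_J^{\mathrm{mon}}) = \mathfrak{b}^{\mathrm{mon}} + \bigcap_J I_J^{\mathrm{mon}}$ (by the monomial version). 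Applying $\pi$, and using $\pi(r)=0$, delivers $f \in \mathfrak{b} + \bigcap_J I_J$, completing the proof. The only conceptual hurdle is that $(X_i)_{i\in J}$ is not a monomial ideal in $R[X_1,\ldots,X_N]$ when $0 \in J$ (because then one of the generators is the polynomial $1 - X_1 - \cdots - X_N$); the lift to $S$ is precisely what finesses this, turning the question into the combinatorics of monomials together with a straightforward CRT.
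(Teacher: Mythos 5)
Your proof is correct in substance and takes a genuinely different route from the paper's. The paper's argument works entirely inside $R[X_1,\ldots,X_N]$: it first replaces each $J$ by $J\cap[0,n]$ (which leaves the right-hand side unchanged and only shrinks the left), then localizes at each $X_i$ — using $\sum_{i=0}^N X_i = 1$ — to reduce to the situation where $0\notin J$ for every $J\in\mathfrak{J}$, at which point all the ideals are honestly monomial and the inclusion follows by isolating, in a given $f$, the monomial terms not divisible by any $X_i$ with $i\geq n+1$. Your proof instead lifts to $S = R[X_0,\ldots,X_N]$, where every ideal in sight is monomial from the start, establishes the identity there by distributivity of union over intersection at the level of monomial supports, and then descends along the hyperplane $V(r)$ using a CRT for monomial ideals together with the fact that $r$ is a non-zero-divisor modulo each relevant $(X_i)_{i\in K}$. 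The paper's proof is shorter; yours makes transparent why the specialization $X_0 = 1 - X_1 - \cdots - X_N$ is harmless, and the monomial CRT you isolate is a reusable device.

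Two small slips to fix. First, you call $\supseteq$ the formal inclusion, but the immediate one is $\subseteq$ (the sum $\mathfrak{b} + \bigcap_J I_J$ is visibly contained in each $\mathfrak{b} + I_J$); your actual argument, which starts from $f$ in the right-hand side and lands it in the left-hand side, does prove the nontrivial direction $\supseteq$, so only the label is off. Second, your non-zero-divisor claim restricts to $K\subsetneq[0,N]$, but the ideal $\mathfrak{c} = \mathfrak{b}^{\mathrm{mon}} + I_{J\cup J'}^{\mathrm{mon}}$ to which you apply it may involve all of $X_0,\ldots,X_N$ (e.g. when $0\in J\cup J'$). The statement still holds in that case — then $S/\mathfrak{c}\cong R$ and $r\equiv -1$ is a unit — so either drop the restriction $K\subsetneq[0,N]$ and argue by cases, or observe that the excluded case is trivial. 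Relatedly, the justification "degree-one polynomial with nonzero constant term" is a little loose; what you really use is that $r\bmod\mathfrak{c}$ is monic in one of the remaining variables (or that its constant term $-1$ is a \emph{unit}, which by McCoy's criterion also suffices).
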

\begin{proof}
	The inclusion "$\subseteq$" is straightforward.  To show that it is an equality, we may without loss of generality assume that all $J\in \mathfrak{J}$ satisfy $J\subset [0,n]$, since replacing $\mathfrak{J}$ with $\{J\cap[0,n] | J\in \mathfrak{J}\}$ leaves the right hand side unchanged, while possibly making the left hand side smaller. 
	
	Since $(X_0,\ldots,X_N) = 1$, it suffices to check the equality after localizing at each $X_i$ respectively. For $i\geq n+1$, both sides become the unit ideal. For any $0\leq i\leq n$, the terms in the intersection for which $i\in J$ become the unit ideal after localizing at $X_i$, so we may leave them out. This reduces us to the case that there exists some $i_0\in [0,n]$ such that for all $J\in \mathfrak{J}$, we have $i\not\in J$. By symmetry, we may assume that $i_0=0$. So, suppose that $\mathfrak{J}$ is a collection of sub-sets of $[1,n]$, and let $f\in R[X_1,\ldots,X_N]$ be contained in the right hand side of \eqref{eq:ideals-equal}. We may write 
	\[f = g + \sum_{i=n+1}^N X_i f_i\]
	where $g\in R[X_1,\ldots,X_n]$ is the sum of all monomial terms of $f$ not divisible by any $X_i$ for $n+1\leq i\leq N$. Then, for any given $J\in \mathfrak{J}$, the condition that $f\in (X_i)_{i\in J} + (X_{n+1}, \ldots,X_N)$ is equivalent to every monomial term in $f$ being divisible by some $X_i\in J\cup[n+1,N]$, hence implies that  $g\in (X_i)_{i\in J}$. Thus, 
	\[g\in \bigcap_{J\in \mathfrak{J}} (X_i)_{i\in \mathfrak{J}},\]
	which shows that $f$ is contained in the left hand side of \eqref{eq:ideals-equal} as desired.  	
\end{proof}

\section{The spin functor}\label{sec:spin-functor}
We keep the setting and notation of \S \ref{sec:self-dual-cat}: $X$ is an affine scheme, carrying a fixed quadratic complex $(\bE, \theta)$. As defined there, let $\mathcal{R}$ denote the category of self-dual representatives of $\bE$ (Definition \ref{def:iso-red}), and $\mathcal{R}^{\Delta}$ its simplicial enrichment (Definition \ref{def:simplicial-enrichment-reductions}). We now also assume $\bE$ is equipped with an orientation. 

In this section, we construct a pseudo-functor $\bS:\mathcal{R}\to \Grpd$, which to an object $E^{\bullet}\in \mathcal{R}$ associates the groupoid of \emph{spin structures on $E^{\bullet}$} (Definition \ref{def:spin-functor}). Generalizing the discussion in \ref{sec:sketch}, we define in \S \ref{sec:constr-spin-functor} a spin structure on $E^{\bullet}$ to be a $\Spin^{\bC}$-structure \footnote{See \ref{sec:appendix-spin-groups}. Not working over $\bC$, this should maybe more appropriately be called a \emph{special Lipschitz structure}.} on the $\SO(m)\times \bG_m$-principal bundle associated to the pair 
\[(E^0, \det E_+^{\bullet}).\]
The data of $\mathbb{S}$ gives rise to a category fibered in groupoids
\[\mathcal{S}\to \mathcal{R}\]
and we define a spin structure on $\bE$ to be a section of this category \ref{def:spin-structure-affine}
 
In the rest of the section, we study the structure of the functor $\bS$, or equivalently the fibration $\mathcal{S}\to \mathcal{R}$. In \S \ref{subsec:enriched-functors}, we show that $\bS$ extends to a functor on the simplicially enriched category $\mathcal{R}^{\Delta}$. Since we already showed this is a weakly contractible category, we can quickly conclude in \S \ref{sec:spin-functor-structure} that the functor $\bS$ is essentially constant, or equivalently, that $\mathcal{S}$ is equivalent to a constant fibration (Theorem \ref{thm:spin-functor-structure}). 

\subsection{Construction of the functor}\label{sec:constr-spin-functor}

\begin{definition}	\label{def:pair-cat}
	Let $\mathcal{P}$ denote the category given as follows
	\begin{enumerate}
		\item The objects of $\mathcal{P}$ are pairs $(E, L)$ where $E$ is an $\SO(m)$-bundle for some $m$, and $L$ is a line bundle
		\item A morphism $(F,M)\to (E,L)$ in $\mathcal{P}$ consists of an isotropic sub-space $K\subseteq E$, which is required to be positive if $2k = m$, together with isomorphisms $q:F \to K^{\perp}/K$ and $\varphi: M \to L\otimes \det K$. 
		\item Composition of morphisms $(K,q,\varphi):(F,M)\to (E,L)$ and $(J,p,\psi):(G,N)\to (F,M)$ is defined as follows: Let $\pi_K:K^{\perp}\to K^{\perp}/K$ and $\pi_{J}:J^{\perp}\to J^{\perp}/J$ denote the natural projections. Then the subspace
		\[\Lambda:= \pi_K^{-1}(qJ) \subseteq E\]
		is isotropic, and one has natural isomorphisms $ J^{\perp}/J\to \Lambda^{\perp}/\Lambda$ and $\det\Lambda \to \det K \otimes \det J$. 
		We take the composition $(K,q,\varphi) \circ (J,p,\psi)$ to be $(\Lambda, r, \rho)$, where $r$ is the composition 
		\[G\xrightarrow{p} J^{\perp}/J \to  \Lambda^{\perp}/\Lambda\]
		and where $\rho$ is the composition 
		\[N\xrightarrow{\psi} M\otimes \det J \xrightarrow{\varphi\otimes \det J} L\otimes \det K\otimes \det J \to L\otimes \det \Lambda\] \label{item:pair-catiii}
	\end{enumerate}	
\end{definition}
In particular, each object of $\mathcal{P}$ corresponds to a principal bundle with structure group $\SO(m)\times \mathbb{G}_m$, and a morphism in $\mathcal{P}$ gives rise to a diagram of principal bundles \eqref{diag:principal-bundles-pairs} over the diagram of groups \eqref{eq:group-diag-twisted}.

\begin{construction}\label{constr:to-pair-functor}
	Define a functor $\mathbb{P}:\mathcal{R}\to \mathcal{P}$ as follows:
	\begin{enumerate}
		\item On objects it is given by   
		\[\bP:E^{\bullet}\mapsto (E^0,\det (E_+^{\bullet}[1])).\]
		\item For a morphism $\xi:F^{\bullet}\rightsquigarrow E^{\bullet}$ given by the isotropic reduction 
		\[F^{\bullet} \xleftarrow{f_{\bullet}} A^{\bullet} \xrightarrow{e_{\bullet}} E^{\bullet},\]
		by Lemma \ref{lem:iso-red-gives-iso-red}, we have the isotropic sub-bundle $K:= \Ker(E^0\xrightarrow{e_0^{\vee}} A_0)$, an induced isomorphism $q:K^{\perp}/K\to F^0$ and a natural isomorphism $\varphi:\det F_+^{\bullet}[1] \to \det  E^{\bullet}_+[1] \otimes \det K$.
		We set 
		\[\bP(\xi):= (K,q,\varphi).\]
	\end{enumerate}
	That this defines a functor, i.e. that $\bP(\xi\circ \zeta) = \bP(\xi)\circ \bP(\zeta)$ follows from a straightforward but slightly tedious calculation involving the commutative diagram $\eqref{eq:composition-diag}$ and similar arguments as used in the proof of Lemma \ref{lem:iso-red-gives-iso-red}. 
\end{construction}

\begin{remark}\label{rem:twisted-pair}
	Let $\mathcal{N}$ be a fixed line bundle on $X$. Then one can also consider a twisted variant $\bP^{\mathcal{N}}$, obtained by postcomposing $\bP$ with the functor 
	\begin{align*}
	\mathcal{N}\otimes -: \mathcal{P} & \to \mathcal{P}	\\
		(E,L)&\mapsto (E,\mathcal{N}\otimes L) 
	\end{align*}
\end{remark}

For an object $(E,L)\in \mathcal{P}$, let $\bS(E,L)$ denote the groupoid of $\Spin^{\bC}$-structures over the associated $\SO(m)\times \mathbb{G}_m$-principal bundle. Give a morphism $\xi: (F,M)\rightsquigarrow (E,L)$ in $\mathcal{P}$, we have the associated pullback and pusforward functors $\xi^*$ and $\xi_*$ obtained from the diagram \eqref{diag:spin-structure-transfer}, as explained there. 
\begin{lemma}\label{lem:spin-to-pairs-functor}
	The construction $(E,L)\mapsto \bS(E)$ and $\xi\mapsto \bS(\xi):=\xi^*$ naturally defines a (contravariant) pseudo-functor $\bS:\mathcal{P}\to \Grpd^{\simeq}$. 
\end{lemma}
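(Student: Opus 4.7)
The plan is to verify the four pieces of data/axioms for a (contravariant) pseudo-functor landing in $\Grpd^{\simeq}$: (a) a functor $\xi\mapsto \xi^{*}$ on each hom-set, (b) a natural isomorphism $c_{\zeta,\xi}:(\xi\circ\zeta)^{*}\Rightarrow \zeta^{*}\circ\xi^{*}$ for each composable pair, (c) associativity/unit coherences for the $c_{\zeta,\xi}$, and (d) that each $\xi^{*}$ is an equivalence of groupoids. The ingredients already in hand are the diagrams \eqref{diag:principal-bundles-pairs} and \eqref{diag:spin-structure-transfer} together with the diagram of double covers \eqref{eq:diag-groups-extended}, so (a) amounts to unwinding, and the work is in (b) and (c).

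\textbf{Step 1 (definition of $\xi^{*}$ and (d)).} For $\xi=(K,q,\varphi):(F,M)\to (E,L)$ with $\rank K=k$, the flag $K\subseteq E$ determines the $\G(m,k)$-reduction $P^{\G}_{E,K}\subseteq P^{\SO}_{E}$; combined with the $\mathbb{G}_m$-bundles of $L,M$ and the isomorphism $\varphi:M\xrightarrow{\sim}L\otimes\det K$, one obtains diagram \eqref{diag:principal-bundles-pairs}. Given $\sigma\in\bS(E,L)$, base-change along $\widetilde{\G(m,k)\times\mathbb{G}_m}\to\G(m,k)\times\mathbb{G}_m$ and extend structure group along $\widetilde{\G(m,k)\times\mathbb{G}_m}\to\Spin^{\bC}(m-2k)$; this gives the desired $\xi^{*}\sigma\in\bS(F,M)$. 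Running the analogous induction-then-restriction procedure in the reverse direction through \eqref{eq:diag-groups-extended} supplies a functor $\xi_{!}:\bS(F,M)\to\bS(E,L)$, and the counit/unit for the induction/restriction adjunction along the $2:1$ covering are isomorphisms (they involve only $2$-torsion obstructions which vanish because the covers in question are Galois of order $2$ on both sides). Hence $\xi^{*}$ is an equivalence, giving (d).

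\textbf{Step 2 (composition isomorphism).} Given $\zeta=(J,p,\psi):(G,N)\to(F,M)$ and $\xi=(K,q,\varphi):(F,M)\to(E,L)$ with composition $\xi\circ\zeta=(\Lambda,r,\rho)$ where $\Lambda=\pi_K^{-1}(qJ)$, the natural isomorphism $c_{\zeta,\xi}$ is built from a common refinement. Let $\G(m;k,k+j)\subseteq\SO(m)$ be the parabolic stabilizer of the nested flag $K\subseteq\Lambda$. It sits in a commutative square of surjective group homomorphisms
\[
\begin{tikzcd}
\G(m;k,k+j)\ar[r]\ar[d] & \G(m,k)\ar[d,"\Gprojtw_{m,k}"] \\
\G(m,k+j)\ar[r,"\Gprojtw_{m,k+j}"'] & \G(m-2k,j)
\end{tikzcd}
\]
(compatibility on the $\mathbb{G}_m$-factor is exactly the isomorphism $\det\Lambda\simeq\det K\otimes\det J$ entering the definition of $\rho$). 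The corresponding diagram of principal bundles obtained by reducing $P^{\SO}_{E}\times P^{\mathbb{G}_m}_{L}$ to the flag stabilizer realizes both iterated and direct pullbacks of $\sigma$ as two extensions of structure group from one and the same $\widetilde{\G(m;k,k+j)\times\mathbb{G}_m}$-bundle; the natural isomorphism between them is then canonical, giving $c_{\zeta,\xi}$.

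\textbf{Step 3 (coherence and unit).} The unit law is immediate: for the identity morphism $(0,\id,\id)$ the reduction of structure group is trivial and $\xi^{*}=\id$ on the nose. For associativity with a third composable morphism $(I,\cdot,\cdot):(H,P)\to(G,N)$, one repeats Step 2 with the triply nested flag, using the stabilizer $\G(m;k,k+j,k+j+i)$; both sides of the pentagon for $c$ are identified with extension of structure group from the resulting $\widetilde{\G(m;k,k+j,k+j+i)\times\mathbb{G}_m}$-bundle, hence agree canonically.

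\textbf{Main obstacle.} The serious bookkeeping is in Step 2: verifying that the determinant character of the parabolic $\G(m;k,k+j)$ factors through the target $\mathbb{G}_m$-factors exactly as prescribed by $\rho$, so that the square above commutes on $\mathbb{G}_m$-factors. This is essentially a check about the factorisation $\det\Lambda\simeq\det K\otimes\det J$ and its compatibility with $\Gprojtw$; once verified, the double-cover diagram commutes automatically by the fundamental-group computation exhibited before \eqref{eq:diag-groups-extended}, and the remaining coherence statements follow from the same computation applied to longer flags.
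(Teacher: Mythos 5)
Your proposal follows essentially the same route as the paper's proof: define $\xi^{*}$ by base change followed by extension of structure group over the twisted parabolic, build the composition isomorphism via the parabolic stabilizing the nested flag $K\subseteq\Lambda$ (your $\G(m;k,k+j)$, the paper's $\G(m,k,j)$), observe that the defining subgroups of the double covers pull back to the same subgroup of $\pi_1$ of the common refinement, and run a three-flag version of the same argument for the pentagon. You also explicitly address the equivalence property needed to land in $\Grpd^{\simeq}$, which the paper's proof leaves tacit; the cleaner justification, though, is not a claim about vanishing $2$-torsion obstructions but simply that the upper-left square of \eqref{eq:diag-groups-extended} is cartesian by the definition of $\widetilde{\G(m,k)\times\bG_m}$, so base change and extension of structure group in the two directions are mutually inverse.

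One concrete issue: the commutative square you draw in Step 2 does not make sense as written. The right-hand vertical arrow is labeled $\Gprojtw_{m,k}$, but $\Gprojtw_{m,k}$ has codomain $\SO(m-2k)\times\bG_m$, not $\G(m-2k,j)$; likewise $\Gprojtw_{m,k+j}$ has codomain $\SO(m-2(k+j))\times\bG_m$. A single square cannot express the compatibility, because the two $\Gprojtw$-maps involved have different targets. What is actually needed is the diamond of \eqref{eq:big-group-diag}: the parabolic $\G(m,k,j)$ maps both to $\G(m-2k,j)\times\bG_m$ (via the restriction of $\Gprojtw_{m,k}$) and to $\G(m,k)\times\bG_m$ (by forgetting the finer flag), these map further down to the three bottom $\SO(\cdot)\times\bG_m$ groups, and the whole thing maps up to $\G(m,k+j)\times\bG_m$. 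The iterated pullback $\zeta^{*}\xi^{*}$ and the direct pullback $(\xi\zeta)^{*}$ are realized as the two outer zig-zags of this diamond, and commutativity of the diamond, together with the $\pi_1$ pullback observation, gives the desired double cover over $\G(m,k,j)\times\bG_m$ through which both factor. The $\bG_m$-factor compatibility you flag as the ``main obstacle'' is exactly the isomorphism $\det\Lambda\simeq\det K\otimes\det J$ built into the definition of $\rho$ in $\mathcal{P}$, as you note. With the diamond in place of your single square, the remainder of your argument is correct and is essentially the paper's proof.
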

\begin{proof}
	Given composable morphisms $\xi,\zeta$ in $\mathcal{P}$, we need to specify a $2$-morphism $\mu_{\xi,\zeta}:\zeta^* \xi^* \Rightarrow (\xi \zeta)^*$ (cf. \cite[Part 1, Definition 3.10]{FGAE} One also needs to specify similar data for identity morphisms, which works in the obvious way and is left out here). 
	Say we have $\xi: (F,M)\rightsquigarrow (E,L)$ given by $(K,q,\varphi)$ and $\zeta:(G,N)\rightsquigarrow (F,M)$ given by $(J,p,\psi)$, so that their composition is given by  $(\Lambda, r, \rho)$ constructed as in Definition \ref{def:pair-cat} \ref{item:pair-catiii}.
	Let $m, k, j$ be the ranks of $E,K$ and $J$ respectively, and consider the subgroup $\G(m,k,j)\subseteq \SO(m)$ preserving the two-step filtration $\mathcal{O}^k\times \{0\}\subseteq \mathcal{O}^{k+j}\times \{0\} \subseteq \mathcal{O}^m$. \footnote{For $\SO(m)$ as chosen in \ref{sec:standard-forms}} Then we have a commutative diagram 
	\begin{equation}\label{eq:big-group-diag}
		\begin{tikzcd}[row sep = small,column sep = tiny]
			& & \G(m,k+j)\times \bG_m\ar[dddll, bend right]\ar[dddrr, bend left]& & \\
			& & \G(m,k,j)\times \bG_m\ar[dl,"{\Gprojtw_{m,k,j}}"]\ar[dr]\ar[u]& &  \\
			& \G(n,j)\times \bG_m\ar[dl,"{\Gprojtw_{n,j}}"]\ar[dr,"{\Gsub\times \id_{\bG_m}}"]& &\G(m,k)\times \bG_m\ar[dl,"{\Gprojtw_{m,k}}"]\ar[dr,"{\Gsub\times \id_{\bG_m}}"] & \,\\
			\SO(\ell)\times \bG_m& &\SO(n)\times \bG_m& &\SO(m)\times \bG_m
		\end{tikzcd}
	\end{equation}
	where $n:=m-k$ and $p:=n-j = m-k-j$. 
	Here, all the maps to the lower right and the upward vertical map are an inclusion in the first and identity on the second factor.
	
	The maps to the lower left are the ones of \ref{eq:group-diag-twisted}, except for $\Gprojtw_{m,k,j}$ which is the restriction of $\Gprojtw_{m,k}$.
	We claim that over the diagram \eqref{eq:big-group-diag}, we have a natural diagram of double covers with the $\Spin^{\bC}$-groups in the bottom row. In view of \eqref{eq:diag-groups-extended}, the only thing that needs to be checked, is that the subgroups defining the double covering groups $\widetilde{\G(a,b)\times \G_m}$ pull back to the same subgroup of $\pi_1(\G(m,k,j)\times \bG_m)$, but this already follows from \eqref{eq:diag-groups-extended} together with the commutativity of \eqref{eq:big-group-diag}.	
	
	Now, let $P_E\times P_L$ be the principal bundle $\SO(m)\times \mathbb{G}_m$-bundle associated to $(E,L)$, so that $\bS(E,L)$ is the groupoid of reductions of structure groups to $\Spin^{\bC}(m)$. Taking base changes and induced bundle along the lower zig-zag of \eqref{eq:big-group-diag}, gives the functor  $\zeta^*\xi^*:\bS(E,L)\to \bS(G,N)$, while traversing via the top yields $(\xi\zeta)^*$. 
	
	By the commutativity of \eqref{eq:big-group-diag}, both of these functors are canonically isomorphic to the functor given by pulling back and taking induced bundle from the group $\widetilde{\G(m,k,j)\times \bG_m}$. 
	
	Finally, one needs to check that these $2$-isomorphisms satisfy the associativity constraints for a pseudo-functor. This involves a similar argument using a group $\G(m,k,j,\ell)$ for three-step filtrations which yields a tetrahedral diagram with \eqref{eq:big-group-diag} as faces. The details are left out here.
\end{proof}

\begin{definition}\label{def:spin-functor}
	\begin{itemize}
		\item	The \emph{spin functor} is the pseudo-functor given as the composition \[\mathcal{R}\xrightarrow{\bP} \mathcal{P}\xrightarrow{\bS} \Grpd^{\simeq}.\] By abuse of notation, we again denote it by $\bS$. Note that it depends on $\bE$ as an oriented quadratic complex. For any $E^{\bullet}$, we say that $\sigma\in \bS(E^{\bullet})$ is a \emph{spin-structure} on the representative $E^{\bullet}$.
		\item 	We let $\mathcal{S}\to \mathcal{R}$ denote the category fibered in groupoids associated to the pseudo-functor $\bS$. Concretely, objects of $\mathcal{S}$ are pairs $(E^{\bullet},\sigma)$ with $\sigma\in \bS(E^{\bullet})$. A morphism $\Xi:(F^{\bullet},\tau)\to (E^{\bullet}, \sigma)$ in $\mathcal{S}$ consists of a morphism $\xi:F^{\bullet}\rightsquigarrow E^{\bullet}$ in $\mathcal{R}$ together with an isomorphism $\tau \to \xi^*\sigma$ in $\bS(F^{\bullet})$.		 	
	\end{itemize}	
\end{definition}

Now we can give a local definition of a spin structure on a quadratic complex which does not depend on a choice of representative:

\begin{definition}\label{def:spin-structure-affine}
	We let $\bS(\bE)$ denote the groupoid of sections of $\mathcal{S}\to \mathcal{R}$. We say that objects of $\bS(\bE)$ are \emph{spin structures on $\bE$}. 
\end{definition}

\subsection{Simplicial enrichments}\label{subsec:enriched-functors}
We would like to enhance the functor $\bS:\mathcal{R}\to \Grpd^{\simeq}$ to a functor from $\mathcal{R}^{\Delta}$.  

We have the obvious notion of families of maps in $\mathcal{P}$ analogous to Definition \ref{def:family-reductions}, which allows us to define a simplicial enrichment on $\mathcal{P}$ as in Definitions \ref{def:simplicial-enrichment-reductions} and \ref{def:kan-enrichment-reductions}. 
\begin{definition}	
	For any scheme $T$, a \emph{family of maps $(F, M)\to (E,L)$ in $\mathcal{P}$} is given by an isotropic subspace $K_T\subset \pr_2^*E$, and isomorphisms $K_T^{\perp}/K_T\to \pr_2^*F$, $L\otimes \det K_T\to M$.
\end{definition}

\begin{definition}
	Let $\tilde{\mathcal{P}}^{\Delta}$ denote the simplicial category whose objects are the objects of $\mathcal{R}$, and such that for $(F,M), (E,L)$, the associated $\Hom$-simplicial set $\Hom_{\tilde{\mathcal{P}}^{\Delta}}((F,M),E,L) $is given by the composition
	\[\Delta \xrightarrow{\bA^{\Delta}}\operatorname{Schemes}\xrightarrow{\underline{Hom}_{\mathcal{P}}((F,M)(E,L)) }\operatorname{Set}\]
	We also define 
	\[\mathcal{P}^{\Delta}:=\tilde{\mathcal{P}}^{\Delta,\Ex^{\infty}}.\] 
\end{definition}

\begin{proposition}\label{prop:pairfunc-simplicial}
	The functor $\bP$ of Construction \ref{constr:to-pair-functor} extends naturally to a functor of simplicial categories $\mathcal{R}^{\Delta}\to \mathcal{P}^{\Delta}$.
\end{proposition}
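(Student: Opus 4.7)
The plan is to first extend $\bP$ to a functor of simplicial categories $\tilde{\mathcal{R}}^{\Delta}\to \tilde{\mathcal{P}}^{\Delta}$, and then pass to the Kan replacement via the functoriality of $\Ex^{\infty}$. Since the $\Hom$-simplicial sets in both $\tilde{\mathcal{R}}^{\Delta}$ and $\tilde{\mathcal{P}}^{\Delta}$ are defined by the composition of $\bA^{\Delta}$ with a presheaf of morphisms, it suffices to produce, for every pair of objects $F^{\bullet},E^{\bullet}\in \mathcal{R}$ and every affine scheme $T$, a map of sets
\[
\bP_T:\underline{\Hom}_{\mathcal{R}}(F^{\bullet},E^{\bullet})(T) \to \underline{\Hom}_{\mathcal{P}}(\bP(F^{\bullet}),\bP(E^{\bullet}))(T)
\]
that is natural in $T$ and functorial in composition of families.

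First I would spell out the extension on families: a $T$-family of isotropic reductions is an isotropic reduction on $T\times X$ between $\pr_2^*F^{\bullet}$ and $\pr_2^*E^{\bullet}$, so the same recipe as in Construction \ref{constr:to-pair-functor} directly produces an isotropic sub-bundle $K_T \subseteq \pr_2^*E^0$, an isomorphism $K_T^{\perp}/K_T \simeq \pr_2^*F^0$, and a natural isomorphism of line bundles $\det (\pr_2^*F_+^{\bullet}[1]) \simeq \det(\pr_2^*E_+^{\bullet}[1])\otimes \det K_T$. This is exactly the data of a $T$-family of morphisms $\bP(F^{\bullet})\to \bP(E^{\bullet})$ in $\mathcal{P}$. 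Naturality in $T$ is immediate, because all the operations involved (kernel of a surjective map of locally free sheaves, orthogonal complement, quotient, determinant of the positive truncation) commute with pullback.

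Next I would verify that this assignment is compatible with composition of families. This is the exact family-version of the argument that $\bP$ is a functor on $\mathcal{R}$, and it reduces to the same diagram chase involving \eqref{eq:composition-diag}, which holds verbatim on $T\times X$. Since the construction respects degeneracies and face maps induced by $\bA^{\Delta}:\Delta\to \Aff$ (as these are just pullbacks along affine maps), this produces a genuine morphism of simplicial categories
\[
\tilde{\bP}^{\Delta}:\tilde{\mathcal{R}}^{\Delta}\to \tilde{\mathcal{P}}^{\Delta}.
\]

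Finally, to get the functor $\mathcal{R}^{\Delta}\to \mathcal{P}^{\Delta}$, one applies $\Ex^{\infty}$ levelwise to the $\Hom$-simplicial sets as in \eqref{eq:hom-spaces-locally-kan-replacement}. Since $\Ex^{\infty}$ is a functor on simplicial sets which preserves finite products, it takes any simplicial functor to a simplicial functor between the corresponding Kan-fibrantly replaced simplicial categories, yielding the desired $\bP^{\Delta}:\mathcal{R}^{\Delta}\to \mathcal{P}^{\Delta}$. I do not expect any deep obstacle here: the only work is the bookkeeping in the composition check on families, which is routine given that the operations defining $\bP$ are all compatible with pullback; the main thing to keep in mind is that one should use the description of morphisms in $\mathcal{R}$ via splicing (Remark \ref{rem:iso-red-comparison}) so that the identifications of $K_T^{\perp}/K_T$ with $F^0$ and of the determinants are unambiguous in families.
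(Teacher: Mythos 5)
Your proposal is correct and follows essentially the same route as the paper: apply Construction \ref{constr:to-pair-functor} over $T\times X$ to get the map on $\underline{\Hom}$-presheaves, observe compatibility with pullback and composition, and then invoke functoriality of $\Ex^{\infty}$ (together with its preservation of finite products) to descend to the locally Kan replacements. Your version fills in the bookkeeping the paper leaves implicit, but there is no difference in strategy.
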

\begin{proof}
	Construction \ref{constr:to-pair-functor} works unchanged with $X$ replaced by $X\times T$ for any affine scheme $T$, and is compatible with pullbacks in $T$. Thus for any $F^{\bullet},E^{\bullet}\in \mathcal{R}$, we have a natural extension of $\bP$ to a functor $\underline{\Hom}_{\mathcal{R}}(F^{\bullet},E^{\bullet})\to \underline{\Hom}_{\mathcal{P}}(\bP(F^{\bullet}), \bP(E^{\bullet}))$, in a way compatible with composition of (families of) morphisms. This induces, in turn, an extension of $\bP$ to 
	\[\tilde{\mathcal{R}}^{\Delta}\to \tilde{\mathcal{P}}^{\Delta},\]
	and then, by functoriality of the $\Ex^{\infty}$-construction, to 
	\[\mathcal{R}^{\Delta}\to \mathcal{P}^{\Delta}.\]
\end{proof}

The slightly more subtle point is to extend the pseudo-functor $\bS:\mathcal{P}\to \Grpd^{\simeq}$ to a functor with domain $\mathcal{P}^{\Delta}$, since it is not immediately clear what a the correct notion of a pseudo-functor from a simplicial category to a $2$-category should be. 
A solution is to replace both types of objects by associated $\infty$-categories, so that a functor between them is now simply given by a map of simplicial sets. 

For $2$-categories, the role of the homotopy-coherent nerve is played by the \emph{Duskin nerve}, which to a $2$-category $\mathcal{D}$ associates an simplicial set $\ND(\mathcal{D})$, and which is functorial for strictly unitary functors of $2$-categories \cite[\href{https://kerodon.net/tag/00AU}{Theorem 00AU}]{kerodon}.\footnote{We silently modify all our pseudo-functors to be strictly unitary as described in \cite[\href{https://kerodon.net/tag/008U}{Tag 008U}]{kerodon}.} If $\mathcal{D}$ is a $(2,1)$-category, in the sense that all its $2$-morphisms are invertible, then its Duskin nerve is an infinity category \cite[\href{https://kerodon.net/tag/00AC}{Theorem 00AC}]{kerodon}. 

Hence, we would like to extend $\bS:\mathcal{P}\to \Grpd^{\simeq}$ in some way to a morphism $\Nhc(\mathcal{P}^{\Delta})\to \ND(\Grpd^{\simeq})$.
Since the target is obtained from a $2$-category, we will need only the $2$-categorical information contained in $\mathcal{P}^{\Delta}$, which is achieved by the following standard construction. 

To any simplicial category $\mathcal{C}^{\Delta}$, we associate its homotopy $2$-category $h_2\mathcal{C}$ as in \cite[\href{https://kerodon.net/tag/02BJ}{Construction 02BJ}]{kerodon}. By definition, it has the same objects as $\mathcal{C}^{\Delta}$, and for any $x,y\in \mathcal{C}^{\Delta}$, the $1$-category $\mathcal{\Hom}_{h_2\mathcal{C}}(x,y)$ is obtained as the homotopy category of the simplicial mapping space $\Hom_{\mathcal{C}^{\Delta}}(x,y)$. 

More concretely, for any $x,y\in \mathcal{C}^{\Delta}$, the category $\Hom_{h_2\mathcal{C}}(x,y)$ is the category whose objects are given by the $0$-simplices of $\Hom_{\mathcal{C}^{\Delta}}(x,y)$, and whose morphism set is freely generated by the set of $1$-simplices of $ \Hom_{\mathcal{C}^{\Delta}}(x,y)$ subject to the relations that $[\sigma_0(f)] = \id_f$ for any vertex $f\in\Hom_{\mathcal{C}^{\Delta}}(x,y)_0$, and $\xi_1\circ \xi_{0} = \xi_2$ for any $2$-simplex in $\Hom_{\mathcal{C}^{\Delta}}(x,y)$ with edges $\xi_0,\xi_1, \xi_2$. 
By \cite[\href{https://kerodon.net/tag/02BM}{Remark 02BM}]{kerodon}, there is a natural comparison map of $\infty$-categories $\Nhc(\mathcal{C}^{\Delta})\to \ND(h_2\mathcal{C})$

\begin{construction}\label{constr:pullback-simplicial-pairs}
	We define a natural extension of the pseudo-functor $\mathbb{S}:\mathcal{P}\to \Grpd^{\simeq}$ to a pseudo-functor of $2$-categories $h_2\mathcal{P}\to \Grpd^{\simeq}$ (also denoted $\bS$). 
	
	Since $\Grpd^{\simeq}$ has only invertible $2$-morphisms, it is equivalent to construct a pseudo-functor $h_2\tilde{\mathcal{P}}\to \Grpd^{\simeq}$ as we show in Corollary \ref{cor:2-cat} below. 
	
	Fix objects $(F,M)$ and $(E,L)$ of $\mathcal{P}$. We specify the functor
	\begin{equation}\label{eq:2catfunc-homcats}
		\bS:\Hom_{h_2\tilde{\mathcal{P}}}((F,M),(E,L))\to \Hom_{\Grpd^{\simeq}}(\mathbb{S}(E,L), \mathbb{S}(F,M) ),
	\end{equation}
	On objects, it is just given by $\xi \mapsto \mathbb{S}(\xi):=\xi^*$. 
	By definition, 
	\[\Hom_{h_2\widetilde{\mathcal{P}}}((F,M),(E,L)) = h_1\left(\Hom_{\widetilde{\mathcal{P}}^{\Delta}}((F,M), (E,L))\right).\]
	
	Thus, we need to specify $\mathbb{S}$ on $1$-simplices of $\Hom_{\mathcal{P}^{\Delta}}((F,M), (E,L))$ and show that the resulting map respects the relations defining the homotopy category. 
	
	A $1$-simplex of $\Hom_{\widetilde{\mathcal{P}}^{\Delta}}((F,M), (E,L))$ is given by a family of maps $\xi_t: (F,M)\to (E,L)$ over $\mathbb{A}^1$. 

	A spin structure $\sigma $ on $E$ gives rise to an $\mathbb{A}^1$-valued family of spin structures $\xi_t^*\sigma$ on $F$, with identifications $\xi_t^*\sigma|_{0}\simeq \xi_0^*\sigma$ and $\xi_t^*\sigma|_1 = \xi_1^*\sigma$. Since pullback gives an isomorphism\footnote{Here we use \'etale cohomology groups and, crucially, that $2$ is invertible on $X$.} 
	 \begin{equation}\label{map:cohom-pullback-iso}
	 \operatorname{H}^1(X, \ZT)\xrightarrow{\sim}	\operatorname{H}^1(\bA^1\times X, \ZT) ,
	 \end{equation} we get that the spin structure $\xi_t^*\sigma$ is isomorphic to $\pr_2^*\tau$ for some $\tau\in \bS(F,M)$. 	
	Restricting to $0$ and $1$, we get an isomorphism 
	\begin{equation}\label{eq:defeq-eta}		\xi_{0}^*\sigma \simeq \xi^*\sigma|_0 \simeq \operatorname{pr}_2^*\tau|_0\simeq \tau\simeq \operatorname{pr}_2^*\tau|_1\simeq  \xi^*\sigma|_1\simeq \xi_1^*\sigma.
	\end{equation}
	We note that this isomorphism is in fact independent of the choice of $\tau$, and functorial in $\sigma$, and hence gives a well-defined natural transformation $\eta_{\xi}:\xi_0^*\Rightarrow \xi_1^*$ of functors $\bS(F,M)\to \bS(E,L)$. The inverse is obtained from the family obtained by reparametrizing $\xi$ via $t\mapsto 1-t$.   
	
	If $\xi$ is a degenerate $1$-simplex, i.e. a constant family of morphisms, then we can take $\tau = \xi_0^*f$, and see that the induced isomorphism is the identity.
	
	Now suppose that we have a $2$-simplex in $\Hom_{\tilde{\mathcal{P}}^{\Delta}}((F,M), (E,L))$ with restrictions $\xi_{01}, \xi_{12}$ and $\xi_{02}$ to the respective edges. This is given by a family of morphisms over $\mathbb{A}^2$. Then, using \eqref{map:cohom-pullback-iso} twice (once with $X$ replaced by $X\times \bA^1$), the family of spin structures $\xi_{\bA^2}^*\sigma$ is isomorphic to $\pr_2^*\tau$ for $\tau\in \bS(F,M)$. We can use the same $\tau$ to define the natural transformations induced by $\xi_{01}, \xi_{12}$ and $\xi_{02}$. Thus, we have the following commutative diagram, showing that $\eta_{\xi_{12}}\circ \eta_{\xi_{01}} = \eta_{\xi_{02}}$.
	\begin{equation*}
		\begin{tikzcd}
			&               & \xi_{0}^*\sigma \arrow[ld, "\sim"] \arrow[rd, , "\sim"']\ar[dddrr, bend left,"{\eta_{\xi_{02}}(\sigma)}"] \ar[dddll, bend right,"{\eta_{\xi_{01}}(\sigma)}"'] &               &                         \\
			&   |[xshift=1.2em,overlay]|  \xi_{01}^*\sigma|_0 \arrow[rd, , "\sim"] &                                       &  |[xshift=-1.2em,overlay]| \xi_{02}^*\sigma|_0 \arrow[ld, , "\sim"'] &                         \\
			& \xi_{01}^*\sigma|_1 \arrow[r, , "\sim"]  & \tau                                    & \xi_{02}^*\sigma|_2 \arrow[l, , "\sim"']  &                         \\
			\xi_1^*\sigma \arrow[ru, , "\sim"] \arrow[r, , "\sim"] \ar[rrrr, bend right,"{\eta_{\xi_{12}}(\sigma)}"]& \xi_{12}^*\sigma|_{1} \arrow[ru, , "\sim"] &                                       & \xi_{12}^*\sigma|_{2} \arrow[lu, , "\sim"'] & \xi_{2}^*\sigma \arrow[lu, , "\sim"'] \arrow[l, , "\sim"']
		\end{tikzcd}
	\end{equation*}
	This concludes the construction of \eqref{eq:2catfunc-homcats}. 
	
	Next, we need to specify the compatibility data for composition of morphisms. That is, we need to show that the $2$-morphisms $\mu_{\xi,\zeta}$ constructed in the proof of Lemma \ref{lem:spin-to-pairs-functor} assemble to a natural transformation of functors $\Hom_{h_2\tilde{\mathcal{P}}}(x,y)\times \Hom_{h_2\tilde{\mathcal{P}}}(y,z)\to \Hom_{\Grpd^{\simeq}}(\bS(x), \bS(z))$.
	Concretely, we need to show that for every morphism $(\eta,\nu):(\zeta_0,\xi_0) \to (\zeta_1,\xi_1) $ in the category $\Hom_{h_2\tilde{\mathcal{P}}}(x,y)\times \Hom_{h_2\tilde{\mathcal{P}}}(y,z)$, the following diagram of $2$-morphisms in $\Grpd$ commutes
	\begin{equation}\label{eq:2cat-comm-diag}
		\begin{tikzcd}
			\bS(\zeta_0)\circ \bS(\xi_0)\ar[r, Rightarrow, "{\mu_{\xi_0,\zeta_0}}"]\ar[d,Rightarrow, "{\bS(\eta)\circ \bS(\nu)}"]
			& \bS(\xi_0\circ \zeta_0) \ar[d, Rightarrow, "{\bS(\nu\circ \eta)}"]\\
			\bS(\zeta_1)\circ \bS(\xi_1)\ar[r, Rightarrow, "{\mu_{\xi_1,\zeta_1}}"] & \bS(\xi_1\circ \zeta_1)
		\end{tikzcd}
	\end{equation}
	Here, we use $\circ$ to denote both compositions of $1$-morphisms and horizontal composition of $1$-morphisms.
	It suffices to show commutativity of \eqref{eq:2cat-comm-diag} for $(\eta,\nu)$ ranging through a generating set of morphisms, so without loss of generality we may assume that either $\eta$ or $\nu$ is an identity morphism and that the other is given by a $1$-simplex of the respective simplicial set $\Hom_{\tilde{\mathcal{P}}^{\Delta}}(-,-)$. Assume, say, that $\nu = \id_{\xi}$ (the other case is exactly analogous) and that $\eta=\eta_{\zeta}:\zeta_0\Rightarrow\zeta_1$ is obtained from a family of maps $\zeta_t$ over $\bA^1$. 
	
	We now need to check that for a given $\sigma\in \bS(z)$, the following diagram commutes 
	\begin{equation}\label{eq:2cat-commdiag2}
		\begin{tikzcd}[column sep= large]
			\zeta_0^*\xi^*\sigma\ar[r,"{\mu_{\xi,\zeta_0}(\sigma)}"]\ar[d,"{\eta_{\zeta}(\xi^*\sigma)}"]& (\xi\circ \zeta_0)^*\sigma\ar[d,"{\eta_{\xi\circ \zeta}(\sigma)}"] \\
			\zeta_1^* \xi^*\sigma \ar[r,"{\mu_{\xi,\zeta_1}(\sigma)}"] & (\xi\circ \zeta_1)^*\sigma
		\end{tikzcd}
	\end{equation}
	
	By considering $\bA^1\times X$ in place of $X$, we have the natural transformation $\mu_{\pr_2^*\xi, \zeta_t}: \bS(\zeta_t)\circ \bS(\pr_2^{*}\xi) \Rightarrow \bS(\pr_2^*\xi \circ \zeta_t)$  of functors $\bS(\pr_2^*z)\to  \bS(\pr_2^*x)$, which restricts to $\mu_{\xi,\zeta_0}$ and $\mu_{\xi, \zeta_1}$ at $0$ and $1$ respectively.

	To compute $\eta_{\zeta}(\xi^*\sigma)$ via the defining formula \eqref{eq:defeq-eta}, we choose an isomorphism $\zeta_t^*(\pr_2^* (\xi^*\sigma)) \simeq \pr_2^*\tau$ for some $\tau\in \bS(y)$, and to compute $\eta_{\xi\circ \zeta}(\sigma)$, we take the composition 
	\[(\pr_2^*\xi\circ \zeta_t)^*\xi \xrightarrow{\mu_{\pr_2^*\xi,\zeta_t}(\sigma)^{-1}} \zeta_t^* (\pr_2^*\xi)^* \pr_2^*\sigma \simeq \pr_2^*\tau.\] 
	We get the following diagram of isomorphisms, which recovers the edges of \eqref{eq:2cat-commdiag2} by traversing between any two adjacent corners, and therefore shows that \eqref{eq:2cat-commdiag2} commutes.
	\begin{equation*}
		\begin{tikzcd}
			\zeta_0^*\xi^*\sigma\ar[r] &\zeta_t^*\xi^*\sigma|_{0} \ar[dr]& & (\xi\circ \zeta_t)^*\sigma|_0\ar[r]& (\xi\circ\zeta_0)^*\sigma\\
			& &\zeta_t^*\tau\ar[ur]\ar[dr]& & &\\
			\zeta_t^*\xi^*\sigma \ar[r] & \zeta_t^*\xi^*\sigma|_{1}\ar[ur]& & (\xi\circ \zeta_t)^*\sigma|_1\ar[r]& (\xi\circ\zeta_1)^*\sigma.
		\end{tikzcd}
	\end{equation*}
\end{construction}

\begin{lemma}\label{lem:homotopy-category-localization}
	Let $Y$ be a simplicial set. Then the natural map of homotopy categories $h_1(Y)\to h_1(\Ex^{\infty}(Y))$ identifies the latter as the localization of $h_1(Y)$ at the class of all morphisms.  
\end{lemma}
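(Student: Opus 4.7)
\medskip

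\noindent\textbf{Proof plan.} The strategy is to identify both $h_1(Y)[W^{-1}]$ and $h_1(\Ex^{\infty}(Y))$ with the fundamental groupoid of $Y$ and then invoke the weak equivalence $Y \to \Ex^{\infty}(Y)$.

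First I would check that the natural functor $F: h_1(Y) \to h_1(\Ex^{\infty} Y)$ factors through the localization. Since $\Ex^{\infty}(Y)$ is a Kan complex, every edge $e\colon a\to b$ admits a left and right inverse up to a $2$-simplex: just fill the horns $\Lambda^2_0, \Lambda^2_2 \to \Ex^{\infty}(Y)$ whose two specified edges are $e$ and the degenerate edge at $a$ (respectively $b$). Hence $h_1(\Ex^{\infty}(Y))$ is a groupoid, so $F$ inverts every morphism of $h_1(Y)$, and consequently factors uniquely as
\[
  h_1(Y) \longrightarrow h_1(Y)[W^{-1}] \xrightarrow{\ \bar F\ } h_1(\Ex^{\infty}(Y)).
\]

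Next I would identify both sides of $\bar F$ with the fundamental groupoid $\Pi_1(Y)$ of $Y$. By its standard generators-and-relations description, $\Pi_1(Y)$ is the free groupoid on the vertices and (freely invertible) edges of $Y$ modulo the relations imposed by $2$-simplices; this is precisely the presentation of the groupoid completion of $h_1(Y)$, giving $\Pi_1(Y) \cong h_1(Y)[W^{-1}]$. The same presentation applied to the Kan complex $\Ex^{\infty}(Y)$ (whose $h_1$ is already a groupoid by the first paragraph) gives $\Pi_1(\Ex^{\infty}Y) \cong h_1(\Ex^{\infty}Y)$.

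Finally, the canonical inclusion $Y \hookrightarrow \Ex^{\infty}(Y)$ is a weak homotopy equivalence by the defining property of Kan's $\Ex^{\infty}$ functor (\cite[\href{https://kerodon.net/tag/00ZA}{Subsection 00ZA}]{kerodon}), and $\Pi_1$ sends weak equivalences of simplicial sets to equivalences of groupoids. Combining with the previous step, $\bar F$ is an equivalence, as desired.

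The main obstacle is the last claim, that $\Pi_1$ preserves weak equivalences; this is the only genuinely non-formal ingredient. It follows from the fact that $\pi_0$ and all the pointed $\pi_1$'s of $Y$ (defined via the geometric realization, or equivalently via $\Ex^{\infty}(Y)$) are preserved by weak equivalences. Alternatively, one can avoid invoking this externally: since $Y \hookrightarrow \Ex^{\infty}(Y)$ is a monomorphism and a weak equivalence, hence an anodyne extension, and since $N\mathcal{G}$ is a Kan complex for any groupoid $\mathcal{G}$, the restriction map $\Hom_{\mathrm{sSet}}(\Ex^{\infty}Y, N\mathcal{G}) \to \Hom_{\mathrm{sSet}}(Y, N\mathcal{G})$ is surjective and injective up to homotopy; passing to $\pi_0$ and using $\Hom_{\mathrm{sSet}}(-, N\mathcal{G}) = \Hom_{\mathrm{Cat}}(h_1(-), \mathcal{G})$ on one side and its localization variant on the other yields the universal property defining $h_1(Y)[W^{-1}]$.
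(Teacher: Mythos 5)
Your proof is correct, but it takes a genuinely different route from the paper. The paper proves directly that already $h_1(Y)\to h_1(\Ex(Y))$ is the localization, by an entirely explicit computation: writing $1$-simplices of $\Ex(Y)$ as cospans $(f,g)$, exhibiting a concrete $2$-simplex $\Sd(\Delta^2)\to Y$ witnessing $[(g,1)]\circ[(f,g)]=[(f,1)]$, deducing invertibility of $[(f,1)]$ by setting $f=\id$, and constructing an explicit inverse $[(f,g)]\mapsto [g]^{-1}[f]$ whose well-definedness is checked against the generating relations. The statement for $\Ex^{\infty}$ then follows by passing to the filtered colimit, using that $h_1$ preserves colimits. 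You instead identify $h_1(Y)[W^{-1}]$ with the fundamental groupoid $\Pi_1(Y)$, observe that $h_1(\Ex^{\infty}Y)=\Pi_1(\Ex^{\infty}Y)$ since the target is a Kan complex, and invoke the classical facts that $Y\to\Ex^{\infty}(Y)$ is a weak equivalence and $\Pi_1$ carries weak equivalences to equivalences of groupoids (or, in your nicely self-contained variant, the anodyne-extension lifting argument against $N\mathcal{G}$, which relies on $\tau_1=h_1$ preserving finite products). Your approach is shorter and more conceptual, at the cost of importing standard model-categorical and simplicial-homotopy facts; the paper's is longer but elementary and stays entirely within combinatorics of simplices and generators-and-relations for $h_1$. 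One small improvement to your write-up: since $\Ex$ preserves vertices, the inclusion $Y\hookrightarrow\Ex^{\infty}(Y)$ is bijective on $0$-simplices, so the resulting equivalence $\Pi_1(Y)\to\Pi_1(\Ex^{\infty}Y)$ is in fact an isomorphism of categories, matching the strength of the paper's conclusion exactly.
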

\begin{proof}
	In fact, already $h_1(Y)\to h_1(\Ex(Y))$ is the localization, and this is what we will prove. The statement of the lemma follows from this since, as a left adjoint, the homotopy category functor preserves colimits, and since $\Ex^{\infty}(Y)= \lim_{\rightarrow k} \Ex^k(Y)$.  
	Now, by construction, a $1$-simplex of $\Ex(Y)$ consists of a \emph{cospan} in $Y$, i.e. a pair of $1$-simplices $(f,g)$ with the same endpoint as such:
	\begin{equation*}
		\begin{tikzcd}
			\bullet \ar[r,"f"] & \bullet &\bullet\ar[l, "g"']
		\end{tikzcd}
	\end{equation*}
	Here, we view this as a $1$-simplex starting at the source of $f$ and ending at the source of $g$. The $1$-simplices lying in $Y$ are exactly those of the form $(f,\id)$.
	We claim that in $h^1(\Ex(Y))$, we have the relation $[(g,1)]\circ [(f,g)] = [(f,1)]$. Indeed, this is witnessed by the following $2$-simplex in $\Ex(Y)$, given by a map $\operatorname{Sd}(\Delta^2)\to Y$:
	\begin{equation*}
		\begin{tikzcd}
			&              & {\bullet} \arrow[ld] \arrow[rd] \arrow[d] &              &                                      \\
			& {\bullet} \arrow[r] & {\bullet}                                 & {\bullet} \arrow[l] &                                      \\
			{\bullet} \arrow[rr,"f"'] \arrow[rru,"f"'] \arrow[ru,"f"] &              & {\bullet} \arrow[u]                       &              & {\bullet} \arrow[llu,"g"] \arrow[ll,"g"] \arrow[lu,"g"'].
		\end{tikzcd}
	\end{equation*}
	Taking $f = \id$, this shows that every morphism in $h^1(Y)$ has an invertible image in $h^1(\Ex(Y))$. Thus, the map $h_1(Y)\to h_1(\Ex(Y))$ factors uniquely through the localization. Further, we see that $[(f,g)] = [(g,1)]^{-1}[(f,1)]$, so the map from the localization is surjective on morphisms. To see that it is an isomorphism, we claim that $[(f,g)]\mapsto [g]^{-1}\circ [f]$ gives a well-defined inverse. Concretely, we need to check that it respects the defining relations in $h_1(\Ex(Y))$. For the ones regarding identity morphisms, this is straightforward. Suppose we have a relation $[(f_3,g_3)] = [(f_2,g_2)]\circ [(f_1,g_1)]$ coming from a $2$-simplex $\Delta^2\to \Ex(Y)$, i.e. a map $r:\operatorname{Sd}(\Delta^2)\to Y$. We need to show that, in $h_1(Y)$, we have $[g_3]^{-1}[f_3] = [g_2]^{-1}[f_2][g_1]^{-1}[f_1]$. 
	But $r$ forces a commutative diagram in $h_1(Y)$, which after passing to the localization guarantees exactly this desired relation. 	  
\end{proof}
\begin{corollary}\label{cor:2-cat}
	Let $\mathcal{D}$ be a $2$-category in which all $2$-morphisms are invertible, and let $\mathcal{C}^{\Delta}$ be a simplicial category. Consider the locally Kan replacement $\mathcal{C}^{\Delta,\Ex^{\infty}}$ defined via \eqref{eq:hom-spaces-locally-kan-replacement}. Then any pseudo-functor of $2$-categories
	\[F:h_2(\mathcal{C}^{\Delta}) \to \mathcal{D}\]
	has a unique extension to a pseudo-functor 
	\[h_2(\mathcal{C}^{\Delta,\Ex^{\infty}})\to \mathcal{D}.\]
\end{corollary}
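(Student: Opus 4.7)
The plan is to reduce the statement, piece by piece, to the content of Lemma \ref{lem:homotopy-category-localization}. The key initial observation is that the $\Ex$ construction (and hence $\Ex^{\infty}$) preserves $0$-simplices, so the $2$-categories $h_2(\mathcal{C}^{\Delta})$ and $h_2(\mathcal{C}^{\Delta,\Ex^{\infty}})$ share the same objects, and for any pair $x, y$ their hom-categories share the same objects. Since $\Ex^{\infty}$ additionally preserves finite products, the composition of $0$-simplices in $\mathcal{C}^{\Delta,\Ex^{\infty}}$ agrees with the one in $\mathcal{C}^{\Delta}$. Consequently, any extension $\tilde{F}$ is forced to agree with $F$ on objects, on objects of hom-categories, and on the compositor and (strict) unitor $2$-cells; this already gives uniqueness of these pieces of data.

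Next I would construct the action of $\tilde{F}$ on $1$-morphisms of the hom-categories. For each pair $x, y$, the functor
\[F_{x,y}: h_1\bigl(\Hom_{\mathcal{C}^{\Delta}}(x,y)\bigr) \to \Hom_{\mathcal{D}}(Fx, Fy)\]
takes values in a groupoid (since all $2$-morphisms in $\mathcal{D}$ are invertible), so it sends every morphism to an isomorphism. By Lemma \ref{lem:homotopy-category-localization}, the category $h_1\bigl(\Ex^{\infty}(\Hom_{\mathcal{C}^{\Delta}}(x,y))\bigr)$ is the localization of $h_1\bigl(\Hom_{\mathcal{C}^{\Delta}}(x,y)\bigr)$ at the class of all morphisms. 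Hence the universal property of localization yields a unique extension of $F_{x,y}$ to a functor $\tilde{F}_{x,y}$, completing the construction of the data of $\tilde{F}$.

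It then remains to verify the pseudo-functor axioms. Defining the compositor $\tilde{\mu}_{g,f}$ and the unitor to coincide with those of $F$ on $0$-simplices of the hom-categories, the pentagon and triangle coherence axioms are pointwise conditions on objects of hom-categories and therefore hold automatically. The only nontrivial point is naturality of $\tilde{\mu}$ with respect to morphisms in the extended hom-categories, and I expect this to be the only real check—though not a genuine obstacle. It reduces to the case of morphisms in the original hom-category, since by Lemma \ref{lem:homotopy-category-localization} every morphism in the localized hom-category is a zig-zag of such. For a morphism coming from the un-extended hom-category, naturality is exactly that of $\mu$; for the formal inverse of such a morphism, one simply inverts the resulting isomorphism in the groupoid $\Hom_{\mathcal{D}}(Fx, Fz)$. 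Overall, the real content is Lemma \ref{lem:homotopy-category-localization}, and the corollary is essentially an unwinding exercise built on top of it.
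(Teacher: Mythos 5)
Your proposal is correct and follows the same route the paper takes: identify $h_2(\mathcal{C}^{\Delta,\Ex^{\infty}})$ as the 2-category obtained from $h_2(\mathcal{C}^{\Delta})$ by totally localizing all hom-categories (Lemma \ref{lem:homotopy-category-localization}), then use the groupoid hypothesis on $\mathcal{D}$ and the universal property of localization. The paper compresses your verification into ``a straightforward check,'' so your write-up is simply a more detailed unwinding of the same argument.
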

\begin{proof}
	By Lemma \ref{lem:homotopy-category-localization}, the $2$-category $h_2(\mathcal{C}^{\Delta, \Ex^\infty})$ is obtained from $h_2(\mathcal{C}^{\Delta})$ by replacing all $\Hom$-categories by their total localizations. By the assumption on $\mathcal{D}$, and the basic properties of localizations, a straightforward check shows that all the data defining $F$ uniquely extends. 
\end{proof}

\subsection{Structure of the spin functor}\label{sec:spin-functor-structure}
We now use the results of \S \ref{sec:simpl-struc} and \S \ref{subsec:enriched-functors} to show that the functor $\bS$ is essentially constant. We use the notation, $\underline{Z}_x$ to denote the constant functor with value $x$. 
\begin{theorem}\label{thm:spin-functor-structure}
	
	\begin{enumerate}[label = \roman*)]
		\item For any $E^{\bullet}\in \mathcal{R}$, there exists a natural isomorphism of functors $\bS \Rightarrow \underline{Z}_{\bS(E^{\bullet})}$ whose value at $E^{\bullet}\in \mathcal{R}$ is the identity. \label{item:spin-structurei}
		\item There is an equivalence $\Phi:\mathcal{S}\to \bS(E^{\bullet})\times \mathcal{R}$ of categories over $\mathcal{R}$, whose fiber over $E^{\bullet}$ is the identity.\label{item:spin-structureii}
		\item  Any choice of $\Phi$ as constructed in the proof of \ref{item:spin-structureii} is essentially canonical: If $\Phi'$ is obtained from making different choices in the construction, there exists a \emph{canonical} natural transformation $\Phi \Rightarrow \Phi'$. Given three choices, then the resulting triangle of natural isomorphisms commutes. \label{item:spin-structureiii}
	\end{enumerate}
\end{theorem}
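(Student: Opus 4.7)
My plan is to promote the pseudo-functor $\bS:\mathcal{R}\to \Grpd^{\simeq}$ to a map of simplicial sets and invoke the weak contractibility of $\Nhc(\mathcal{R}^{\Delta})$ established in Theorem \ref{thm:contractible}. The decisive observation is that, since every $1$-morphism in $\Grpd^{\simeq}$ is an equivalence of groupoids and every $2$-morphism (natural isomorphism) is invertible, the Duskin nerve $\ND(\Grpd^{\simeq})$ is a Kan complex. Combining Proposition \ref{prop:pairfunc-simplicial}, Construction \ref{constr:pullback-simplicial-pairs} and Corollary \ref{cor:2-cat}, the pseudo-functor $\bS$ extends through $h_2(\mathcal{R}^{\Delta})\to \Grpd^{\simeq}$ to a map of simplicial sets
\[\bS^{\infty}\colon\Nhc(\mathcal{R}^{\Delta})\longrightarrow \ND(\Grpd^{\simeq}).\]

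For part \ref{item:spin-structurei}, I would fix the vertex $E^{\bullet}$ and attack the lifting problem for the inclusion $\Nhc(\mathcal{R}^{\Delta})\times \partial\Delta^1 \,\cup\, \{E^{\bullet}\}\times \Delta^1 \hookrightarrow \Nhc(\mathcal{R}^{\Delta})\times \Delta^1$, with boundary data $\bS^{\infty}$ at $0$, the constant map $\underline{Z}_{\bS(E^{\bullet})}$ at $1$, and the degenerate edge at $\bS(E^{\bullet})$ on $\{E^{\bullet}\}\times \Delta^1$. Since $\Nhc(\mathcal{R}^{\Delta})$ is weakly contractible, this inclusion is a trivial cofibration in the Kan--Quillen model structure, so a lift $H$ exists into the Kan complex $\ND(\Grpd^{\simeq})$. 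Unpacking $H$ through the equivalence between pseudo-natural transformations of $(2,1)$-categorical pseudo-functors and edges in the Duskin nerve of the mapping simplicial set recovers the desired natural isomorphism $\bS\Rightarrow \underline{Z}_{\bS(E^{\bullet})}$, which is the identity at $E^{\bullet}$ by the prescribed boundary condition on $\{E^{\bullet}\}\times \Delta^1$.

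Part \ref{item:spin-structureii} is then a formal consequence: applying the Grothendieck construction to the natural isomorphism produced in \ref{item:spin-structurei} yields an equivalence of categories fibered in groupoids $\Phi:\mathcal{S}\to \bS(E^{\bullet})\times \mathcal{R}$ over $\mathcal{R}$, whose fiber over $E^{\bullet}$ is the identity. For part \ref{item:spin-structureiii}, I would observe that any two choices of lift $H,H'$ are vertices of the fiber of a trivial Kan fibration of mapping spaces, hence lie in a contractible Kan complex. This produces a canonical (up to higher coherent homotopy) $1$-simplex of homotopies between them, which translates back into the canonical natural transformation $\Phi\Rightarrow \Phi'$. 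The compatibility for three choices follows similarly from contractibility of the corresponding space of $2$-simplices.

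The main obstacle I anticipate is the bookkeeping involved in the strictification step: turning the simplicial lift $H$ into genuine pseudo-natural transformation data requires verifying coherent compatibility with composition in $\mathcal{R}$, and the canonical $\Phi\Rightarrow \Phi'$ in \ref{item:spin-structureiii} must be shown to satisfy the stated triangle identity. These are standard consequences of the equivalence between $(2,1)$-categorical pseudo-functors and morphisms of Duskin nerves, but I would want to spell them out rather than rely on a black-box reference. A subtler conceptual point is that one must check at the outset that $\bS^{\infty}$ indeed factors through the Kan complex $\ND(\Grpd^{\simeq})$ rather than a larger $2$-categorical nerve; this is automatic since the restriction to equivalences is built into the definition of $\bS$.
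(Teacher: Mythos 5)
Your proposal is correct and follows essentially the same approach as the paper: both rely on the chain of constructions producing $\bS^\infty\colon \Nhc(\mathcal{R}^\Delta)\to\ND(\Grpd^{\simeq})$, on $\ND(\Grpd^{\simeq})$ being a Kan complex, and on the weak contractibility of $\Nhc(\mathcal{R}^\Delta)$ from Theorem~\ref{thm:contractible}, with parts~ii) and~iii) handled by the Grothendieck construction and contractibility of the space of choices, exactly as in the paper. The one technical divergence is that you solve the lifting problem for the pushout-product $\Nhc(\mathcal{R}^\Delta)\times\partial\Delta^1\cup\{E^\bullet\}\times\Delta^1\hookrightarrow\Nhc(\mathcal{R}^\Delta)\times\Delta^1$ directly (which is a trivial cofibration since $\{E^\bullet\}\hookrightarrow\Nhc(\mathcal{R}^\Delta)$ is one, by the pushout-product axiom), whereas the paper first replaces $\Nhc(\mathcal{R}^\Delta)$ by its fibrant replacement $\Ex^\infty$ and constructs the contraction there before composing; your route saves this intermediate step and is a mild streamlining of the same argument, buying nothing new in generality but a slightly cleaner bookkeeping.
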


\begin{corollary}\label{cor:restriction-iso}
	For any choice of representative $E^{\bullet}$, the natural restriction map $\bS(\bE)\to \bS(E^{\bullet})$ that sends a section of $\mathcal{S}\to \mathcal{R}$ to its value at $E^{\bullet}$ is an equivalence of categories. 
\end{corollary}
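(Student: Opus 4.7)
The plan is to reduce the statement directly to Theorem~\ref{thm:spin-functor-structure}\,\ref{item:spin-structureii} and Proposition~\ref{prop:simply-connected}. By Theorem~\ref{thm:spin-functor-structure}\,\ref{item:spin-structureii}, for the given representative $E^{\bullet}$ we fix an equivalence of categories over $\mathcal{R}$
\[
\Phi\colon \mathcal{S} \xrightarrow{\;\sim\;} \bS(E^{\bullet})\times \mathcal{R},
\]
whose fiber over $E^{\bullet}$ is the identity on $\bS(E^{\bullet})$. Since equivalences over $\mathcal{R}$ induce equivalences on groupoids of sections, passing to sections yields an equivalence
\[
\bS(\bE) \;=\; \mathrm{Sec}(\mathcal{S}\to\mathcal{R}) \;\xrightarrow{\;\sim\;}\; \mathrm{Sec}\bigl(\bS(E^{\bullet})\times \mathcal{R}\to \mathcal{R}\bigr) \;=\; \mathrm{Fun}\bigl(\mathcal{R},\bS(E^{\bullet})\bigr),
\]
and this equivalence intertwines evaluation-at-$E^{\bullet}$ on the two sides, since the fiber of $\Phi$ at $E^{\bullet}$ is the identity.

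It therefore suffices to show that evaluation at $E^{\bullet}$ defines an equivalence
\[
\mathrm{ev}_{E^{\bullet}}\colon \mathrm{Fun}\bigl(\mathcal{R},\bS(E^{\bullet})\bigr) \xrightarrow{\;\sim\;} \bS(E^{\bullet}).
\]
This is where the contractibility-type results for $\mathcal{R}$ come in. By Proposition~\ref{prop:simply-connected}, every functor $F\colon \mathcal{R}\to \bS(E^{\bullet})$ is isomorphic to a constant functor $\underline{Z}_{F(E^{\bullet})}$, which shows that $\mathrm{ev}_{E^{\bullet}}$ is essentially surjective and that every object of the source is isomorphic to one in the essential image of the constant-functor embedding $\bS(E^{\bullet})\hookrightarrow \mathrm{Fun}(\mathcal{R},\bS(E^{\bullet}))$. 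Fullness and faithfulness of $\mathrm{ev}_{E^{\bullet}}$ then reduce to the corresponding statement for constant functors: a natural transformation between two constant functors $\underline{Z}_{g_1},\underline{Z}_{g_2}$ assigns to each object of $\mathcal{R}$ a morphism $g_1\to g_2$ in the groupoid $\bS(E^{\bullet})$, and naturality forces this assignment to be constant on connected components of $\mathcal{R}$; by Corollary~\ref{cor:repcat-connected}, $\mathcal{R}$ is connected, so the set of such natural transformations is in canonical bijection with $\mathrm{Hom}_{\bS(E^{\bullet})}(g_1,g_2)$ via $\mathrm{ev}_{E^{\bullet}}$. This gives the desired equivalence.

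Composing the two equivalences, the restriction map $\bS(\bE)\to \bS(E^{\bullet})$ factors as
\[
\bS(\bE) \xrightarrow{\;\sim\;} \mathrm{Fun}\bigl(\mathcal{R},\bS(E^{\bullet})\bigr) \xrightarrow[\mathrm{ev}_{E^{\bullet}}]{\sim} \bS(E^{\bullet}),
\]
and is therefore an equivalence. The only non-formal input is Theorem~\ref{thm:spin-functor-structure}, whose proof is the substantive content; everything else in this corollary is an exercise in manipulating sections of a trivializable fibration over a weakly contractible base. No step is genuinely difficult once Theorem~\ref{thm:spin-functor-structure} is in hand, but one should be careful that the trivialization $\Phi$ is chosen so that its fiber over $E^{\bullet}$ is the identity, which is precisely what is guaranteed by part~\ref{item:spin-structureii} of that theorem.
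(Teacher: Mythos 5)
Your proof is correct and follows the same route as the paper: trivialize $\mathcal{S}\to\mathcal{R}$ via $\Phi$ from Theorem~\ref{thm:spin-functor-structure}\,\ref{item:spin-structureii}, identify sections of the trivial fibration with $\operatorname{Fun}(\mathcal{R},\bS(E^{\bullet}))$, and invoke the weak contractibility of $\mathcal{R}$. The paper's own proof is terser on the final step, just citing Proposition~\ref{prop:simply-connected}, while you spell out why $\operatorname{ev}_{E^{\bullet}}$ is fully faithful and not merely essentially surjective; that elaboration is worth having, since Proposition~\ref{prop:simply-connected} as stated gives essential surjectivity directly but the fully faithful part requires the connectedness argument you supply. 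One small point you leave implicit: when you reduce fullness and faithfulness to the case of constant functors, you should normalize the isomorphism $F\cong\underline{Z}_{F(E^{\bullet})}$ furnished by Proposition~\ref{prop:simply-connected} so that its component at $E^{\bullet}$ is the identity (this is always possible by post-composing with a constant natural isomorphism), otherwise the comparison of hom-sets does not literally intertwine the two evaluation maps. With that observation the argument is airtight.
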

\begin{proof}
	Let $\Phi:\mathcal{S}\to \bS(E^{\bullet})\times \mathcal{R}$ be an equivalence as in \ref{item:spin-structureii} of Theorem \ref{thm:spin-functor-structure}. Then the restriction map $\bS(\bE)\to \bS(E^{\bullet})$ factors through postcomposition with $\Phi$. Since $\Phi$ is an equivalence, post-composition with $\Phi$ is an equivalence of the categories of sections. On the other hand, the category of sections of $\bS(E^{\bullet})\times \mathcal{R}$ is naturally isomorphic to the category of functors $\mathcal{R}\to  \bS(E^{\bullet})$, so the result follows from Proposition \ref{prop:simply-connected}. 
\end{proof}
\begin{proof}[Proof of Theorem \ref{thm:spin-functor-structure}]
	Statements \ref{item:spin-structurei} and \ref{item:spin-structureii} are equivalent by the Grothendieck construction. We will prove \ref{item:spin-structurei}, and similarly prove the analogous statement about natural transformations of functors in place of \ref{item:spin-structureiii}. 
	
	Putting together the constructions of \S \ref{subsec:enriched-functors}, we have a commutative diagram of simplicial sets
	
	\begin{equation*}
		\begin{tikzcd}
			\operatorname{N}(\mathcal{R})\ar[d]\ar[r]& \Nhc(\mathcal{R}^{\Delta})\ar[r]\ar[d]& \ND(h_2\mathcal{R})\ar[d] &\\
			\operatorname{N}(\mathcal{P})\ar[r]&\Nhc(\mathcal{P}^{\Delta})\ar[r] & \ND(h_2\mathcal{P})\ar[r,"{\ND(\bS)}"]&\ND(\Grpd^{\simeq})
		\end{tikzcd}
	\end{equation*}
	Here, the vertical maps are obtained from Construction \ref{constr:to-pair-functor} and Proposition \ref{prop:pairfunc-simplicial}, and the commutative squares are due to the compatibility of the respective nerve constructions. The map $\ND(\bS)$ is obtained from the functor $\bS$ of Construction \ref{constr:pullback-simplicial-pairs} and the composite $\operatorname{N}(\mathcal{R})\to \ND(\Grpd^{\simeq})$ agrees with the one induced by the spin functor $\bS:\mathcal{R}\to \Grpd^{\simeq}$ of Definition \ref{def:spin-functor}.

	Since the $2$-category $\Grpd^{\simeq}$ has only invertible morphisms, the Duskin nerve $\ND(\Grpd^{\simeq})$ is a Kan complex, see \cite[\href{https://kerodon.net/tag/019D}{Proposition 019D}, (d)$\implies$(b)]{kerodon}. 
	Consequently, any morphism from a simplicial set $K\to \ND(\Grpd^{\simeq})$ factors through a fibrant replacement $K\to \Ex^{\infty}(K)$, and the space of such extensions is contractible.
	
	In particular, we can choose an extension of $\Nhc(\mathcal{R}^{\Delta})\to \ND(\Grpd^{\simeq})$ to a map 
	\[ e: \Ex^{\infty}\left(\Nhc(\mathcal{R}^{\Delta})\right)\to \ND(\Grpd^{\simeq}).\]

	By Theorem \ref{thm:contractible}, the Kan complex $\Ex^{\infty}\left(\Nhc(\mathcal{R})^{\Delta}\right)$ is contractible. Thus, for any given $E^{\bullet}\in \mathcal{R}$, we can choose a map
	\[c:\Ex^{\infty}\left(\Nhc(\mathcal{R}^{\Delta})\right) \times \Delta^1\to \Ex^{\infty}\left(\Nhc(\mathcal{R}^{\Delta})\right) \]
	so that 
	\begin{itemize}
		\item $c|_{- \times \{0\}}$ is the identity on $\Ex^{\infty}\left(\Nhc(\mathcal{R}^{\Delta})\right)$,
		\item  $c|_{-\times \{1\}}$ is constant with value $E^{\bullet}$ and 
		\item $c|_{\{E^{\bullet}\}\times \Delta^1}$ is constant with value $E^{\bullet}$.
	\end{itemize}
	More precisely, we may define the restriction of $c$ to the sub-set 
	\[\Ex^{\infty}\left(\Nhc(\mathcal{R}^{\Delta})\right)\times \partial\Delta^1 \cup \{E^{\bullet}\}\times \Delta^1 \]
	via these prescriptions, and then obtain $c$ via \cite[\href{https://kerodon.net/tag/0070}{Corollary 0070}]{kerodon} (this also shows that the space of choices of $c$ is contractible). 
	Now, consider the composition 
	\[N(\mathcal{R})\times \Delta^1 \hookrightarrow \Ex^{\infty}\left(\Nhc(\mathcal{R}^{\Delta})\right)\times \Delta^1 \xrightarrow{c} \Ex^{\infty}\left(\Nhc(\mathcal{R}^{\Delta})\right) \xrightarrow{b} \ND(\Grpd^{\simeq}).\]
	By construction, it restricts to $\bS$ on $\operatorname{N}(\mathcal{R})\times \{0\}$, and is constant with value $E^{\bullet}$ on $\operatorname{N}(\mathcal{R})\times \{0\}$ and on $\{E^{\bullet}\}\times \Delta^1$. Hence it defines a natural transformation $\bS\Rightarrow \underline{Z}_{\bS(E^{\bullet})}$ as in \ref{item:spin-structurei}.
	Regarding \ref{item:spin-structureiii}, we observe that the choice involved in the construction is exactly the choice of the pair $(b,c)$, each corresponding to a vertex in a contractible Kan complex. Hence, the space of such choices is itself a contractible Kan complex, and \ref{item:spin-structureiii} follows from this.
\end{proof}

\subsection{Compatibility with pullbacks of schemes}
We discuss how the constructions of this section behave under pullback of schemes. All schemes in this subsection are assumed to be affine\footnote{Or quasi-projective over an affine scheme.}. As throughout, let $\bE$ be an oriented quadratic complex on $X$. Since we want to change the base-scheme, we use the explicit notation $\mathcal{R}_{\bE}$ and $\mathcal{S}_{\bE}$ to denote the categories of Definition \ref{def:iso-red} and Definition \ref{def:spin-functor} respectively. 

Let $a:Y\to X$ be a morphism of schemes. Pullback along $a$ induces a functor $\mathcal{R}_{\bE}\to \mathcal{R}_{a^*\bE}$. If $\bE$ is oriented, this extends to a functor $\mathcal{S}_{\bE}\to \mathcal{S}_{a^*\bE}$, defined as follows: 
The spin structure $\sigma\in \bS(E^{\bullet})$ given by a double cover $P_{\sigma}\to P_{E^0}\times_X P_{\det E^{\bullet}_+}$ is sent to the composition 
\[a^*P_{\sigma}\to a^*P_{E^0}\times_Y a^*P_{\det E^{\bullet}_+}\simeq P_{a^*E_0}\times_Y P_{\det a^*E^{\bullet}_+},\]
where the latter isomorphism is the compatility of pullback with the principal bundle construction. 

The pullback and forgetful maps fit into a (strictly!) commutative diagram 
\begin{equation*}
	\begin{tikzcd}
		\mathcal{S}_{\bE}\ar[r,"a^*"]\ar[d]& \mathcal{S}_{a^*\bE}\ar[d] \\
		\mathcal{R}_{\bE}\ar[r,"a^*"] & \mathcal{R}_{a^*\bE}.
	\end{tikzcd}
\end{equation*}

Let now $\tau\in \bS(a^*\bE)$ and $\sigma\in \bS(\bE)$ be sections of the respective forgetful maps. Composition with the pullback maps gives maps $\tau\circ a^*$ and $a^*\circ \sigma$ from $\mathcal{R}_{\bE}\to \mathcal{S}_{a^*\bE}$, both of which make the following diagram commute
\begin{equation*}
	\begin{tikzcd}
		& \mathcal{S}_{a^*\bE}\ar[d] \\
		\mathcal{R}_{\bE}\ar[r,"a^*"] \ar[ur,dashed]& \mathcal{R}_{a^*\bE}
	\end{tikzcd}
\end{equation*}
\begin{definition}\label{def:spin-category-pullback}
	Let $\mathcal{S}'_{\bE,a}:=\mathcal{R}_{\bE}\times_{\mathcal{R}_{a^*\bE}} \mathcal{S}_{a^*\bE}$ denote the category fibered in groupoids over $\mathcal{R}_{\bE}$ obtained by pullback of $\mathcal{S}_{a^*\bE}$ along $a^*:\mathcal{R}_{\bE}\to \mathcal{R}_{a^*\bE}$. 
	Equivalently, this is the fibered category associated to the functor 
	\[\mathcal{R}_{\bE}\xrightarrow{a^*}\mathcal{R}_{a^*\bE}\xrightarrow{
		\bS}\Grpd^{\simeq}.\]
	We also let $\bS'(\bE,a)$ denote the category of sections of $\mathcal{S}'_{\bE,a}\to \mathcal{R}_{\bE}$. 
\end{definition}
We may and do also interpret $\tau\circ a^*$ and $a^*\circ \tau$ as elements of $\bS'(\bE,a)$, so that we have functors
\begin{equation}\label{eq:pullback-functors}
	\bS(a^*\bE)\xrightarrow{-\circ a^*} \bS'(\bE,a)\xleftarrow{a^*\circ -} \bS(\bE).
\end{equation}

\begin{definition}\label{def:pullback-comparison}
	A \emph{comparison map from $\tau$ to $\sigma$ over $a$} is an isomorphism  $\tau\circ a^*\Rightarrow a^*\circ \tau $ of functors of categories over $\mathcal{R}_{\bE}$, i.e. an isomorphism $\tau\circ a^*\to a^*\circ \sigma$ in $\bS'(\bE,a)$.
\end{definition}
\begin{remark}
	To give such a comparison map, one needs to specify for each $E^{\bullet}\in \mathcal{R}_{\bE}$ an isomorphism $\eta_{E^{\bullet}}: \tau_{a^*E^{\bullet}} \to a^*\sigma_{E^{\bullet}}$ in $\bS(a^*E^{\bullet})$. These need to satisfy that for any morphism $\xi:F^{\bullet}\rightsquigarrow E^{\bullet}$ in $\mathcal{R}_{\bE}$, the following diagram in $\bS(a^*F^{\bullet})$ commutes:
	\begin{equation*}
		\begin{tikzcd}[column sep = large]
			\tau_{a^*F^{\bullet}}\ar[r,"{\eta_{F^{\bullet}}}"] \ar[d,"{\tau_{a^*\xi}}"]& a^*\sigma_{ F^{\bullet}} \ar[d,"{a^*\sigma_{\xi}}"]\\	
			(a^*\xi)^*\tau_{a^*E^{\bullet}}\ar[r,"{(a^*\xi)^*\eta_{E^{\bullet}}}"]& a^*\xi^*\sigma_{E^{\bullet} } 
		\end{tikzcd}
	\end{equation*}
\end{remark}

Let $E^{\bullet}\in \mathcal{R}_{\bE}$ be a given representative. Then restriction to the fiber over $E^{\bullet}$ (respectively over $a^*\bE$) induces the vertical maps in the following commutative diagram extending \ref{eq:pullback-functors}
\begin{equation}\label{eq:pullback-functor-restrictions}
	\begin{tikzcd}
		\bS(a^*\bE)\ar[r]\ar[d]& \bS'(\bE,a)\ar[d]& \bS(\bE)\ar[l]\ar[d]\\
		\bS(a^*E^{\bullet})\ar[r,equal] &\bS(a^*E^{\bullet}) &\bS(E^{\bullet})\ar[l]
	\end{tikzcd}
\end{equation}

The following is a variant of Corollary \ref{cor:restriction-iso}. 
\begin{proposition}\label{prop:restriction-equiv-pullback}
	The map $\bS'(\bE,a)\to \bS(a^*E^{\bullet})$ in \eqref{eq:pullback-functor-restrictions} is an equivalence of categories. 
\end{proposition}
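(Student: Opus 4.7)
The plan is to mimic the argument of Corollary \ref{cor:restriction-iso}, applied to the pullback fibration $\mathcal{S}'_{\bE,a}\to \mathcal{R}_{\bE}$ rather than $\mathcal{S}_{\bE}\to \mathcal{R}_{\bE}$. The key observation is that by Definition \ref{def:spin-category-pullback}, the fibered category $\mathcal{S}'_{\bE,a}\to \mathcal{R}_{\bE}$ classifies the composite pseudo-functor
\[
\mathcal{R}_{\bE}\xrightarrow{a^*}\mathcal{R}_{a^*\bE}\xrightarrow{\bS}\Grpd^{\simeq},
\]
and that Theorem \ref{thm:spin-functor-structure} applies to the quadratic complex $a^*\bE$ on $Y$.

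First, I would apply Theorem \ref{thm:spin-functor-structure} \ref{item:spin-structureii} on $Y$ with basepoint $a^*E^{\bullet}\in \mathcal{R}_{a^*\bE}$ to produce an equivalence of categories over $\mathcal{R}_{a^*\bE}$
\[
\Phi:\mathcal{S}_{a^*\bE}\xrightarrow{\sim} \bS(a^*E^{\bullet})\times \mathcal{R}_{a^*\bE},
\]
whose fiber over $a^*E^{\bullet}$ is the identity. Pulling $\Phi$ back along the functor $a^*:\mathcal{R}_{\bE}\to \mathcal{R}_{a^*\bE}$ yields an equivalence of fibered categories over $\mathcal{R}_{\bE}$
\[
\Phi':\mathcal{S}'_{\bE,a}\xrightarrow{\sim} \bS(a^*E^{\bullet})\times \mathcal{R}_{\bE},
\]
whose fiber over $E^{\bullet}$ is the identity on $\bS(a^*E^{\bullet})$.

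Taking categories of sections, $\Phi'$ induces an equivalence $\bS'(\bE,a)\xrightarrow{\sim} \Fun(\mathcal{R}_{\bE},\bS(a^*E^{\bullet}))$, and under this identification the restriction map $\bS'(\bE,a)\to \bS(a^*E^{\bullet})$ becomes evaluation at $E^{\bullet}$. Since $\bS(a^*E^{\bullet})$ is a groupoid and $\mathcal{R}_{\bE}$ is weakly simply connected by Proposition \ref{prop:simply-connected}, every functor $\mathcal{R}_{\bE}\to \bS(a^*E^{\bullet})$ is (canonically) isomorphic to a constant functor, so evaluation at $E^{\bullet}$ is an equivalence $\Fun(\mathcal{R}_{\bE},\bS(a^*E^{\bullet}))\xrightarrow{\sim}\bS(a^*E^{\bullet})$.

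There is no substantial obstacle here; the only point that requires a little care is checking that the pullback of the fibered category associated to $\bS\circ a^*$ along the identity on $\mathcal{R}_{\bE}$ agrees with the fibered product definition of $\mathcal{S}'_{\bE,a}$ in Definition \ref{def:spin-category-pullback}, and that the resulting restriction functor is indeed the one in diagram \eqref{eq:pullback-functor-restrictions}. Both verifications are immediate from the Grothendieck construction applied to the commutative square defining $\mathcal{S}'_{\bE,a}$.
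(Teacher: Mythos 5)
Your proof is correct, and it takes a genuinely different (and arguably cleaner) route than the paper's. The paper proves Proposition \ref{prop:restriction-equiv-pullback} by re-running the simplicial argument of Theorem \ref{thm:spin-functor-structure} \ref{item:spin-structurei} directly on the composite functor $\bS\circ a^*:\mathcal{R}_{\bE}\to\Grpd^{\simeq}$: it writes down the commutative diagram of simplicial sets with $\operatorname{N}(\mathcal{R}_{\bE})\to\Nhc(\mathcal{R}^{\Delta}_{\bE})\to\Nhc(\mathcal{R}^{\Delta}_{a^*\bE})\to\ND(\Grpd^{\simeq})$, and uses weak contractibility of $\Nhc(\mathcal{R}^{\Delta}_{\bE})$ (Theorem \ref{thm:contractible} applied on $X$) to contract this map to the constant functor with value $\bS(a^*E^{\bullet})$, exactly as in the proof of Theorem \ref{thm:spin-functor-structure}. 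You instead invoke Theorem \ref{thm:spin-functor-structure} \ref{item:spin-structureii} once, on $Y$, to get a fibered equivalence $\Phi:\mathcal{S}_{a^*\bE}\xrightarrow{\sim}\bS(a^*E^{\bullet})\times\mathcal{R}_{a^*\bE}$, and then base-change $\Phi$ along $a^*:\mathcal{R}_{\bE}\to\mathcal{R}_{a^*\bE}$, using that $\mathcal{S}'_{\bE,a}$ is by Definition \ref{def:spin-category-pullback} the $2$-fiber product $\mathcal{R}_{\bE}\times_{\mathcal{R}_{a^*\bE}}\mathcal{S}_{a^*\bE}$, and that base change preserves equivalences of categories fibered in groupoids (which are exactly the fiberwise equivalences). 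What this buys is that you do not have to re-run the contractibility machinery for the composite; you get the "essentially constant" conclusion by whiskering an already-established natural transformation with $a^*$. Both routes then finish identically via Proposition \ref{prop:simply-connected} to see that evaluation at $E^{\bullet}$ is an equivalence. The only point worth spelling out (which you flag) is that $\Phi'$ restricts to the identity over $E^{\bullet}\in\mathcal{R}_{\bE}$ because $\Phi$ restricts to the identity over $a^*E^{\bullet}\in\mathcal{R}_{a^*\bE}$, so the composite equivalence $\bS'(\bE,a)\to\bS(a^*E^{\bullet})$ is indeed the restriction map appearing in \eqref{eq:pullback-functor-restrictions}.
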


Before we prove this, we collect two consequences.
\begin{corollary}
	The map $\bS(a^*\bE)\to \bS'(\bE,a)$ is an equivalence of categories. 
\end{corollary}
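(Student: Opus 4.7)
The plan is to deduce this immediately from Proposition \ref{prop:restriction-equiv-pullback} and (the obvious pullback version of) Corollary \ref{cor:restriction-iso}, using the left square of the commutative diagram \eqref{eq:pullback-functor-restrictions}.

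First, pick any representative $E^{\bullet}\in \mathcal{R}_{\bE}$. Then $a^*E^{\bullet}\in \mathcal{R}_{a^*\bE}$ is a representative of $a^*\bE$ on $Y$. Applying Corollary \ref{cor:restriction-iso} over $Y$ to the oriented quadratic complex $a^*\bE$ and the representative $a^*E^{\bullet}$, the restriction map $\bS(a^*\bE)\to \bS(a^*E^{\bullet})$ (which is exactly the left vertical arrow of \eqref{eq:pullback-functor-restrictions}) is an equivalence of groupoids.

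Next, Proposition \ref{prop:restriction-equiv-pullback} tells us that the middle vertical arrow $\bS'(\bE,a)\to \bS(a^*E^{\bullet})$ is also an equivalence. Since the left square of \eqref{eq:pullback-functor-restrictions} commutes and its bottom horizontal arrow is the identity on $\bS(a^*E^{\bullet})$, the two-out-of-three property for equivalences of categories applied to the triangle
\[
\bS(a^*\bE)\longrightarrow \bS'(\bE,a)\longrightarrow \bS(a^*E^{\bullet})
\]
forces the top map $\bS(a^*\bE)\to \bS'(\bE,a)$ to be an equivalence as well. There is no real obstacle here; the content is entirely in Proposition \ref{prop:restriction-equiv-pullback}, and this corollary is simply the two-out-of-three formality that packages it together with the affine case already established in Corollary \ref{cor:restriction-iso}.
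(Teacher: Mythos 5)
Your argument is exactly the paper's: the left vertical arrow of \eqref{eq:pullback-functor-restrictions} is an equivalence by Corollary \ref{cor:restriction-iso} applied over $Y$, the middle one is an equivalence by Proposition \ref{prop:restriction-equiv-pullback}, and two-out-of-three closes the triangle. Nothing to add.
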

\begin{proof}
	By Corollary \ref{cor:restriction-iso}, the outer vertical maps in \eqref{eq:pullback-functor-restrictions} are equivalences. By Proposition \ref{prop:restriction-equiv-pullback}, so is the middle one. The result follows by the two-out-of-three property for equivalences. 
\end{proof}
Let $\tau\in \bS(a^*\bE)$ and $\sigma\in \bS(\bE)$. Restriction to $E^{\bullet}$ induces a map of sets
\begin{equation}\label{eq:restrict-comparison-map}
	\left\{\mbox{{Comparison maps from $\tau$ to $\sigma$ over $a$}} \right\}\to \operatorname{Isom}_{\bS(a^*E^{\bullet})}(\tau_{a^*E^{\bullet}}, a^*\sigma_{E^{\bullet}}).
\end{equation} 
\begin{corollary}\label{cor:comparison-bijection}
	The map \eqref{eq:restrict-comparison-map} is a bijection. 
\end{corollary}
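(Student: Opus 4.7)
The plan is to deduce this directly from Proposition \ref{prop:restriction-equiv-pullback}, using only the formal properties of equivalences of categories.

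First, I would reinterpret both sides of \eqref{eq:restrict-comparison-map} as isomorphism sets in the relevant groupoids. By Definition \ref{def:pullback-comparison}, a comparison map from $\tau$ to $\sigma$ over $a$ is precisely an isomorphism $\tau\circ a^*\to a^*\circ \sigma$ in the category $\bS'(\bE,a)$ of sections of $\mathcal{S}'_{\bE,a}\to \mathcal{R}_{\bE}$. So the source of \eqref{eq:restrict-comparison-map} is naturally identified with
\[
\operatorname{Isom}_{\bS'(\bE,a)}(\tau\circ a^*,\; a^*\circ \sigma),
\]
and under this identification the map \eqref{eq:restrict-comparison-map} is nothing but the map on isomorphism sets induced by the restriction functor
\[
\rho\colon \bS'(\bE,a) \longrightarrow \bS(a^*E^{\bullet}),
\]
which sends a section of $\mathcal{S}'_{\bE,a}\to \mathcal{R}_{\bE}$ to its value at $E^{\bullet}$. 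The commutativity of the left square of \eqref{eq:pullback-functor-restrictions} ensures that $\rho(\tau\circ a^*)=\tau_{a^*E^{\bullet}}$ and $\rho(a^*\circ \sigma)=a^*\sigma_{E^{\bullet}}$, so that the target matches as claimed.

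Next, I would invoke Proposition \ref{prop:restriction-equiv-pullback}, which asserts that $\rho$ is an equivalence of categories. A standard consequence of being an equivalence of categories is that $\rho$ is fully faithful, i.e. for any two objects $X,Y$ of $\bS'(\bE,a)$ the induced map
\[
\operatorname{Hom}_{\bS'(\bE,a)}(X,Y) \longrightarrow \operatorname{Hom}_{\bS(a^*E^{\bullet})}(\rho X,\rho Y)
\]
is a bijection. Both categories in question are groupoids (as they are categories of sections of a category fibered in groupoids, or by inspection of $\bS(a^*E^{\bullet})$), so this bijection restricts to a bijection on isomorphism sets. Applying this with $X=\tau\circ a^*$ and $Y=a^*\circ \sigma$ yields that \eqref{eq:restrict-comparison-map} is a bijection.

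There is no real obstacle here: once Proposition \ref{prop:restriction-equiv-pullback} is available, the statement is a purely formal reformulation of full faithfulness. The only point that might deserve a sentence of verification is that both $\bS'(\bE,a)$ and $\bS(a^*E^{\bullet})$ are groupoids, so that Hom-sets of isomorphisms coincide with the restriction of Hom-sets to invertible morphisms; this is immediate from the fact that the fibered categories $\mathcal{S}'_{\bE,a}$ and $\mathcal{S}_{a^*\bE}$ are fibered in groupoids over $\mathcal{R}_{\bE}$ and $\mathcal{R}_{a^*\bE}$ respectively.
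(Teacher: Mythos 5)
Your proof is correct and follows the same route as the paper: identify \eqref{eq:restrict-comparison-map} with the map on Hom-sets induced by the restriction functor $\bS'(\bE,a)\to \bS(a^*E^{\bullet})$ and conclude from the equivalence of Proposition \ref{prop:restriction-equiv-pullback}. The paper's proof is just a terser version of yours; your additional remarks on full faithfulness and on both categories being groupoids are unobjectionable elaborations of the same argument.
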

\begin{proof}
	By definition, the map \eqref{eq:restrict-comparison-map} is given by the application of the functor $\bS(\bE,a)\to \bS(a^*E^{\bullet})$ on the morphism space between two objects. By Proposition \ref{prop:restriction-equiv-pullback}, this functor is an equivalence, hence induces a bijection on morphism sets. 
\end{proof}

\begin{proof}[Proof of Proposition \ref{prop:restriction-equiv-pullback}]
	The proof follows the same lines as the one of Corollary \ref{cor:restriction-iso} using an analogue of Theorem \ref{thm:spin-functor-structure} \ref{item:spin-structurei}. We will only address the parts that are different.  
	
	As remarked in Definition \ref{def:spin-category-pullback}, the fibered category $\mathcal{S}'_{\bE,a}$ is associated to the composition 
	\[\mathcal{R}_{\bE}\to \mathcal{R}_{a^*\bE}\xrightarrow{\bS} \Grpd^{\simeq}.\]
	As in the proof of Theorem \ref{thm:spin-functor-structure}, corresponding to the spin functor $\mathcal{R}_{a^*\bE}\to \Grpd^{\simeq}$ associated to the oriented quadratic complex $a^*\bE$ on $Y$, we have a map of simplicial sets $\operatorname{N}(\mathcal{R}_{a^*\bE})\to \ND(\Grpd^{\simeq})$, and a factorization  
	\[\operatorname{N}(\mathcal{R}_{a^*\bE})\to \Nhc(\mathcal{R}^{\Delta}_{a^*\bE})\to \ND(\Grpd^{\simeq}).\]
	Pullback by $a$ extends naturally to a map of simplicial categories $\mathcal{R}^{\Delta}_{\bE}\to \mathcal{R}^{\Delta}_{a^*\bE}$. Hence, we have a commutative diagram of simplicial sets 
	\begin{equation*}
		\begin{tikzcd}
			\operatorname{N}(\mathcal{R}_{\bE})\ar[d]\ar[r]& \Nhc(\mathcal{R}^{\Delta}_{\bE})\ar[d]&\\
			\operatorname{N}(\mathcal{R}_{a^*\bE})\ar[r]&\Nhc(\mathcal{R}_{a^*\bE}^{\Delta})\ar[r] &\ND(\Grpd^{\simeq})
		\end{tikzcd}
	\end{equation*}
	and the corresponding functor $\mathcal{R}_{\bE}\to \Grpd^{\simeq}$ is $\bS\circ a^*$. By the same argument as in the proof of Theorem \ref{thm:spin-functor-structure}, we conclude that there exists a natural transformation from $\bS \circ a^*$ to the constant functor on $\mathcal{R}_{\bE}$ whose value at $E^{\bullet}$ is the identity on $\bS(a^*E^{\bullet})$.
	From this, one concludes Proposition \ref{prop:restriction-equiv-pullback} using the same argument as in the proof of Corollary \ref{cor:restriction-iso}.
\end{proof}
\begin{variant}
	Let $a:Y\to X$ and $b:Z\to Y$ be morphisms of schemes, quasi-projective over affine schemes and let $\bE$ be an oriented quadratic complex on $X$. Then from the commutative diagram
	\begin{equation*}
		\begin{tikzcd}
			\mathcal{S}_{\bE}\ar[r,"a^*"]\ar[d]& \mathcal{S}_{a^*\bE}\ar[d]  \ar[r,"b^*"] & \mathcal{S}_{b^*a^*\bE}\ar[d]\\
			\mathcal{R}_{\bE}\ar[r,"a^*"] &  \mathcal{R}_{a^*\bE}\ar[r,"b^*"]&\mathcal{R}_{b^*a^*\bE}.
		\end{tikzcd}
	\end{equation*}
	and the identification $b^*a^*\bE\simeq (a\circ b)^*\bE$, one has the commutative diagram
	\begin{equation*}
		\begin{tikzcd}
			\mathcal{S}_{\bE,a\circ b}'\ar[r]\ar[d]& \mathcal{S}'_{a^*\bE, b}\ar[d] \\
			\mathcal{R}_{\bE}\ar[r] & \mathcal{R}_{a^*\bE}
		\end{tikzcd}
	\end{equation*}
	Moreover, this is already a base change diagram, hence we have an induced functor
	\[\bS'(a^*\bE,b)\to \bS'(\bE,a\circ b).\]
	This is an equivalence, which one sees by checking that the following diagram $2$-commutes 
	\begin{equation*}
		\begin{tikzcd}
			\bS(b^*a^*\bE)\ar[r,"\sim"]\ar[d,"\sim"]& \bS((a\circ b)^*\bE)\ar[d,"\sim"] \\
			\bS'(a^*\bE,b)\ar[r] & \bS'(\bE,a\circ b).
		\end{tikzcd}
	\end{equation*}
\end{variant}

\section{Spin Structures on Quadratic Complexes}\label{sec:spin-structures}
We generalize the notion of spin structures beyond affine schemes to algebraic stacks. Let $\mathcal{X}$ be an algebraic stack over $\bZ[1/2]$ and let $\bE$ be an oriented quadratic complex on $\mathcal{X}$. Let $\Aff_{\mathcal{X}}$ denote the category of affine schemes over $\mathcal{X}$.

\subsection{Spin structures on complexes over stacks}

For any object $x:X\to \mathcal{X}$ of $\Aff_{\mathcal{X}}$, we have the categories $\mathcal{S}_{x^*\bE}\to \mathcal{R}_{x^*\bE}$ associated to $x^*\bE$. The collections of these categories can be assembled into fibered categories over $\Aff_{\mathcal{X}}$:
\begin{definition}\label{def:R-cat-stack}
	We let $\underline{\mathcal{R}}_{\bE}$ denote the category over $\Aff_{\mathcal{X}}$ whose objects are pairs $(x, E^{\bullet})$ with $x:X\to \mathcal{X}\in \Aff_{\mathcal{X}}$ and $E^{\bullet}\in \mathcal{R}_{x^*\bE}$. A morphism $(y:Y\to \mathcal{X}, F^{\bullet})\to (x:X\to \mathcal{X},E^{\bullet})$  in $\mathcal{R}_{\mathcal{X}}$ consists of a morphism $a:Y\to X$ of affine schemes over $\mathcal{X}$ together with a morphism  $\xi:F^{\bullet} \rightsquigarrow a^*E^{\bullet}$ in $\mathcal{R}_{y^*\bE}$, where $a^*E^{\bullet}$ is seen as a representative of $y^*\bE$ via the natural isomorphism $a^*x^*\bE\to y^*\bE$ induced by $a$.   
\end{definition}

\begin{definition}\label{def:S-cat-stack}
	We let $\underline{\mathcal{S}}_{\bE}$ denote the category over $\underline{\mathcal{R}}_{\bE}$ whose objects are triples $(x, E^{\bullet}, \sigma)$, where $x:X\to \mathcal{X}\in \Aff$, $E^{\bullet}\in \mathcal{R}_{x^*\bE}$ and $\sigma \in \bS(E^{\bullet})$. 
	A morphism $(y, F^{\bullet}, \tau)\to (x, E^{\bullet}, \sigma)$ in $\mathcal{S}_{\mathcal{X}}$ consists of a morphism  $(a, \xi):(y,F^{\bullet})\to (x,E^{\bullet})$ in $\underline{\mathcal{R}}_{\bE}$ together with an isomorphism of spin structures $\tau \to \xi^*a^*\sigma $.  
\end{definition}

As $\underline{\mathcal{R}}_{\bE}$ and $\underline{\mathcal{S}}_{\bE}$ are obtained from the Grothendieck construction, one has:
\begin{lemma}\label{lem:fibered-cats-stacks}
	\begin{enumerate}[label = \roman*)]
		\item The category $\underline{\mathcal{R}}_{\bE}$ is fibered over $\Aff_{ \mathcal{X}}$. 
		\item The category $\underline{\mathcal{S}}_{\bE}$ is fibered in groupoids over $\underline{\mathcal{R}}_{\bE}$.
	\end{enumerate}
\end{lemma}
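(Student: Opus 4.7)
The plan is to realize both $\underline{\mathcal{R}}_{\bE}$ and $\underline{\mathcal{S}}_{\bE}$ as Grothendieck constructions of appropriate pseudo-functors, so that the lemma follows from the general principle that the Grothendieck construction of a pseudo-functor valued in $\Cat$ (resp. $\Grpd$) produces a fibered category (resp. a category fibered in groupoids); see \cite[Part 1, \S 3.1.3]{FGAE}.

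For (i), I would first extend the assignment $(x\colon X\to \mathcal{X})\mapsto \mathcal{R}_{x^*\bE}$ to a pseudo-functor $\Aff_{\mathcal{X}}^{\op}\to \Cat$. Given $a\colon Y\to X$ in $\Aff_{\mathcal{X}}$ and the canonical isomorphism $\alpha_a\colon a^*x^*\bE\xrightarrow{\sim} y^*\bE$ arising from the $2$-commutativity of $Y\to X\to \mathcal{X}$, pullback along $a$ takes self-dual representatives to self-dual representatives (since being locally free, being a three-term self-dual complex, and all the sheaf-theoretic conditions for an isotropic reduction, including \ref{item:iso-redii} of Definition \ref{def:isotropic-red}, are preserved by pullback) and yields a functor $a^*\colon \mathcal{R}_{x^*\bE}\to \mathcal{R}_{y^*\bE}$. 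The coherence isomorphisms $(a\circ b)^*\cong b^*\circ a^*$ are the usual ones for pullback of quasi-coherent complexes, and they are compatible with the identifications $\alpha_a$ because the latter come from the same $2$-categorical data. The category $\underline{\mathcal{R}}_{\bE}$ of Definition \ref{def:R-cat-stack} is, by inspection, precisely the Grothendieck construction of this pseudo-functor. For each $(x,E^{\bullet})$ and each $a\colon Y\to X$, a cartesian lift is given by $(a,\id)\colon (y,a^*E^{\bullet})\to (x,E^{\bullet})$.

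For (ii), I would similarly build a pseudo-functor $\underline{\mathcal{R}}_{\bE}^{\op}\to \Grpd$ whose value on $(x,E^{\bullet})$ is the groupoid $\bS(E^{\bullet})$. On a morphism $(a,\xi)\colon (y,F^{\bullet})\to (x,E^{\bullet})$ it sends $\sigma\in \bS(E^{\bullet})$ to $\xi^*a^*\sigma\in \bS(F^{\bullet})$: here $a^*\sigma$ is well defined via the compatibility of the spin functor with pullbacks of affine schemes discussed at the end of \S \ref{sec:spin-functor}, and $\xi^*$ is pullback along an isotropic reduction as in Definition \ref{def:spin-functor}. The coherence data (for composition of two morphisms $(a,\xi)$ and $(b,\zeta)$ in $\underline{\mathcal{R}}_{\bE}$) combines the $2$-isomorphisms $\mu_{\xi,\zeta}$ supplied in the proof of Lemma \ref{lem:spin-to-pairs-functor} with the base-change coherences for affine pullbacks. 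Applying the Grothendieck construction to this pseudo-functor reproduces Definition \ref{def:S-cat-stack} and, since the fibers are groupoids, presents $\underline{\mathcal{S}}_{\bE}\to \underline{\mathcal{R}}_{\bE}$ as a category fibered in groupoids.

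The only non-formal step is the verification that the assignments $a\mapsto a^*$ and $(a,\xi)\mapsto \xi^*a^*$ genuinely assemble into pseudo-functors, i.e. that the coherence isomorphisms satisfy the cocycle/associativity constraints. This is the step that requires some bookkeeping but no new ideas: for (i) the constraints reduce to the standard ones for pullback of perfect complexes combined with functoriality of forming $(\theta_{\bullet}+\theta_{\bullet}^{\vee})/2$ and of the splicing construction of \S \ref{sec:symm-rep}, all of which are strictly compatible with pullback; for (ii) they follow by pasting the associativity pentagon for the $\mu_{\xi,\zeta}$, already sketched at the end of the proof of Lemma \ref{lem:spin-to-pairs-functor}, onto the analogous pentagon for iterated affine pullback. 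I expect the main obstacle to be simply keeping the notation for these two layers of coherence data organized, rather than any conceptual difficulty.
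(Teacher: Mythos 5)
Your proposal is correct and takes essentially the same approach as the paper: the paper offers no separate proof beyond the remark preceding the lemma that $\underline{\mathcal{R}}_{\bE}$ and $\underline{\mathcal{S}}_{\bE}$ arise from the Grothendieck construction, and your writeup simply makes that remark explicit by identifying the two pseudo-functors, checking that pullback along affine maps preserves self-dual representatives and isotropic reductions (including condition \ref{item:iso-redii}, which survives because surjectivity of the dual map is stable under pullback), and exhibiting the cartesian lifts.
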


\begin{definition}\label{def:spin-structure}
	A \emph{spin structure} on $\bE$ is a section $\underline{\mathcal{R}}_{\bE}\to \underline{\mathcal{S}}_{\bE}$ of the forgetful functor $\underline{\mathcal{S}}_{\bE}\to \underline{\mathcal{R}}_{\bE}$. A morphism of spin structures is a natural transformation of functors over $\underline{\mathcal{R}}_{\bE}$. We denote by $\bS(\bE)$ the category of spin structures on $\bE$. 
\end{definition}
\begin{remark}
	In the case that $\mathcal{X}$ is the functor of points of an affine scheme $X$, we have now defined $\bS(\bE)$ \emph{twice}, although both notions turn out to be equivalent (Theorem \ref{thm:spin-str-defs-compatible}). To avoid confusion, we will for the moment denote the groupoid of Definition \ref{def:spin-structure-affine} by $\bS^{\Aff}(\bE)$.   
\end{remark} 

Effectively, a spin structure on the oriented quadratic complex $\bE$ on the stack $\mathcal{X}$ is a choice of spin structure (in the sense of Definition \ref{def:spin-structure-affine}) on $x^*\bE$ for each $(X,x)\in \Aff_{\mathcal{X}}$, together with suitable compatibility data under pullbacks of affine schemes. To make this more precise, we introduce the following
\begin{definition}
	Let $\underline{\bS}^{\Aff}(\bE)$ denote the category over $\Aff_{\mathcal{X}}$ defined as follows 
	\begin{itemize}
		\item Objects of $\underline{\bS}^{\Aff}(\bE)$ over $x:X\to \mathcal{X}$ are spin structures on $X$, i.e. objects of $\bS^{\Aff}(x^*\bE)$, or more concretely, sections of $\mathcal{S}_{x^*\bE}\to \mathcal{R}_{x^*\bE}$.  
		\item A morphism $(y:Y\to \mathcal{X},\tau) \to (x:X\to \mathcal{X}, \sigma)$ in  $\underline{\bS}^{\Aff}(\bE)$ consists of a morphism $a:Y\to X$ in $\Aff_{\mathcal{X}}$ together with a comparison map from $\tau$ to $\sigma$ over $a$ (Definition \ref{def:pullback-comparison}).
	\end{itemize} 
\end{definition}
\begin{proposition}\label{prop:spin-structure-stack-alternate}
	\begin{enumerate}[label = \roman*)]
		\item The category $\underline{\bS}^{\Aff}(\bE)$ is fibered in groupoids over $\Aff_{\mathcal{X}}$.\label{item:spinstructure-alternatei}
		\item The category of sections of $\underline{\bS}^{\Aff}(\bE)\to \underline{\mathcal{R}}_{\bE}$ is \emph{isomorphic} to the category $\bS(\bE)$.\label{item:spinstructure-alternateii}
	\end{enumerate}
\end{proposition}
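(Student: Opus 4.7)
The plan is to treat the two parts separately: the first is a direct verification of the fibration axioms using the equivalence established in \S \ref{sec:spin-functor-structure}, and the second is an exercise in unpacking the Grothendieck construction, where the key fact is that the comparison data in $\underline{\bS}^{\Aff}(\bE)$ matches exactly the transition data for $\underline{\mathcal{S}}_{\bE}\to \underline{\mathcal{R}}_{\bE}$ along morphisms of the form $(a, \id): (Y, a^*E^{\bullet})\to (X, E^{\bullet})$.

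For \ref{item:spinstructure-alternatei}, given $a:Y\to X$ in $\Aff_{\mathcal{X}}$ and $\sigma\in \bS^{\Aff}(x^*\bE)$ over $X$, I would first note that the composition $a^*\circ \sigma$ is naturally an object of the comparison category $\bS'(x^*\bE, a)$ of Definition \ref{def:spin-category-pullback}. By the Corollary following Proposition \ref{prop:restriction-equiv-pullback}, the functor $\bS^{\Aff}(y^*\bE)\to \bS'(x^*\bE, a)$, $\tau\mapsto \tau\circ a^*$, is an equivalence of groupoids, hence produces an object $a^*\sigma\in \bS^{\Aff}(y^*\bE)$ together with an isomorphism $a^*\sigma\circ a^*\Rightarrow a^*\circ \sigma$, that is, a comparison map $\eta$ from $a^*\sigma$ to $\sigma$ over $a$. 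This yields the desired morphism $(a,\eta):(Y, a^*\sigma)\to (X,\sigma)$ in $\underline{\bS}^{\Aff}(\bE)$. To check it is cartesian, given any morphism $(b,\nu):(Z,\rho)\to (X,\sigma)$ with $b = a\circ c$, Corollary \ref{cor:comparison-bijection} guarantees that a filling comparison map $(c,\mu):(Z,\rho)\to (Y,a^*\sigma)$ is uniquely determined by its restriction to a single representative in $\mathcal{R}_{z^*\bE}$, where existence and uniqueness follow immediately from the groupoid property. Every comparison map is by definition an isomorphism in the relevant groupoid, so all morphisms are automatically cartesian and $\underline{\bS}^{\Aff}(\bE)$ is fibered in groupoids.

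For \ref{item:spinstructure-alternateii}, I would construct strictly inverse functors. In one direction, a section $\Sigma:\underline{\mathcal{R}}_{\bE}\to \underline{\mathcal{S}}_{\bE}$ restricts on each fiber of $\underline{\mathcal{R}}_{\bE}\to \Aff_{\mathcal{X}}$ to a section $\sigma_X$ of $\mathcal{S}_{x^*\bE}\to \mathcal{R}_{x^*\bE}$; for $a:Y\to X$ in $\Aff_{\mathcal{X}}$, evaluating $\Sigma$ on the cartesian morphisms $(a,\id):(Y, a^*E^{\bullet})\to (X, E^{\bullet})$ for varying $E^{\bullet}$ assembles to a natural isomorphism $\sigma_Y\circ a^*\Rightarrow a^*\circ \sigma_X$, i.e.\ a comparison map. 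Conversely, given a section $(\sigma_X, \eta_a)$ of $\underline{\bS}^{\Aff}(\bE)\to \Aff_{\mathcal{X}}$, I define a functor $\Sigma$ by $(X,x,E^{\bullet})\mapsto (X,x,E^{\bullet},\sigma_X(E^{\bullet}))$ and send a morphism $(a,\xi)$ in $\underline{\mathcal{R}}_{\bE}$ to the morphism in $\underline{\mathcal{S}}_{\bE}$ built by composing $\sigma_Y(\xi)$ with (the evaluation at $E^{\bullet}$ of) $\eta_a$. The two assignments are strictly mutually inverse because morphisms in $\underline{\mathcal{R}}_{\bE}$ factor uniquely as a vertical morphism in $\mathcal{R}_{x^*\bE}$ followed by a cartesian lift of $a$, and both sides of the correspondence are set up to respect exactly this factorization.

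The only real obstacle is bookkeeping: one must verify that the comparison data $\eta_a$ in a section of $\underline{\bS}^{\Aff}(\bE)$ composes correctly under iterated base change, so that the resulting $\Sigma$ is indeed a functor, and conversely that the transition isomorphisms extracted from $\Sigma$ satisfy the cocycle condition defining a section. This compatibility is guaranteed by the variant at the end of \S \ref{sec:spin-functor}, which records that the equivalence of Proposition \ref{prop:restriction-equiv-pullback} is itself compatible with further base change, so that the identification of $a^*\sigma$ along $a\circ b$ agrees with the successive identifications along $a$ and then $b$.
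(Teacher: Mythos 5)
Your proof is correct, and part (ii) is essentially the same unravelling of definitions that the paper performs. Part (i), however, takes a genuinely different route. The paper reduces to the case $\mathcal{X} = X$ affine, fixes a single representative $E^{\bullet} \in \mathcal{R}_{\bE}$, and compares $\underline{\bS}^{\Aff}(\bE)$ with the manifestly fibered category $\underline{\bS}(E^{\bullet})$ attached to the pseudo-functor $(y:Y\to X)\mapsto \bS(y^*E^{\bullet})$; Corollary \ref{cor:restriction-iso} makes this an equivalence over the base, and the conclusion follows from the fact (quoting the Stacks Project) that being fibered is stable under equivalences of categories over a fixed base. You instead verify the fibration axioms directly over the original base $\Aff_{\mathcal{X}}$, using the comparison-category machinery: the equivalence $\bS^{\Aff}(y^*\bE)\xrightarrow{-\circ a^*}\bS'(x^*\bE,a)$ gives existence of a lift, and Corollary \ref{cor:comparison-bijection} gives the uniqueness clause for strong cartesianness. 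Both are valid; the paper's route is shorter because it offloads the verification onto a standard stability result for fibered categories, while yours makes the universal property explicit and avoids the affine reduction step. One small caveat in your argument: the compatibility-under-iterated-base-change supplied by the Variant at the end of \S \ref{sec:spin-functor} is needed already in part (i) to make composition of comparison maps (and hence composition in $\underline{\bS}^{\Aff}(\bE)$) well-defined and to ensure that the composite $(a,\eta)\circ(c,\mu)$ restricts at $E^{\bullet}$ to $c^*\eta_{E^{\bullet}}\circ\mu_{E^{\bullet}}$; you only cite the Variant in part (ii). This is easy to fix but worth flagging explicitly, since the uniqueness argument in (i) hinges on it.
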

\begin{proof}
	Clearly, the fibers of $\underline{\bS}^{\Aff}(\bE)\to \Aff_{\mathcal{X}}$ are groupoids, so for \ref{item:spinstructure-alternatei} it remains to show that it is a fibered category. Let $a:(y:Y\to \mathcal{X})\to(x:X\to\mathcal{X})$ be a given morphism in $\Aff_{\mathcal{X}}$.

	The existence of cartesian arrows over $a$ may be checked after pulling back to the category $\Aff_{X}$. Without loss of generality, we may therefore assume that $\mathcal{X} = X$ is an affine scheme.
	In this case, we may choose a representative $E^{\bullet}\in \mathcal{R}_{\bE}$, which induces a representative $E_y^{\bullet}\in \mathcal{R}_{y^*\bE}$ for every $y:Y\to X$. Consider the fibered category $\underline{\bS}(E^{\bullet})\to \Aff_{X}$ associated to the pseudo-functor $(y:Y\to X)\mapsto \bS(y^*E^{\bullet})$. Then, restriction induces a map $\underline{\bS}^{\Aff}(\bE)\to \underline{\bS}(E^{\bullet})$ of categories over $\underline{\mathcal{R}}_{\bE}$. By Corollary \ref{cor:restriction-iso}, this map is an equivalence fiberwise, hence an equivalence. By \cite[\href{https://stacks.math.columbia.edu/tag/042G}{Lemma 042G}]{stacks-project}, the property of being fibered is preserved under equivalences preserving the base, hence \ref{item:spinstructure-alternatei} follows.
	
	Point \ref{item:spinstructure-alternateii} can be seen by unraveling both definitions and using the properties of fibered categories. One finds that to give an object in either category means to give the following data:
	\begin{itemize}
		\item for each $x:X\to \mathcal{X}$ in $\Aff_{\mathcal{X}}$ and each $E^{\bullet}\in \mathcal{R}_{x^*\bE}$, a choice of $\sigma_{E^{\bullet}}\in\bS(E^{\bullet})$
		\item for each $x:X\to \mathcal{X}$ in $\Aff_{\mathcal{X}}$ and each morphism $\xi:F^{\bullet}\rightsquigarrow E^{\bullet}$ in $\mathcal{R}_{x^*bE}$, an isomorphism $\sigma_{\xi}:\sigma_{F^{\bullet}}\to\xi^*\sigma_{E^{\bullet}}$,
		\item for each morphism $a:(Y,y) \to (X,x)$ in $\Aff_{\mathcal{X}}$, and each $E^{\bullet}$ in $\mathcal{R}_{x^*\bE}$, an isomorphism $\eta_{a,\bE}:\sigma_{a^*E^{\bullet}}\to a^*\sigma_{E^{\bullet}}$.
	\end{itemize}
	The result is immediate from this.
\end{proof}

Now suppose that $\mathcal{X}=X$ is the functor of points of an affine scheme\footnote{Or one quasi-projective over one.}. We examine how $\bS(\bE)$ and $\bS^{\Aff}(\bE)$ are related. 
\begin{definition}
	Let $\underline{\bS}'(\bE)$ denote the fibered category over $\Aff_{X}$ associated to the functor sending $y:Y\to X$ to the groupoid $\bS'(\bE,y)$ of Definition \ref{def:spin-category-pullback}, and sending a morphism $a:(Z,z)\to (Y,y)$ over $X$ to the map $\bS'(\bE,y)\to \bS'(\bE,z)$ induced by pullback along $a$.
\end{definition}

We claim that we have natural maps of categories fibered in groupoids over $\Aff_{X}$
\begin{equation}\label{eq:category-diagram}
	\begin{tikzcd}
		\underline{\bS}^{\Aff}(\bE)\ar[r]&\underline{\bS}'(\bE)& 	\Aff_X\times \bS^{\Aff}(\bE).\ar[l]
	\end{tikzcd}
\end{equation} 
The map $\Aff_{X}\times \bS^{\Aff}(\bE)\to \underline{\bS}'(\bE)$ sends $(y,\sigma)$, where $y:Y\to X$ and $\sigma\in \bS^{\Aff}(\bE)$ to $(y,y^*\sigma)$, where $y^*\sigma\in \bS'(\bE,y)$. 
The map $\underline{\bS}^{\Aff}(\bE)\to \underline{\bS}'(\bE)$ is on fibers over any $(Y,y)\in \Aff_X$ given by the map  $\bS^{\Aff}(y^*\bE)\to \bS'(\bE,y)$ of \eqref{eq:pullback-functors} induced by pre-composition with $y$. To define it on morphisms, let $(a,\eta):(z,\tau)\to (y,\sigma)$ be a morphism in $\underline{\bS}^{\Aff}(\bE)$, i.e. $a:(Z,z)\to (Y,y)$ is a morphism in $\Aff_X$, we have $\tau\in \bS^{\Aff}(z^*\bE)$, $y\in \bS^{\Aff}(y^*\bE)$, and $\eta$ is a comparison map from $\tau$ to $\sigma$ over $a$. By definition, $\eta$ specifies an isomorphism $\tau\circ a^*\to a^*\circ \sigma$ in $\bS'(y^*\bE,a)$. Let $\overline{\eta}$ denote its image under the functor 
\[\bS'(y^*\bE,a)\to \bS'(\bE,a\circ y) = \bS'(\bE,z).\]
Then we take $(a,\overline{\eta})$ to be the image of $(a,\eta)$ in $\underline{\bS}'(\bE)$.

Let $\bS'(\bE)$ denote the category of sections of $\underline{\bS}'(\bE)\to \Aff_X$. Passing to the categories of sections of \eqref{eq:category-diagram}, we obtain the following functors
\begin{equation} \label{eq:compare-spin-definitions}
\begin{tikzcd}
	\bS(\bE) \ar[r]& \bS'(\bE)\ar[d,dotted]& \operatorname{Fun}(\Aff_X,\bS^{\Aff}(\bE))\ar[l]&\bS^{\Aff}(\bE)\ar[l,dashed]\\
	&\bS^{\Aff} & &
\end{tikzcd}
\end{equation}
where the added dashed map is the inclusion of the subcategory of constant functors. The dotted map is only defined if $X$ is affine and is evaluation at $(id_X)\in \Aff_X$.
\begin{theorem}\label{thm:spin-str-defs-compatible}
	Suppose $\mathcal{X} = X$ is a scheme, quasi-projective over an affine scheme.  Then the maps in \eqref{eq:compare-spin-definitions} are equivalences of categories. 
\end{theorem}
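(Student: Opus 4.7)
My plan is to show each of the three solid arrows in \eqref{eq:compare-spin-definitions} is an equivalence of categories. Label them $\alpha:\bS(\bE)\to \bS'(\bE)$, $\beta:\operatorname{Fun}(\Aff_X,\bS^{\Aff}(\bE))\to \bS'(\bE)$, and $\gamma:\bS^{\Aff}(\bE)\to \operatorname{Fun}(\Aff_X,\bS^{\Aff}(\bE))$. I will prove $\alpha$ and $\gamma$ are equivalences directly, then deduce that $\beta$ is an equivalence by the 2-of-3 property once I show that the composition $\beta\circ\gamma$ is an equivalence.

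For $\alpha$: by Proposition \ref{prop:spin-structure-stack-alternate} we have $\bS(\bE)\simeq \Gamma(\underline{\bS}^{\Aff}(\bE)\to\Aff_X)$, and $\alpha$ is induced by the morphism of fibered-in-groupoids categories $\underline{\bS}^{\Aff}(\bE)\to \underline{\bS}'(\bE)$ over $\Aff_X$. Its fiber over $y:Y\to X$ is the functor $\bS^{\Aff}(y^*\bE)\to \bS'(\bE,y)$ given by $\tau\mapsto \tau\circ y^*$, which is an equivalence by the unlabeled corollary stated just after Proposition \ref{prop:restriction-equiv-pullback}. Since a fiberwise equivalence of fibered-in-groupoids categories induces an equivalence on global sections, $\alpha$ is an equivalence.

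For $\gamma$: because $X$ is quasi-projective over an affine scheme it is separated, so fiber products $Y_1\times_X Y_2$ of affine schemes over $X$ are again affine. Hence $\Aff_X$ admits finite products and is therefore cofiltered, so its nerve is weakly contractible. Since $\bS^{\Aff}(\bE)$ is a groupoid, the inclusion as constant functors $\gamma$ is an equivalence: given $F:\Aff_X\to \bS^{\Aff}(\bE)$ and any fixed $y_0$, the assignment $\eta_y := F(p_{y_0})\circ F(p_y)^{-1}:F(y)\to F(y_0)$, where $p_y,p_{y_0}$ are the projections from $y\times_X y_0$, defines a natural isomorphism $F\Rightarrow \underline{Z}_{F(y_0)}$; morphisms between constant functors reduce to morphisms in $\bS^{\Aff}(\bE)$.

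It remains to show that $\beta\circ\gamma:\bS^{\Aff}(\bE)\to \bS'(\bE)$, which sends $\sigma$ to the section $y\mapsto y^*\sigma$, is an equivalence. This map factors as $\bS^{\Aff}(\bE)\to \bS(\bE)\xrightarrow{\alpha}\bS'(\bE)$, so it suffices to show the first arrow is an equivalence. Fixing a representative $E^{\bullet}\in \mathcal{R}_{\bE}$, Corollary \ref{cor:restriction-iso} (applied both to $\bE$ and, compatibly in $y$, to each $y^*\bE$) identifies this first arrow with the natural map $\bS(E^{\bullet})\to \Gamma(\underline{\bS}(E^{\bullet})\to \Aff_X)$, where $\underline{\bS}(E^{\bullet})$ has fiber at $y$ the groupoid of $\Spin^{\bC}$-reductions of the pullback $y^*Q$ of $Q := P_{E^0}\times_X P_{\det E_+^{\bullet}}$. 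Because principal bundles for affine algebraic groups form a stack in the Zariski topology and $\Aff_X$ contains a Zariski cover of the separated scheme $X$, this is precisely Zariski descent for $\Spin^{\bC}$-reductions of $Q$ on $X$, hence an equivalence. The main obstacle is making this last descent step rigorous; when $X$ is itself affine the issue disappears since $\id_X\in \Aff_X$ is a terminal object and evaluation there provides a direct quasi-inverse.
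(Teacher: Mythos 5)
Your argument is correct and follows essentially the same route as the paper: prove $\alpha$ is an equivalence by passing to the fiberwise equivalences $\bS^{\Aff}(y^*\bE)\to\bS'(\bE,y)$, reduce the remaining maps via Corollary~\ref{cor:restriction-iso} to statements about $\underline{\bS}(E^{\bullet})$, handle the constant-functor inclusion by contractibility of $\Aff_X$, and finish by Zariski descent for $\Spin^{\bC}$-reductions. The one genuine difference is how you establish contractibility of $\Aff_X$ for the map $\gamma$: you use that separatedness of $X$ makes $\Aff_X$ closed under fiber products, hence cofiltered, and then build an explicit natural isomorphism using projections; the paper instead just observes that the empty scheme is an initial object of $\Aff_X$, which yields contractibility with no hypotheses on $X$ and no explicit homotopy. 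Both work, but the initial-object argument is strictly simpler and makes the separatedness hypothesis irrelevant at this step.
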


\begin{proof}
	
	For $\bS(\bE)\to \bS'(\bE)$: We claim that in fact the map $\underline{\bS}(\bE)\to \underline{\bS}'(\bE)$ is an equivalence of categories over $\Aff_X$. This follows from combining the facts that for each $y:Y\to X$, the maps $\bS(y^*\bE)\to \bS'(\bE,y)$ is an equivalence, and that for each map $a:(Z,z)\to (Y,y)$ in $\Aff_X$, the map $\bS'(y^*\bE,a)\to \bS'(\bE,y\circ a) = \bS'(\bE,z)$ is an equivalence.
	
	Now, choose a representative $E^{\bullet}\in \mathcal{R}_{\bE}$. Then, restriction to $E^{\bullet}$ induces a diagram of maps extending \eqref{eq:category-diagram}
	\begin{equation}\label{eq:category-diagram2}
		\begin{tikzcd}
			\underline{\bS}^{\Aff}(\bE)\ar[r,"\sim"]\ar[d,"\sim"]&\underline{\bS}'(\bE)\ar[d]& 	\Aff_X\times \bS^{\Aff}(\bE)\ar[l]\ar[d,"\sim"]\\
			\underline{\bS}(E^{\bullet})\ar[r,equal]& \underline{\bS}(E^{\bullet})&\Aff_X\times \bS(E^{\bullet})\ar[l]
		\end{tikzcd}
	\end{equation}
	where we have marked the known equivalences. Passing to sections over $\Aff_X$ reduces us to showing the following maps are equivalences:
	\[\operatorname{Sec}(\underline{\bS}(E^{\bullet})/\Aff_X) \leftarrow \operatorname{Fun}(\Aff_X,\bS(E^{\bullet}))\leftarrow \bS(E^{\bullet}).\]
	To see that the map $\bS(E^{\bullet})\to \operatorname{Fun}(\Aff_X,\bS^{\Aff}(E^{\bullet}))$ is an equivalence, one can use that $\Aff_X$ has an inital object (the empty scheme), hence its total localization is equivalent to a point.
	
	Finally, we argue that the composition $\bS(E^{\bullet})\to \operatorname{Sec}(\underline{\bS}(E^{\bullet})/\Aff_X)$ is an equivalence. After choosing a cover of $X$ by affine open subsets, this follows from Zariski descent. \footnote{The proof simplifies a bit if one assumes $X$ is affine rather than just quasi-projective.}
\end{proof}
We conclude by defining the pullback of spin structures along maps of stacks.
\begin{construction}\label{constr:pullback}
	For a map of stacks $f:\mathcal{Y}\to \mathcal{X}$, we have an associated functor $F:\Aff_{\mathcal{Y}}\to \Aff_{\mathcal{X}}$ given by postcomposition with $f$. We have a natural \emph{isomorphism} 
	of categories $\underline{\mathcal{R}}_{F^*\bE}\to F^*\underline{\mathcal{R}}_{\bE}$   induced by the natural isomorphisms $t^*F^*\bE\simeq (F\circ t)^* \bE$ for each $(t:T\to \mathcal{Y})\in\Aff_{ \mathcal{Y}}$. We have a similar isomorphism $\underline{\mathcal{S}}_{F^*\bE}\to F^*\underline{\mathcal{S}}_{\bE}$. We obtain a pullback functor $F^*:\bS(\bE)\to \bS(F^*\bE)$ which sends a section $\mathcal{R}_{\bE}\to \mathcal{S}_{\bE}$ to its $2$-fiber product along $F$.
\end{construction}

\subsection{The gerbe of spin structures}
\begin{definition}\label{def:spin-gerbe}
	Given an algebraic stack $\mathcal{X}$ over $\Spec \bZ[1/2]$ and an oriented quadratic complex $\bE$ on $\mathcal{X}$, let $\underline{\bS}(\bE)$ denote the category fibered over schemes, defined as follows:
	\begin{itemize}
		\item The objects of $\underline{\bS}(\bE)$ are schemes $t:T\to \mathcal{X}$ over $\mathcal{X}$, together with a spin structure $\sigma$ on $t^*\bE$ in the sense of Definition \ref{def:spin-structure} . 
		\item A morphism $(T', \sigma')\to (T,\sigma)$ consists of a morphism $b:T'\to T$ of schemes over $\mathcal{X}$ and an isomorphism $b^*\sigma\to \sigma'$, where $b^*\sigma$ is the pullback defined in Construction \ref{constr:pullback}, which we view as a as a spin structure on $t'^*\bE$ via the identification $b^*t^*\bE\to t'^*\bE$. 
	\end{itemize} 
\end{definition}

\begin{proposition}\label{prop:spin-gerbe}
	The category $\underline{\bS}(\bE)$ fibered in groupoids over $\operatorname{Schemes}$ is a stack.
\end{proposition}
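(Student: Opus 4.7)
The strategy is to reduce both halves of the stack condition — isomorphism presheaves are sheaves, and objects descend effectively — to descent for reductions of structure group along the double cover $\Spin^{\bC}(m)\to \SO(m)\times \bG_m$, which is standard since this is a finite étale cover of smooth affine algebraic groups (étale because $2$ is invertible). Since the stack conditions are local on $T$, we may first reduce to the case that $T$ is affine via Zariski descent, which is transparent from the definition of $\bS(t^*\bE)$ as sections over $\Aff_T$ (Definition \ref{def:spin-structure}).

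\textbf{Local model.} For affine $T$ with $t:T\to\mathcal{X}$, choose a self-dual representative $E^{\bullet}\in \mathcal{R}_{t^*\bE}$, which exists by Construction \ref{constr:self-dual-complex}. By Theorem \ref{thm:spin-str-defs-compatible} combined with Corollary \ref{cor:restriction-iso}, evaluation at $E^{\bullet}$ yields an equivalence of groupoids
\[
\bS(t^*\bE)\xrightarrow{\ \sim\ } \bS(E^{\bullet}),
\]
and the right-hand side is, by Definition \ref{def:spin-functor}, the groupoid of $\Spin^{\bC}(m)$-reductions of the $\SO(m)\times\bG_m$-principal bundle associated to $(E^0,\det E^{\bullet}_+[1])$. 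Such reductions form a stack in the fppf (indeed étale) topology by classical descent for principal bundles. Moreover, for two spin structures $\sigma,\tau$, the presheaf $U\mapsto \operatorname{Isom}(\sigma|_U,\tau|_U)$ is identified with a presheaf of isomorphisms between $\Spin^{\bC}$-bundles, hence is a sheaf.

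\textbf{Globalization.} Given an fppf cover $\{T_i\to T\}$ and descent data $(\sigma_i,\varphi_{ij})$ with $\sigma_i\in\bS(t_i^*\bE)$, pull $E^{\bullet}$ back to each $T_i$ and use the equivalence above together with Corollary \ref{cor:comparison-bijection} to transport $(\sigma_i,\varphi_{ij})$ into descent data for a $\Spin^{\bC}(m)$-reduction on the principal bundle over $T$ associated to $E^{\bullet}$. This glues to an object $\sigma_{E^{\bullet}}\in\bS(E^{\bullet})$, which, via the inverse of the equivalence in Corollary \ref{cor:restriction-iso}, determines a section $\sigma$ of $\underline{\mathcal{S}}_{t^*\bE}\to\underline{\mathcal{R}}_{t^*\bE}$. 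Functoriality of the pullback construction (Construction \ref{constr:pullback}) together with Corollary \ref{cor:comparison-bijection} ensures that $\sigma$ restricts to the original $\sigma_i$ on each $T_i$. The same bookkeeping yields effective descent for isomorphisms.

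\textbf{Main obstacle.} The genuine descent statement — for $\Spin^{\bC}$-structures on a single principal bundle — is routine. The delicate point is that a spin structure, in the sense of Definition \ref{def:spin-structure}, is not a single object but a coherent family of $\Spin^{\bC}$-structures indexed over the huge category $\underline{\mathcal{R}}_{t^*\bE}$, together with compatibility isomorphisms $\sigma_{\xi}:\sigma_{F^{\bullet}}\to\xi^*\sigma_{E^{\bullet}}$ for every isotropic reduction. What must be verified is that the globally constructed $\sigma_{E^{\bullet}}$ extends canonically to such a section, and that the extension restricts correctly on the cover. This relies crucially on the canonicality of the constructions in \S\ref{sec:self-dual-cat}–\S\ref{sec:spin-functor} under arbitrary base change, so that all coherence data descends automatically once descent is known for a single representative; Corollary \ref{cor:restriction-iso} and Corollary \ref{cor:comparison-bijection} are precisely the tools that turn this naturality into the required global coherence.
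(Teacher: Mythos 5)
Your argument follows the same route as the paper's: first prove descent over an affine base by collapsing $\bS(t^*\bE)$ onto the fixed groupoid $\bS(E^{\bullet})$ via Theorem \ref{thm:spin-str-defs-compatible}, Corollary \ref{cor:restriction-iso} and Corollary \ref{cor:comparison-bijection}, then invoke étale descent of $\Spin^{\bC}$-torsors, and finally bootstrap to a general base using the characterization of $\bS(t^*\bE)$ as a limit over $\Aff_T$. The only place you are more optimistic than the paper is the word ``transparent'' in the reduction to the affine case: the paper spends a paragraph building, for each affine $U\to S$, a descended $\sigma_U$ from a chosen affine refinement $V\to U$ of the cover and then verifying independence of $V$ and functoriality in $U$, both of which rely on the Isom-sheaf property already established in the affine case rather than being formal consequences of the definition.
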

\begin{proof}
	We will check that descent holds for finite families of affine schemes and then conclude by purely formal arguments. 
	Let $V$ be an affine scheme with an oriented quadratic complex $\bE$ and let $(u_i:U_i\to V)_{i=1}^n$ be a smooth cover by affine schemes.
	Assume that for each $i$, we are given an element $\sigma_i\in \bS(\bE_{U_i})$, together with isomorphisms $\varphi_{i,j}:u_j^*\sigma_i\to u_i^*\sigma_j$ in $\bS(u_j^*\bE_{U_i})\simeq \bS(u_i^*\bE_{U_j})$ for each $i,j$, that moreover satisfy the co-cycle condition on triple products. (Here, we denote the base change of $u_i$ and $u_j$ to $U_i\times_V U_j$ also by $u_i$ and $u_j$ respectively). 
	
	Let $E^{\bullet}\in \mathcal{R}_{\bE}$ be a representative for $\bE$. Then as in \eqref{eq:category-diagram2}, we have equivalences 
	\[\bS(\bE)\to \operatorname{Sec}(\underline{\bS}(E^{\bullet})/\Aff_V)\leftarrow \bS(E^{\bullet})\]
	which are moreover compatible with further pullbacks along any map $U\to V$ from an affine scheme. This reduces us to checking that descent is effective for the categories $\bS(E^{\bullet})$, which follows from descent for principal bundles. 
	
	Now let $s:S\to \mathcal{X}$ be an arbitrary scheme, and $t:T\to S$ be a smooth cover. We would like to show that the category $\bS(s^*\bE)$ is equivalent to the category of descent data for  $t$ via the natural map obtained from restriction. Let $\pi_1,\pi_2:T\times_S T\to T$ denote the projection maps on first and second factor respectively, and similarly let $\pi_{i,j}:T\times_S T\times_S T\to T\times_S T$ denote the projection on the $i$th and $j$th factor, where $1\leq i<j\leq 3$. Now consider a descent datum $(\sigma_T, \varphi)$, where $\sigma$ is a spin structure on $t^*\bE$ and $\varphi:\pi_1^*\sigma_T\to \pi_2^*\sigma_T$ is an isomorphism of spin structures on $(s\circ t)^*\bE$, so that $\pi_{2,3}^*\varphi \circ \pi_{1,2}^*\varphi = \pi_{1,3}^*\varphi$ under the natural identifications. 
	
	Let $U\to S$ be a map from an affine scheme, and let $t_U:T_U\to U$ be the base change of $t$ to $U$. Let $V\to T_U$ be a map from an affine scheme so that the composition $v:V\to U$ is a smooth cover -- more concretely, we may take $V$ to be a disjoint union of affine open subsets of $T_U$. Then the descent data for $t$ induces descent data for $v$.  
	This gives rise to an object $\sigma_U\in \bS(u^*s^*\bE)$ by our result for affine schemes. One can check that -- up to canonical isomorphism -- the object $\sigma_U$ is independent of the choice of $V$. In particular, we can pass to the limit over all choices of $V$ to obtain a canonical choice. This construction is functorial in $U$, hence we obtain a functor sending a descent datum to an object of $\bS(s*\bE)$. This is an inverse equivalence to the operation of taking the descent datum associated to $t$. 
	
\end{proof}

\begin{corollary}\label{cor:spin-gerbe}
	For any stack $\mathcal{X}$ with an oriented quadratic complex $\bE$, the category $\underline{\bS}(\bE)\to \mathcal{X}$ is a $\ZT$-gerbe over $\mathcal{X}$, in particular it is an algebraic stack. When $\bE$ is clear from the data, we also denote it by $\mathcal{X}^{\operatorname{sp}}$.
\end{corollary}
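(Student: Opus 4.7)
The plan is to deduce the corollary from Proposition \ref{prop:spin-gerbe} (which already gives stackiness) by verifying the remaining two gerbe axioms---local non-emptiness and local connectedness---and then invoking the standard algebraicity result for gerbes banded by a finite étale group. First I would reduce the entire local analysis to the explicit picture of Definition \ref{def:spin-functor}. Given a smooth cover $u \colon U \to \mathcal{X}$ by an affine scheme, Construction \ref{constr:self-dual-complex} produces a self-dual representative $E^{\bullet} \in \mathcal{R}_{u^*\bE}$ on $U$, and the composite of equivalences
\[
\bS(u^*\bE) \xrightarrow{\sim} \bS^{\Aff}(u^*\bE) \xrightarrow{\sim} \bS(E^{\bullet})
\]
supplied by Theorem \ref{thm:spin-str-defs-compatible} and Corollary \ref{cor:restriction-iso} identifies the fiber groupoid of $\underline{\bS}(\bE)$ over $U$ with the groupoid of $\Spin^{\bC}(m)$-structures on the $\SO(m) \times \bG_m$-principal bundle attached to $(E^0, \det E^{\bullet}_+)$.

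Next I would run the standard obstruction theory for lifting a structure group along the central $\ZT$-extension $\Spin^{\bC}(m) \to \SO(m) \times \bG_m$ recorded in diagram \eqref{eq:diag-groups-extended}: the obstruction to existence of a lift lives in the étale cohomology group $H^2(U,\ZT)$, and the set of isomorphism classes of lifts, when non-empty, is a torsor under $H^1(U,\ZT)$. After shrinking $U$ sufficiently in the étale topology, both classes die; this yields local non-emptiness and local connectedness simultaneously. The automorphism group of any single lift, computed as the kernel of the covering map of principal bundles, is canonically $\ZT$, which identifies the band.

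Algebraicity then follows from the general fact that any gerbe over an algebraic stack banded by a finite étale abelian group scheme is itself algebraic: étale-locally on $\mathcal{X}$ the gerbe becomes isomorphic to $\BG{\ZT} \times \mathcal{X}$, and algebraicity descends along smooth covers. The main technical hurdle is not any individual step but rather the coherence of the affine-local equivalences under pullback, which is precisely what the fibered-category set-up of Definitions \ref{def:R-cat-stack} and \ref{def:S-cat-stack} together with Theorem \ref{thm:spin-str-defs-compatible} is designed to provide. Once that is in hand, the remaining cohomological verifications are routine and no new input is needed.
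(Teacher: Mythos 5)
Your proposal is correct and follows essentially the same route as the paper: reduce to the affine case via Corollary \ref{cor:restriction-iso} so that the fiber groupoid is identified with $\Spin^{\bC}$-structures on the $\SO(m)\times\bG_m$-bundle of a fixed representative, then check local non-emptiness and local connectedness (the paper by directly trivializing the bundle étale-locally, you by the equivalent cohomological obstruction classes in $H^2$ and $H^1$), and finally compute the automorphism group as $\ZT$ from the deck transformations of the double cover. Your closing remark on algebraicity of gerbes banded by a finite étale group is a harmless expansion of the paper's ``in particular it is an algebraic stack.''
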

\begin{definition}\label{def:obstr-class}
	We let $o_{\operatorname{sp}}(\bE)\in H^2(\mathcal{X},\ZT)$ denote the class of the gerbe $\mathcal{X}^{\operatorname{sp}}$. It is exactly the obstruction class to the existence of a spin structure on $\bE$.  
\end{definition}
\begin{proof}
	Working locally, we may assume that $\mathcal{X}$ is an affine scheme, so that we can pick a representative $E^{\bullet}$ for $\bE$. Etale-locally, we may trivialize the $\SO(m)\times \mathbb{G}_m$ bundle associated to $E^{\bullet}$. In particular, $\bS(\bE) = \bS(E^{\bullet})$ is locally non-empty. Given any two objects $\sigma_1,\sigma_2\in \bS(E^{\bullet})$, we may locally trivialize the associated $\Spin^{\bC}$-bundles (compatibly with a chosen trivialization of the $\SO(m)\times \bC^*$-bundle associated to $E^{\bullet}$), which shows that any two objects are locally isomorphic.
	
	Finally, for any object $x\in \bS(\bE)$ we have $\Aut(x)\simeq \underline{\mathbb{Z}}_2$:  
	This can be checked locally, so we may assume we are working over an affine scheme with chosen representative $E^{\bullet}$. Then $\Aut(x)$ is given by the automorphisms of the double cover $P_{x}\to P_{E}\times P_{\det E_+^{\bullet}}$, which is canonically the sections of the constant sheaf $\ZT$ over $T$.  
\end{proof}
We outline a variant of the constructions here, which may be useful for applications.
\begin{remark}\label{rem:spinc-structures}
	 Let $\mathcal{N}$ be a fixed line bundle on $\mathcal{X}$. Replacing the functor $\bP$ of Construction \ref{constr:to-pair-functor} with its twisted version $\bP^{\mathcal{N}}$ of Remark \ref{rem:twisted-pair} throughout, one obtains a fibered category $\underline{\bS}(\bE, \mathcal{N})$ of \emph{$\Spin^{\bC}$-structures} on the pair $(\bE,\mathcal{N})$. This again forms a $\ZT$-gerbe over $\mathcal{X}$, whose class in $H^2(\mathcal{X},\ZT)$ is 
	 \[o_{\operatorname{sp}}(\bE,\mathcal{N}) = o_{\operatorname{sp}}(\bE) + o_{\mathcal{N}},\]
	 where $o_{\mathcal{N}}$ is the class obstructing the existence of a square root of $\mathcal{N}$. 
	 In particular, one may take $\mathcal{N}$ to be the (pullback of the) universal sheaf on $\mathcal{X}\times B\bG_m$. This gives rise to a $\bG_m$-gerbe over $\mathcal{X}$ which can be interpreted as the gerbe parametrizing $\Spin^{\bC}$-structures on $\bE$ without reference to a fixed line bundle.  
\end{remark}

\section{Application to DT4 theory}
We apply the concepts developed here to stacks carrying an oriented DT4 obstruction theory -- the main example being moduli stacks of sheaves on Calabi--Yau fourfolds. The main result is that a spin structure on the obstruction theory gives rise to a virtual structure sheaf. In the absence of a spin structure, one can work on the $\ZT$-gerbe of Definition \ref{def:spin-gerbe} where one has a \emph{universal} spin structure. Thus, the virtual structure sheaf always exists as a \emph{twisted} sheaf. In this section all stacks are assumed to be locally Noetherian and maps to be locally of finite type.
\subsection{Obstruction theories in DT4 theory}
The obstruction theories appearing in DT4 theory are characterized by their self-dual structure and a more subtle -- but crucial -- \emph{isotropy condition} \cite[Definition 1.10]{park_pull}. In this subsection, we define the notion of DT4 obstruction theory for Artin stacks. To state the isotropy condition in this setting, we work affine locally, which requires the notion of symmetrized pullback. Another way to express the isotropy condition for stacks -- using \emph{higher} stacks -- can be found in \cite[Appendix B]{bkp_counting} via the intrinsic normal cone for stacks of \cite{ap_cone}).  

We first recall the definition of obstruction theory on an Artin stack:

\begin{definition}[{cf. \cite[Definition 2.4.1]{mochi_don}}]\label{def:ob-thy}
	Let $\mathcal{X}$ be an algebraic stack over a base $S$. An \emph{obstruction theory on $\mathcal{X}$ over $S$} consists of an object $\bE\in D^b(\mathcal{X})$ together with a map $\ob:\bE \to L_{\mathcal{X}/S}$ satisfying
	\begin{enumerate}[label = \roman*)]
		\item[(OB)]  the induced map $h^i(\ob)$ on cohomology sheaves is an isomorphism in degrees $i\geq 0$, and surjective for $i=-1$. 
	\end{enumerate}
\end{definition}

In the usual applications of obstruction theories to constructing virtual cycles, such as in Gromov--Witten theory or sheaf counting on varieties in dimension $\leq 3$, one often considers \emph{perfect} obstruction theories, for which $\bE$ is assumed to have perfect amplitude in degrees $[-1,1]$. In this situation, it is well known that one can construct a virtual structure sheaf on $\mathcal{X}$ as originally suggested by Kontsevich \cite{kont_en} (see e.g. \cite[Remark 5.4]{bf_normal} for the case of schemes).

The obstruction theories appearing in DT4 theory are not perfect, but instead are required to satisfy the following symmetry condition:

\begin{definition}[{cf. \cite[Definition C.1]{park_pull}}]\label{def:shifted-symm}
	Let $\mathcal{X}$ be an algebraic stack. 
	A \emph{$-2$-shifted symmetric complex} on $\mathcal{X}$ is a pair $(\bE, \theta)$ where $\bE$ is a perfect complex on $\mathcal{X}$, and where $\theta:\bE^{\vee} \to \bE[-2]$ is an isomorphism satisfying $\theta^{\vee}[-2] = \theta$.
\end{definition}

Combining Definitions \ref{def:ob-thy} and \ref{def:shifted-symm}, we have
\begin{definition}
	Let $\mathcal{X}$ be an algebraic stack over a base $S$.
	A \emph{$-2$-shifted symmetric obstruction theory} on $\mathcal{X}$ is a triple $(\bE,\ob,\theta)$, such that $(\bE,\ob)$ is an obstruction theory for $\mathcal{X}$ over $S$ and such that $(\bE,\theta)$ is a $-2$-shifted symmetric complex. 
\end{definition}

We now describe how a $-2$-shifted symmetric obstruction theory on a stack $\mathcal{X}$ gives rise to $-2$-shifted symmetric obstruction theory on any smooth affine scheme over $\mathcal{X}$. This uses the notion of symmetrized pullback introduced in Park \cite[Theorem 0.1]{park_pull}. An analogue for $-1$-shifted symmetric obstruction theories has been used in \cite{klt_dtpt} and \cite{liu_ss_vw} in the context of wall-crossing for equivariant K-theoretic invariants in DT3 theory. 

\begin{lemma}\label{lem:symm-pull}
	Let $\mathcal{X}$ be an algebraic stack over $S$, and let $(\bE, \ob,\theta)$ be a $-2$-shifted symmetric obstruction theory on $\mathcal{X}$ over $S$. Let $t:T\to \mathcal{X}$ be a smooth map from an affine scheme. Then there exists a $-2$-shifted symmetric obstruction theory $(\bF_T, \ob_T,\theta_T)$ on $T$ over $S$ that is \emph{compatible} with $\bE$, in the following sense: There exists a diagram in which the rows are distinguished triangles and the vertical maps give morphisms of distinguished triangles
	\begin{equation}\label{eq:compatibility-diagram}
		\begin{tikzcd}
			\bD^{\vee}[2]\ar[d,"\beta^{\vee}"]\ar[r,"\alpha^{\vee}"] & \bF_{T} \ar[d,"\alpha"]\ar[r,"\delta"]& \Omega_t\ar[r]\ar[d,equals]& \, \\
			t^*\bE \ar[r, "\beta"]\ar[d,"t^*\ob"]&\bD\ar[r,"\gamma"]\ar[d,"\ob_T'"]& \Omega_t \ar[r]\ar[d,equals]&\, \\
			t^*L_{\mathcal{X}/S}\ar[r]& L_{T/S}\ar[r] & \Omega_t \ar[r]&\, 
		\end{tikzcd}
	\end{equation}
	such that $\ob_T = \ob'_T\circ \alpha$ and where $\alpha^{\vee}$ and $\beta^{\vee}$ are defined using the self-duality isomorphisms $\theta_T$ and $t^*\theta$ respectively. 
	Moreover, the triple $(\bF_T, \ob_T,\theta_T)$ together with choice of diagram \eqref{eq:compatibility-diagram} is unique up to (not necessarily unique) isomorphism.
\end{lemma}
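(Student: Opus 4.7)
The construction is the stack-theoretic analogue of Park's symmetrized pullback \cite[Theorem~0.1]{park_pull}: first build an auxiliary (non-symmetric) obstruction theory $\ob'_T\colon \bD\to L_{T/S}$, then symmetrize it to obtain $(\bF_T,\ob_T,\theta_T)$.

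\emph{Stage 1 (the intermediate $\bD$).} Smoothness of $t$ makes $\Omega_t$ locally free in degree $0$ and yields the distinguished triangle $t^*L_{\mathcal{X}/S}\to L_{T/S}\to \Omega_t\xrightarrow{b}$. Since $t^*\ob$ satisfies (OB), its cone $C$ has cohomology concentrated in degrees $\leq -2$. Combined with the vanishing of $\Ext^{\geq 1}(\Omega_t,-)$ on quasi-coherent complexes over the affine scheme $T$ (a consequence of $\Omega_t$ being a vector bundle), the groups $\Hom(\Omega_t,C)$ and $\Hom(\Omega_t,C[1])$ both vanish, so the boundary $b$ admits a unique lift $\sigma\colon \Omega_t\to t^*\bE[1]$ along $t^*\ob[1]$. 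Define $\bD:=\operatorname{cone}(\Omega_t[-1]\xrightarrow{\sigma[-1]}t^*\bE)$, which supplies $\beta$ and $\gamma$. The octahedral axiom applied to the two evident triangles produces $\ob'_T\colon \bD\to L_{T/S}$ making the middle and bottom rows into a morphism of triangles; an (OB)-check for $\ob'_T$ is then a straightforward diagram chase.

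\emph{Stage 2 (the symmetric $\bF_T$).} Using $\theta$ to identify $(t^*\bE)^{\vee}[2]\simeq t^*\bE$, dualize-and-shift the middle triangle to obtain
\[
\Omega_t^{\vee}[2]\longrightarrow \bD^{\vee}[2]\xrightarrow{\beta^{\vee}} t^*\bE\longrightarrow,
\]
with $\beta^{\vee}$ the vertical arrow of column~1 of \eqref{eq:compatibility-diagram}. The composite $\iota:=\beta\circ\beta^{\vee}\colon \bD^{\vee}[2]\to\bD$ is symmetric under $(-)^{\vee}[2]$ by the symmetry $\theta^{\vee}[-2]=\theta$ together with the uniqueness of $\sigma$. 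The desired $\bF_T$ is a \emph{Lagrangian factorization} of $\iota$, i.e.\ an object with maps $\alpha^{\vee}\colon \bD^{\vee}[2]\to \bF_T$, $\alpha\colon \bF_T\to \bD$ and a self-duality $\theta_T\colon \bF_T\simeq \bF_T^{\vee}[2]$ satisfying $\alpha\circ\alpha^{\vee}=\iota$ and $\alpha^{\vee}=\alpha^{\vee}[2]\circ\theta_T$. Concretely, $\bF_T:=\operatorname{cone}(\Omega_t[-1]\xrightarrow{\chi}\bD^{\vee}[2])$, where $\chi\colon \Omega_t\to\bD^{\vee}[3]$ is the unique lift of $\sigma$ along $\beta^{\vee}[1]$ (existence and uniqueness again from Ext-vanishing against $\Omega_t$ on the affine $T$). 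The symmetry of $\iota$ ensures that this $\bF_T$ carries $\theta_T$ and that the top triangle of \eqref{eq:compatibility-diagram} is simultaneously realized by its dual-shifted counterpart $\Omega_t^{\vee}[2]\to \bF_T\to \bD$.

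\emph{Stage 3 (verification and uniqueness).} Condition (OB) for $\ob_T:=\ob'_T\circ\alpha$ follows from (OB) for $\ob'_T$ together with the long exact sequences attached to the top two rows of \eqref{eq:compatibility-diagram} and the fact that $\Omega_t$ is concentrated in degree $0$. Self-duality $\theta_T^{\vee}[-2]=\theta_T$ is inherent in the symmetric construction of Stage~2. Uniqueness up to (non-unique) isomorphism reduces at each stage to the uniqueness of $\sigma$ and $\chi$, combined with the usual non-canonical uniqueness of cones.

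\textbf{Main obstacle.} The delicate point is Stage~2: verifying that $\bF_T$ genuinely carries the self-duality $\theta_T$ so that both the top row of \eqref{eq:compatibility-diagram} and its dual-shifted counterpart are realized by the same structure, which amounts to a Lagrangian-factorization argument for the symmetric morphism $\iota$ at the level of the derived category. This is essentially the technical core of \cite[Theorem~0.1]{park_pull}, and the generalization from schemes to the present setting is unobstructed because all the required manipulations take place on the smooth affine atlas chart $T$, where the stackiness of $\mathcal{X}$ plays no direct role.
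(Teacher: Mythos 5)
Your proposal follows essentially the same route as the paper: Stage 1 corresponds to the paper's appeal to the unique lift $\Omega_t[-1]\to t^*\bE$ (the paper cites \cite[\S 2.3.3, Lemma 2.3.7]{klt_dtpt} for existence/uniqueness where you derive it directly from Ext-vanishing against the vector bundle $\Omega_t$ on the affine $T$), and Stage 2 is a repackaging of Park's isotropic-subcomplex construction \cite[Lemma C.2]{park_pull} which the paper invokes directly. The only real difference is presentational — you spell out the Ext-vanishing and frame the symmetrization as a "Lagrangian factorization" rather than citing Park's lemma as a black box — and both proofs ultimately defer the same technical core to \cite{park_pull}.
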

\begin{definition}
	We call $(\bF_T,\ob_T,\theta_T)$ as in Lemma \ref{lem:symm-pull} the \emph{symmetrized pullback obstruction theory.}
\end{definition}
\begin{proof}
	We construct $\bF_T$, by building up the diagram \eqref{eq:compatibility-diagram} from the bottom up. The lowest row is simply the natural triangle of cotangent complexes associated to the morphisms $T\to \mathcal{X}\to S$.  In particular, we have a connecting homomorphism $\Omega_t[-1]\to t^*L_{\mathcal{X}/S}$. By \cite[\S 2.3.3]{klt_dtpt}, using that $T$ is affine and $\ob$ is an obstruction theory, there exists a unique lift $\Omega_t[-1]\to t^*\bE$. We take $\bD$ to be the cone of this map and choose some $\ob_T'$ giving a morphism of the triangles in the lower two rows of \eqref{eq:compatibility-diagram}. By \cite[Lemma 2.3.7]{klt_dtpt}, the choice of $\bD$ is unique. (Note that \cite{klt_dtpt} works with $-1$-shifted symmetric objects, but the argument goes through with obvious changes)
	
	Now, the map $\Omega_t[-1]\to t^*\bE$ induces a map $t^*\bE \simeq t^*\bE^{\vee}[2]\to \Omega_t^{\vee}[3]$, which is an \emph{isotropic subcomplex} of the $-2$-shifted symmetric complex $(t^*\bE,t^*\theta)$ in the sense of \cite[Definition 2.3]{park_pull} (this uses again that $T$ is affine). By \cite[Lemma C.2]{park_pull} this determines the upper two rows in \eqref{eq:compatibility-diagram} uniquely up to isomorphism (here one needs to dualize and shift Park's diagram \cite[(C1)]{park_pull}).
\end{proof}

\begin{definition}\label{def:DT4-ob-thy}
	Let $\mathcal{X}$ be an algebraic stack over a base $S$. A \emph{DT4 obstruction theory} is a $-2$-shifted symmetric obstruction theory $E^{\bullet}$, satisfying the following \emph{isotropy condition}:
	\begin{enumerate}
		\item[(ISO)]\label{item:ISO} For any smooth map $T\to \mathcal{X}$ from an affine scheme, the induced symmetric pullback obstruction theory satisfies the isotropy condition (cf. Definition 4.10 in \cite{park_pull}). 
	\end{enumerate}
\end{definition}

For a \emph{perfect} obstruction theory $\bE$ on a scheme $X$ and any \emph{two-term} resolution $E^{\bullet}=[E^{-1}\to E^0]$ of $\bE$, one obtains a sub-cone $\fC_{X,E}\subset E_1$ which is preserved under the action of $E_0$, and so that $[\fC_{X,E}/E_0]$  is canonically identified with the intrinsic normal cone $\fC_X$ of $X$ \cite[Theorem 4.5]{bf_normal}. Moreover, any morphism of $2$-term resolutions $F^{\bullet}\to E^{\bullet}$ induces (dually) a morphism between associated cones $\fC_{X,E}\to \fC_{X,F}$ compatible with the actions of $E_0$ and $F_0$, so that taking quotients, we have a $2$-commutative diagram
\begin{equation*}
	\begin{tikzcd}
		{\left[\fC_{X,E}/E_0\right]} \ar[dr]\ar[r]& {[\fC_{X,F} /F_0]}\ar[d] \\
		\, & \fC_X
	\end{tikzcd}
\end{equation*} 
We record here that a similar statement is true for more general obstruction theories, which streamlines some of the arguments that follow.
\begin{construction}\label{constr:sub-cone}
	Let $\mathcal{X}$ be an algebraic stack with an obstruction theory $\bE\to L_{\mathcal{X}}$ and let $t:T\to \mathcal{X}$ be a smooth map from an affine scheme. 
	Let $f:\Omega_t[-1]\to t^*\bE$ be the unique lift of the connecting homomorphism $\Omega_t[-1]\to t^*L_{\mathcal{X}}$.
	Then for any representative $A^{\bullet}$ of $\bE$, we naturally obtain a sub-cone $\fC_{\mathcal{X},t,A}\subset A_1$. 
	
	To obtain this, choose a map $a:\Omega_t\to A^1$ representing the lift $f$. Then taking 
	\[A'^0:= \Ker(\Omega_t\oplus A^0 \xrightarrow{\begin{bmatrix}
			f_1& d 
	\end{bmatrix}} A^1),\] the sub-complex 
	\[A'^{\bullet} = \left[\cdots \to A^{-1}\to A'^0\to 0 \right]\]
	of $A^{\bullet}$ is an obstruction theory for $T$. The truncation $\left[A^{-1}\to A'^0\right]$ is a resolution of a perfect obstruction theory, hence we have a natural sub-cone $\fC_{\mathcal{X},t,A}\subset A_1$ invariant under translation $A_0$. 
	Given another representative $B^{\bullet}$ and a map of representatives $g_{\bullet}:B^{\bullet}\to A^{\bullet}$. Choosing a lift $b:\Omega_t\to B^1$ of $a$, one checks that the morphism $A_1\to B_1$ restricts to a morphism of cones \[\fC_{\mathcal{X},t,A}\to \fC_{\mathcal{X},t,B}.\] This is equivariant over the map $A_0\to B_0$. Note that the construction is independent of the choice of lifts $a$, and $b$, and functorial in $T$.
\end{construction}

\begin{remark}\label{rem:sub-cone-iso}
	In the case of Construction \ref{constr:sub-cone}, if $\bE$ is a DT4 obstruction theory, amd one chooses a self-dual representative $E^{\bullet}$ of $t^*\bE$, condition \eqref{item:ISO} of Definition \ref{def:DT4-ob-thy} guarantees that the sub-cone $\fC_{\mathcal{X},t,E}\subset E_1\simeq E^{-1}$ is isotropic. 
	
	If, moreover, 
	\[F^{\bullet} \leftarrow A^{\bullet} \to E^{\bullet}\]
	is an isotropic reduction with $K = \Ker \left(E_{1}\to A_1\right)$, then the induced morphism 
	\[\fC_{\mathcal{X},t,A}\to \fC_{\mathcal{X},t,E}\]
	is an isomorphism, and we have an exact sequence of cones
	\[0\to K \to \fC_{\mathcal{X},t,A} \to  \fC_{\mathcal{X},t,F}\to 0.\] 
\end{remark}
\subsection{Spin Structures and Clifford Representations}
Let $X$ be a scheme and let $(E,q)$ be a quadratic bundle of rank $n$ over $X$. We recall some parts from the appendix that will be needed in what follows

Let $\Cl(E)$ denote the \emph{Clifford Algebra of $E$} (cf. \S \ref{app:cl-algs}).  It is a $\bZ_2$-graded $\mathcal{O}_X$-algebra, and locally free of rank $2^n$ as an $\mathcal{O}_X$-module. We will use the canonical embedding $\iota_E:E\to \Cl(E)$ to view $E$ as a sub $\mathcal{O}_X$-module of $\Cl(E)$. A choice of orientation on $E$ gives rise to a \emph{volume element} $\omega\in \Cl(E)$, which is a certain section of the Clifford algebra satisfying $\omega^2 = 1$. If $n$ is odd, then any volume element lies in the center of $\Cl(E)$. If $n$ is even, we have 
\begin{equation}\label{eq:anti-comm}
	 \omega s = - s \omega \mbox{ for } s\in \Gamma(X,E).
\end{equation}

Now suppose we have fixed an orientation on $E$.  For a line bundle $L$ on $X$, the datum of a $\Spin^{\bC}$-structure $\sigma$ on the pair $(E,L)$ can be characterized in terms of giving a certain module over $\Cl(E)$. More precisely (see Propositions \ref{prop:spin-data-even} and \ref{prop:spin-data-odd} in the appendix)
\begin{proposition}
	\begin{enumerate}
		\item \label{item:even}If $n=2m$ is even, then the category of $\Spin^{\bC}$-structures on $(E,L)$ is equivalent to the category of pairs $(\sS,\eta)$, where 
		\begin{itemize}
			\item $\sS$ is a locally free $\mathcal{O}_X$-module with a left action of $\Cl(E)$,
			\item  $\eta$ is an isomorphism of $R$-modules $\eta:\sS\otimes_{\Cl(E)} \sS \to L$.\footnote{The tensor product is taken with respect to the induced right-module structure induced by the canonical anti-involution on $\Cl(E)$ that acts as $-\id$ on $E$ (see Proposition \ref{prop:clifford-props}).}
		\end{itemize}
		\item \label{item:odd} If $n=2m+1$ is odd, then the category of $\Spin^{\bC}$-structures on $(E,L)$ is equivalent to the category of pairs $(\sS,\eta)$, where 
		\begin{itemize}
			\item $\sS$ is a locally free $\mathcal{O}_X$-module with a left action of $\Cl(E)$ and on which the volume element $\omega$ acts by the identity,
			\item  $\eta$ is an isomorphism of $R$-modules $\eta:\sS\otimes_{\Cl_0(E)} \sS \to L$. 
		\end{itemize}
	\end{enumerate}
	In either \ref{item:even} or \ref{item:odd}, an isomorphism of pairs $(\sS,\eta)\to (\sS',\eta')$ is an isomorphism $\Phi:\sS\to \sS'$ so that $\eta = \eta'\circ (\Phi\otimes \Phi)$. 
\end{proposition}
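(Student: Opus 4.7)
The plan is to prove both parts by constructing an explicit equivalence of categories in each direction and checking that the two constructions are mutually inverse. Since both sides of the claimed equivalence satisfy étale descent (the $\Spin^{\bC}$-bundle side by definition, the Clifford-module side because local freeness, algebra-module structures, and isomorphisms to $L$ all descend), I would first reduce to the case where $(E, q)$ is a trivial quadratic bundle (of the chosen orientation) and $L$ is trivial, so that $\Cl(E) = \Cl_n \otimes \mathcal{O}_X$ is constant. Fix once and for all a standard spinor module $\bS_n$: for $n = 2m$, the unique irreducible $\Cl_n$-module of rank $2^m$; for $n = 2m+1$, the summand of the irreducible $\Cl_{0,n}$-module on which $\omega$ acts as $+1$. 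Fix also the standard Clifford-invariant bilinear pairing $\beta_n \colon \bS_n \otimes \bS_n \to \mathcal{O}$ built from the canonical anti-involution, and verify that it factors through a nowhere-vanishing map on the advertised quotient $\bS_n \otimes_{\Cl_n} \bS_n$ (resp.\ $\bS_n \otimes_{\Cl_{0,n}} \bS_n$), which is locally of rank one.

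To go from a $\Spin^{\bC}$-structure to a pair $(\sS, \eta)$, I would form the associated bundle $\sS := P \times^{\Spin^{\bC}(n)} \bS_n$, using the standard spin representation of $\Spin^{\bC}(n)$. The Clifford-module structure comes from the identity $g \cdot (e \cdot s) = (\pi(g) e) \cdot (g \cdot s)$ for $g \in \Spin^{\bC}(n)$, $e \in E$, $s \in \bS_n$, where $\pi \colon \Spin^{\bC}(n) \to \SO(n)$ is the projection; this equation precisely says that $\sS$ carries a well-defined action of $\Cl(E)$, not merely of $\Cl_n$. The pairing $\eta$ is obtained from $\beta_n$ by twisting by the character $\Spin^{\bC}(n) \to \bG_m$, whose associated line bundle is $L$ by construction of $\Spin^{\bC}$. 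In the odd case, because $\omega$ is central and acts on $\bS_n$ as $+1$, it acts on $\sS$ as $+1$.

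For the inverse direction, given $(\sS, \eta)$, I would define $P$ to be the sub-scheme of the frame bundle of $\sS$ consisting of isomorphisms $\phi \colon \bS_n \otimes \mathcal{O}_U \xrightarrow{\sim} \sS|_U$ of $\Cl_n$-modules (with respect to a chosen local trivialization of $E$) that intertwine $\eta$ with the standard pairing (with respect to a chosen local trivialization of $L$), and in the odd case respect the $\omega = +1$ condition. The key point is that the structure group of $P$, i.e.\ the automorphism group of the triple $(\bS_n, \text{Clifford action}, \beta_n)$, is exactly $\Spin^{\bC}(n)$: this follows from the classical identification of $\Spin^{\bC}(n)$ as the group of Clifford-linear automorphisms of the spinor representation whose induced action on $E \subset \Cl(E)$ is orthogonal and whose effect on $\beta_n$ is a nonzero scalar. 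The projection $P \to P_E \times P_L$ to the $\SO(n) \times \bG_m$-bundle of $(E,L)$ is built out of the induced action on $E$ and the scalar by which the pairing is multiplied.

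The main obstacle will be Step 4 above — correctly identifying the group $\Spin^{\bC}(n)$ as the full symmetry group of the pair $(\bS_n, \beta_n)$ in the right sense, distinguishing it from $\Pin^{\bC}(n)$ or a direct product, and correctly matching the character through which it maps to $\bG_m$ with the line bundle $L$. The even/odd dichotomy (reflected in the anti-commutation relation \eqref{eq:anti-comm} and in whether $\omega$ is central) feeds directly into this identification and into why one tensors over $\Cl(E)$ versus $\Cl_0(E)$. Once this identification is in place, checking that the two constructions are mutually inverse, functorial in morphisms $(\sS, \eta) \to (\sS', \eta')$ and compatible with gluing, is a routine verification.
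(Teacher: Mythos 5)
Your proposal follows essentially the same route as the paper: the paper reduces, via Propositions~\ref{prop:spin-data-even} and~\ref{prop:spin-data-odd}, to the identification of $\Spin^{\bC}(n)$ as the automorphism group $\Aut(R^n,q_n,o_n,\sS_n)$ of the standard spin datum (Propositions~\ref{prop:spinc-autom-even} and~\ref{prop:spinc-autom-odd}), together with the spinor-norm characterization $\sn(\varphi,\Phi)=\Phi\otimes_{\Cl(\varphi)}\Phi$ (Propositions~\ref{prop:spinor-norm-even} and~\ref{prop:spinor-norm-odd}) which matches the character $\Spin^{\bC}(n)\to\bG_m$ to $L$; this is precisely the ``main obstacle'' you flag. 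Two small imprecisions worth fixing: the structure group of your frame scheme is only $\Spin^{\bC}(n)$ if one parametrizes \emph{pairs} $(\varphi,\Phi)\in\SO(n)\times\GL(\sS_n)$ compatible through $\Cl(\varphi)$ --- $\Cl_n$-module automorphisms alone form a $\bG_m$-torsor by irreducibility --- and in the odd case $\omega$ is an odd central element of $\Cl(n)$, so the half-spin module should be described as the irreducible $\Cl(n)$-module (not $\Cl_0(n)$-module) on which $\omega$ acts as $+1$.
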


We note that in either case the module $\sS$ is necessarily locally free of rank $2^m$.  

To apply this to DT4 theory, we recall a construction of Polishchuk and Vaintrob (\cite[\S 2.2]{pv_algebraic}).
\begin{construction}\label{constr:matrix-fac}
	Let $(E,q)$ be an oriented quadratic bundle of even rank on $X$ with a section $s\in \Gamma(X,E)$, and let $\sS$ be a module for $\Cl(E)$. Let $\sS = \sS_+\oplus \sS_-$ be the composition in $\pm 1$-Eigenspaces for the action of the volume element $\omega$. By \eqref{eq:anti-comm}, multiplication with any section of $E$ maps $\sS_+$ into $\sS_-$ and vice-versa. We define a matrix factorization of the element $q(s)$ on $X$
	\[C^{\bullet}(E,s,\sS):= \left[ \cdots \to \sS_- \xrightarrow{s}  \sS_+\xrightarrow{s} \sS_-\to \cdots\right],\] 
	where $\sS_+$ is in degree zero, and all maps are given by multiplication with $s\in \Gamma(X,E)$.
	
	If $s$ happens to be isotropic, i.e. $q(s) = 0$, this is a two-periodic chain complex of $\mathcal{O}_X$-modules. 
\end{construction}
As remarked in \cite[\S 3.1]{pv_algebraic}, if the section $s$ is isotropic, then the cohomology sheaves of $C^{\bullet}(E,s,\sS)$ are supported on the vanishing locus $Z(s)$. 
\begin{definition}\label{def:matfac-cohomologies}
	Suppose that in Construction \ref{constr:matrix-fac}, the section $s$ is isotropic. We define for $i\in \ZT$,
	\[h^i(E,s,\sS):= h^i(C^{\bullet}(E,s,\sS)) \in \operatorname{Coh}(Z(s)).\]
\end{definition}

\begin{remark}
	This construction easily generalizes to give maps 
	\begin{align*}
		D_{Coh}^b(X)&\to \Coh(Z(s))\\
		F^{\bullet}&\mapsto h^i\left(\operatorname{Tot}^{\oplus} \left(C^{\bullet}(E,s,\sS)\otimes F^{\bullet}\right)\right).
	\end{align*}
\end{remark}

\subsection{Local construction of a virtual structure sheaf}
We now turn to applying Construction \ref{constr:matrix-fac} to DT4 theory. In order to apply the construction in the context of stacks, we start with a slightly more general framework.

Let $(E,L)$ be a pair of an oriented quadratic bundle and a line bundle on a scheme $X$. Assume that $E$ is of even rank $n=2m$, and let $\sigma$ be a $\Spin^{\bC}$-structure on the pair $(E,L)$. Let $\sS_{\sigma}$ be the associated $\Cl(E)$-module, with trivialization $\eta_{\sigma}:\sS_{\sigma}\otimes_{\Cl(E)}\sS_{\sigma} \to L$.  

For what follows, we identify $E$ with its total space. Let $\pi_E:E\to X$ denote the projection, and $0_E:X\to E$ the zero section. Let $s_E\in \Gamma(E,\pi_E^*E)$ denote the tautological section. We also regard the quadratic structure on $E$ as a function $q_E\in \Gamma(E,\mathcal{O}_E)$. Via pull-back, the pair $(\pi_E^*\sS,\pi_E^*\eta_{\sigma})$ gives a $\Spin^{\bC}$-structure on the pair $(\pi^*_E E,\pi_E^*L_{\sigma})$ on $E$. 

\begin{construction}\label{constr:matfac-cone}
	Let $\mathfrak{C}\subset E$ be sub-scheme of the total space of $E$ that is \emph{isotropic}, i.e. that satisfies $q|_{\fC} \equiv 0$.  
	Let $E_{\fC},L_{\fC},\sS_{\sigma,\fC}$ and $\eta_{\sigma,\fC}$ denote the respective pullbacks of $E,L,\sS_{\sigma}$ and $\eta_\sigma$ to $\fC$. Let also $s_{\fC}$ denote the restriction of the tautological section $s_E$ to $\fC$. By the assumption on $\fC$, the section $s_{\fC}$ is isotropic. Hence, we can apply Definition \ref{def:matfac-cohomologies}, to obtain 
	\[h^i(E_{\fC},s_{\fC}, \sS_{\sigma,\fC}) \in \Coh(Z(s_{\fC})).\]
	Note that $Z(s_{\fC}) = Z(s_E)\cap \fC$ is contained in the zero-section of $E$ which is identified with $X$. Hence, we may pushforward to obtain sheaves
	\[h^i(E,\fC, \sS_{\sigma}):= h^i(E_{\fC},s_{\fC}, \sS_{\sigma,\fC}) \in \Coh(X).\]  
\end{construction}

\paragraph{Functoriality.}
In the situation of Construction \ref{constr:matfac-cone}, suppose that we are given an isotropic sub-bundle $K\subset E$ of rank $k$ satisfying $\fC \subset K^{\perp}$, and such that $\fC$ is invariant under translation by $K$. In this situation, the sheaves $h^i(E,\fC, \sS_{\sigma})$ behave well under taking isotropic reductions along $K$. 

Let $F:=K^{\perp}/K$, and let $M:= L\otimes \det K$. By  \eqref{diag:spin-structure-transfer}, the $\Spin^{\bC}$-structure $\sigma$ on the pair $(E,L)$ induces a $\Spin^{\bC}$-structure $\tau$ on $(F,M)$ via extension of structure group along $\widetilde{\G(m,k)\times \bG_m}\to \Spin^{\bC}(m-2k)$. 

In terms of the Clifford modules (see \ref{app:iso-red} for details), it sends $\sS_{\sigma}$ to the $\Cl(K^{\perp}/K)$-module $\sS_{\tau}:=\Ann(K)$, which is an $\mathcal{O}_X$-submodule of $\sS_{\sigma}$. By Lemma \ref{lem:isored-orientations}, the decomposition of $\S_{\tau}$ into even and odd part is compatible with the one of $\sS_{\sigma}$ under the inclusion. 

Let $\pi_K:K^{\perp}\to F$ be the projection. By assumption, there is a unique sub-scheme $\overline{\fC}\subset F$, such that $\pi_K^{-1}\overline{\fC} = \fC$ as sub-schemes of $K^{\perp}$.

\begin{construction}\label{constr:matfac-iso}
	From the identification $\sS_{\tau} = \Ann(K)$, we have an inclusion of $\mathcal{O}_X$-modules $\sS_{\tau}\hookrightarrow \sS_{\sigma}$. Moreover, we have a commutative diagram with vertical maps the multiplication actions
	\begin{equation*}
		\begin{tikzcd}
			\Cl(K^{\perp}/K)\otimes \sS_{\tau}\ar[d]&\Cl(K^{\perp})\otimes \sS_{\tau}\ar[l]\ar[r]\ar[d]&\Cl(E)\otimes \sS_{\sigma} \ar[d] \\
			\sS_{\tau}&\sS_{\tau}\ar[l, equal]\ar[r, hook] & \sS_{\sigma}
		\end{tikzcd}
	\end{equation*} 
	Since $\fC\subset K^{\perp}$, the tautological section $s_{\fC}\in \Gamma(\fC,\pi_E^*E)$ lies in $\pi_E^*K^{\perp}$. Let $\overline{s}_{\fC}$ denote its image in $\pi^*_E K^{\perp}/K$. Then multiplication by $\overline{s}_{\fC}$ on $\sS_{\tau}$ as a $\Cl(K^{\perp}/K)$-module agrees with the restriction of multiplication with $s$ on $\sS_{\sigma}$. Since the gradings are compatible, we have an induced map of complexes, which is a degreewise inclusion 
	\begin{equation}\label{eq:complex-inclusion}
		C^{\bullet}(F_{\fC}, \overline{s}_{\fC}, \sS_{\tau, \fC}) \to C^{\bullet}(E_{\fC}, s_{\fC}, \sS_{\sigma,\fC}).
	\end{equation}
	Here, the former complex is canonically identified with 	\[\pi_K^*C^{\bullet}(F_{\overline{\fC}}, s_{\overline{\fC}}, \sS_{\tau,\overline{\fC}}).\] 
	Since $\pi_K: \fC\to \overline{\fC}$ is flat, we get a map on cohomology sheaves 
	\[\pi_K^*h^i(F,\overline{\fC}, \sS_{\tau})\to h^i(E,\fC, \sS_{\sigma}).\]
	By adjunction, this is equivalent to a map of sheaves on $X$
	\begin{equation}\label{eq:sheaf-map}
		h^i(F,\overline{\fC}, \sS_{\tau})\xrightarrow{\nu_{E,\fC,\sS_{\sigma},K}} h^i(E,\fC, \sS_{\sigma}).
	\end{equation}
\end{construction}
\begin{lemma}\label{lem:matfac-functorial}
	Construction \ref{constr:matfac-iso} is functorial, i.e. given a further isotropic subspace $J\subseteq F$ such that $\overline{\fC}\subseteq J^{\perp}$ and $\overline{\fC}$ is $J$-invariant, and letting $\Lambda := \pi_K^{-1}J\subseteq E$, we have an identity
	\[\nu_{E,\fC,\sS_{\sigma},K} \circ \nu_{F,\overline{\fC}, \sS_{\tau}, J} = \nu_{E,\fC,\sS_{\sigma},\Lambda}.\]
\end{lemma}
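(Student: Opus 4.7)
The plan is to verify the equality by unwinding the definitions and observing that every piece of data involved (the Clifford modules with their $\ZT$-gradings, the tautological sections, the matrix factorizations, and the flat base-change/adjunction maps) behaves functorially in a two-step isotropic reduction. The argument is essentially bookkeeping, and no new geometric input is required beyond what has already been set up.

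First I would identify the geometric data associated to the composed reduction. Let $G := J^{\perp}/J \subseteq F$, let $\Lambda := \pi_K^{-1}J \subseteq K^{\perp}\subseteq E$, and observe that $\Lambda$ is isotropic with $\Lambda^{\perp}/\Lambda \simeq G$ canonically; this is exactly the composition law of Definition \ref{def:pair-cat} \ref{item:pair-catiii}. Under these identifications, the double quotient $\overline{\overline{\fC}}\subseteq G$ of $\fC$ equals the image of $\fC$ under the projection $\Lambda^{\perp}\to \Lambda^{\perp}/\Lambda$, because $\fC \subseteq \Lambda^{\perp}$ (follows from $\fC\subseteq K^{\perp}$ and $\overline{\fC}\subseteq J^{\perp}$) and $\fC$ is $\Lambda$-invariant. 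Similarly the tautological sections $s_\fC$, $\overline{s}_\fC\in \pi^*F|_{\overline{\fC}}$ and $\overline{\overline{s}}_\fC\in \pi^*G|_{\overline{\overline{\fC}}}$ are all compatible under the natural projections.

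Next I would identify the Clifford module data associated to the composed $\Spin^{\bC}$-structure $\rho$ on $(G,N)$ (with $N = L\otimes \det\Lambda \simeq M\otimes \det J$). By Lemma \ref{lem:isored-orientations} and the description of the annihilator construction in \S \ref{app:iso-red}, we have
\[
\sS_{\rho} \;=\; \Ann_{\sS_{\tau}}(J) \;=\; \Ann_{\sS_{\sigma}}(\Lambda),
\]
as an $\mathcal{O}_X$-submodule of $\sS_{\sigma}$, with the $\ZT$-grading on $\sS_{\rho}$ inherited compatibly from $\sS_{\tau}$ and from $\sS_{\sigma}$. This is the key step; once established, the rest of the argument is purely formal. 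The reason it holds is that passing to the annihilator of an isotropic subspace is transitive: annihilating $K$ first and then $J$ (viewed inside $K^{\perp}/K$) has the same effect as annihilating the preimage $\Lambda = \pi_K^{-1}J$ directly, and the identifications constructed in Lemma \ref{lem:spin-to-pairs-functor} using the $2$-step subgroup $\G(m,k,j)\subseteq \SO(m)$ are exactly what makes this compatible on the level of $\Spin^{\bC}$-structures.

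With these identifications in hand, the two maps of matrix factorizations
\[
C^{\bullet}(G_{\overline{\overline{\fC}}},\overline{\overline{s}}_\fC,\sS_{\rho,\overline{\overline{\fC}}}) \hookrightarrow C^{\bullet}(F_{\overline{\fC}},\overline{s}_\fC,\sS_{\tau,\overline{\fC}}) \hookrightarrow C^{\bullet}(E_{\fC},s_\fC,\sS_{\sigma,\fC})
\]
are degreewise inclusions whose composition coincides with the single inclusion associated to $\Lambda$ in Construction \ref{constr:matfac-iso}. Taking cohomology sheaves and then applying the adjunction between pullback along the flat projections $\pi_J\colon\overline{\fC}\to \overline{\overline{\fC}}$, $\pi_K\colon \fC\to \overline{\fC}$, and $\pi_\Lambda = \pi_J\circ \pi_K$ yields the equality $\nu_{E,\fC,\sS_\sigma,K}\circ \nu_{F,\overline{\fC},\sS_\tau,J} = \nu_{E,\fC,\sS_\sigma,\Lambda}$, since the composite of two unit maps for adjunctions along flat morphisms equals the unit for the composite morphism. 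The main obstacle is exclusively the identification of $\sS_\rho$ inside $\sS_\sigma$ as the annihilator of $\Lambda$, together with the matching of the $\pm 1$-eigenspace decompositions for the various volume elements; everything else is a diagram chase.
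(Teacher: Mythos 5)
Your proof is correct and follows essentially the same route as the paper's: identify $\sS_{\rho} = \Ann_{\sS_\tau}(J) = \Ann_{\sS_\sigma}(\Lambda)$ inside $\sS_\sigma$, observe that the two degreewise inclusions of matrix factorizations compose to the single one for $\Lambda$, and conclude via compatibility of the $\pi_K$, $\pi_J$, $\pi_\Lambda$ adjunction units. The paper packages the last step as the commutativity of a single square of cohomology sheaves followed by push-pull, but the content is identical.
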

\begin{proof}[Proof sketch]
	Let $G:=J^{\perp}/J$ and $\upsilon$ the induced $\Spin^{\bC}$-structure. We have that $\sS_{\upsilon}:=\Ann(J)\subseteq \sS_{\tau}$ agrees with $\Ann(\Lambda)$ as a sub $\mathcal{O}_X$-module of $\sS_{\sigma}$. Moreover, the induced structures as $\Cl(J^{\perp}/J)$-module and as $\Cl(\Lambda^{\perp}/\Lambda)$-module are compatible with the natural isomorphism  $J^{\perp}/J\simeq \Lambda^{\perp}/\Lambda$. Letting $\widehat{\fC}:= \overline{\fC}/J = \fC/\Lambda$ one next checks that the following induced diagram commutes
	\begin{equation*}
		\begin{tikzcd}
			\pi_K^*\pi_{J}^*h^i(G,\widehat{\fC}, \sS_{\upsilon})\ar[r]\ar[d,"\sim"]& \pi_K^*h^i(F,\overline{\fC}, \sS_{\tau})\ar[d] \\
			 \pi_{\Lambda}^*h^i(G,\widehat{\fC}, \sS_{\upsilon})\ar[r]& h^i(E,\fC,\sS_{\sigma}).
		\end{tikzcd}
	\end{equation*}
	Then the claim follows from this by push-pull adjunction.
\end{proof}

\begin{proposition}\label{prop:matfac-iso}
	The map \eqref{eq:sheaf-map} is an isomorphism. 
\end{proposition}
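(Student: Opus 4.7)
My plan is to reduce the claim to a local computation via a hyperbolic decomposition. The statement is local on $X$, so I would first pass to an \'etale (or Zariski) cover on which $K$ extends to a decomposition $E \simeq F \oplus K \oplus K^{\vee}$ of oriented quadratic bundles, with $K \oplus K^{\vee}$ a hyperbolic summand complementary to $F \simeq K^{\perp}/K$. Under this splitting, the Clifford algebra factors as a super tensor product $\Cl(E) \simeq \Cl(F) \otimes \Cl(K \oplus K^{\vee})$, and since $\Cl(K \oplus K^{\vee}) \simeq \End(\wedge^{\bullet} K)$ is Morita trivial, any $\Cl(E)$-module decomposes as $\sS_{\sigma} \simeq \sS'_{\tau} \otimes \wedge^{\bullet} K$ for a unique $\Cl(F)$-module $\sS'_{\tau}$ (with $K$ acting on $\wedge^{\bullet} K$ by wedging and $K^{\vee}$ by contraction). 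The annihilator of $K$ in $\wedge^{\bullet} K$ is the top wedge $\det K$, so the inclusion $\sS_{\tau} = \Ann_{\sS_{\sigma}}(K) \hookrightarrow \sS_{\sigma}$ is realized by $\sS'_{\tau} \otimes \det K \hookrightarrow \sS'_{\tau} \otimes \wedge^{\bullet} K$. By the compatibility results for isotropic reductions of $\Spin^{\bC}$-modules referenced via Lemma \ref{lem:isored-orientations}, this identification is coherent with the $\Cl(F)$-action and the trivialization $\eta_{\tau}$, up to the expected parity shift by $k$ mod $2$.

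Next, I would exploit the local product structure $\fC \simeq \overline{\fC} \times_X T_K$, where $T_K$ denotes the total space of $K$; this exists because $\fC$ is $K$-invariant and the splitting identifies $K^{\perp}$ with $F \oplus K$. Under this identification, the tautological section decomposes as $s_{\fC} = p^* \overline{s} + q^* t$ where $\overline{s}$ is the tautological section of $F$ on $\overline{\fC}$ and $t$ is the tautological section of $K$ on $T_K$ (the $K^{\vee}$-component vanishes because $\fC \subset K^{\perp}$). Combined with the decomposition of $\sS_{\sigma}$, the matrix factorization $C^{\bullet}(E_{\fC}, s_{\fC}, \sS_{\sigma, \fC})$ factors as a super tensor product of $C^{\bullet}(F_{\fC}, \overline{s}, \sS'_{\tau, \fC})$ with the complex $C^{\bullet}(K_{\fC} \oplus K^{\vee}_{\fC}, t, \wedge^{\bullet} K_{\fC})$. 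The latter is a $2$-periodic folding of the dual Koszul complex $[\mathcal{O}_{\fC} \xrightarrow{\wedge t} K_{\fC} \to \cdots \to \wedge^k K_{\fC}]$; since $t$ cuts out the zero section $X \hookrightarrow T_K$ as a regular embedding (pulled back to $\fC$), this dual Koszul resolves $(i_{\fC})_* \det K|_{\overline{\fC}}$ placed in degree $k$, where $i_{\fC}: \overline{\fC} \hookrightarrow \fC$ is the zero-section inclusion.

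With the second factor's cohomology concentrated in a single degree, a K\"unneth-type identification (which degenerates to a straightforward short exact sequence analysis) yields
\begin{equation*}
h^i(E, \fC, \sS_{\sigma}) \simeq h^{i-k}(F, \overline{\fC}, \sS'_{\tau} \otimes \det K) \simeq h^{i-k}(F, \overline{\fC}, \sS_{\tau}),
\end{equation*}
and the shift by $k$ is absorbed by the parity shift in the identification $\sS_{\tau} \simeq \sS'_{\tau} \otimes \det K$ of the first step. The main obstacle will be the final verification: that the isomorphism constructed this way actually coincides with the map $\nu_{E, \fC, \sS_{\sigma}, K}$ defined through the degreewise inclusion of complexes \eqref{eq:complex-inclusion}. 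I would handle this by tracing the inclusion \eqref{eq:complex-inclusion} through the local decomposition: on the super tensor product, it factors as the identity on $C^{\bullet}(F_{\fC}, \overline{s}, \sS'_{\tau,\fC})$ tensored with the top-wedge inclusion $\det K \hookrightarrow \wedge^{\bullet} K$, and the latter is precisely a cocycle representative of the cohomology generator of the dual Koszul complex at $\overline{\fC} = Z(t)$. The sign and orientation bookkeeping across the super tensor product — in particular ensuring the $k$-shift matches the parity convention on $\sS_{\tau}$ consistently — is exactly the content of Lemma \ref{lem:isored-orientations}, so appealing to it should close the argument.
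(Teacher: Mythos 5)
Your proof takes essentially the same route as the paper's: reduce to a local orthogonal splitting $E \simeq F \oplus K \oplus K^{\vee}$, factor $\Cl(E)$ as a graded tensor product, decompose $\sS_\sigma$ as a $\Cl(F)$-module tensored with the half-spinor module for the hyperbolic summand, observe that the matrix factorization $C^\bullet(E_\fC,\ldots)$ factors accordingly into $C^\bullet(F,\ldots)$ tensored with a (periodized) Koszul-type complex, and conclude by the fact that the Koszul complex of the tautological section of $K$ resolves the zero locus $\overline{\fC}$.

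The one genuine difference is your choice of half-spinor model: you take $\sS_K = \wedge^{\bullet}K$, so that $\Ann(K) = \det K$ sits in top wedge-degree and the relevant complex is the dual Koszul complex resolving $\det K|_{\overline{\fC}}$ in degree $k$. The paper instead takes $\sS_K = \wedge^{\bullet}K^{\vee}$, so that $\Ann(K) = \mathcal{O}\cdot 1$ sits in degree $0$ and the inclusion \eqref{eq:complex-inclusion} becomes simply the unit $\mathcal{O} \hookrightarrow K^{\bullet}(0_K)$ of the Koszul resolution of $\mathcal{O}_{\overline{\fC}}$; this makes the grading bookkeeping trivial, since in that model the $\omega_K$-eigenvalue on $\wedge^{\ell}K^{\vee}$ is simply $(-1)^{\ell}$. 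In your model the $\omega_K$-eigenvalue on $\wedge^{\ell}K$ is $(-1)^{k-\ell}$, so $\det K$ sits in $\ZT$-degree $0$ of the matrix factorization, not degree $k$. Your prose treats the dual Koszul complex with its naive cohomological grading, gets $h^{i-k}$, and then appeals to an ``absorbing'' parity shift; the cleaner way to say this is simply that the $\ZT$-grading of the matrix factorization is by the $\omega_K$-eigenvalue, which already differs from the naive wedge degree by $k$, so there is never a shift to absorb. You do correctly point to Lemma \ref{lem:isored-orientations} (which is exactly the statement that $\omega_K$ acts as $+1$ on $\Ann(K)$) as the place where this grading is pinned down, so the argument closes, but the exposition around the $k$-shift has two compensating inaccuracies. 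Choosing the paper's convention $\sS_K = \wedge^\bullet K^\vee$ makes this issue vanish entirely.
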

\begin{proof}
	This may be checked locally on $X$, so that we may reduce to the case that $E$ is the standard quadratic and that $\sS_{\sigma}$ is the standard $\Cl(E)$-module as in \ref{app:cliff-rep}. Moreover, we may assume that one has an orthogonal splitting $E=F\oplus( K\oplus K^{\vee})$, so that $\fC = \overline{\fC}\oplus K \oplus 0$. We then have 
	\[\Cl(E)\simeq \Cl(F)\otimes_{\bZ_2} \Cl(K\oplus K^{\vee}).\] 
	We may further assume that $\sS_{\sigma} = \sS_{\tau} \otimes_{\bZ_2} \sS_K$ for the $\Cl(K\oplus K^*)$-module $\sS_K = \bigwedge^{\bullet}K^{\vee}$. Here, the action on the tensor product is in the graded sense, i.e. $x\otimes y (s\otimes t) = (-1)^{|s|\cdot|y|} xs\otimes yt$ whenever $y,s$ are homogeneous. 
	In this case, one has $s_{E} =(\pi_K^* s_F, \pi_F^*s_K)$, where $s_K$ is the tautological section of $\pi_K^*K$ on the total space of $K$. 
	One gets an identification of complexes on $\fC$:
	\[C^{\bullet}(E_{\fC}, s_{\fC}, \sS_{\sigma,\fC}) \simeq C^{\bullet}(F_{\fC}, s_E|_{\fC}, \sS_{\tau,\fC}) \otimes_{\operatorname{2-per}} \pi_F^* C^{\bullet}(\pi_K^*(K\oplus K^*), s_K,\pi_K^* \sS_K ),\]
	where the tensor product is in the sense of $2$-periodic complexes.
	Here, the last complex is just the $2$-periodization of the Koszul complex $K^{\bullet}(0_K)$ associated to the zero-section of the total space of $K$. This gives an identification
	\[C^{\bullet}(E_{\fC}, s_{\fC}, \sS_{\sigma,\fC}) \simeq C^{\bullet}(F_{\fC}, s_E|_{\fC}, \sS_{\tau,\fC})\otimes K^{\bullet}(0_K),\]
	where we now take the usual tensor product of complexes. Moreover, the inclusion \eqref{eq:complex-inclusion} is identified with the result of tensoring the (bad) truncation map  
	\[\mathcal{O}_{E}\to \pi_F^* C^{\bullet}(\pi_K^*(K\oplus K^*), s_K,\pi_K^* \sS_K )\]
	with $\pi^*_K C^{\bullet}(F_{\overline{\fC}}, s_{\overline{\fC}}, \sS_{\tau,\overline{\fC}})$.
	
	By adjunction, and the projection formula, the Proposition now follows from the the fact that the Koszul resolution of a regular immersion is a resolution of the structure sheaf.
\end{proof}

The following is the key result of this sub-section. Note that a DT4 obstruction theory is quadratic in our sense only after a shift.
\begin{theorem}\label{thm:DT4-local}
	Let $X$ be a scheme that is quasi-projective over an affine scheme, and let $\bE[1]\to L_X$ be an oriented DT4 obstruction theory on $X$, so that $\bE$ is an oriented quadratic complex. There is a natural functor $\bS(\bE)\to \Coh(X)^{\oplus \bZ_2}$. Over any given component of $X$, it sends the non-trivial automorphism of any object of $\bS(\bE)$ to multiplication by $-1$.  
\end{theorem}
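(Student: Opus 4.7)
The functor is constructed by fixing a self-dual representative to reduce to Construction \ref{constr:matfac-cone}, then showing independence of that choice via the compatibility results of \S\ref{sec:spin-functor} and the connectedness of $\mathcal{R}$. Throughout, one may work Zariski-locally on $X$ to assume it is affine, invoking descent at the end.

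\emph{Step 1 (Value on a chosen representative).} Fix a self-dual representative $E^{\bullet} = [B_{-1} \to E \to B^1]$ of $\bE$ together with a spin structure $\sigma$, i.e.\ a $\Spin^{\bC}$-structure on $(E, \det E_+^{\bullet}[1])$. Apply Construction \ref{constr:sub-cone} to the shifted obstruction theory $\bE[1] \to L_X$ with the representative $E^{\bullet}[1]$; using the self-duality identification $E \simeq E^{\vee}$, this produces a sub-cone $\fC \subset E$. By condition $(ISO)$ of Definition \ref{def:DT4-ob-thy} together with Remark \ref{rem:sub-cone-iso}, $\fC$ is isotropic in $E$. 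Feeding $(E, L = \det B^1, \sS_\sigma)$ and $\fC$ into Construction \ref{constr:matfac-cone} yields
\[\Phi(E^{\bullet}, \sigma) := \bigl(h^0(E, \fC, \sS_\sigma),\, h^1(E, \fC, \sS_\sigma)\bigr) \in \Coh(X)^{\oplus \bZ_2}.\]
When $\rk E$ is odd the same recipe applies after replacing $E^{\bullet}$ by $E^{\bullet} \oplus [0 \to \mathcal{H} \to 0]$ for a rank-one hyperbolic summand $\mathcal{H}$ with its canonical spin structure; this leaves $\fC$ and the output unchanged.

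\emph{Step 2 (Comparison along isotropic reductions).} Given $\xi : F^{\bullet} \rightsquigarrow E^{\bullet}$, Lemma \ref{lem:iso-red-gives-iso-red} identifies $F^0 \simeq K^{\perp}/K$ for an isotropic $K \subset E$, and Remark \ref{rem:sub-cone-iso} shows that $\fC_E \subset K^{\perp}$, is $K$-invariant, and projects to $\fC_F$. The pullback $\xi^*\sigma$ corresponds on the Clifford side to the $\Cl(K^{\perp}/K)$-module $\sS_{\xi^*\sigma} = \Ann(K) \subset \sS_\sigma$ (Appendix \S\ref{app:iso-red}). Proposition \ref{prop:matfac-iso} then provides a canonical isomorphism
\[\nu_\xi : \Phi(F^{\bullet}, \xi^*\sigma) \xrightarrow{\sim} \Phi(E^{\bullet}, \sigma),\]
and Lemma \ref{lem:matfac-functorial} gives the cocycle identity $\nu_{\xi \circ \zeta} = \nu_\xi \circ \nu_\zeta$.

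\emph{Step 3 (Descent over $\mathcal{R}$ and naturality).} A spin structure $\sigma \in \bS(\bE)$ is a section of $\mathcal{S} \to \mathcal{R}$ (equivalently, of $\underline{\mathcal{S}}_{\bE} \to \underline{\mathcal{R}}_{\bE}$ in the stack case), providing a compatible family $\{\sigma_{E^{\bullet}}\}$ with isomorphisms $\sigma_{F^{\bullet}} \to \xi^*\sigma_{E^{\bullet}}$. Composing the Step 1 construction with the Step 2 comparisons yields a system $\bigl(\Phi(E^{\bullet}, \sigma_{E^{\bullet}})\bigr)$ related by canonical isomorphisms. By the connectedness of $\mathcal{R}$ (Corollary \ref{cor:repcat-connected}) and the cocycle identity of Step 2 (which encodes the $1$-categorical part of Proposition \ref{prop:simply-connected}), this system descends to a well-defined object $\Phi(\bE, \sigma) \in \Coh(X)^{\oplus \bZ_2}$, and the construction is manifestly natural in morphisms $\sigma \to \sigma'$ of spin structures.

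\emph{Step 4 (The nontrivial automorphism).} On a connected component of $X$ the rank of $E$ is constant, and $\Aut(\sigma) \simeq \ZT$ (cf.\ proof of Corollary \ref{cor:spin-gerbe}); the nontrivial automorphism is the image of the central element $-1 \in \Spin^{\bC}$, which acts on $\sS_\sigma$ by scalar multiplication by $-1$. This scalar commutes with multiplication by $s_\fC$, hence acts as $-1$ on each term of $C^\bullet(E_\fC, s_\fC, \sS_{\sigma,\fC})$ and therefore as $-1$ on each cohomology sheaf $h^i(E, \fC, \sS_\sigma)$.

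\emph{Main obstacle.} The key point is the coherence of Step 3: one must ensure that the comparison isomorphisms $\nu_\xi$ of Step 2 assemble consistently across \emph{all} self-dual representatives, not only pairs. This is exactly the content of Lemma \ref{lem:matfac-functorial}; because the target $\Coh(X)^{\oplus \bZ_2}$ is an ordinary $1$-category, only this $1$-categorical coherence is needed, and no higher-simplicial bookkeeping (as in \S\ref{sec:simpl-struc}) is required. A secondary obstacle is the careful identification of $\sS_{\xi^*\sigma}$ with $\Ann(K)$ at the level of Clifford modules, compatibly with compositions; this uses the factorization of $\Gprojtw_{m,k}$ through the subgroup $\G(m,k)$ and the identifications recorded in the appendix.
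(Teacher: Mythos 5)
Your Steps 1, 2, and 4 are correct and follow the paper's own construction closely, but Step 3 has a genuine gap. You assert that Corollary \ref{cor:repcat-connected} (connectedness of $\mathcal{R}$) together with the cocycle identity of Lemma \ref{lem:matfac-functorial} descends the system $\bigl(\Phi(E^{\bullet},\sigma_{E^{\bullet}})\bigr)$ to a well-defined object, and you describe the cocycle identity as ``the $1$-categorical part of Proposition \ref{prop:simply-connected}.'' These are two independent things. Lemma \ref{lem:matfac-functorial} is a property of the \emph{construction}: it says $\nu_{\xi\circ\zeta} = \nu_\xi\circ\nu_\zeta$, which is precisely what makes the assignment a functor $\mathcal{S}_{\bE}\to \left(\Coh^{\oplus\ZT}\right)^{\simeq}$ in the first place. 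It does not say anything about $\mathcal{R}$. To take the limit over $\mathcal{R}_{\bE}$ of the induced functor $\Phi\circ\sigma : \mathcal{R}_{\bE}\to \left(\Coh^{\oplus\ZT}\right)^{\simeq}$ and obtain a single object, you need this functor to be essentially constant, and that is a statement about loops in $\mathcal{R}$: two different zig-zags of isotropic reductions from $E^{\bullet}_0$ to $E^{\bullet}_1$ might a priori produce different isomorphisms $\Phi(E^{\bullet}_0,\sigma_0)\to\Phi(E^{\bullet}_1,\sigma_1)$, and mere connectedness plus functoriality cannot rule this out. (Example: the identity functor $B\bZ\to B\bZ$ satisfies the cocycle identity and the source is connected, but the limit is not the constant value.)

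What is actually needed is the full statement of Proposition \ref{prop:simply-connected} — that $\mathcal{R}$ is \emph{weakly simply connected}, i.e.\ every functor from $\mathcal{R}$ to a groupoid is equivalent to a constant functor. This is a nontrivial fact about $\mathcal{R}$ whose proof lives in \S\ref{sec:families} (it uses the deformation arguments of Propositions \ref{prop:connect-reductions} and \ref{prop:connect-families} and Lemma \ref{lem:homotopy-reduction-isom}), and it is cited directly in the paper's proof of this theorem. Your instinct that the higher-simplicial machinery of \S\ref{sec:simpl-struc} is not required here is correct — the paper's proof indeed bypasses Theorem \ref{thm:contractible} — but you cannot substitute Corollary \ref{cor:repcat-connected} for Proposition \ref{prop:simply-connected}. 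Replacing the citations in your Step 3 accordingly (connectedness gives non-emptiness, Lemma \ref{lem:matfac-functorial} gives functoriality, Proposition \ref{prop:simply-connected} gives essential constancy, and then the limit exists and is canonical) would close the gap. One minor additional remark: Construction \ref{constr:matrix-fac} as stated requires $\rk E$ to be even, and since $\rk E \equiv \rk\bE \pmod 2$ for any self-dual representative, the parity is intrinsic to $\bE$; your ad-hoc odd-rank fix by adding a hyperbolic summand changes the parity of $E$ but also changes $\det E_+^{\bullet}[1]$, so it would need a more careful check that the output is genuinely unchanged — though since the paper itself silently assumes even rank here, this is a presentational point rather than an error in your argument.
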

\begin{proof}
	The data of the DT4 obstruction theory provides for each $E^{\bullet}\in \mathcal{R}_{\bE}$ an isotropic cone $\fC_{X,E}\subset E^0$, with the property that for each isotropic reduction $F^{\bullet}\rightsquigarrow E^{\bullet}$, the induced morphism of pairs $(F, \det F_+^{\bullet}[1])\to (E,\det E_+^{\bullet}[1])$ with isotropic subspace $K\subset E$ and identification $K^{\perp}/K\simeq F$ satisfies $K\subset \fC_{X,E}\subset K^\perp$ and identifies $\fC_{X,F}$ with $\fC_{X,E}/K$.
	Thus, we can apply Construction \ref{constr:matfac-iso} to any morphism in $\mathcal{S}_{\bE}$, which by Lemma \ref{lem:matfac-functorial} gives a functor $\mathcal{S}_{\bE}\to \Coh^{\oplus \ZT}$ that by Lemma \ref{prop:matfac-iso} sends any morphism to an isomorphism. 
	
	Hence, we get an induced functor
	\[\bS(\bE)\to \operatorname{Fun}(\mathcal{R}_{\bE}, (\Coh^{\oplus \ZT})^{\simeq}).\]
	By Proposition \ref{prop:simply-connected} any element of the latter functor category is locally constant. Hence, taking the limit gives an equivalence 
	\[\operatorname{Fun}\left(\mathcal{R}_{\bE}, \left(\Coh^{\oplus \ZT}\right)^{\simeq}\right) \to \left(\Coh^{\oplus \ZT}\right)^{\simeq}.\] 
	Composing, we obtain the functor of the theorem statement. 
	The statement about the action of automorphisms follows immediately from the construction.
\end{proof}

\subsection{Global construction of a virtual structure sheaf}\label{sec:str-sheaf}
Now let $\mathcal{X}$ be an algebraic stack with an oriented DT4 obstruction theory $\bE[1]\to L_{\mathcal{X}}$ as in Definition \ref{def:DT4-ob-thy}, with its groupoid of spin structures $\bS(\bE)$ (Definition \ref{def:spin-structure}).

By combining the construction used in Theorem \ref{thm:DT4-local} with descent we can construct a virtual structure sheaf on $\mathcal{X}$. For this, we will identify $\bS(\bE)$ with the category of sections of $\underline{\bS}^{\Aff}(\bE)$ via Proposition \ref{prop:spin-structure-stack-alternate}.  

\paragraph{Value on affines.}

\begin{construction}\label{constr:str-sheaf-affine}
By Construction \ref{constr:sub-cone} and Remark \ref{rem:sub-cone-iso}, to any $t:T\to \mathcal{X}$ in $\Aff_{\mathcal{X}}$ and any self-dual representative $E^{\bullet}$ of $t^*\bE$, we have an induced sub-cone $\fC_{\mathcal{X},t,E}\subset E_1$. For a spin structure $\sigma \in \bS(E^{\bullet})$, we may apply Construction \ref{constr:matfac-cone}. Furthermore, for any morphism in $\mathcal{S}_{t^*\bE}$, we are in the situation of Construction \ref{constr:matfac-iso}. By Lemma \ref{lem:matfac-functorial}, we obtain a functor \[\mathcal{S}_{t^*\bE}\to \left(\Coh(T)^{\oplus \bZ_2}\right)^{\simeq}\]
which induces the first functor in
\[\bS^{\Aff}(t^*\bE)\to \operatorname{Fun}\left(\mathcal{R}_{t^*\bE}, \left(\Coh(T)^{\oplus \bZ_2}\right)^{\simeq}\right) \xrightarrow{\sim}  \left(\Coh(T)^{\oplus \bZ_2}\right)^{\simeq}.\]
The second one is taking the limit over $\mathcal{R}_{t^*\bE}$ and is an equivalence by Proposition \ref{prop:simply-connected}. 
\end{construction}

For any $\sigma_t\in \bS^{\Aff}(t^*\bE)$, we denote the resulting $\bZ_2$-graded coherent sheaf by $\mathcal{O}_{\mathcal{X}}^{\vir}(t,\sigma_t)$. 

\paragraph{Value on morphisms.}
Let $x:(s,S)\to (t,T)$ be a morphism in $\Aff_{\mathcal{X}}$ and let $\sigma_x : \sigma_s\to \sigma_t$ a morphism in $\underline{\bS}^{\Aff}(\bE)$ lying over it. Our goal is to produce an isomorphism 
\[\mathcal{O}_{\mathcal{X}}^{\vir}(\sigma_x): \mathcal{O}_{\mathcal{X}}^{\vir}(s,\sigma_s)\xrightarrow{\sim} x^*\mathcal{O}_{X}^{\vir}(t,\sigma_t).\]

Recall the category fibered in groupoids $\mathcal{S}'_{t^*\bE,x}\to \mathcal{R}_{t^*\bE}$ defined in Definition \ref{def:spin-category-pullback} with $\bS'(t^*\bE,x)$ its category of sections. Then $\sigma_x$ is -- by definition -- an isomorphism of the induced sections $\sigma_s\circ x^* \to x^*\circ \sigma_t$ in $\bS'(t^*\bE,x)$. 
By the same recipe as in Construction \ref{constr:str-sheaf-affine}, we have a natural functor $\mathcal{S}_{t^*\bE,x}'\to (\Coh(S)^{\oplus \ZT})^{\simeq}$, which fits into a natural $2$-commutative diagram 
\begin{equation*}
	\begin{tikzcd}
		\mathcal{S}_{t^*\bE}\ar[r]\ar[d]&\mathcal{S}'_{t^*\bE,x}\ar[r]\ar[d]& \mathcal{S}_{s^*\bE}\ar[d] \\
		(\Coh(T)^{\oplus \ZT})^{\simeq}\ar[r,"x^*"]&(\Coh(S)^{\oplus \ZT})^{\simeq}\ar[r, equal]&  (\Coh(S)^{\oplus \ZT})^{\simeq}
	\end{tikzcd}
\end{equation*}
where the commutativity of the left square is the compatibility of Construction \ref{constr:matfac-cone} with pullbacks, and the right square is commutative on the nose. In the same way as there, we also obtain a functor
\[\bS'(t^*\bE,x)\to (\Coh(S)^{\oplus \ZT})^{\simeq}\]
which we denote $\mathcal{O}_{\mathcal{X}}^{\vir}(t,x, -)$.
We have an induced $2$-commutative diagram of functors
\begin{equation*}
	\begin{tikzcd}
		\operatorname{Fun}\left(\mathcal{R}_{t^*\bE}, \left(\Coh(T)^{\oplus \ZT}\right)^{\simeq}\right)\ar[r,"{x^*\circ -}"]\ar[d]&  \operatorname{Fun}\left(\mathcal{R}_{t^*\bE}, \left(\Coh(S)^{\ZT}\right)^{\simeq}\right)\ar[d] & \ar[l,"{- \circ x^*}"]\ar[d] \operatorname{Fun}\left(\mathcal{R}_{s^*\bE}, \left(\Coh(S)^{\ZT}\right)^{\simeq}\right)\\
		  \left(\Coh(T)^{\ZT}\right)^{\simeq} \ar[r,"x^*"] &  \left(\Coh(S)^{\ZT}\right)^{\simeq}&  \left(\Coh(S)^{\ZT}\right)^{\simeq}\ar[l,equals]
	\end{tikzcd}
\end{equation*}
Here, the vertical maps are the limit functors.
The commutativity of both squares is due to all limits involved being over essentially constant functors by Proposition \ref{prop:simply-connected}.

The sections $\sigma_{s},\sigma_t$ and the morphism $\sigma_x$ between them then give rise to the following isomorphisms (as desired)
\[\mathcal{O}_{\mathcal{X}}^{\vir}(s,\sigma_s)\xrightarrow{\sim} \mathcal{O}_{\mathcal{X}}^{\vir}(t,x,\sigma_s\circ x^*)\xrightarrow{\sim}\mathcal{O}_{\mathcal{X}}^{\vir}(t,x, x^*\sigma_t)\xleftarrow{\sim} x^*\mathcal{O}_{\mathcal{X}}^{\vir}(t,\sigma_t).\]

\begin{remark}\label{rem:ev-pt}
	Instead of taking limits, we may also choose a representative $E^{\bullet}$ of $t^*\bE$ and evaluate at $E^{\bullet}$ and $x^*E^{\bullet}$ respectively. The resulting sheaves fit into a commutative diagram
	\begin{equation*}
		\begin{tikzcd}
			\mathcal{O}_{\mathcal{X}}^{\vir}(s,\sigma_s)\ar[r]\ar[d]& x^*\mathcal{O}_{\mathcal{X}}^{\vir}(t,\sigma_t)\ar[d] \\
			\mathcal{O}_{\mathcal{X}}^{\vir}(s,\sigma_s|_{x^*E^{\bullet}})\ar[r] & x^*\mathcal{O}_{\mathcal{X}}^{\vir} (t,\sigma_t|_{E^{\bullet}})
		\end{tikzcd}
	\end{equation*}
	Here the bottom map is induced by the restriction of $\sigma_x$ to an isomorphism $\sigma_s|_{x^*E^{\bullet}}\to x^*\sigma_t|_{E^{\bullet}}$ via compatibility of Construction \ref{constr:matfac-iso} with pullback of schemes.
\end{remark}
\paragraph{Functoriality.}
We claim that for $x:(S,s)\to (T,t)$ and $y: (T,t)\to (U,u)$ composable morphisms in $\Aff_{\mathcal{X}}$, and for comparison morphisms $\sigma_x:\sigma_s\to \sigma_t$ and $\sigma_{y}:\sigma_t\to \sigma_u$, the diagram 
\begin{equation*}
	\begin{tikzcd}
		\mathcal{O}_{\mathcal{X}}(s, \sigma_s)\ar[r,"\mathcal{O}_{\mathcal{X}}^{\vir}(\sigma_x)"]\ar[dr, "\mathcal{O}_{\mathcal{X}}^{\vir}(\sigma_y\circ \sigma_x)"']& x^*\mathcal{O}_{\mathcal{X}}(t, \sigma_t)\ar[d, "x^*\mathcal{O}_{\mathcal{X}}^{\vir}(\sigma_y)"] \\
		& x^*y^*\mathcal{O}_{\mathcal{X}}(u,\sigma_u)
	\end{tikzcd}
\end{equation*}
commutes. 
To do so, we may choose a representative $E^{\bullet}$ of $u^*\bE$ on $U$. Then we have commutative diagrams as in Remark \ref{rem:ev-pt}, and the result follows from functoriality of Construction \ref{constr:matfac-iso} with respect to pullback of schemes. 
Putting the constructions of this subsection together we have
\begin{theorem}\label{thm:twisted-str-sheaf}
		Let $\mathcal{X}$ be an algebraic stack carrying an oriented DT4 obstruction theory $\bE[1]\to L_{\mathcal{X}}$ of even rank.
\begin{enumerate}[label =\roman*)]
	\item We have a natural functor $\mathcal{O}_{\mathcal{X}}^{\vir}:\bS(\bE)\to (\Coh(\mathcal{X})^{\ZT})^{\simeq}$ compatible with base change in $\mathcal{X}$. For any object of $\bS(\bE)$, the everywhere non-trivial automorphism is sent to multiplication by $-1$.  
	\item The $\bZ_2$-gerbe $\mathcal{X}^{\operatorname{sp}}$ of Corollary \ref{cor:spin-gerbe} carries a canonical $1$-twisted (and $\ZT$-graded) sheaf  $\mathcal{O}_{\mathcal{X}}^{\vir}\in \Coh^1(\mathcal{X}^{\operatorname{sp}})^{\oplus \bZ_2}$. 
\end{enumerate}	 
\end{theorem}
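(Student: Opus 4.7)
The preceding construction has already produced, for each $(t\colon T\to \mathcal{X})\in\Aff_{\mathcal{X}}$, a functor
$\bS^{\Aff}(t^*\bE)\to (\Coh(T)^{\oplus\ZT})^{\simeq}$, and for each morphism $x\colon(S,s)\to(T,t)$ in $\Aff_{\mathcal{X}}$ together with a comparison morphism $\sigma_x\colon \sigma_s\to \sigma_t$ an isomorphism $\mathcal{O}_{\mathcal{X}}^{\vir}(\sigma_x)\colon \mathcal{O}_{\mathcal{X}}^{\vir}(s,\sigma_s)\to x^*\mathcal{O}_{\mathcal{X}}^{\vir}(t,\sigma_t)$, and the last step verified that these compose correctly. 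The first step is therefore to repackage this as a morphism of fibered categories $\underline{\bS}^{\Aff}(\bE)\to \underline{\Coh(-)^{\oplus\ZT}}$ over $\Aff_{\mathcal{X}}$, where the right-hand side is the fibered category whose fiber over $(T,t)$ is $(\Coh(T)^{\oplus\ZT})^{\simeq}$ with pullbacks along affine morphisms. By Proposition~\ref{prop:spin-structure-stack-alternate}\ref{item:spinstructure-alternateii}, the category $\bS(\bE)$ is the category of sections of $\underline{\bS}^{\Aff}(\bE)\to \Aff_{\mathcal{X}}$, so post-composition with the above morphism produces, for every $\sigma\in \bS(\bE)$, a section of $\underline{\Coh(-)^{\oplus\ZT}}$ that is strictly compatible with restriction along morphisms in $\Aff_{\mathcal{X}}$.

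The next step is descent: because $\ZT$-graded coherent sheaves satisfy smooth descent, any such section glues to a genuine $\ZT$-graded coherent sheaf $\mathcal{O}_{\mathcal{X}}^{\vir}(\sigma)$ on $\mathcal{X}$. Compatibility with base change along a morphism of stacks $f\colon\mathcal{Y}\to\mathcal{X}$ follows from Construction~\ref{constr:pullback} together with the observation that Constructions~\ref{constr:sub-cone}, \ref{constr:matfac-cone} and \ref{constr:matfac-iso} are functorial under base change of affine schemes; tracking through the identifications one obtains a canonical isomorphism $f^*\mathcal{O}_{\mathcal{X}}^{\vir}(\sigma)\simeq \mathcal{O}_{\mathcal{Y}}^{\vir}(f^*\sigma)$.

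For the action on automorphisms, the claim is local on $\mathcal{X}$, so it suffices to work on an affine $(T,t)$ with a fixed self-dual representative $E^{\bullet}$ of $t^*\bE$. The non-trivial global automorphism of a spin structure $\sigma$ corresponds to the deck transformation of the double cover $P_{\sigma}\to P_{E^0}\times P_{\det E^{\bullet}_+}$; under the equivalence with Clifford modules this is precisely the automorphism $-\id$ of $\sS_{\sigma}$. Multiplying $\sS_{\sigma}$ by $-1$ multiplies the two-periodic complex $C^{\bullet}(E_{\fC},s_{\fC},\sS_{\sigma,\fC})$ by $-1$ degree-wise, and hence acts as $-1$ on every cohomology sheaf; this sign propagates through the limit and the gluing, giving the claim in~(i).

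Finally, part~(ii) is formal from~(i). On the gerbe $\pi\colon \mathcal{X}^{\operatorname{sp}}\to \mathcal{X}$ there is a tautological spin structure $\sigma^{\operatorname{univ}}\in \bS(\pi^*\bE)$, by definition of $\mathcal{X}^{\operatorname{sp}}$ as the moduli of spin structures together with base-change compatibility from~(i). Applying~(i) to $\sigma^{\operatorname{univ}}$ produces a $\ZT$-graded coherent sheaf $\mathcal{O}_{\mathcal{X}}^{\vir}$ on $\mathcal{X}^{\operatorname{sp}}$, and the statement about the action of the non-trivial automorphism in~(i) says exactly that the inertial $\ZT$-action on this sheaf is by the sign character, which is the definition of a $1$-twisted sheaf on a $\ZT$-gerbe. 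The main thing to check carefully is the bookkeeping of the first paragraph (assembling the affine constructions into a morphism of fibered categories while keeping track of the isomorphism $\mathcal{O}_{\mathcal{X}}^{\vir}(\sigma_y\circ\sigma_x)=x^*\mathcal{O}_{\mathcal{X}}^{\vir}(\sigma_y)\circ \mathcal{O}_{\mathcal{X}}^{\vir}(\sigma_x)$ without introducing unwanted cocycle terms); everything after that is a direct application of descent and the dictionary between twisted sheaves and inertia-equivariant sheaves.
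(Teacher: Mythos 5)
Your proof is correct and follows essentially the same approach as the paper, which introduces Theorem~\ref{thm:twisted-str-sheaf} with the phrase ``Putting the constructions of this subsection together we have'' and leaves the assembly implicit. You spell out the three steps the paper compresses: (1) packaging Construction~\ref{constr:str-sheaf-affine}, the ``Value on morphisms'' paragraph, and the ``Functoriality'' check into a morphism of fibered categories $\underline{\bS}^{\Aff}(\bE)\to\underline{\Coh(-)^{\oplus\ZT}}$ over $\Aff_{\mathcal{X}}$, then post-composing sections via Proposition~\ref{prop:spin-structure-stack-alternate}; (2) invoking smooth descent for $\ZT$-graded coherent sheaves to glue the local data; and (3) reading off the $1$-twisting from the sign computation. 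The sign argument is the right one: the nontrivial element of $\ZT=\Ker(\Spin^{\bC}(n)\to\SO(n)\times\bG_m)$ is $-1\in\Cl_0(n)^\times$, so it acts as $-\id$ on $\sS_\sigma$, commutes with the $\ZT$-grading and with multiplication by $s$, and therefore induces $-\id$ on each $h^i$; this is exactly the inertial sign character defining a $1$-twisted sheaf on $\mathcal{X}^{\operatorname{sp}}$. For part~(ii) you correctly use the universal spin structure on $\pi^*\bE$ over $\mathcal{X}^{\operatorname{sp}}$ (noting implicitly that $\pi$ is \'etale so $\pi^*\bE[1]\to L_{\mathcal{X}^{\operatorname{sp}}}$ is again a DT4 obstruction theory) and apply~(i) there. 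No gaps.
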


\begin{remark}
	If $\mathcal{N}$ is a fixed line bundle on $\mathcal{X}$, everything should go through if one works with $\Spin^{\bC}$-structures on $(\bE,\mathcal{N})$ in place of spin-structures on $\bE$. Then one obtains a $1$-twisted $\bZ_2$-graded sheaf $\mathcal{O}^{\vir}_{\mathcal{X},\mathcal{N}}$ on the gerbe parametrizing $\Spin^{\bC}$-structures on $(\bE,\mathcal{N})$ described in Remark \ref{rem:spinc-structures}. Morally, this sheaf differs from $\mathcal{O}_{\mathcal{X}}^{\vir}$ by a twist with a square root of $\mathcal{N}$. This can be made more precise, for example by passing to $K$-theory on $X$ with $2$ inverted via Theorem \ref{thm:untw}.  
\end{remark}
\subsection{Comparison with the Oh--Thomas class}
Let $X$ be a quasi-projective scheme over the complex numbers with an oriented DT4 obstruction theory $\bE[1]\to L_X$ of even rank. We exhibit how our twisted virtual structure sheaf $\mathcal{O}_X^{\vir}$ recovers the $K$-theory class constructed by Oh--Thomas. More precisely, we recover their class by first passing to $K$-theory and then applying an untwisted pushforward operation to pass from the spin gerbe back to $X$ -- this is developed more generally in Appendix \ref{app:untw}. We expect that a similar result holds in the torus equivariant case if one takes care of the additional subtleties that already appear in the Oh--Thomas's construction \cite[p. 53 and below]{ot_counting}. 

\begin{theorem}\label{thm:str-shvs-compatible}
	Let $\pi_X:X^{\operatorname{sp}}\to X$ denote the spin gerbe and let $\mathcal{O}_X^{\vir}$ denote the $1$-twisted virtual structure sheaf of Theorem \ref{thm:twisted-str-sheaf}. Let 
	\[\widetilde{\pi}_{X,*}:K(X^{\operatorname{sp}}) \to K(X)[1/2] \]
	denote the untwisted pushforward operation of Theorem \ref{thm:untw}.
	Then 
	\[\widetilde{\pi}_{X,*}\left[\mathcal{O}_X^{\vir}\right] \]
	is the Oh--Thomas twisted virtual structure sheaf of \cite[Definition 5.9]{ot_counting}.
\end{theorem}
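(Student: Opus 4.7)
The plan is to reduce to a local calculation on $X$ by étale descent, then match our construction with Oh--Thomas's formula term by term. Both sides are compatible with étale base change: the twisted sheaf $\mathcal{O}_X^{\vir}$ by Theorem \ref{thm:twisted-str-sheaf}, its untwisted pushforward by construction (Theorem \ref{thm:untw}), and the Oh--Thomas class by \cite[Definition 5.9]{ot_counting}. Thus we may work étale-locally, assuming that $\bE$ admits a self-dual representative $E^\bullet$ and that a spin structure $\sigma$ on $E^\bullet$ exists; the latter trivializes the gerbe $X^{\operatorname{sp}} \simeq X \times B\ZT$.

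The key computational step is to re-express both sides using the same Clifford module data. On our side, by Construction \ref{constr:str-sheaf-affine} together with Remark \ref{rem:ev-pt}, the sheaf $\mathcal{O}_X^{\vir}(\sigma)$ is the pushforward to $X$ of the cohomology sheaves $h^{\bullet}(E^0_{\fC}, s_\fC, \sS_{\sigma,\fC})$ of the matrix factorization complex associated to the isotropic cone $\fC_{X,E} \subset E^0$ (Remark \ref{rem:sub-cone-iso}). On the Oh--Thomas side, their twisted virtual structure sheaf $[\widehat{\mathcal{O}}_X^{\vir}]$ is also built from exactly this matrix factorization construction applied to the half-spinor bundles $\sS_\pm$, with an additional twist by a formal square root of $\det \sS_+$ (or an equivalent correction involving $\det E^0$ and $\det E_+^\bullet$) needed to make it a well-defined $K$-theory class independent of the choice of Clifford module up to the natural $\ZT$-ambiguity. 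This is precisely the correction that forces them to work in $K_0(X)[1/2]$.

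The main step is then to verify that the untwisted pushforward $\widetilde{\pi}_{X,*}$ of Theorem \ref{thm:untw}, applied to the $1$-twisted class $[\mathcal{O}_X^{\vir}]$, implements exactly Oh--Thomas's formal-square-root twist. Concretely, on the trivialized gerbe $X \times B\ZT$, a $1$-twisted sheaf corresponds to a $\ZT$-equivariant sheaf on which the non-trivial element acts by $-1$; the untwisted pushforward acts by selecting a square root of the associated determinant line bundle (which exists after inverting $2$) and producing a class in $K(X)[1/2]$. One verifies directly that the non-trivial automorphism of $\sigma$ acts as $-1$ on both summands $\sS_\pm$ of the Clifford module (by Theorem \ref{thm:twisted-str-sheaf}~(i)), so the $\bZ_2$-graded virtual sheaf $(h^0, h^1)$ indeed defines a $1$-twisted class, and the resulting normalization matches Oh--Thomas's.

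The main obstacle will be the careful comparison of sign and normalization conventions: Oh--Thomas's square root is defined via a specific choice involving $\det B^1$ in the decomposition $[B_{-1} \to E \to B^1]$, while the untwisted pushforward is defined intrinsically on the gerbe. Bridging these requires checking that the line bundle $\det E_+^\bullet[1]$ appearing in the pair functor $\bP$ of Construction \ref{constr:to-pair-functor} is precisely Oh--Thomas's twisting line bundle, and that the $\Spin^{\bC}(m)$-cover we use matches their double-cover construction. Independence of the choice of self-dual representative on both sides (by Corollary \ref{cor:repcat-connected} for us, by their Eq.~(75) on their side) then upgrades the local identification to the claimed global equality.
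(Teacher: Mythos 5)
Your proposal has a genuine gap in the very first reduction step. You propose to reduce to an étale-local comparison where a spin structure exists, and then conclude the global equality by "étale descent." But the objects being compared are classes in $K_0(X)[1/2]$, and $K_0$ does \emph{not} satisfy étale descent: two $K$-theory classes on $X$ that agree after pullback along an étale cover need not agree on $X$. So even if you could establish the identity étale-locally (where a spin structure and a trivialization of the gerbe exist), you could not infer it globally by that route. This is not a cosmetic issue --- it is the reason the paper's proof does something different.

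The paper instead follows Oh--Thomas's own strategy: fix a self-dual representative $E^\bullet = [B_{-1}\to E\to B^1]$ on the quasi-projective $X$, and pass to the bundle $\tilde{X}\to X$ of \emph{isotropic flags} of $E$ (cf.\ \cite[Definition 5.7]{ot_counting}). This is a smooth proper bundle along which flat pullback on $K$-theory is \emph{injective} --- a splitting-principle argument, not a descent argument --- so it suffices to verify the identity after pulling back to $\tilde{X}$, where $E$ acquires a maximal isotropic subbundle $\Lambda$ compatible with the orientation. At that point the Clifford module $\sS_\Lambda$ is available, the obstruction to a $\Spin^{\bC}$-structure on $(E,\det B^1)$ is identified as the $2$-torsion class of $\sqrt{\det\Lambda}^{-1}\otimes\sqrt{\det B^1}$, and the resulting formula for $\widetilde{\pi}_{X,*}[\mathcal{O}^{\vir}_X]$ is matched against the Oh--Thomas class using the cited localization results of Kim--Oh and Oh--Sreedhar (as packaged in \cite[Proposition 3.8]{kr_mag}). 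Your proposal does capture the right conceptual picture --- both sides come from the same matrix-factorization/Clifford-module data, and the untwisted pushforward implements the formal square-root twist --- but the "work étale-locally and glue" step should be replaced by the flag-bundle pullback, and the "term by term" comparison is not free: it rests on the cited results, which you would otherwise have to reprove.
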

\begin{proof}
	Since we are on a quasi-projective scheme, we may calculate with a fixed resolution $E^{\bullet}$ of $\bE$, say
	\[E^{\bullet} = \left[B_{-1}\to E\to B^1\right].\] 
	Both constructions work with the induced isotropic cone 
	\[\fC_{X,E}\subset E.\]
	From here, Oh--Thomas proceed by applying a \emph{K-theoretic localized square-root Euler class} and twisting by $\sqrt{\det B^1}$ (which is well-defined after inverting $2$ in $K$-theory), while our construction proceeds by passing to the gerbe parametrizing $\Spin^{\bC}$-structures on the pair $(E,\det B^1)$ and applying Construction \ref{constr:matfac-cone} to the universal $\Cl(E)$-module on that gerbe. Either of these constructions is compatible with flat pullback, so as in \cite[Definition 5.7]{ot_counting} we may pass to a suitable bundle $\tilde{X}\to X$ of isotropic flags associated to $E$ and do the comparison there. In particular, we may assume that $E$ carries a maximal isotropic subbundle $\Lambda$ compatible with the orientation. In this case, via $\Lambda$, we obtain a Clifford module $\sS_{\Lambda}$ for $E$ satisfying $\sS_{\Lambda}\otimes_{\Cl(E)}\sS_{\Lambda}\simeq \det \Lambda$. Hence, the existence of a $\Spin^{\bC}$ structure on $(E,\det B^1)$ is obstructed precisely by $\sqrt{\det \Lambda}^{-1}\otimes \sqrt{\det B^1}$. By the construction of $\widetilde{\pi}_{X}$ and by Construction \ref{constr:matfac-cone}, we find that 
	\[\widetilde{\pi}_{X,*}\left[\mathcal{O}_X^{\vir}\right] = \left(h^i(E,\fC_{X,E}, \sS_{\Lambda})\right)_{i=0,1} \otimes \sqrt{\det B^1}\otimes \sqrt{\det \Lambda}.\]
	On the other hand, the expression on the right hand side agrees with the class of Oh--Thomas due to results of Kim--Oh and Oh--Sreedhar \cite{ko_loc}\cite{os_loc}. This is explained in detail in \cite[Proposition 3.8]{kr_mag}.
\end{proof}

\appendix

\section{Quadratic forms and associated groups}
We collect some standard facts about the algebraic groups $\OO(n), \SO(n)$ and $\Spin(n)$, the categories of torsors over them and the relationship to Clifford algebras. A reference is \cite{calfas_groupes} particularly Chapter 4. For conventions regarding determinants (though not for orientations!) we follow \cite{ot_counting}. Throughout, we let $R$ denote a ring over $\bZ[1/2]$ and $X$ a scheme over $\Spec R$. As we are outside characteristic $2$, we will identify a quadratic form with its associated bilinear form. 
\subsection{Standard forms of $\OO(n)$ and $\SO(n)$}\label{sec:standard-forms}
We fix conventions regarding the orthogonal and special orthogonal groups.

For $n\geq 0$, let 
\[B_n := \begin{bmatrix}
	& & 1 \\
	& \iddots &\\ 
	1 & &
\end{bmatrix}\]
be the $n\times n$ matrix with entries $1$ along the anti-diagonal. We let $q_n$ denote the associated bilinear form on $R^n$. Explicitly:
\[q_n(\sum x_i e_i, \sum y_je_j) = x_1y_n + \cdots +x_n y_1.\]

\paragraph{Orthogonal groups.}
We define $\OO(n)$ to be the closed subgroup of $\GL(n)$ of matrices preserving $q_n$. Equivalently
\[\OO(n):= \{ A\in \GL(n) | A^T B_n A = B_n\},\]
understood as an equality of $X$-valued points for all $R$-schemes $X$.

\begin{definition}
	A \emph{quadratic bundle of rank $n$} on a scheme $X$ is a pair $(M,q_M)$, where $M$ is a locally free $\mathcal{O}_X$-module of rank $n$ and where $q_M:M\otimes M\to \mathcal{O}_X$ is a non-degenerate symmetric bilinear form on $M$. An isomorphism of quadratic modules is an isomorphism of their underlying $\mathcal{O}_X$-modules that preserves the bilinear forms. 
\end{definition}
We let the \emph{standard quadratic bundle of rank $n$} be the pullback of $(R^n, q_n)$ along the structure map $X\to \Spec R$, and denote it by $(\mathcal{O}_X^n,q_n)$. 

\begin{proposition}\label{prop:orthogonoal-torsors}
	For any $R$-scheme $X$ we have an equivalence of categories 
	\[\left(\mbox{Quadratic bundles of rank $n$ on $X$ }\right) \xrightarrow{\sim} \left(\mbox{$\OO(n)$-torsors over $X$}\right)\]
	sending a quadratic bundle $(M,q_M)$ to the isom-scheme $\operatorname{Isom}\left((M,q_M), (\mathcal{O}_X^n,q_n)\right)$. 
\end{proposition}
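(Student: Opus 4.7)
The plan is to exhibit an explicit inverse functor via the associated bundle construction and then check the two compositions are naturally isomorphic to the identities.

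First, I would verify that the proposed functor is well-defined. Given a quadratic bundle $(M,q_M)$, the functor $\operatorname{Isom}((\mathcal{O}_X^n,q_n),(M,q_M))$ on $X$-schemes carries a natural right action of $\OO(n)$ by precomposition. To see that it is an $\OO(n)$-torsor, I need to show it is fppf-locally (in fact étale-locally, since $2$ is invertible) non-empty. This is the statement that quadratic bundles of rank $n$ are étale-locally isomorphic to the standard one, which I would prove by reducing to the case of a local ring $R'$ with $2\in R'^\times$ and then diagonalizing the symmetric bilinear form by a standard Gram--Schmidt argument, followed by adjusting the basis to hyperbolic form (matching $q_n$) using that squares are norms étale-locally.

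Next, I would construct the inverse functor. Given an $\OO(n)$-torsor $P$ over $X$, form the contracted product
\[M_P := P \times^{\OO(n)} \mathcal{O}_X^n,\]
which is a locally free $\mathcal{O}_X$-module of rank $n$; moreover, because $q_n$ is $\OO(n)$-invariant by definition, it descends to a well-defined symmetric bilinear form $q_{M_P}$ on $M_P$. Non-degeneracy can be checked étale-locally, where $M_P$ trivializes and recovers $(\mathcal{O}_X^n,q_n)$. Morphisms of torsors induce morphisms of associated bundles preserving the forms, so this is functorial.

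Finally, I would check that the two compositions are naturally isomorphic to identity functors. In one direction, there is a canonical evaluation map $\operatorname{Isom}((\mathcal{O}_X^n,q_n),(M,q_M)) \times^{\OO(n)} \mathcal{O}_X^n \to M$, $(\varphi,v)\mapsto \varphi(v)$, which is well-defined, respects the bilinear forms, and is an isomorphism étale-locally (hence globally). In the other direction, an $\OO(n)$-torsor $P$ acts on $M_P$ by construction, giving a canonical map $P \to \operatorname{Isom}((\mathcal{O}_X^n,q_n),M_P)$ sending $p$ to $v \mapsto [p,v]$; this is an $\OO(n)$-equivariant map of $\OO(n)$-torsors, hence an isomorphism.

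The most delicate point is the local-triviality step, though in characteristic $\neq 2$ it is entirely standard once one reduces to a local ring. The rest is bookkeeping via the associated bundle formalism, and the same argument template will give the analogous statement for $\SO(n)$ once orientations are incorporated in the definition of the isom-scheme.
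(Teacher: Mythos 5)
Your proposal is correct and takes essentially the same approach as the paper: the paper's proof likewise rests on étale-local trivialization of the quadratic bundle via Gram--Schmidt (followed by the implicit use of étale-local square roots, which you helpfully make explicit) and then appeals to the standard torsor/associated-bundle dictionary, deferring the bookkeeping to a reference in \cite{calfas_groupes}. The only cosmetic deviation is that you define the torsor as $\operatorname{Isom}\left((\mathcal{O}_X^n,q_n),(M,q_M)\right)$ with its right $\OO(n)$-action by precomposition, whereas the proposition uses $\operatorname{Isom}\left((M,q_M),(\mathcal{O}_X^n,q_n)\right)$; the two are canonically identified by $\varphi\mapsto\varphi^{-1}$, so this changes nothing in substance.
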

\begin{proof}
	Since we are working away from characteristic $2$, any non-degenerate quadratic bundle on a scheme $X$ is \'etale locally isomorphic to the standard quadratic bundle by the Gram--Schmidt process. The statement then follows, since essentially by definition, $\OO(n)$ parametrizes the automorphisms of the standard quadratic bundle. For details see Proposition 4.1.0.4 in \cite{calfas_groupes}.
\end{proof}

\paragraph{Special orthogonal groups.}

We define the special orthogonal group as the intersection $\SO(n):=\OO(n)\cap \SL(n)$ of closed sub-groupschemes of $\GL(n)$. Equivalently
\[\SO(n):= \Ker\left(\OO(n)\xrightarrow{\det} \mathbb{G}_m \right).\]
is the sub-group of $\OO(n)$ of elements with determinant $1$. 

To any quadratic bundle $(M,q_M)$ one has the associated isomorphism 
\begin{align*}
	\theta_M: M & \to M^{\vee}\\
			  x & \mapsto q_M(x, -),
\end{align*}
which induces an isomorphism of line bundles
\[\det(\theta_M): \det M \to \det M^{\vee} \simeq \left(\det M\right)^{\vee}.\]
Here, the latter isomorphism is induced from the pairing 
\begin{align*}
	\det M \otimes \det M^{\vee}& \to \mathcal{O}_X\\
	(x_1\wedge \cdots \wedge x_n) \otimes (\varphi_1 \wedge\cdots \wedge \varphi_n) & \mapsto \sum_{\sigma \in S_n} \operatorname{sgn}(\sigma)\,  \varphi_1(x_{\sigma n}) \cdots \varphi_n(x_{\sigma_1}).
\end{align*}
Tensoring $\det \theta_M$ from the left with $\det M$, we obtain an isomorphism
\begin{equation}\label{eq:detqm}
	\det(q_M) : (\det M)^{\otimes 2} \to \mathcal{O}_X.
\end{equation}
\begin{remark}
	From these conventions, one has that for a (local) orthonormal basis $u_1,\ldots,u_n$ of $M$, 
	\[\det(q_M)\left((u_1\wedge \cdots \wedge u_n)^{\otimes 2}\right) = (-1)^{n(n-1)/2}.\]
\end{remark}
\begin{definition}
	Let $(M,q_M)$ be a rank $n$ quadratic bundle on $X$. An \emph{orientation} on $M$ consists of an isomorphism $o_M:\mathcal{O}_X\to \det M$ for which the composition 
	\[\mathcal{O}_X \simeq \mathcal{O}_X^{\otimes 2} \xrightarrow{o_M^{\otimes 2}} \left(\det M\right)^{\otimes 2} \xrightarrow{\det q_M} \mathcal{O}_X \]
	is equal to the identity. An isomorphism between oriented quadratic bundles is an isomorphism of quadratic bundles whose determinant commutes with the orientations. 
\end{definition}
\begin{remark}
	Letting $\varphi_M:= o_M^{-1}: \det M\to \mathcal{O}_X$, this is equivalent to the requirement that 
	\[\varphi_M^{\vee}\circ \varphi_M = \det \theta_M\]
\end{remark}

Consider the standard quadratic bundle $(R^n,q_n)$ and let $e_i$ be the standard basis vectors of $R^n$. Let also $f_i:=e_{n-i}$, so that $q_n(e_i,f_j) = \delta_{i,j}$. If $n = 2m$ is even, we define the \emph{standard orientation} on $q_n$ to be 
\begin{align*}
	o_n: R &\to \det R^n\\
	1& \mapsto \left(e_1\wedge \cdots \wedge e_m\right)\wedge \left(f_m\wedge \cdots \wedge f_1\right) = \left(e_1\wedge f_1\right) \wedge \cdots \wedge \left(e_m\wedge f_m\right).
\end{align*}
If $n=2m+1$ is odd, let $u:=e_{m+1}$, so that $q_n(u,u)=1$. In this, case we define the \emph{standard orientation} to be
\begin{align*}
	o_n: R &\to \det R^n\\
	1& \mapsto \left(e_1\wedge \cdots \wedge e_m\right)\wedge \left(f_m\wedge \cdots \wedge f_1\right)\wedge u = \left(e_1\wedge f_1\right) \wedge \cdots \wedge \left(e_m\wedge f_m\right)\wedge u.
\end{align*}
\begin{remark}
	If $\Spec R$ is connected, there are exactly two orientations on the standard quadratic bundle. For $n>0$, the resulting oriented quadratic bundles are isomorphic
\end{remark}

On any scheme $X$, the \emph{standard oriented quadratic bundle} $(\mathcal{O}_X^n,q_n,o_n)$ is defined as the pull-back of the standard quadratic bundle with its standard orientation.

\begin{proposition}\label{prop:special-orthogonal-torsors}
	For any $R$-scheme $X$ and $n>0$ we have an equivalence of categories 
	\[\left(\mbox{Oriented quadratic bundles of rank $n$ on $X$ }\right) \xrightarrow{\sim} \left(\mbox{$\SO(n)$-torsors over $X$}\right)\]
	sending an oriented quadratic bundle $(M,q_M, o_M)$ to the isomorphism scheme \[\operatorname{Isom}\left((M,q_M, o_M), (\mathcal{O}_X^n,q_n,o_n)\right).\] 
\end{proposition}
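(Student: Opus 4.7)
The strategy is to mirror the proof of Proposition \ref{prop:orthogonoal-torsors}, using the orthogonal case as the starting point and cutting down to a sub-torsor corresponding to compatibility with the orientation. First I would verify that the Isom scheme $P' := \operatorname{Isom}((M,q_M,o_M), (\mathcal{O}_X^n,q_n,o_n))$ is a closed subscheme of the $\OO(n)$-torsor $P := \operatorname{Isom}((M,q_M), (\mathcal{O}_X^n,q_n))$ carrying a natural action of $\SO(n)\subset \OO(n)$. Concretely, for a $T$-point $g\in P(T)$, one has $\det(g)\circ o_M|_T = \lambda\cdot o_n|_T$ for some $\lambda \in \mathbb{G}_m(T)$, and since $g$ is an isomorphism of quadratic bundles the identity
\[\det(q_n)\circ \det(g)^{\otimes 2} = \det(q_M)\]
combined with the defining properties of the two orientations forces $\lambda^2 = 1$, i.e.\ $\lambda \in \mu_2(T)$. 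Thus the map $P \to \mu_2$, $g \mapsto \lambda$, realises $P'$ as the preimage of $1$; $P'$ is visibly stable under the sub-action of $\SO(n)$.

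Next I would check that $P'$ is étale-locally non-empty, which exhibits it as an $\SO(n)$-torsor. Étale locally on $X$, Proposition \ref{prop:orthogonoal-torsors} gives a trivialisation $g:(M,q_M)\xrightarrow{\sim} (\mathcal{O}_X^n,q_n)$, under which $\det(g)\circ o_M$ becomes $\pm o_n$. If the sign is $+1$ we are done; if the sign is $-1$, we precompose $g$ with any element of $\OO(n)(R)$ of determinant $-1$ (for instance the permutation swapping $e_1$ and $f_1$), which yields a new trivialisation landing in $P'$. This shows the functor $(M,q_M,o_M)\mapsto P'$ is well-defined.

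For essential surjectivity and fully faithfulness, I would produce the inverse functor by twisting: to an $\SO(n)$-torsor $Q$ I associate $Q\times^{\SO(n)} (\mathcal{O}_X^n,q_n,o_n)$, where the standard oriented quadratic bundle is equipped with its natural $\SO(n)$-action. Since $\SO(n)$ preserves $q_n$ and $o_n$, the resulting associated bundle inherits the structure of an oriented quadratic bundle. The two constructions are inverse: starting from $(M,q_M,o_M)$, the twist $P'\times^{\SO(n)}(\mathcal{O}_X^n,q_n,o_n)$ is canonically isomorphic to $(M,q_M,o_M)$ via the tautological evaluation map, and similarly in the other direction. On morphisms, fully faithfulness follows because the Isom sheaf of two oriented quadratic bundles is a bitorsor under $\SO(n)$ on each side, agreeing with the Isom sheaf of the corresponding torsors.

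The main thing to keep honest is the bookkeeping in the first step -- specifically the verification that the scalar $\lambda$ produced by a quadratic-form-preserving isomorphism squares to $1$, which is where the sign convention for $\det(q_M)$ and the normalisation condition defining an orientation interact. Everything else is formal once we know that $\SO(n)$ sits in $\OO(n)$ as the kernel of $\det$ and that orientations on quadratic bundles form a $\mu_2$-torsor locally trivialised by any orthogonal trivialisation.
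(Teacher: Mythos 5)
Your proposal is correct and follows essentially the same approach that the paper sketches (local trivialization preserving the orientation, with $\SO(n)$ as the stabilizer of the standard orientation inside $\OO(n)$), just with the details filled in—in particular the $\mu_2$-valued map on the $\OO(n)$-torsor and the verification $\lambda^2=1$, which the paper delegates to \cite{calfas_groupes}. The only minor slip is that in the $\OO(n)$-torsor convention used here (action by postcomposition on the target), fixing the sign requires \emph{post}composing $g$ with a determinant $-1$ element, not precomposing; this does not affect the substance of the argument.
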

\begin{proof}
	As in the proof of Proposition \ref{prop:orthogonoal-torsors}, locally any quadratic bundle is isomorphic to the standard quadratic bundle and one can always arrange the isomorphism to preserve orientations.
	Then the result is due to the fact that $\SO(n)$ is exactly the sub-group of $\OO(n)$ preserving the standard orientation. 
	For more details, see Proposition 4.3.0.30 and Proposition 4.3.0.31 in \cite{calfas_groupes}.
\end{proof}

\begin{example}
	Let $K$ be a locally free sheaf of rank $k$ on $X$. The \emph{hyperbolic quadratic bundle} associated to $K$ is the quadratic bundle $(K\oplus K^{\vee}, q_K)$, where $q_K(x,\lambda):=\lambda(x)$. It has a natural orientation given locally by $1\mapsto (e_1\wedge \cdots \wedge e_k)\wedge (f_k\wedge \cdots \wedge f_1)$ for any local choice of basis $e_1,\ldots, e_k$ of $K$ with dual basis $f_1,\ldots,f_k$.  
\end{example}
\subsection{Clifford Algebras}\label{app:cl-algs}

Let $M$ be a rank $n$ locally free $R$-module and $q_M:M\otimes M\to R$ a symmetric bilinear pairing on $M$ (not assumed to be non-degenerate here).
\begin{definition}
	The \emph{Clifford algebra associated to $(M,q_M)$}, denoted $\Cl(M)$ is the quotient of the tensor algebra $\oplus_{i\geq 0} T^i(M)$ by the two-sided ideal generated by the relations $v\otimes v - q_M(v,v)$ for $x\in M$. Note that we have a canonical morphism $\iota_M:M\to \Cl(M)$ via the identification $T^1(M)=M$.
\end{definition}
One defines the analogous notion for a pair $(M,q_M)$ over a scheme $X$ by glueing the local construction. The Clifford algebra has the following universal property 
\begin{itemize}
	\item Let $\mathcal{A}$ be a sheaf of $\mathcal{O}_{X}$-algebras, and let $\iota_{\mathcal{A}}: M\to \mathcal{A}$ be a morphism of $\mathcal{O}_X$-modules, such that for each section $m$ of $M$, one has $m*_{\mathcal{A}} m = q_M(m,m)$. Then there exists a unique homomorphism of $\mathcal{O}_X$-algebras $\Phi:\Cl(M)\to \mathcal{A}$ such that $\iota_{\mathcal{A}} = \Phi\circ \iota_M$. 
\end{itemize}

One has the following properties:
\begin{proposition}\label{prop:clifford-props}
	Let $(M,q_M)$ be a locally free $\mathcal{O}_X$-module together with a symmetric bilinear pairing. 
	
	\begin{itemize}
		\item The Clifford algebra $\Cl(M)$ is an $\mathcal{O}_X$-algebra whose underlying module is locally free of rank $2^{n}$. 	
		\item The Clifford algebra has a natural $\ZT$-grading 
		\[\Cl(M) = \Cl_0(M)\oplus \Cl_1(M)\]
		induced by the grading of the tensor algebra into even and odd parts.
		\item There is a natural filtration $F^i\Cl(M)$ on the Clifford algebra, whose $i$-th piece is the image of $\oplus_{j\leq i} T^i (M)$ under the natural quotient map. 
		\item The map $\iota_M$ is injective, and we have natural identifications $F^0\Cl(M) = \mathcal{O}_X$ and $F^1\Cl(M)\simeq \mathcal{O}_X \oplus M$, where $M$ is identified with its image under $\iota_M$. 
		\item There is a natural isomorphism of $\mathcal{O}_X$-modules to the exterior algebra 
		\[\Gamma_M:\Cl(M)\to \bigwedge^{\bullet}M\]
		which is compatible with filtrations. 
		If $u_1,\ldots,u_n$ is a local orthogonal basis, it is given by 
		\[u_{i_1}u_{i_2}\cdots u_{i_k}\mapsto u_{i_1}\wedge \cdots \wedge u_{i_k},\]
		whenever $1\leq i_1 < \cdots <i_k\leq n$. 
		\item There is a unique anti-automorphism $\sigma_M:\Cl(M)\to \Cl(M)$ that acts as multiplication by $-1$ on the image of $\iota_M$. 
		\item The construction of the Clifford algebra is functorial: If $f:M_1\to M_2$ is a morphism of $\mathcal{O}_X$ modules that respects given pairings $q_1$ and $q_2$, there is an induced homomorphism of algebras
		\[\Cl(f):\Cl(M_1)\to \Cl(M_2)\]
		that restricts to $f$ on $M_1$. 
	\end{itemize}
\end{proposition}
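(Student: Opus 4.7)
Since every claim is of a local nature (modules being locally free, a morphism being an isomorphism, an endomorphism being the identity, etc.), the plan is to reduce to the affine setting over a ring $R$ where $M$ is free of rank $n$ and, after further localization in the \'etale topology (available since $2$ is invertible), admits an orthogonal basis $u_1,\ldots,u_n$. Once each claim is established in this local form, the usual glueing argument, together with functoriality for localization that is built into the universal property, gives the statement on $X$. The $\ZT$-grading and the ascending filtration are immediate: the generating relations $v\otimes v-q_M(v,v)$ live in $T^{\leq 2}(M)$ and have even-degree components in $T(M)$, so the grading and filtration descend from $T(M)$.

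The core of the proof is the construction of the isomorphism $\Gamma_M\colon \Cl(M)\to \bigwedge^{\bullet}M$, from which injectivity of $\iota_M$, the identification $F^1\Cl(M)=\mathcal{O}_X\oplus M$, and the local freeness of rank $2^n$ all follow. For this I would use the Chevalley representation: for each local section $v$ of $M$, define an endomorphism $\rho(v)$ of $\bigwedge^{\bullet}M$ as the sum of exterior multiplication by $v$ and contraction against $q_M(v,-)$. A direct check shows $\rho(v)^2=q_M(v,v)\cdot \id$, so the universal property yields an algebra homomorphism $\rho\colon \Cl(M)\to \operatorname{End}_{\mathcal{O}_X}(\bigwedge^{\bullet}M)$. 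Set $\Gamma_M(x):=\rho(x)(1)$. In a local orthogonal basis one computes $\Gamma_M(u_{i_1}\cdots u_{i_k})=u_{i_1}\wedge\cdots\wedge u_{i_k}$ for $i_1<\cdots<i_k$, so $\Gamma_M$ is surjective; combined with the upper bound $\rank \Cl(M)\leq 2^n$ coming from the relations $u_iu_j=-u_ju_i$ ($i\neq j$) and $u_i^2=q_M(u_i,u_i)$, it is an isomorphism of $\mathcal{O}_X$-modules. Compatibility with filtrations is visible from the formula. The main technical subtlety here is checking that $\rho(v)^2=q_M(v,v)$ honestly on the nose (not up to a factor of two), which is where the symmetric-bilinear convention is used.

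For the anti-automorphism $\sigma_M$, apply the universal property to the map $-\iota_M\colon M\to \Cl(M)^{\mathrm{op}}$: one has $(-v)\cdot_{\mathrm{op}}(-v)=v^2=q_M(v,v)$ in $\Cl(M)$, giving an algebra map $\Cl(M)\to \Cl(M)^{\mathrm{op}}$, i.e.\ an anti-homomorphism $\sigma_M\colon \Cl(M)\to \Cl(M)$ restricting to $-\id$ on $M$. Applying the same construction to $\sigma_M\circ \sigma_M$ shows this composition is the identity (hence $\sigma_M$ is an anti-automorphism), and uniqueness is clear since $M$ generates $\Cl(M)$ as an algebra. Functoriality is the most formal point: a morphism $f\colon M_1\to M_2$ compatible with $q_1,q_2$ yields the composite $\iota_{M_2}\circ f\colon M_1\to \Cl(M_2)$, which satisfies the Clifford relation by hypothesis, and so extends uniquely to $\Cl(f)\colon \Cl(M_1)\to \Cl(M_2)$; unicity in the universal property gives compatibility with composition. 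Finally, to pass from rings to schemes, one observes that all universal properties used are preserved by localization and that $\Gamma_M$, $\sigma_M$, $\Cl(f)$ were constructed canonically, hence glue to globally defined maps of sheaves of algebras.
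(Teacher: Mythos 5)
Your proof is correct, and it is the standard argument: the Chevalley map $\Cl(M)\to\End(\bigwedge^{\bullet}M)$ coming from the universal property, evaluation at $1$ to get $\Gamma_M$, the filtration/rank count, and the universal property again for $\sigma_M$ and for $\Cl(f)$. The paper does not actually prove Proposition~\ref{prop:clifford-props} — it is stated as a collection of standard facts with a reference to \cite{calfas_groupes} — so there is no internal proof to compare to, but your argument is exactly the textbook one the paper implicitly cites. You correctly handle the normalization issue: with the paper's convention $v\otimes v - q_M(v,v)$, the Cartan identity $v\wedge\iota_{\lambda} + \iota_{\lambda}(v\wedge\,) = \lambda(v)\cdot\id$ applied with $\lambda = q_M(v,-)$ gives $\rho(v)^2 = q_M(v,v)$ without any extraneous factor of $2$, which is the one place a wrong convention would quietly break things.

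One small caution that is worth being explicit about. The proposition is stated for an arbitrary symmetric bilinear pairing, with no non-degeneracy hypothesis, and your reduction step asserts the existence of a local orthogonal basis. For non-degenerate forms over a local ring with $2$ invertible this is standard Gram--Schmidt, but for genuinely degenerate forms with radical not a direct summand, an orthogonal basis need not exist. This does not actually matter for your proof, because the Chevalley map $\Gamma_M$ is defined canonically without choosing any basis, and its bijectivity can be checked via the filtration: the natural surjection $\bigwedge^{\bullet}M\twoheadrightarrow \gr\,\Cl(M)$ composed with $\gr\,\Gamma_M$ is the identity, which forces both to be isomorphisms. You should either state the argument this way or note that the final bullet of the proposition (which invokes ``a local orthogonal basis'') is only used in the non-degenerate case that the paper actually needs. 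Everything else — $\sigma_M$ via the universal property applied to $-\iota_M\colon M\to\Cl(M)^{\op}$, $\sigma_M^2=\id$ by uniqueness, functoriality of $\Cl(f)$, and globalization by canonicity — is fine as written.
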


Using the isomorphism $\Gamma_M$, we have a natural inclusion 
\begin{align*}
	\det M &\to \Cl(M) 
\end{align*}
given as the composition $\det M = \bigwedge^n M \xrightarrow{\Gamma_M^{-1}} \Cl(M)$. We always have that $(\det M)^2 \subset \mathcal{O}_X$, where the product is taken in the Clifford algebra. 

Now we return to the case where the bilinear pairing is non-degenerate.
\begin{definition}
	Let $(M,q_M)$ be a quadratic bundle. A \emph{volume element} for $\Cl(M)$ is a global section $\omega$ of $\Cl(M)$ that factors through the inclusion $\det M\to \Cl(M)$ and satisfies $\omega^2 = 1\in \mathcal{O}_X\subseteq \Cl(M)$. 
\end{definition}

If $q_M$ is non-degenerate, and choosing a local orthonormal basis $u_1,\ldots, u_n$, one calculates that in $\Cl(M)$:
\[(u_1\cdots u_n)^2 = (-1)^{n(n-1)/2} u_1^2\cdots u_n^2 = (-1)^{n(n-1)/2}.\]
As a consequence, we conclude the following
\begin{lemma}
	For a quadratic bundle $(M,q_M)$, the induced map 
	\[\det M \otimes \det M \to \mathcal{O}_X \]
	given by multiplication inside the Clifford algebra is identical to $\det(q_M)$ of \eqref{eq:detqm}. In particular, a choice of volume element for the Clifford algebra is the same as a choice of orientation for $(M,q_M)$. 
\end{lemma}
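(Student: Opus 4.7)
The plan is to verify the equality of the two $\mathcal{O}_X$-linear maps $(\det M)^{\otimes 2} \to \mathcal{O}_X$ \'etale-locally using an orthonormal basis, and then derive the statement about volume elements and orientations as an immediate consequence.

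First I would reduce to the local case. Since $(\det M)^{\otimes 2}$ is a line bundle, two $\mathcal{O}_X$-linear maps out of it agree globally as soon as they agree on one local generator. As remarked after Proposition \ref{prop:orthogonoal-torsors}, in characteristic away from $2$ the quadratic bundle $(M,q_M)$ admits \'etale-local orthonormal trivializations $u_1,\ldots,u_n$, and $(u_1\wedge\cdots\wedge u_n)^{\otimes 2}$ is then a local generator of $(\det M)^{\otimes 2}$. So it is enough to evaluate both maps on this element.

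For the Clifford multiplication map, the inclusion $\det M \hookrightarrow \Cl(M)$ sends $u_1\wedge\cdots\wedge u_n$ to the product $u_1 u_2 \cdots u_n$ via $\Gamma_M^{-1}$, and the calculation
\[
(u_1\cdots u_n)^2 \;=\; (-1)^{n(n-1)/2}\, u_1^2\cdots u_n^2 \;=\; (-1)^{n(n-1)/2}
\]
already displayed in the text gives the value $(-1)^{n(n-1)/2}$. For $\det(q_M)$, the remark following \eqref{eq:detqm} records the very same value on the same generator. Hence both maps coincide on this local generator, and therefore globally.

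For the second assertion, once the two pairings on $(\det M)^{\otimes 2}$ are identified, the translation is formal: given an orientation $o_M:\mathcal{O}_X\to\det M$, set $\omega := o_M(1)\in \Gamma(X,\det M)\subset \Gamma(X,\Cl(M))$; then $\omega\cdot\omega$ computed in $\Cl(M)$ equals $\det(q_M)\bigl(o_M(1)\otimes o_M(1)\bigr) = 1$ by the orientation condition, so $\omega$ is a volume element. Conversely, a volume element $\omega$ is a global section of $\det M$ whose Clifford square, equivalently $\det(q_M)(\omega\otimes\omega)$, equals $1$; in particular $\omega$ trivializes the line bundle $\det M$, giving an isomorphism $o_M:\mathcal{O}_X\to \det M$ with $o_M(1)=\omega$, and the orientation condition for $o_M$ is exactly $\omega^2=1$. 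The two constructions are manifestly inverse to each other.

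The only point that requires a moment's care is the compatibility of signs between the two pairings on $(\det M)^{\otimes 2}$; but since the conventions used in \eqref{eq:detqm} and in the identification $\Gamma_M$ both give the uniform factor $(-1)^{n(n-1)/2}$ on an orthonormal basis, there is nothing to adjust.
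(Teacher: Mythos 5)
Your proof is correct and follows essentially the same route as the paper: check the two pairings on the local generator $(u_1\wedge\cdots\wedge u_n)^{\otimes 2}$ coming from an orthonormal basis, where both yield $(-1)^{n(n-1)/2}$. The paper leaves the translation between volume elements and orientations implicit, and your explicit unpacking of that equivalence (including the observation that $\omega^2=1$ forces $\omega$ to be nowhere vanishing) is accurate.
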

\begin{example}
	If $n = 2m$ is even, the volume element associated to the standard oriented quadratic form can be written in terms of the basis $e_1,\ldots,e_m,f_m,\ldots,f_1$ as 
	\[\omega_n = (e_1 f_1-1)\cdots (e_m f_m - 1).\]
	
	If $n=2m+1$ is odd, one has instead
	\[\omega_n  = (e_1 f_1-1)\cdots (e_m f_m - 1) u.\]
\end{example}

\subsection{The Spin groups}\label{sec:appendix-spin-groups}
Consider the standard quadratic bundle $(R^n, q_n)$ over $R$, and let 
\[\Cl(n):=\Cl(R^n)\]
denote the associated Clifford algebra. 
There is an associated \'etale sheaf of groups $\lip_n\subset \Cl_0(n)^{\times}$, defined as a certain subsheaf of the group of multiplicative units of the even part of the Clifford algebra. Over a scheme $X$, the sections of $\lip_n$ are those sections of $\Cl^0(n)^{\times}$ that preserve the subspace $R^{\oplus n}\subset \Cl(n)$ (after any further possible restriction along a map of schemes $Y\to X$) under the conjugation action.

The sheaf $\lip_n$ is representable by an affine group scheme over $R$, see \cite{calfas_groupes} after Def. 4.5.1.2 (see also Proposition 2.4.2.1 there). We denote this group scheme also by $\lip_n$, and will call it the \emph{special Lipschitz group}.

Since $\lip_n$ preserves the sub-space $R^n\subseteq \Cl(n)$, any section of $\lip_n$ induces a section of $\GL(n)$ by restriction. It can be shown to preserve the quadratic form and the orientation, hence one has a group homomorphism
\[\lip_n\to \SO(n).\]

Using the anti-automorphism $\sigma$, one defines the spinor norm of an element of $\lip_n$:
\[\operatorname{sn}(x):= \sigma(x)x.\]
\begin{lemma}[cf. Def. 4.5.1.8 in \cite{calfas_groupes}]
	The spinor norm of any element of $\lip_n$ takes values in $R^{\times}\subset R\subset \Cl(n)$ and defines a group homomorphism 
	\begin{equation}\label{eq:spinor-norm}
		\operatorname{sn}: \lip_n \to \mathbb{G}_m.
	\end{equation} 
\end{lemma}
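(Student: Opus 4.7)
The plan is to verify first that $\operatorname{sn}(x) := \sigma(x)x$ lands in $R^\times \subseteq \Cl(n)$ for every section $x$ of $\lip_n$, and then deduce multiplicativity from centrality. Since all the claims are \'etale-local in nature and $\lip_n$ is representable, it is enough to work with an $X$-point $x$ and check the statements section-wise.

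First I would show that $\operatorname{sn}(x)$ lies in the center of $\Cl(n)$. Fix $v \in R^n$; by definition of $\lip_n$, the element $xvx^{-1}$ again belongs to $R^n$. Applying the anti-automorphism $\sigma$ (which acts by $-1$ on $R^n$) to both sides gives
\begin{equation*}
-xvx^{-1} = \sigma(xvx^{-1}) = \sigma(x^{-1})\sigma(v)\sigma(x) = -\sigma(x^{-1})\,v\,\sigma(x).
\end{equation*}
Cancelling signs, rearranging to $\sigma(x^{-1})\,v\,\sigma(x)\,x = x\,v$, and multiplying on the left by $\sigma(x)$ yields $v \cdot \sigma(x)x = \sigma(x)x \cdot v$. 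So $\operatorname{sn}(x)$ commutes with every section of $R^n$; since $R^n$ generates $\Cl(n)$ as an algebra, $\operatorname{sn}(x) \in Z(\Cl(n))$.

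Next I would pin down which central element it is, and this is the main obstacle. The key observation is that both $x$ and $\sigma(x)$ lie in $\Cl_0(n)$, so $\operatorname{sn}(x) \in Z(\Cl(n)) \cap \Cl_0(n)$. I will now split on parity of $n$: if $n$ is even, then $Z(\Cl(n)) = R$ outright; if $n$ is odd, then $Z(\Cl(n)) = R \oplus R\omega$, but the volume element $\omega = u_1 \cdots u_n$ is a product of an odd number of generators and hence lies in $\Cl_1(n)$, so once again $Z(\Cl(n)) \cap \Cl_0(n) = R$. These two center statements are classical; they are proved locally using an orthonormal basis $u_1,\dots,u_n$ and comparing coefficients in the expansion of a central element via $\Gamma_M$. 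Either way $\operatorname{sn}(x) \in R$, and since $x$ and $\sigma(x)$ are units in $\Cl(n)$, their product is invertible, so $\operatorname{sn}(x) \in R^\times$.

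Finally, for multiplicativity, given sections $x, y \in \lip_n$, compute
\begin{equation*}
\operatorname{sn}(xy) \;=\; \sigma(xy)\,xy \;=\; \sigma(y)\,\sigma(x)\,x\,y \;=\; \sigma(y)\,\operatorname{sn}(x)\,y \;=\; \operatorname{sn}(x)\,\sigma(y)\,y \;=\; \operatorname{sn}(x)\,\operatorname{sn}(y),
\end{equation*}
where the fourth equality uses that $\operatorname{sn}(x) \in R^\times$ is central. The construction is manifestly functorial in the base, so it defines the desired morphism of group schemes $\operatorname{sn}: \lip_n \to \mathbb{G}_m$.
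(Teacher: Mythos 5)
Your proof is correct and self-contained; note that the paper itself supplies no argument for this lemma, deferring entirely to the cited reference (Definition 4.5.1.8 of \cite{calfas_groupes}), so there is no in-paper proof to compare against. The centrality trick---applying $\sigma$ to the defining condition $xvx^{-1}\in R^n$ and cancelling to get $\operatorname{sn}(x)v = v\operatorname{sn}(x)$---is exactly the standard route, and combining it with $\operatorname{sn}(x)\in \Cl_0(n)$ (valid since $\sigma$ preserves the $\ZT$-grading) and the parity-split computation of $Z(\Cl(n))\cap \Cl_0(n)=R$ is clean. One step you pass over quickly is the inference from ``$\operatorname{sn}(x)$ is a unit of $\Cl(n)$ lying in $R$'' to ``$\operatorname{sn}(x)\in R^\times$''; this is fine but deserves a word: the inclusion $R\hookrightarrow \Cl(n)$ is split as a map of $R$-modules (project to the degree-zero piece via $\Gamma_M$), so if $zw=1$ in $\Cl(n)$ with $z\in R$, applying the projection gives $z\cdot w_0 = 1$ in $R$. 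Likewise, the identity $Z(\Cl(n))\cap\Cl_0(n)=R$ is a sheaf-level statement that you rightly reduce to an \'etale-local computation with an orthogonal basis (available since $2$ is invertible); it would be worth saying explicitly that both sides are sheaves so the local verification suffices. With those two remarks made explicit, the argument is airtight.
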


\begin{definition}
	We define the \emph{spin group} as the kernel of the spinor norm:
	\[\Spin(n):=\Ker(\lip_n \xrightarrow{\operatorname{sn}} \mathbb{G}_m).\] 
\end{definition}

As expected, the spin groups give double covers of $\SO(n)$ \cite[Propositions 4.6.0.8 and 4.6.0.9]{calfas_groupes}.
\begin{proposition}
	We have an exact sequence of algebraic groups 
	\[0\to \bZ_2 \to \Spin(n)\to \SO(n)\to 0,\]
	where the map to $\SO(n)$ is induced from the conjugation action.
\end{proposition}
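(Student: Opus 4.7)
My plan is to verify the two components of the asserted short exact sequence separately: identifying the kernel as the constant group scheme $\bZ_2$, and establishing surjectivity étale-locally (which is the correct notion of surjection for the sequence to be exact as algebraic groups, i.e. as étale sheaves).

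For the kernel, I would start from an $X$-point $x \in \Spin(n)$ that lies in the kernel, so that conjugation by $x$ acts as the identity on $R^n \subseteq \Cl(n)$. Since $R^n$ generates $\Cl(n)$ as an $\mathcal{O}_X$-algebra, $x$ must lie in the center $Z(\Cl(n))$. A direct calculation with an orthonormal basis (working étale-locally) shows $Z(\Cl(n)) = \mathcal{O}_X$ when $n$ is even, and $Z(\Cl(n)) = \mathcal{O}_X \oplus \mathcal{O}_X \cdot \omega_n$ when $n$ is odd. In the odd case, the volume element $\omega_n$ lies in $\Cl_1(n)$, so the constraint $x \in \lip_n \subseteq \Cl_0(n)^\times$ forces $x \in \mathcal{O}_X^\times$. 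In either case $x$ is a scalar; since the anti-involution $\sigma$ fixes scalars, the spinor norm condition $\sigma(x)x=1$ becomes $x^2 = 1$. Because $2$ is invertible on $X$, the group scheme $\mu_2$ is étale-constant, so $x \in \bZ_2$ as claimed.

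For surjectivity, the classical input is the Cartan--Dieudonné theorem: over any field of characteristic $\ne 2$, every element of $\SO(n)$ is a product of an even number of reflections $s_v$ along non-isotropic vectors. Each such reflection is realized inside $\Cl(n)$ by conjugation with the vector $v$ itself (up to a sign absorbed by choosing $\pm v$), using the standard calculation $vwv = 2q(v,w)v - q(v,v)w$ which gives $vwv^{-1} = -s_v(w)$. A product $v_1v_2\cdots v_{2k}$ of an even number of such vectors lies in $\Cl_0(n)^\times$ and preserves $R^n$ under conjugation, hence lies in $\lip_n$ and maps to the corresponding product of reflections in $\SO(n)$. Thus $\lip_n \to \SO(n)$ is surjective on all residue fields, hence étale-locally surjective (as both are smooth group schemes over $R$, citing \cite{calfas_groupes}). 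To refine a lift $x \in \lip_n$ of a given $g \in \SO(n)$ to one in $\Spin(n)$, I would use the identity $\operatorname{sn}(\lambda x) = \lambda^2 \operatorname{sn}(x)$ for $\lambda \in \mathbb{G}_m \subset \Cl(n)^\times$ (which follows because $\sigma$ fixes scalars). After passing to the étale cover obtained by extracting a square root of $\operatorname{sn}(x) \in \mathcal{O}_X^\times$, we may rescale $x$ to have spinor norm $1$, producing the desired lift to $\Spin(n)$.

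Combining the two, the sequence $0 \to \bZ_2 \to \Spin(n) \to \SO(n) \to 0$ is exact as a sequence of étale sheaves of groups on $\Spec R$, which is the intended meaning of ``exact sequence of algebraic groups''. The main obstacle is justifying the Cartan--Dieudonné step uniformly over a base rather than over a field; I would handle this by reducing surjectivity of the smooth map $\lip_n \to \SO(n)$ to a pointwise check on geometric points (where the classical theorem applies) and invoking the standard criterion for a map of smooth group schemes to be smooth surjective, using the representability and smoothness of $\lip_n$ already recorded in \cite{calfas_groupes}. The remaining rescaling step is uniform in the base since it only requires an étale cover trivializing a square root, which exists étale-locally.
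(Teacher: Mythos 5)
The paper itself does not supply a proof for this proposition—it simply cites \cite[Propositions 4.6.0.8 and 4.6.0.9]{calfas_groupes}—so what you've written is a self-contained argument rather than an alternative to a proof in the text. Your argument is the standard one for exhibiting $\Spin(n)$ as the double cover of $\SO(n)$, and it is essentially correct: the kernel computation via the center of $\Cl(n)$ (with the observation that $\omega_n$ is odd when $n$ is odd, so the $\Cl_0$-constraint kills it), the reduction $\sigma(x)x = 1 \Rightarrow x^2 = 1$ for scalar $x$, the identification $\mu_2 \cong \bZ_2$ over $\bZ[1/2]$, the Cartan--Dieudonn\'e lifting at geometric points, and the \'etale rescaling of the spinor norm using $\operatorname{sn}(\lambda x) = \lambda^2 \operatorname{sn}(x)$ are all sound, and interpreting exactness as exactness of \'etale sheaves of groups is the right notion.

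One small imprecision worth fixing: you write that the sign in $vwv^{-1} = -s_v(w)$ is ``absorbed by choosing $\pm v$,'' but conjugation by $v$ and by $-v$ are the \emph{same} automorphism of $\Cl(n)$, so no choice of sign on $v$ can remove the minus. The sign is actually handled exactly as you do in the very next sentence: since $\lip_n \subset \Cl_0(n)^\times$, one only ever conjugates by a product $v_1\cdots v_{2k}$ of an \emph{even} number of non-isotropic vectors, producing $(-s_{v_1})\cdots(-s_{v_{2k}}) = s_{v_1}\cdots s_{v_{2k}}$, so the signs cancel. (The alternative is to use the twisted conjugation $w \mapsto \alpha(x)wx^{-1}$ with $\alpha$ the grading involution, which kills the sign outright and is what lets one define the Lipschitz group including odd elements; but since you work entirely in $\Cl_0$ this is unnecessary.) On the smooth-surjectivity step you flag, the standard justification is fine: a homomorphism of smooth, finite-type group schemes over a Noetherian base that is surjective on geometric points of every fiber is faithfully flat (fiberwise flatness plus the local criterion), hence an fppf—and here \'etale—epimorphism; the smoothness of $\lip_n$ is precisely what \cite{calfas_groupes} supplies.
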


\begin{definition}
	We define the group $\Spin^{\bC}(n)$ as the quotient
	\[\Spin^{\bC}(n):= \Spin(n)\times_{\bZ_2}\bG_m,\]
	along the unique inclusions of $\bZ_2$ as the normal subgroup in $\Spin(n)$ and $\bG_m$ respectively.
\end{definition}
As one can take the quotient of $\Spin(n)\times \bG_m$ by the larger subgroup $\bZ_2\times \bZ_2$, we have the exact sequence
\[0\to \bZ_2 \to \Spin^{\bC}(n) \to SO(n)\times \bG_m\to 0.\]

Note that the map 
\begin{align*}
	\Spin(n)\times \bG_m &\to \Cl_0(n)^{\times}\\
	(\alpha, \lambda)&\mapsto \lambda\alpha
\end{align*}
defines a group homomorphism whose kernel is exactly the diagonal subgroup $\bZ_2$. This induces an injective homomorphism $\Spin^{\bC}(n)\to \Cl_0(n)^{\times}$ identifying $\Spin^{\bC}(n)$ with the special Lipschitz group $\lip_n$. 
\subsection{Clifford representations}\label{app:cliff-rep}
We discuss the representations of the Clifford algebra, which gives rise to another characterization of the $\Spin^{\bC}$-groups. For this discussion, it is convenient to distinguish the even and odd dimensional cases. We will also restrict the discussion mostly to the case of the standard Clifford algebra. 

\paragraph{Even rank.}
Let $n = 2m$ be even and consider the standard quadratic bundle $R^{n}$. Let $\Lambda^{\vee} = \langle f_1,\ldots f_m \rangle$ be the maximal isotropic subspace of $R^n$ generated by the last $m$ basis vectors, which we regard as dual to $\Lambda:= \langle e_1,\ldots,e_m\rangle$. We define an action of $\Cl(n)$ on the exterior algebra $\bigwedge^{\bullet}\Lambda^{\vee}$ via
\begin{equation}\label{eq:cliff-action-formulas}
\begin{aligned}
	e_i (f_{i_1}\wedge \cdots \wedge f_{i_k}) := &2 e_i \lrcorner (f_{i_1}\wedge \cdots \wedge f_{i_k})\\
	f_i(f_{i_1}\wedge \cdots \wedge f_{i_k}) := &f_i\wedge f_{i_1}\wedge \cdots \wedge f_{i_k}.
\end{aligned}
\end{equation}
for $1\leq i\leq m$. 
Here $\lrcorner$ denotes the contraction, which by definition is given by 
\[y \lrcorner (x_1\wedge \cdots \wedge x_k) := \sum_{j=1}^k (-1)^{j-1} q_n(y,x_j)\,  x_1\wedge \cdots \wedge \widehat{x_j}\wedge \cdots \wedge x_k,\]
where the hat symbol denotes leaving out the corresponding term. 
We denote the resulting $\Cl(n)$-module by $\sS_n$. As an $R$-module it is free of rank $2^m$.
We have a decomposition 
\[\bigwedge^{\bullet} \Lambda^{\vee}  = \bigwedge^{ev} \Lambda^{\vee} \oplus \bigwedge^{odd}\Lambda^{\vee}\]
into even and odd wedge powers, which gives a $\ZT$-grading such that the Clifford algebra acts in a graded way. Note that the volume element $\omega = e_1\cdots e_mf_m\cdots f_1$ corresponding to the standard orientation acts with weight $1$ on the even part and with weight $-1$ on the odd part. 
The action of the Clifford algebra induces homomorphism of graded $R$-algebras
\[\Cl(n) \to \End_{R}(\sS_n),\]
which is in fact an isomorphism. Using the involution $\sigma$, any left Clifford module carries a structure of right module. Using this to form the tensor product, we have mutually inverse isomorphisms of $R$-modules 
\begin{align*}
	R &\to \sS_n \otimes_{\Cl(n)} \sS_n & \sS_n\otimes_{\Cl(n)}\sS_n &\to \bigwedge^m\langle f_1,\ldots,f_m\rangle \simeq  R\\ 
	1 & \mapsto 1 \otimes f_{1}\wedge \cdots \wedge f_m & x\otimes y & \mapsto (-1)^{|x|}\operatorname{rev}(x) \wedge y. 
\end{align*}
Here, we implicitly project into the degree $m$ part of the exterior algebra and $\operatorname{rev}$ is the automorphism of the wedge product that inverts the order of factors. On degree $i$ it acts by $(-1)^{i(i-1)/2}$.  
We now give an alternate description of the group $\Spin^{\bC}(n)$.
Let $\Aut(R^n,q_n,o_n,\sS_n)$ denote the sheaf of groups over $R$ parametrizing pairs $(\varphi,\Phi)\in \SO(n)\times \GL(\sS_n)$ for which the following diagram commutes 
\begin{equation}\label{diag:cl-act-comm}
	\begin{tikzcd}
		\Cl(n) \otimes \sS_n \ar[r]\ar[d,"{\Cl(\varphi)\otimes \Phi}"]& \sS_n\ar[d,"\Phi"] \\
		\Cl(n) \otimes \sS_n\ar[r] & \sS_n,
	\end{tikzcd}
\end{equation} 
where the horizontal arrows are the action map. 

\begin{proposition}\label{prop:spinc-autom-even}
	The map $\Spin^{\bC}(n)\to \SO(n)\times \GL(\sS_n)$ given in the first coordinate by the natural map $\Spin^{\bC}(n)\to \SO(n)$, and in the second coordinate by the Clifford action is an isomorphism onto $\Aut(R^n,q_n,o_n,\sS_n)$. 
\end{proposition}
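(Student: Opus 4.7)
The plan is to use the paper's identification $\Spin^{\bC}(n) = \lip_n \subset \Cl_0(n)^{\times}$ together with the $R$-algebra isomorphism $\Cl(n) \xrightarrow{\sim} \End_R(\sS_n)$ induced by left multiplication (recorded in Proposition~\ref{prop:clifford-props} and the discussion of $\sS_n$). Under these, the morphism of the proposition reads $\alpha \mapsto (\varphi_\alpha, L_\alpha)$, where $\varphi_\alpha$ denotes the conjugation action on $R^n$ and $L_\alpha$ is left multiplication on $\sS_n$. Since all the constructions commute with base change in $R$, it suffices to check well-definedness, injectivity, and surjectivity of $R$-points.

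For well-definedness, let $\alpha \in \lip_n$. By definition $\alpha R^n \alpha^{-1} = R^n$, so $\varphi_\alpha \in \OO(n)$. For $n=2m$ even, the volume element $\omega$ lies in $\Cl_0(n)$, and since $\omega s = -s\omega$ for $s \in R^n$, a short computation shows $\omega$ commutes with all of $\Cl_0(n)$. Hence $\Cl(\varphi_\alpha)(\omega) = \alpha\omega\alpha^{-1} = \omega$, and since $\omega \in \det R^n \subset \Cl(n)$ represents the orientation, this places $\varphi_\alpha$ in $\SO(n)$. The commutativity of \eqref{diag:cl-act-comm} follows from
\[
L_\alpha(x\cdot s) = \alpha x s = (\alpha x \alpha^{-1})\cdot \alpha s = \Cl(\varphi_\alpha)(x)\cdot L_\alpha(s),
\]
verified for $x \in R^n$ and extended to all $x \in \Cl(n)$ by multiplicativity in $x$.

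Injectivity is immediate: if $L_\alpha = \id_{\sS_n}$, then $\alpha = 1$ by injectivity (in fact bijectivity) of the Clifford action. For surjectivity, let $(\varphi, \Phi) \in \Aut(R^n, q_n, o_n, \sS_n)$. The action isomorphism yields a unique $\alpha \in \Cl(n)^{\times}$ with $\Phi = L_\alpha$, and the commutativity of \eqref{diag:cl-act-comm} rewrites as $\alpha x = \Cl(\varphi)(x)\alpha$ in $\Cl(n)$, i.e.\ $\Cl(\varphi)(x) = \alpha x \alpha^{-1}$. Applied to $x \in R^n$, this gives $\alpha R^n \alpha^{-1} = \varphi(R^n) = R^n$. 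Applied to $\omega$, the assumption $\varphi \in \SO(n)$ yields $\Cl(\varphi)(\omega) = \omega$, hence $\alpha\omega = \omega\alpha$; decomposing $\alpha = \alpha_0 + \alpha_1$ into even and odd parts and using $\omega\alpha_0 = \alpha_0\omega$, $\omega\alpha_1 = -\alpha_1\omega$, together with the invertibility of $\omega$ (from $\omega^2 = 1$), forces $\alpha_1 = 0$. Thus $\alpha \in \Cl_0(n)^{\times}$ and, combined with the first condition, $\alpha \in \lip_n$ as required.

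The only conceptual subtlety is the dictionary \emph{orientation-preservation of $\varphi \leftrightarrow$ evenness of $\alpha$}, which is precisely the feature that distinguishes $\Spin^{\bC}(n)$ from the full (special) Lipschitz/Clifford group. Beyond this translation the proof reduces to algebraic manipulation inside a matrix algebra, and this respects base change in $R$, so I do not anticipate any further technical obstacle.
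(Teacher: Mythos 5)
Your proof is correct, but it takes a genuinely different route from the paper's, most notably for surjectivity. The paper's argument is to first lift $\varphi\in\SO(n)$ (locally) along the covering $\Spin^{\bC}(n)\to\SO(n)$, thereby reducing to the case $\varphi=1$, and then to observe that $\Phi$ commuting with the matrix algebra $\End_R(\sS_n)$ forces $\Phi$ to be a scalar. You instead go the opposite way: use the isomorphism $\Cl(n)\cong\End_R(\sS_n)$ to recover a unique $\alpha\in\Cl(n)^{\times}$ with $\Phi=L_{\alpha}$ directly, and then verify by hand that the two defining conditions of $\lip_n$ hold — $\alpha R^n\alpha^{-1}=R^n$ from the intertwining relation, and $\alpha\in\Cl_0(n)^{\times}$ from $\omega$-invariance together with the parity decomposition $\alpha=\alpha_0+\alpha_1$ and invertibility of $2$ and of $\omega$. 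Your approach is more constructive and avoids the local lifting step (so no sheaf-theoretic reduction is needed), but it leans on the identification $\Spin^{\bC}(n)\cong\lip_n$ as the working definition, whereas the paper's proof leans on surjectivity of $\Spin^{\bC}(n)\to\SO(n)$; both of these are stated in \S\ref{sec:appendix-spin-groups} without detailed proof, so the two routes are on equal footing in terms of black boxes. Your well-definedness step is also slightly more detailed than the paper's: the paper defers the claim $\varphi_\alpha\in\SO(n)$ to the earlier unproved assertion that $\lip_n\to\GL(n)$ factors through $\SO(n)$, whereas you give a self-contained argument via commutation of $\omega$ with $\Cl_0(n)$, which is a nice bonus. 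One small imprecision you should tighten: to check $\alpha\in\lip_n$ as a group scheme section you need the condition $\alpha R^n\alpha^{-1}\subseteq R^n$ to persist after arbitrary base change $Y\to X$; this does hold because it is a closed condition (vanishing of components in the complementary summand of $R^n\subset\Cl(n)$ under the exterior-algebra identification), but your phrase ``it suffices to check \ldots of $R$-points'' glosses over this — better to say the whole argument is functorial in $R$.
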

\begin{proof}
	Let $(\varphi_{\alpha},\Phi_{\alpha})$ denote the image of a section $\alpha$ of $\Spin^{\bC}(n)$. To see that this lies in $\Aut(R^n,q_n,o_n,\sS_n)$, note that $\Cl(\varphi_{\alpha})$ is nothing but the conjugation action of $\alpha$ on $\Cl(n)$. Thus the commutativity of \eqref{diag:cl-act-comm} reduces to the formula
	\[(\alpha v \alpha^{-1})(\alpha s) = \alpha(v s).\]
	Injectivity holds, since $\Spin^{\bC}(n)$ is a multiplicative sub-monoid of $\Cl(n)$, and the map $\Cl(n)\to \End(\sS_n)$ is injective.
	
	To see surjectivity, let $(\varphi,\Phi)\in \Aut(R^n,q_n,o_n,\sS_n)$ be an arbitrary section. Since $\Spin^{\bC}(n)\to \SO(n)$ is surjective, we may (locally) choose a lift of $\varphi$. This reduces us to the case that $\varphi = 1$. Then $\Phi$ is an endomorphism of $\sS_n$ that respects the action of the Clifford algebra, hence equivalently, the action of the matrix algebra $\End(\sS_n)$. This shows that $\Phi$ must be scaling by some $\lambda\in R$, hence $(1,\Phi)$ is the image of $\lambda\in R^{\times }\subset \Spin^{\bC}(n)$.   
\end{proof}

Given any pair $(\varphi,\Phi) \ in \Aut(R^n,q_n,o_n,\sS_n)$ one has that $\Phi$ is also compatible with $\Cl(\varphi)$ with respect to the \emph{right} $\Cl(n)$-modules structure on $\sS_n$ (this uses that $\sigma\circ \Cl(\varphi)\circ \sigma$ is the identity). Hence we obtain a homomorphism of invertible $R$-modules 
\[R\simeq \sS\otimes_{\Cl(n)}\sS \xrightarrow{\Phi\otimes_{\Cl(\varphi)}\Phi} \sS\otimes_{\Cl(n)} \sS \simeq R.\]
In other words $(\varphi,\Phi)\mapsto \Phi\otimes_{\Cl(\varphi)} \Phi$ defines a group homomorphism
\begin{equation}\label{eq:spinor-norm-hom}
	\Spin^{\bC}(n)\simeq \Aut(R^n,q_n,o_n,\sS_n)\to \bG_m
\end{equation}
\begin{proposition}\label{prop:spinor-norm-even}
	The homomorphism \eqref{eq:spinor-norm-hom} agrees with the spinor norm $\operatorname{sn}$ defined in \eqref{eq:spinor-norm}. 
\end{proposition}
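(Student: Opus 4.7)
The plan is to verify the identity by an explicit computation using the canonical generator $1 \otimes (f_1 \wedge \cdots \wedge f_m)$ of the isomorphism $R \simeq \sS_n \otimes_{\Cl(n)} \sS_n$. Both maps are multiplicative on $\lip_n = \Spin^{\bC}(n)$, so that it suffices to show that for every $\alpha \in \lip_n$, the image $\Phi_\alpha \otimes_{\Cl(\varphi_\alpha)} \Phi_\alpha$ of $\alpha$ under \eqref{eq:spinor-norm-hom} coincides with the element $\sigma(\alpha)\alpha \in R^\times$.

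First I would translate the claim into a direct computation inside $\sS_n \otimes_{\Cl(n)} \sS_n$. Writing $s_0 := f_1 \wedge \cdots \wedge f_m$, the canonical element $1 \otimes s_0$ corresponds to $1 \in R$, and applying $\Phi_\alpha \otimes_{\Cl(\varphi_\alpha)} \Phi_\alpha$ produces $(\alpha \cdot 1) \otimes (\alpha \cdot s_0)$, where $\alpha \cdot (-)$ denotes the Clifford action on $\sS_n$. The key identity to invoke is that the right-module structure on the left factor is given by $s \cdot c = \sigma(c) s$, so the defining relation of $\sS_n \otimes_{\Cl(n)} \sS_n$ reads $(cs) \otimes t = s \otimes (\sigma(c) t)$ for $c \in \Cl(n)$; equivalently, $\sigma(c) s \otimes t = s \otimes (ct)$. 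Applying this relation with $c = \alpha$, $s = 1$ to the left factor gives
\[
(\alpha \cdot 1) \otimes (\alpha \cdot s_0) \;=\; 1 \otimes \bigl(\sigma(\alpha) \alpha \cdot s_0\bigr) \;=\; 1 \otimes \bigl(\operatorname{sn}(\alpha) \cdot s_0\bigr).
\]
Since $\operatorname{sn}(\alpha) = \sigma(\alpha)\alpha \in R^\times$ is a scalar, $\operatorname{sn}(\alpha) \cdot s_0$ is just ordinary scalar multiplication, and the resulting element is $\operatorname{sn}(\alpha) \cdot (1 \otimes s_0)$. Under the identification with $R$, this corresponds to $\operatorname{sn}(\alpha) \in \mathbb{G}_m$, proving the claim.

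One thing to be careful about is that $\Phi_\alpha \otimes_{\Cl(\varphi_\alpha)} \Phi_\alpha$ makes sense as a well-defined map on $\sS_n \otimes_{\Cl(n)} \sS_n$ in the first place; this follows from the twisted equivariance $\Phi_\alpha(c s) = \operatorname{Cl}(\varphi_\alpha)(c)\, \Phi_\alpha(s) = \alpha c \alpha^{-1} \Phi_\alpha(s)$, which together with the compatibility $\sigma \circ \operatorname{Cl}(\varphi_\alpha) = \operatorname{Cl}(\varphi_\alpha) \circ \sigma$ shows that the relation defining the tensor product is preserved after applying $\Phi_\alpha$ to both factors. The main potential obstacle, and the step I would check most carefully, is keeping straight the sign/convention for the right-module structure and for the isomorphism $x \otimes y \mapsto (-1)^{|x|} \operatorname{rev}(x) \wedge y$; the chosen anchor element $1 \otimes s_0$ has $|1| = 0$ and $\operatorname{rev}(1) = 1$, which makes the bookkeeping clean and is why it is the natural starting point.

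As a sanity check I would verify both sides on the two natural subgroups $\mathbb{G}_m, \Spin(n) \subset \Spin^{\bC}(n)$: for $\lambda \in \mathbb{G}_m$, one has $\sigma(\lambda)\lambda = \lambda^2$ while $\Phi_\lambda \otimes \Phi_\lambda$ is multiplication by $\lambda^2$; and for $\alpha \in \Spin(n)$, the computation above specializes to the known identity $\sigma(\alpha)\alpha = 1$, matching the known fact that $\Spin(n)$ is the kernel of $\operatorname{sn}$.
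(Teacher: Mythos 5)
Your proof is correct and uses the same key step as the paper: the identity $(\alpha s_1)\otimes(\alpha s_2)=\sigma(\alpha)\alpha\, s_1\otimes s_2$ in $\sS_n\otimes_{\Cl(n)}\sS_n$, obtained from the $\sigma$-twisted right-module structure. The paper carries this out for arbitrary elementary tensors $s_1\otimes s_2$ while you specialize to the canonical generator $1\otimes(f_1\wedge\cdots\wedge f_m)$; this is a cosmetic difference, not a different route.
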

\begin{proof}
	Let $(\varphi_{\alpha},\Phi_{\alpha})$ be the image of some $\alpha\in \Spin^{\bC}(n)$. 
	Then for any section of the form $s_1\otimes s_2$ of $\sS_n\otimes_{\Cl(n)}\sS_n$, we have
	\[\Phi_{\alpha}\otimes\Phi_{\alpha}(s_1\otimes s_2) = (\alpha s_1) \otimes (\alpha s_2) = (\sigma(\alpha) \alpha s_1)\otimes s_2 = \operatorname{sn}(\alpha) s_1\otimes s_2.\]
\end{proof}

\paragraph{Odd rank.}

Let $n = 2m+1$, and let again $\Lambda = \langle e_1,\ldots,e_m\rangle$ and $\Lambda^{\vee} = \langle f_1,\ldots,f_m \rangle$, which are maximal isotropic subspaces of $R^n$. We again define an action of $\Cl(n)$ on $\bigwedge^{\bullet}\Lambda^{\vee}$ via the formulas \eqref{eq:cliff-action-formulas} for $1\leq i\leq m$, and with 
\[u (f_{i_1}\wedge \cdots \wedge f_{i_k}):= (-1)^k(f_{i_1}\wedge \cdots \wedge f_{i_k}). \]
We let $\sS_n$ denote the resulting $\Cl(n)$-module. We let $\sS_n'$ denote the $\Cl(n)$-module on which the odd elements of $\Cl(n)$ act with the opposite sign, i.e. with the same underlying $R$-module as $\sS_n$, but with multiplication defined by $(x,s)\mapsto (-1)^{|x|}xs$ for a homogeneous element $x$ of $\Cl(n)$. As $R$-modules, both $\sS_n$ and $\sS'_n$ are locally free of rank $2^m$. The volume element $\omega\in \Cl(n)$ defined by the standard orientation acts as the identity on $\sS_n$ and as minus the identity on $\sS_n'$. 
The actions of the Clifford algebra on $\sS_n$ and $\sS_n'$ induce an isomorphism of $R$-algebras
\[\Cl(n)\to \End_R(\sS_n)\times \End_R(\sS_n').\]

We have an isomorphism of $\Cl(2m)\to \Cl_0(2m+1)$ onto the \emph{even} part of $\Cl(n)$. It is obtained from the universal property of the Clifford algebra, applied to the map
\begin{align*}
	R^m\oplus R^m & \to \Cl_0(2m+1)\\
	(e, f) & \mapsto eu - fu.
\end{align*}
Here, the first summand is the subspace $\Lambda$ and the second is $\Lambda^{\vee}$, cf. \cite[Theorem 4.2.0.16]{calfas_groupes}.
Restricting the action of $\Cl(n)$ on $\sS_n$ to the even part of the Clifford algebra, we obtain an isomorphism
\[\Cl_0(n)\to \End_R(\sS_n).\]
We have mutually inverse isomorphisms of $R$-modules, where we take the tensor product only over the even part of the Clifford algebra
\begin{align*}
	R & \to \sS_n \otimes_{\Cl_0(n)} \sS_n &  \sS_n\otimes_{\Cl_0(n)} \sS_n &\to \bigwedge^m\langle f_1,\ldots, f_m \rangle \simeq R\\
	1&\mapsto 1\otimes f_1\wedge \cdots \wedge f_m, & x\otimes y &\mapsto (-1)^{m|x|} \operatorname{rev}(x)\otimes y.
\end{align*}

Exactly as in the even rank case, we define the group $\Aut(R^n,q_n,o_n,\sS_n)\subset \SO(n)\times \GL(\sS_n)$.
We have the analogue of Proposition \ref{prop:spinc-autom-even} with exactly the same statement, and essentially the same proof (one only needs to change $\Cl(n)$ to $\Cl_0(n)$ in the proof of injectivity).
\begin{proposition}\label{prop:spinc-autom-odd}
	The map $\Spin^{\bC}(m)\to \SO(n)\times \GL(\sS_n)$ given in the first coordinate by the natural map $\Spin^{\bC}(n)\to \SO(n)$, and in the second coordinate by the Clifford action is an isomorphism onto $\Aut(R^n,q_n,o_n,\sS_n)$. 
\end{proposition}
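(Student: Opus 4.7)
The plan is to follow the argument of Proposition \ref{prop:spinc-autom-even} essentially verbatim, adjusting only where the odd-dimensional Clifford theory differs from the even-dimensional one. First, I would verify that the map is well-defined, i.e.\ that for any section $\alpha$ of $\Spin^{\bC}(n)$ the pair $(\varphi_\alpha,\Phi_\alpha)\in \SO(n)\times \GL(\sS_n)$ lies in $\Aut(R^n,q_n,o_n,\sS_n)$. As before, $\Cl(\varphi_\alpha)$ is conjugation by $\alpha$ inside $\Cl(n)$, and commutativity of \eqref{diag:cl-act-comm} reduces to the tautological identity $(\alpha v \alpha^{-1})(\alpha s)=\alpha(vs)$. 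Here one should remark that the identification $\Spin^{\bC}(n)\simeq \lip_n\subset \Cl_0(n)^{\times}$ ensures $\alpha$ is an \emph{even} unit, which is the key algebraic fact one uses in the odd case.

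For injectivity, the only change from the even case is that the representation $\Cl(n)\to \End_R(\sS_n)$ is not itself injective (its kernel is the ideal coming from $\sS_n'$). However, since $\Spin^{\bC}(n)\subset \Cl_0(n)^{\times}$, we only need the restriction $\Cl_0(n)\to \End_R(\sS_n)$, which is an \emph{isomorphism} as recorded in the paragraph preceding the proposition. Hence if $\Phi_\alpha=\id$ and $\varphi_\alpha=1$, then the underlying element of $\Cl_0(n)$ must be trivial, so $\alpha=1$.

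For surjectivity, let $(\varphi,\Phi)\in \Aut(R^n,q_n,o_n,\sS_n)$. Using that $\Spin^{\bC}(n)\to \SO(n)$ is surjective, we may, after possibly passing to an \'etale cover, choose a lift of $\varphi$ to $\Spin^{\bC}(n)$; multiplying by this lift reduces us to $\varphi=1$. Then $\Phi\in \GL(\sS_n)$ commutes with the action of $\Cl(n)$, hence in particular with the action of $\Cl_0(n)\simeq \End_R(\sS_n)$. By Schur's lemma for the matrix algebra, $\Phi$ is a scalar $\lambda\in R^\times$, and then $(1,\Phi)$ is the image of $\lambda\in \mathbb{G}_m\subset \Spin^{\bC}(n)$ under the defining map. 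The only subtlety worth double-checking — and I expect it to be the single nontrivial point — is that the $\Cl_0(n)$-commutation is genuinely enough, i.e.\ that the commutation with the action of odd elements is automatic once we have fixed $\varphi=1$; this follows because $\sS_n$ is \'etale-locally generated as a $\Cl(n)$-module by a single element on which both $\Phi$ and the Clifford action can be compared, combined with the fact that $\Phi$ must respect the eigenspace decomposition for the action of the volume element $\omega$ (since $\omega$ acts centrally on $\Cl(n)$ as $+1$ on $\sS_n$, this is automatic). This completes the argument.
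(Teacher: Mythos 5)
Your proof is correct and is exactly the argument the paper intends: the paper explicitly states that the odd case is proved just like Proposition~\ref{prop:spinc-autom-even}, changing $\Cl(n)$ to $\Cl_0(n)$ in the injectivity step, and you carry that out faithfully. The one thing to trim is the final ``subtlety'' paragraph: once you reduce to $\varphi=1$, commutativity of \eqref{diag:cl-act-comm} already says that $\Phi$ commutes with the action of \emph{all} of $\Cl(n)$ (not just the even part) --- this is part of the hypothesis that $(\varphi,\Phi)\in\Aut(R^n,q_n,o_n,\sS_n)$, not something to be deduced --- and then commutation with $\Cl_0(n)\simeq\End_R(\sS_n)$ alone already forces $\Phi$ to be a scalar; no generation-by-one-element or volume-element eigenspace argument is needed.
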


Given $(\varphi,\Phi)\in \Aut(R^n,q_n,o_n,\sS_n)$ we may again form the tensor product of $\Phi$ with itself, this time over $\Cl_0(n)$. This gives a homomorphism
\[R\simeq \sS_n\otimes_{\Cl_0(n)} \sS_n\xrightarrow{\Phi\otimes_{\Cl_0(\varphi)}\Phi}\sS_n\otimes_{\Cl(n)}\sS_n \simeq R.\]
Hence, we again obtain a group homomorphism
\begin{equation}\label{eq:spinor-norm-hom-odd}
	\Spin^{\bC}(n)\simeq \Aut(R^n,q_n,o_n,\sS_n) \to \bG_m.
\end{equation}
\begin{proposition}\label{prop:spinor-norm-odd}
	The homomorphism \eqref{eq:spinor-norm-hom-odd} agrees with the spinor norm $\operatorname{sn}$.
\end{proposition}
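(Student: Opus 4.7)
The plan is to mirror the proof of Proposition \ref{prop:spinor-norm-even} essentially verbatim, exploiting the fact that $\Spin^{\bC}(n) \subseteq \Cl_0(n)^{\times}$ by construction (since $\Spin^{\bC}(n) \simeq \lip_n$ and the special Lipschitz group lives inside the even part of the Clifford algebra). This containment in the \emph{even} part is precisely what makes the odd-rank formulation of the spinor norm compatible with a tensor product over $\Cl_0(n)$ rather than over all of $\Cl(n)$.

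Concretely, given a section $\alpha \in \Spin^{\bC}(n)$ with image $(\varphi_\alpha, \Phi_\alpha)$ under the isomorphism of Proposition \ref{prop:spinc-autom-odd}, the map $\Phi_\alpha$ acts on $\sS_n$ as Clifford multiplication by $\alpha$. Using the induced right $\Cl(n)$-module structure on $\sS_n$ (given by $s \cdot x := \sigma(x) s$), and recalling that the isomorphism $R \simeq \sS_n \otimes_{\Cl_0(n)} \sS_n$ is given by $1 \mapsto 1 \otimes f_1 \wedge \cdots \wedge f_m$, the computation will read
\begin{equation*}
\Phi_\alpha \otimes_{\Cl_0(\varphi_\alpha)} \Phi_\alpha (s_1 \otimes s_2) = (\alpha s_1) \otimes (\alpha s_2) = (\sigma(\alpha)\alpha s_1) \otimes s_2 = \operatorname{sn}(\alpha)\,  s_1 \otimes s_2,
\end{equation*}
where the last equality is the very definition of the spinor norm \eqref{eq:spinor-norm}. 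Since $\operatorname{sn}(\alpha) \in R^\times$ acts as a scalar, this shows the induced endomorphism of $R$ is multiplication by $\operatorname{sn}(\alpha)$, as claimed.

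The only point that genuinely requires verification, and the step I would check most carefully, is the middle equality: that $\sigma(\alpha)$ may be moved across the balanced tensor product over $\Cl_0(n)$. This is valid precisely because $\alpha \in \Cl_0(n)$, and the canonical anti-involution $\sigma$ preserves the even part $\Cl_0(n)$ (a product of two elements that each change sign under $\sigma$ preserves its sign). Hence $\sigma(\alpha) \in \Cl_0(n)$ as well, and the relation $\sigma(\alpha)s_1 \otimes s_2 = s_1 \otimes \alpha s_2$ holds by definition of $\otimes_{\Cl_0(n)}$. With this point settled, the rest of the argument is formal, and no separate odd-case calculation beyond what appears in the even case is required.
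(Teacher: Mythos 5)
Your proof is correct and mirrors the paper's proof of Proposition~\ref{prop:spinor-norm-even}, which is exactly the intended argument (the paper leaves the odd case unproved as an evident analogue). You have correctly isolated the one point that could conceivably fail: the balanced tensor product relation over $\Cl_0(n)$ requires $\alpha$ and $\sigma(\alpha)$ to lie in the even part, and this is guaranteed because $\Spin^{\bC}(n)\simeq \lip_n\subseteq \Cl_0(n)^\times$ and the anti-involution $\sigma$ preserves the $\ZT$-grading.
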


\subsection{Isotropic reductions}\label{app:iso-red}
Let $(M,q_M)$ be a rank $n$ quadratic bundle on $X$, and let $K\subseteq M$ be a rank $k$ isotropic sub-bundle. Let $K^{\perp}$ denote the orthogonal complement to $K$ in $M$. Then $K\subset K^{\perp}$, and the locally free sheaf $K^{\perp}/K$ inherits a symmetric bilinear pairing $\overline{q}_M$ satisfying $\overline{q}_M(\overline{x},\overline{y}) = q_M(x,y)$ for any pair of sections $x,y$ of $K^{\perp}$. 

An orientation $o_M:\mathcal{O}_X\to \det M$ on $(M,q_M)$ induces an orientation $\overline{o}_M$ on $(K^{\perp}/K, \overline{q}_M)$ via the formula
\[\mathcal{O}_X\xrightarrow{o_M} \det M \simeq \det(K^{\perp}/K)\otimes \det (K\oplus K^{\vee}) \xrightarrow{1\otimes o_K^{-1}}\det K^{\perp}/K.\]
Here, $o_K$ is the canonical orientation on the hyperbolic space $K\oplus K^{\vee}$, and the middle isomorphism is induced by writing locally $M =  K^{\perp}/K\oplus K \oplus K^{\vee}$.

By functoriality of Clifford algebras, we have morphisms 
\[\Cl(K^{\perp}/K) \twoheadleftarrow \Cl(K^{\perp})\hookrightarrow \Cl(M).\]

The kernel of the quotient map $\Cl(K^{\perp})\to \Cl(K^{\perp}/K)$ is given by the two-sided ideal generated by the sub-space $K\subset \Cl(K^{\perp})$.

\paragraph{Associated structure groups.}
Consider the standard quadratic bundle $R^n$. Let $K = R^{\oplus k}\times \{0\}\subset R^n$ be the subspace spanned by the first $k$ basis vectors $e_1,\ldots, e_k$ for some integer $k$ with $2k\leq n$. 
Then $K^{\perp}/K$ with its induced quadratic structure and orientation is canonically identified with the standard oriented quadratic bundle $R^{n-2k}$, via the isomorphism
\begin{equation}\label{eq:reduction-iso} 
\begin{aligned}
	R^{n-2k} & \to K^{\perp}/K\\
	e_i&\mapsto \overline{e}_{i + k}.
\end{aligned}  
\end{equation}

\begin{definition}
	We let $\G(n,k)$ denote the subgroup of $\SO(n)$ of elements preserving the subspace $K\subset R^n$ of the first $k$ coordinates. We define $\widetilde{\G(n,k)\times \bG_m}$ as the base change
	\begin{equation*}
		\begin{tikzcd}
			\widetilde{\G(n,k)\times \bG_m}\ar[r]\ar[d]& \Spin^{\bC}(n)\ar[d] \\
			\G(n,k)\times \bG_m\ar[r] & \SO(n)\times \bG_m
		\end{tikzcd}
	\end{equation*}
\end{definition}
where the right vertical map is the natural double covering.
Note that any element of $\G(n,k)$ automatically preserves $K^{\perp}$, and that we have an induced homomorphism $\G(n,k)\to \SO(n-2k)$.

Using Propositions \ref{prop:spinc-autom-even} and \ref{prop:spinc-autom-odd}, we find that $\widetilde{\G(n,k)\times \bG_m}$ is identified with the subgroup of $\SO(n)\times \GL(\sS_n)$ consisting of pairs $(\varphi,\Phi)$ for which $\varphi\in \G(n,k)$ and which satisfy the compatibility \eqref{diag:cl-act-comm}.

Consider the subspace $\Ann(K) \subset \sS_n$ consisting of those elements which are annihilated by every element of $K$. Explicitly, with $m= \lfloor n/2\rfloor$, one has that
\[\Ann(K) =\left\langle f_{i_1}\wedge \cdots \wedge f_{i_\ell}~|~\substack{\ell \geq 0 \\ k+1\leq i_1<\cdots <i_{\ell} \leq m }\right\rangle,\]
so $\dim_R \Ann(K) = 2^{m-k}$, and we have a natural isomorphism of $R$-modules 
\begin{equation}\label{eq:annihil-iso}
\Ann(K) = \bigwedge^{\bullet}\langle f_{k+1}, \ldots,f_m \rangle\simeq \bigwedge^{\bullet}\langle f_1,\ldots, f_{m-k}\rangle = \sS_{n-2k}.
\end{equation}

Since every element of $K^{\perp}$ anti-commutes with every element of $K$, the action of $\Cl(K^{\perp})$ perserves $\Ann(K)$, and the two sided ideal generated by $K\subset \Cl(K^{\perp})$ acts trivially. 
We obtain an induced action of $\Cl(K^{\perp}/K)$ on $\Ann(K)$. 
The isomorphisms \eqref{eq:reduction-iso} and \eqref{eq:annihil-iso} are compatible with the Clifford actions, in that they make the following diagram commute
\begin{equation*}
	\begin{tikzcd}
		\Cl(n-2k)\otimes \sS_{n-2k}\ar[r]\ar[d]& \sS_{n-2k}\ar[d] \\
		\Cl(K^{\perp}/K)\otimes \Ann(K)\ar[r] & \Ann(K).
	\end{tikzcd}
\end{equation*}

Let $(\varphi,\Phi)\in \widetilde{\G(n,k)\times \bG_m}$. For any $v\in K$ and $s\in \Ann(K)$, write $w:=\varphi^{-1}v\in K$, so  that
\[v \Phi(s) = (\varphi w)\Phi(s) = \Phi(w s) = 0.\]
It follows that $\Phi$ preserves $\Ann(K)$. Since $\Phi$ is compatible with $\varphi$ under the Clifford action, it follows that $\Phi|_{\Ann(K)}$ is compatible with the action of $\overline{\varphi}|_{K^{\perp}/K}$. 
We obtain an induced homomorphism
\[\widetilde{\G(n,k)\times \bG_m} \xrightarrow{\widetilde{\Gprojtw_{n,k}}} \Spin^{\bC}(n-2k).\]

In the following, let $\Gprojtw_{n,k}: \G(n,k)\times \bG_m \to \SO(n-2k)$ denote the map given by 
\[\Gprojtw_{k,m}: \left(\begin{bmatrix}
	A & * & * \\
	0 & B & * \\
	0 & 0 & (A^{-1})^t
\end{bmatrix},\,  t\right) \mapsto \left(B, t\cdot \det A\right).\]

\begin{proposition}
	The diagram 
	\begin{equation*}
		\begin{tikzcd}
			\widetilde{\G(n,k)\times \bG_m}\ar[r,"{\widetilde{\Gprojtw_{n,k}}}"]\ar[d]&\Spin^{\bC}(n-2k) \ar[d] \\
			\G(n,k)\times \bG_m\ar[r,"\Gprojtw_{n,k}"] & \SO(n-2k) \times \bG_m 
		\end{tikzcd}
	\end{equation*} 
	commutes. Here the vertical maps are the natural projections onto the first factor
\end{proposition}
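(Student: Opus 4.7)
The plan is to reduce the commutativity of the square to a single identity of $\bG_m$-valued characters on $\widetilde{\G(n,k)\times\bG_m}$, which I would then verify by a direct computation with the spinor pairings of Propositions~\ref{prop:spinor-norm-even} and \ref{prop:spinor-norm-odd}. Under the identification $\Spin^{\bC}(n)\simeq \lip_n\subset \Cl_0(n)^{\times}$ from Propositions~\ref{prop:spinc-autom-even} and \ref{prop:spinc-autom-odd}, a section $\alpha$ of $\widetilde{\G(n,k)\times\bG_m}$ is an element of $\Cl_0(n)^{\times}$ whose conjugation action $\pi(\alpha)\in\SO(n)$ lies in $\G(n,k)$, with $\bG_m$-component under the vertical cover equal to $\operatorname{sn}(\alpha)$. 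By the construction of $\widetilde{\Gprojtw_{n,k}}$, its image $\bar\alpha\in\Cl_0(n-2k)^{\times}$ is the unique element whose action on $\sS_{n-2k}$ corresponds to the restriction of $\alpha$ to $\Ann(K)\simeq\sS_{n-2k}$, and its $\SO(n-2k)$-component is $\pi(\alpha)|_{K^{\perp}/K}$, matching the image of the bottom row. Thus the commutativity reduces to the identity
\[
\operatorname{sn}(\bar\alpha) \;=\; \operatorname{sn}(\alpha)\cdot\det(A), \qquad A := \pi(\alpha)|_K \in \GL(k).
\]

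To prove this, I would compute both spinor norms as the scalars implementing $\Phi\otimes\Phi$ on the canonical generators of $\sS_n\otimes_{\Cl(n)}\sS_n\simeq R$ and $\sS_{n-2k}\otimes_{\Cl(n-2k)}\sS_{n-2k}\simeq R$ (or the $\Cl_0$-versions in the odd-rank case). The key step is to exploit the Clifford factorization $f_1\wedge\cdots\wedge f_m = f_1\cdot f_2\cdots f_k\cdot (f_{k+1}\wedge\cdots\wedge f_m)$ together with the equivariance $\alpha v = \varphi(v)\alpha$ for $v\in R^n$, yielding
\[
\Phi(f_1\wedge\cdots\wedge f_m) \;=\; \varphi(f_1)\cdots\varphi(f_k)\cdot\Phi|_{\Ann(K)}(f_{k+1}\wedge\cdots\wedge f_m).
\]
Since $\varphi\in \G(n,k)$, each $\varphi(f_i)$ for $i\le k$ decomposes as $(A^{-T})_i^{\,j}f_j + u_i$ with $u_i\in K^{\perp}$. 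Expanding the product and observing that neither $\Phi(1)$ nor $\Phi|_{\Ann(K)}(f_{k+1}\wedge\cdots\wedge f_m)$ (both in $\Ann(K)$) carries any $f_i$ with $i\le k$, the $k$ factors $f_1,\ldots,f_k$ needed to reach the top degree $f_1\wedge\cdots\wedge f_m$ of the spinor pairing can only come from the pure $f$-parts of the $\varphi(f_i)$'s; antisymmetrizing produces a coefficient $\det(A^{-T})=\det(A)^{-1}$ times $\omega_K\wedge \Phi|_{\Ann(K)}(f_{k+1}\wedge\cdots\wedge f_m)$, with $\omega_K := f_1\wedge\cdots\wedge f_k$.

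After moving $\omega_K$ past $\operatorname{rev}(\Phi(1))$ and recognizing what remains as the spinor pairing on $\sS_{n-2k}\otimes_{\Cl(n-2k)}\sS_{n-2k}$ applied to the restricted maps, this gives an identity of the form
\[
\operatorname{sn}(\alpha) \;=\; (-1)^{k|\Phi(1)|}\,\det(A)^{-1}\,\operatorname{sn}(\bar\alpha).
\]
The main obstacle will be the careful bookkeeping of the accumulated signs, but these cancel because $\alpha\in\Cl_0(n)$ preserves the $\bZ_2$-grading of $\sS_n$ and $1\in\sS_n$ has even degree, so that $\Phi(1)=\alpha\cdot 1$ is of even degree and $(-1)^{k|\Phi(1)|}=1$. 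Rearranging yields $\operatorname{sn}(\bar\alpha)=\det(A)\operatorname{sn}(\alpha)$, establishing commutativity. The odd-rank case $n=2m+1$ is handled in parallel using the $\Cl_0$-tensor pairing: no new difficulty arises, since the volume element of $K^{\perp}/K$ acts as the identity on $\Ann(K)\simeq \sS_{n-2k}$ by the same computation as for the ambient volume element on $\sS_n$.
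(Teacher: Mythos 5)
Your proof is correct and follows essentially the same strategy as the paper's: reduce commutativity to the spinor-norm identity $\operatorname{sn}(\bar\alpha) = \det(A)\operatorname{sn}(\alpha)$ (the $\SO$-component being trivially compatible), then verify it via the explicit pairings of Propositions~\ref{prop:spinor-norm-even}/\ref{prop:spinor-norm-odd}, using equivariance to pull out the $\varphi(f_i)$'s and a weight argument — giving $f_1,\dots,f_k$ degree one and $f_{k+1},\dots,f_m$ degree zero — to isolate the pure $K^{\vee}$-components of $\varphi(f_i)$, whose wedge contributes $\det(A^{-T})$. The paper packages the same degree argument into a general Claim (that $z\otimes y_1\cdots y_k w$ depends only on $\bar y_1\wedge\cdots\wedge\bar y_k\in\wedge^k(R^n/K^\perp)$) and works with arbitrary $x,y\in\Ann(K)$ via an auxiliary isomorphism $F$ carrying the sign $(-1)^{k|x|}$, whereas you specialize to the generators $1$ and $f_1\wedge\cdots\wedge f_m$; this is a presentational difference only, though you should spell out the weight argument (a $u_i\in K^{\perp}$ never raises the new degree, so all $k$ factors must be $a_i$'s) rather than leaving it as an observation.
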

\begin{proof}
	It suffices that both maps give the same result after composing with the projection to $\SO(n-2k)$ and to $\bG_m$ respectively. For the composition to $\SO(n-2k)$, this is immediate from the definition of $\Gprojtw_{n,k}$. We consider the projection to $\bG_m$.
	Let $(\varphi,\Phi)\in\widetilde{\G(n,k)\times \bG_m}\subseteq \SO(n)\times \GL(\sS_n)$.
	Following the square along the lower left corner and projecting to $\bG_m$, we obtain $\det \varphi|_K \cdot \operatorname{sn}(\varphi,\Phi)\in \bG_m$.
	Following the square along the upper right, we instead get $\operatorname{sn}(\varphi|_{K^{\perp}/K}, \Phi|_{\Ann(K)})$. 
	To compare the two expressions we use the characterization of the spinor norm given in Propositions \ref{prop:spinor-norm-even} and \ref{prop:spinor-norm-odd}. 
	Let $\alpha\in \widetilde{\G(n,k)\times \bG_m}$ corresponding to $(\varphi,\Phi)\in \SO(n)\times\GL(\sS_n)$. Assume first that $n$ is even.
	We have an isomorphism of $R$-modules
	\[F:\Ann(K)\otimes_{\Cl(K^{\perp})}\Ann(K)\to \sS_n\otimes_{\Cl(n)}\sS_n,\]
	which for homogeneous elements $x,y\in \Ann(K)$ is given by $x\otimes y\mapsto (-1)^{k|x|}x\otimes f_1\cdots f_k y$. 
	Let $x\otimes y\in \Ann(K)\otimes_{\Cl(K^{\perp})}\Ann(K)$ with $x$ homogeneous. We have 
	\[\sn(\alpha) F(x\otimes y) = (\Phi\otimes \Phi) F(x\otimes y) =(-1)^{k|x|} \Phi(x)\otimes \Phi(f_1\cdots f_k y) = (-1)^{k|x|} \Phi(x)\otimes \varphi(f_1)\cdots \varphi(f_k) \Phi(y) \]
	On the other hand, we have 
	\[\sn(\widetilde{\Gprojtw_{n,k}}(\alpha)) F(x\otimes y ) =  F(\Phi|_{\Ann(K)} (x) \otimes \Phi|_{\Ann(K)} (y))= (-1)^{k|x|} \Phi(x) \otimes f_1\cdots f_k \Phi(y).\]
	Here, the second equality uses that $\Phi$ respects degree modulo two. 
	It remains to compare these two terms. For this, we make the following
	\begin{claim}
		Let $z,w\in \Ann(K)$, and let $y_1,\ldots,y_k\in R^n$. Then the element 
		\[z\otimes y_1\cdots y_k w\in \sS_n\otimes_{\Cl(n)}\sS_n \]
		depends on $y_1,\ldots,y_k$ only via their image $\bar{y_1}\wedge\cdots \wedge \bar{y_k}\in \wedge^k (R^n/K^{\perp})$. 
	\end{claim}
	Assuming this claim, we have 
	\[(-1)^{k|x|} \Phi(x)\otimes \varphi(f_1)\cdots \varphi(f_k) \Phi(y) =\det(\varphi|_K)^{-1} (-1)^{k|x|} \Phi(x)\otimes f_1\cdots f_k \Phi(y),\]
	and hence $\sn(\alpha) \det(\varphi|_K)= \sn(\widetilde{\Gprojtw_{n,k}}(\alpha))$ as desired.
	\begin{proof}[Proof of the Claim]
		First, we show that if any $y_i\in K^{\perp}$, then the expression is zero. For this, we may introduce a new grading on $\sS_n=\bigwedge^{\bullet} \Lambda^{\vee}$ by setting the degree of $f_1,\ldots,f_k$ to be one and the degree of $f_{k+1},\ldots,f_m$ to be zero. Then, in the map $\sS_{n}\otimes_R\sS_n \to \sS_n\otimes_{\Cl(n)}\sS_n\simeq R$, all elements of degree $\neq k$ in $\sS_n\otimes_R\sS_n$ are sent to zero. Now, by assumption, $z,w$ have degree zero in this grading, and all $y_i$ are of degree $\leq 1$, so if any of them has degree zero, we find that the product vanishes. 
		We have shown that the element only depends on the images of $y_1,\ldots,y_k$ in $R^n/K^{\perp}$. Without loss of generality, we may therefore assume that $y_1,\ldots,y_k\in \langle f_1,\ldots,f_k\rangle$. But the sub-algebra $\Cl(\langle f_1,\ldots,f_k\rangle)$ is the exterior algebra -- the result follows from this.
	\end{proof}
	
	Now assume that $n$ is odd. Here, one considers instead
	\begin{align*}
		F:\Ann(K)\otimes_{\Cl_0(K^{\perp})} \Ann(K)&\to \sS_n\otimes_{\Cl_0(n)} \sS_n \\ 
		x\otimes y &\mapsto x \otimes f_1\cdots f_k y
	\end{align*} 
	without intervention of signs and where we take the tensor product only over the \emph{even} parts of the Clifford algebra. The rest of the argument proceeds exactly analogously, replacing tensor products over $\Cl(n)$ by those over $\Cl_0(n)$ where appropriate.	
\end{proof}

\begin{lemma}\label{lem:isored-orientations}
	The volume element $\omega_n$ preserves $\Ann(K)\subset \sS_n$. Its action agrees with the action of the volume element $\omega_{n-2k}$ of $\Cl(K^{\perp}/K)$.  
\end{lemma}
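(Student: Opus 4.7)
The plan is to prove both claims by a direct computation using the explicit formulas for the volume elements recorded in the appendix. The key input is the Clifford anticommutation relation $e_i f_i + f_i e_i = 2q_n(e_i,f_i) = 2$, which controls how the ``$K$-factors'' $(e_if_i-1)$ for $i\leq k$ interact with sections of $\Ann(K)$.

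First I would record two preliminary observations. (a) $\Ann(K)$ is stable under the Clifford action of $\Cl(K^\perp)$: for $v\in K^\perp$, $x\in\Ann(K)$ and any $w\in K$, the relation $wv + vw = 2q_n(w,v) = 0$ yields $w(vx) = -v(wx) = 0$, so $vx\in \Ann(K)$. (b) For $1\leq i\leq k$ the factor $(e_if_i - 1)$ acts as the identity on $\Ann(K)$: if $x\in\Ann(K)$, then $e_ix=0$, and the Clifford relation gives $e_if_ix = (2 - f_ie_i)x = 2x$, so $(e_if_i - 1)x = x$.

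Combining (a) and (b) with the explicit formula $\omega_n = (e_1f_1-1)\cdots(e_mf_m-1)$ (times the additional factor $u=e_{m+1}$ when $n=2m+1$), I conclude that for $x\in\Ann(K)$,
\[
\omega_n\cdot x = \Bigl(\prod_{i=k+1}^{m}(e_if_i-1)\Bigr)\cdot x \qquad(\text{times } u \text{ in the odd case}),
\]
all factors on the right lying in $\Cl(K^\perp)$. In particular $\omega_n\cdot x\in\Ann(K)$, proving the first claim. For the second claim, since $\Ann(K)$ is a $\Cl(K^\perp)$-submodule on which the two-sided ideal generated by $K\subset\Cl(K^\perp)$ acts trivially, the action factors through $\Cl(K^\perp)\twoheadrightarrow\Cl(K^\perp/K)$. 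Under the identification \eqref{eq:reduction-iso}, which sends $e_i'\mapsto \overline{e}_{i+k}$ and therefore $f_i' = e_{n-2k-i}'\mapsto \overline{e}_{n-k-i} = \overline{f}_{i+k}$ (and $u'\mapsto \overline{u}$ in the odd case), the element $\prod_{i=k+1}^m(\overline{e}_i\overline{f}_i-1)$ (times $\overline{u}$) is by definition the image of $\omega_{n-2k}\in\Cl(n-2k)$. This is the desired equality of actions.

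The only real obstacle is bookkeeping — correctly tracking the parity of $n$, the role of the extra factor $u$ in the odd case, and checking that \eqref{eq:reduction-iso} actually matches the standard basis of $R^{n-2k}$ (in particular its $f$-vectors) to the images of the corresponding basis elements of $K^\perp$. Once the conventions are laid out, the essential content is a single-line application of the Clifford relation, and no further input is needed.
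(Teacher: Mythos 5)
Your proof is correct and takes essentially the same route as the paper: factor $\omega_n$ into the factors $(e_if_i-1)$ for $i\le k$ and the remaining factors, use the Clifford relation $e_if_i+f_ie_i=2$ together with $e_i\in K$ to see that each $(e_if_i-1)$ with $i\le k$ acts as the identity on $\Ann(K)$, and identify the surviving product with $\omega_{n-2k}$ under \eqref{eq:reduction-iso}. Your observation (a) is also already recorded in the paper just before the lemma, so the only substantive content in either proof is step (b), which you and the paper handle by the same one-line Clifford computation.
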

\begin{proof}
	To calculate this, we may use the splitting $R^n = K^{\perp}/K \oplus (K\oplus K^{\vee})$, and the induced factorization
	\[\Cl(n) = \Cl(K^{\perp}/K)\otimes_{\bZ_2}\Cl(K\oplus K^{\vee}).\] 
	Explicitly, we can write $\omega_{n} = \omega_{n-2k} \omega_K$, where $\omega_K=(e_1f_1-1)\cdots (e_kf_k-1)$. 
	Then, it suffices to show that $\omega_K$, or equivalently each $(e_if_i-1)$ acts as the identity on $\Ann(K)$ for $i=1,\ldots,k$. 
	This holds, since
	\[(e_if_i-1) = (1 - f_ie_i),\]
	and $e_i\in K$.
\end{proof}

\subsection{Torsors and Clifford modules}
We characterize torsors for the $\Spin^{\bC}$-groups. It is again most natural to split the discussion into even and odd rank, but we first discuss some common properties. 

Recall that torsors for the group $\SO(n)$ are equivalent to oriented quadratic bundles via Proposition \ref{prop:special-orthogonal-torsors}. 
A torsor $\sigma$ for the group $\Spin^{\bC}(n)$ gives rise to a torsor over $\SO(n)$ via the natural map $\Spin^{\bC}(n)\to \SO(n)$, and hence to an oriented quadratic bundle, which we denote $V_{\sigma}$. 

The group $\Spin^{\bC}(n)$ acts on $\Cl(n)$ by conjugation, in a way that respects the algebra structure. Thus, the associated vector bundle $P_{\sigma}\times_{\Spin^{\bC}(n)} \Cl(n)$ carries an associative algebra structure. As an $\mathcal{O}_X$-algebra, it is naturally isomorphic to $\Cl(V_{\sigma})$. 

Finally, let $\sS$ be any module for the Clifford algebra $\Cl(n)$. Then via the composition $\Spin^{\bC}(n)\to \Cl(n)\to \End_R(\sS)$, we may again form the associated bundle $\sR(\sS,{\sigma}):=P_{\sigma}\times_{\Spin^{\bC}(n)} \sS$. Since the $\Spin^{\bC}(n)$-action on $\sS$ is compatible with the conjugation action on $\Cl(n)$ under the Clifford multiplication, we find that $\sR(\sS,\sigma)$ is naturally a module under the algebra $\Cl(V_{\sigma})$. 

\paragraph{Even rank.}
Suppose that $n=2m$. 
\begin{definition}\label{def:spindata-even}
	Let $\mathcal{S}\mathit{pin}^{\bC}_n$ denote the \emph{category of spin data} whose objects are tuples 
	\[(M, q_M,o_M, \sS_M),\]
	where 
	\begin{itemize}
		\item $(M, q_M,o_M)$ is an oriented rank $n$ quadratic bundle on $X$,
		\item $\sS_M$ is a module for $\Cl(M)$, locally free of rank $2^m$ as an $\mathcal{O}_X$-module,
	\end{itemize}
	A morphism $(M, q_M,o_M, \sS_M) \to (N, q_N,o_N, \sS_N)$ in this category is given by a pair $(\varphi, \Phi)$, where 
	\begin{itemize}
		\item $\varphi: M\to N$ is an isomorphism of oriented quadratic bundles, and
		\item $\Phi:\sS_M\to \sS_N$ is an isomorphism of $R$-modules that is compatible over $\Cl(\varphi)$ with the Clifford actions. 
	\end{itemize}
\end{definition}
In Proposition \ref{prop:spinc-autom-even} we have exhibited $\Spin^{\bC}(n)$ as the group of automorphisms of the standard object $(R^n,q_n,o_n,\sS_n)$ via the map $\Spin^{\bC}(n)\to \SO(n)\times\GL(\sS_n)$. Given any (right-) torsor $P_{\sigma}$ for the group $\Spin^{\bC}(n)$ over a scheme, we have the associated oriented quadratic bundle $V_{\sigma}$, and the $\Cl(V_{\sigma})$-module $\sS_\sigma:=\sR(\sS_n,\sigma)$. This gives a functor
\begin{equation}\label{eq:spin-functor-todata-even}
	\left(\Spin^{\bC}(n) \mbox{-torsors} \right) \to \Spincat^{\bC}_n.
\end{equation}

Conversely, given a spin datum $(M,q_M,o_M,\sS_M)$, one may consider the sheaf of groups 
\[\operatorname{Isom}\left((M,q_M,o_M,\sS_M),(\mathcal{O}_X^n,q_n,o_n,\sS_n)\right).\]
parametrizing automorphisms to the standard spin datum. 
Since the automorphism group of the standard datum is $\Spin^{\bC}(n)$, and since locally such an isomorphism always exists, this is a (right-) torsor for $\Spin^{\bC}(n)$.
This construction provides a functor
\begin{equation}\label{eq:spin-functor-fromdata-even}
	\Spincat^{\bC}_n \to \left(\Spin^{\bC}(n)\mbox{-torsors}\right).
\end{equation}
We record without proof (see \cite[Proposition 4.5.1.6]{calfas_groupes} for a similar statement) 
\begin{proposition}\label{prop:spin-data-even}
	The maps \eqref{eq:spin-functor-todata-even} and \eqref{eq:spin-functor-fromdata-even} give mutually inverse equivalences of categories. 
\end{proposition}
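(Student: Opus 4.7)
The plan is to verify directly that the pair of functors \eqref{eq:spin-functor-todata-even}--\eqref{eq:spin-functor-fromdata-even} form mutually inverse equivalences, using Proposition \ref{prop:spinc-autom-even} as the key input. Since both constructions are of the standard form ``torsor $\leftrightarrow$ isom-sheaf to a standard object'', the formal content reduces to two things: that \eqref{eq:spin-functor-fromdata-even} actually lands in $\Spin^{\bC}(n)$-torsors, and that the two round-trip compositions are canonically isomorphic to the identity.

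For the torsor property, representability of the isom-sheaf is standard. Proposition \ref{prop:spinc-autom-even} gives immediately that $\Spin^{\bC}(n)$ acts simply transitively on the isom-sheaf whenever it is non-empty. The substantive step is \'etale-local non-emptiness, which we establish in two stages. First, by Proposition \ref{prop:special-orthogonal-torsors}, the oriented quadratic bundle $(M, q_M, o_M)$ is \'etale-locally trivial; after passing to such a cover we may assume $(M, q_M, o_M) = (\mathcal{O}_X^n, q_n, o_n)$, whence $\Cl(M) = \Cl(n)$. Second, we must show that any $\Cl(n)$-module $\sS_M$ that is locally free of rank $2^m$ as an $\mathcal{O}_X$-module is Zariski-locally isomorphic to $\sS_n$ as a $\Cl(n)$-module. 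Here we use that for $n = 2m$ the Clifford action gives an algebra isomorphism $\Cl(n) \simeq \End_R(\sS_n)$, so Morita equivalence identifies the category of $\Cl(n)$-modules with that of $\mathcal{O}_X$-modules via the functor $\sS_M \mapsto \scL := \cHom_{\Cl(n)}(\sS_n, \sS_M)$, with inverse $\scL \mapsto \scL \otimes_{\mathcal{O}_X} \sS_n$. The rank hypothesis forces $\scL$ to be a line bundle, and trivializing $\scL$ Zariski-locally produces the required isomorphism $\sS_M \simeq \sS_n$ over $\Cl(n)$.

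For the round-trip compositions, on the torsor side a $\Spin^{\bC}(n)$-torsor $P_\sigma$ produces the spin datum $(V_\sigma, q_\sigma, o_\sigma, \sS_\sigma)$; the canonical evaluation map
\[
P_\sigma \to \operatorname{Isom}\bigl((V_\sigma, q_\sigma, o_\sigma, \sS_\sigma), (\mathcal{O}_X^n, q_n, o_n, \sS_n)\bigr),
\]
sending a local section $p$ to the induced isomorphisms on associated bundles, is $\Spin^{\bC}(n)$-equivariant, hence is an isomorphism of torsors. On the data side, starting from $D = (M, q_M, o_M, \sS_M)$ with torsor $P = \operatorname{Isom}(D, D_{\mathrm{std}})$, the associated bundles $P \times_{\Spin^{\bC}(n)} R^n$ and $P \times_{\Spin^{\bC}(n)} \sS_n$ are canonically identified with $M$ and $\sS_M$ respectively via evaluation, in a way that manifestly respects the quadratic form, orientation and Clifford action.

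I expect the main obstacle to be the Morita-equivalence step: one must verify that the line bundle $\scL$ produced is genuinely compatible with the $\Cl(n)$-module structure (not merely witnessing the $\mathcal{O}_X$-rank), and keep careful track of base changes through the \'etale trivialization of $(M, q_M, o_M)$ followed by the Zariski refinement trivializing $\scL$. Once that bookkeeping is in order, the remaining verifications are formal consequences of the fact that $\Cl(n)$ is globally a matrix algebra over $R$ in even rank.
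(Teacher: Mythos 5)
The paper does not actually supply a proof of this proposition: it reads ``We record without proof (see \cite[Proposition 4.5.1.6]{calfas_groupes} for a similar statement),'' so there is no internal argument to compare against, and your write-up fills a genuine gap. Your proof is correct and the Morita-equivalence step is exactly the right nontrivial content: since $n=2m$ is even, the Clifford action identifies $\Cl(n)$ with $\cEnd(\sS_n)$, so that $\Cl(n)$-modules correspond to $\mathcal{O}_X$-modules via $\sS_M \mapsto \cHom_{\Cl(n)}(\sS_n, \sS_M)$, the rank constraint forces this to be a line bundle, and trivializing it locally yields the needed local isomorphism $\sS_M \simeq \sS_n$ of $\Cl(n)$-modules. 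Your caveat about ``compatibility with the $\Cl(n)$-structure'' is actually automatic — Morita equivalence produces an isomorphism $\sS_M \simeq \scL\otimes_{\mathcal{O}_X}\sS_n$ of $\Cl(n)$-modules by construction, with $\Cl(n)$ acting only on the $\sS_n$ factor, so any local trivialization of $\scL$ is harmless — so that worry can be retired. The only small phrasing point: after the \'etale trivialization of $(M,q_M,o_M)$, ``Zariski-local'' triviality of $\scL$ is a statement on the cover, so the combined trivialization of the full spin datum is \'etale-local on $X$, which is what one needs (and suffices, since $\Spin^{\bC}(n)$ is smooth affine, so its torsors are \'etale-locally trivial). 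The round-trip checks are indeed the standard torsor yoga and your sketch of them is adequate.
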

Consider the exact sequence of groups 
\[1\to \bZ_2\to \Spin^{\bC}(n)\xrightarrow{(\operatorname{can}, \sn)} \SO(n)\times \bG_m \to 1.\]

By taking extensions of structure groups, this gives a functor
\[\left(\Spin^{\bC}(n) \mbox{-torsors}\right) \to \left(\SO(n)\times \bG_m \mbox{-torsors}\right).\]
On the level of Spin data and associated bundles, it sends a spin datum $\sigma = (M,q_M,o_M,\sS_M)$ to the pair $((M,q_M,o_M), L_{\sigma})$, where 
\[L_{\sigma} := \sS_M\otimes_{\Cl(M)} \sS_M.\]
Conversely, suppose we have a $\SO(n)\times \bG_m$-torsor given by a pair $(M, L)$, where $M$ carries an oriented quadratic structure. Then a reduction of structure group to $\Spin^{\bC}(n)$ is the same as giving a $\Cl(M)$-module $\sS$, together with an isomorphism of $\mathcal{O}_X$-modules $\sS\otimes_{\Cl(M)}\sS \to L$.

\paragraph{Odd rank.}
Now let $n=2m+1$ be odd. We have the following variant of Definition \ref{def:spindata-even}, the main difference being that we now require the Clifford module to be \emph{positive}.
\begin{definition}
	Let $\Spincat^{\bC}_n$ denote the category whose objects are tuples
	\[(M, q_M,o_M, \sS_M),\]
	where 
	\begin{itemize}
		\item $(M, q_M,o_M)$ is an oriented rank $n$ quadratic bundle,
		\item $\sS_M$ is a module for $\Cl(M)$, locally free of rank $2^m$ as an $R$-module, and on which the volume element $\omega\in \Cl(M)$ associated to the orientation acts with weight $1$.
	\end{itemize}
	A morphism $(M, q_M,o_M, \sS_M) \to (N, q_N,o_N, \sS_N)$ in this category is given by a pair $(\varphi, \Phi)$, where 
	\begin{itemize}
		\item $\varphi: M\to N$ is an isomorphism of oriented quadratic bundles, and
		\item $\Phi:\sS_M\to \sS_N$ is an isomorphism of $R$-modules that is compatible over $\Cl(\varphi)$ with the Clifford actions. 
	\end{itemize}
\end{definition}
In the same way as in the even rank case, we have maps 
\begin{equation}\label{eq:spin-torsor-maps-odd}
	\left(\Spin^{\bC}(n)\mbox{-torsors}\right) \rightleftharpoons \Spincat^{\bC}_n
\end{equation}
Given by taking the isomorphism scheme to the trivial $\Spin^{\bC}$-datum, and the associated bundle construction respectively. 
\begin{proposition}\label{prop:spin-data-odd}
	The maps \eqref{eq:spin-torsor-maps-odd} are inverse equivalences of categories. 
\end{proposition}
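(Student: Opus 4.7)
The plan is to follow the pattern of Proposition \ref{prop:spin-data-even} almost verbatim, substituting Proposition \ref{prop:spinc-autom-odd} for \ref{prop:spinc-autom-even} as the computation of the automorphism group of the standard object $(\mathcal{O}_X^n, q_n, o_n, \sS_n)$. The two extra ingredients specific to the odd case are (a) the fact that the volume element $\omega_n$ is central in $\Cl(n)$ and hence its eigenspaces are preserved by the conjugation action, and (b) the rank $2^m$ normalization in the definition of $\Spincat^{\bC}_n$, which singles out the isomorphism class of $\sS_n$ among $\Cl(n)$-modules with $\omega_n$ acting by $+1$.

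First I would check that both constructions in \eqref{eq:spin-torsor-maps-odd} are well-defined functors. Given a $\Spin^{\bC}(n)$-torsor $P_\sigma$, the associated oriented quadratic bundle $V_\sigma$ comes from the natural map $\Spin^{\bC}(n)\to \SO(n)$, the Clifford algebra bundle is $\Cl(V_\sigma) \simeq P_\sigma \times_{\Spin^{\bC}(n)} \Cl(n)$, and $\sS_\sigma := P_\sigma\times_{\Spin^{\bC}(n)} \sS_n$ is a $\Cl(V_\sigma)$-module of $\mathcal{O}_X$-rank $2^m$. Because $n$ is odd, $\omega_n$ lies in the center of $\Cl(n)$, so the conjugation action of $\Spin^{\bC}(n)$ fixes $\omega_n$; equivalently, the volume element associated to the orientation of $V_\sigma$ corresponds under the associated-bundle isomorphism to $\omega_n$, and it acts on $\sS_\sigma$ with weight $+1$ because it does so on $\sS_n$. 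Thus $(V_\sigma,\sS_\sigma)\in \Spincat^{\bC}_n$. Conversely, the isom-scheme $\operatorname{Isom}\bigl((M,q_M,o_M,\sS_M),(\mathcal{O}_X^n,q_n,o_n,\sS_n)\bigr)$ is acted on freely and transitively by $\Spin^{\bC}(n)$ via Proposition \ref{prop:spinc-autom-odd}, provided we already know this isom-scheme is \'etale-locally nonempty.

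The main obstacle is precisely this last local triviality statement, i.e.\ that every object of $\Spincat^{\bC}_n$ is \'etale-locally isomorphic to the standard one. The underlying oriented quadratic bundle is \'etale-locally standard by Proposition \ref{prop:special-orthogonal-torsors}, so we may work locally on $X$ and assume $(M,q_M,o_M) = (\mathcal{O}_X^n, q_n, o_n)$. We then need to show that any $\Cl(n)$-module $\sS_M$ that is $\mathcal{O}_X$-locally free of rank $2^m$ and on which $\omega_n$ acts as $+1$ is \'etale-locally isomorphic to $\sS_n$. For this I would use the decomposition $\Cl(n)\simeq \End_R(\sS_n)\times \End_R(\sS_n')$ from \S \ref{app:cliff-rep}: the two factors are exchanged by the sign of the $\omega_n$-action, so the condition $\omega_n|_{\sS_M}=+1$ means $\sS_M$ is a module over the first factor only. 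Morita equivalence for the matrix algebra $\End_R(\sS_n)$ (applied after base change to a Zariski cover trivializing $\sS_M$) then identifies $\sS_M$ with $\sS_n \otimes_{\mathcal{O}_X} \mathcal{L}$ for some locally free module $\mathcal{L}$; matching ranks forces $\mathcal{L}$ to be a line bundle, and trivializing $\mathcal{L}$ Zariski-locally produces the required isomorphism $\sS_M\simeq \sS_n$ compatible with the Clifford actions.

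Once local triviality is established, the rest is formal: the isom-scheme is a right torsor for $\Aut(\mathcal{O}_X^n,q_n,o_n,\sS_n) = \Spin^{\bC}(n)$ by Proposition \ref{prop:spinc-autom-odd}, and the two functors in \eqref{eq:spin-torsor-maps-odd} are tautologically inverse to each other: starting with a torsor $P_\sigma$, the isom-sheaf of $(V_\sigma,\sS_\sigma)$ is canonically identified with $P_\sigma$ via the evaluation map at the unit frame, and starting with spin data $(M,q_M,o_M,\sS_M)$, the associated bundle construction applied to its isom-sheaf recovers $(M,q_M,o_M,\sS_M)$ by a standard argument. This completes the equivalence. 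I expect no additional difficulty in the odd case beyond the Morita argument described above, which is the one genuinely new input compared to Proposition \ref{prop:spin-data-even}.
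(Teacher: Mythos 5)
The paper does not include a proof of this statement (nor of the even-rank analogue, Proposition~\ref{prop:spin-data-even}, for which it simply cites \cite[Proposition 4.5.1.6]{calfas_groupes}), so there is no in-text argument to compare against. Your proof is correct and takes the route one would expect: the formal isom-sheaf/associated-bundle adjunction reduces everything to showing that a spin datum is \'etale-locally isomorphic to the standard one, and the Morita argument over $\Cl(n)\simeq\End_R(\sS_n)\times\End_R(\sS'_n)$---using the $\omega_n$-eigenvalue to select the first factor---does exactly this, producing $\sS_M\simeq\sS_n\otimes\mathcal{L}$ with $\mathcal{L}$ a line bundle by comparing ranks. Two small remarks. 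First, the parenthetical about base-changing to a Zariski cover trivializing $\sS_M$ before invoking Morita is a distraction: $\mathcal{L}=\cHom_{\Cl(n)}(\sS_n,\sS_M)$ is already a globally defined line bundle on $X$, and one trivializes $\mathcal{L}$ locally at the end. Second, when checking that the associated-bundle construction lands in $\Spincat^{\bC}_n$, centrality of $\omega_n$ alone gives that the conjugation action fixes $\omega_n$; to conclude that the volume element of $V_\sigma$ (the one determined by $o_{V_\sigma}$) acts by $+1$ on $\sS_\sigma$, you should add that the isomorphism $\Cl(V_\sigma)\simeq P_\sigma\times_{\Spin^{\bC}(n)}\Cl(n)$ carries the volume element of $V_\sigma$ to the class of $\omega_n$, which holds because the volume element is functorially determined by the orientation and $o_{V_\sigma}$ corresponds to $o_n$ under the torsor identification.
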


Analogously, we have the map 
\[\left(\Spin^{\bC}(n) \mbox{-torsors}\right) \to \left(\SO(n)\times \bG_m \mbox{-torsors}\right).\]
In this case, it is given on spin data by sending $\sigma = (M,q_M, o_M, \sS_M)$ to the pair $((M,q_M,o_M),L_{\sigma})$ where, due to the characterization of the spinor norm in the odd case, we have 
\[L_{\sigma}:=\sS_M\otimes_{\Cl_0(M)}\sS_M.\]
Given a pair $(V,L)$ of an oriented quadratic bundle and a line bundle, a reduction of structure group to $\Spin^{\bC}(n)$ of the corresponding $\SO(n)\times \bG_m$-torsor is the same as giving a $\Cl(V)$-module $\sS$ on which the volume element acts with weight $+1$, and an isomorphism $\sS\otimes_{\Cl_0(n)}\sS\simeq L$. 
\section{Untwisted pushforward}\label{app:untw}

Let $X$ be a Noetherian algebraic space\footnote{I.e. quasi-compact, quasi-separated and locally Noetherian, cf. \cite[\href{https://stacks.math.columbia.edu/tag/03E9}{Section 03E9}]{stacks-project}}.
Let $\pi:\mathcal{X}\to X$ be a $\mu_n$-gerbe over $X$. Recall that the $K$-group of $\mathcal{X}$ carries a $\bZ/n\bZ$-weight grading (see \cite[Theorem 4.7]{besch_decompositions} for the statement about categories of sheaves):
\[K_0(\mathcal{X}) = \bigoplus_{i=0}^{n-1} K_0^i(\mathcal{X}).\]
Moreover, pullback defines an identification $\pi^*:K_0(X)\to K_0^0(\mathcal{X})$, while pushforward acts as projection onto the summand $K_0^0(\mathcal{X})$ under this identification.

Our goal is to define a different pushforward map $K_0(\mathcal{X})\to K_0(X)[1/n]$, which -- roughly speaking -- acts on $K_0^i(\mathcal{X})$ by undoing the twist before applying the usual pushforward map. It turns out that this is well-defined after inverting $n$ in the coefficients. Our starting point is the following observation which we learnt from \cite[\S 5.1]{ot_counting}:

\begin{lemma}\label{lem:pullback-lem}
	Suppose there exists a section $\sigma:X\to \mathcal{X}$. Then, the map 
	\[\sigma^*:K(\mathcal{X})\to K(X)[1/n].\]
	is independent of the choice of $\sigma$.  
\end{lemma}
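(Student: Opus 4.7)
The plan is to reduce the claim to a question about an $n$-torsion line bundle on $X$, and then to show any such line bundle becomes trivial in $K_0(X)[1/n]$.

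First I would identify the difference of two sections as a $\mu_n$-torsor. Given $\sigma_1, \sigma_2 : X \to \mathcal{X}$, the presheaf sending $(U \to X) \mapsto \operatorname{Isom}_{\mathcal{X}_U}(\sigma_1|_U, \sigma_2|_U)$ is canonically a $\mu_n$-torsor $P$ on $X$, since any two sections of a $\mu_n$-gerbe locally differ by an element of the band. Via the Kummer sequence $1 \to \mu_n \to \mathbb{G}_m \xrightarrow{(-)^n} \mathbb{G}_m \to 1$, the class of $P$ in $H^1(X, \mu_n)$ produces an $n$-torsion line bundle $L := L_P$ on $X$ equipped with a canonical isomorphism $L^{\otimes n} \simeq \mathcal{O}_X$. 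Using the weight decomposition $K(\mathcal{X}) = \bigoplus_{i=0}^{n-1} K_0^i(\mathcal{X})$ recalled above, the torsor structure yields for each $\mathcal{F} \in \operatorname{Coh}^i(\mathcal{X})$ a natural isomorphism $\sigma_2^*\mathcal{F} \simeq \sigma_1^*\mathcal{F} \otimes L^{\otimes i}$, so $\sigma_2^* = [L]^i \cdot \sigma_1^*$ on the summand $K_0^i(\mathcal{X})$. The lemma then reduces to the assertion that $[L] = 1$ in $K_0(X)[1/n]$.

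For the key step I would prove: for any line bundle $L$ on a Noetherian algebraic space $X$ with $L^{\otimes n} \simeq \mathcal{O}_X$, one has $[L] = 1$ in $K_0(X)[1/n]$. Setting $a := [L] - 1 \in K_0(X)$, the identity $[L]^n = 1$ expands as $0 = (1+a)^n - 1 = a \cdot v$, where $v = n + \binom{n}{2}a + \binom{n}{3}a^2 + \cdots + a^{n-1}$. It then suffices to show that $v$ is a unit in $K_0(X)[1/n]$. The class $a$ lies in the first step $F^1_\gamma K_0(X)$ of the $\gamma$-filtration; a standard result of Grothendieck gives that this filtration is nilpotent on a Noetherian scheme of finite Krull dimension, and this then passes to Noetherian algebraic spaces by \'etale descent. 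Consequently $a$ is nilpotent, so $v = n + (\text{nilpotent})$ is $n$ times a unit after inverting $n$, and is therefore itself a unit. This forces $a = 0$ in $K_0(X)[1/n]$, which completes the proof.

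The principal technical obstacle is justifying the nilpotence of $F^1_\gamma K_0(X)$ in the full generality of Noetherian algebraic spaces stated at the outset of the appendix; once this is in hand, everything else is formal, relying only on the Kummer description of $\mu_n$-torsors and the standard weight decomposition of $K$-theory of a $\mu_n$-gerbe.
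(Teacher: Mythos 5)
Your reduction to the triviality of an $n$-torsion line bundle in $K$-theory with $n$ inverted, and the subsequent factorization $0=(1+a)^n-1=a\cdot v$ with $v$ a unit modulo nilpotents, matches the paper's strategy (the paper additionally reduces to prime-power $q$, but this is inessential). Your setup via the Isom $\mu_n$-torsor of two sections is a perfectly good alternative to the paper's device of trivializing the gerbe as $X\times B\mu_n$ and pulling back a fixed weight-one line bundle; these are two ways of producing the same $n$-torsion line bundle.

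However, there is a genuine gap in the nilpotence step, and it is precisely the place where the argument has to carry real weight. You invoke nilpotence of the $\gamma$-filtration on $K$-theory, which in SGA6 is proved for Noetherian schemes of \emph{finite Krull dimension} admitting an ample family of line bundles; neither hypothesis is part of the standing assumptions here (the appendix works with arbitrary Noetherian algebraic spaces). Your parenthetical appeal to ``\'etale descent'' does not obviously transport a ring-theoretic nilpotence statement across an \'etale cover, and no mechanism is offered. There is a second, related imprecision: the class $[L]$ lives in $K^{\circ}(X)$ (vector bundles / perfect complexes), while the pullback $\sigma^*$ lands in $K_0(X)$ (coherent sheaves). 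You write $a\in K_0(X)$ and claim $a$ is nilpotent as an \emph{element}; the paper is careful to place the class in $K^{\circ}(X)$ and to prove only that the augmentation ideal of $K^{\circ}(X)$ acts \emph{nilpotently as operators} on $K_0(X)$, citing a specific lemma of Edidin--Graham tailored for exactly this operational statement. This distinction is not cosmetic: operational nilpotence on $K_0$ can hold where element-wise nilpotence in $K^{\circ}$ is either unknown or false, and it is exactly what is needed to conclude $v$ acts invertibly on $K_0(X)[1/n]$. To repair your proof in the stated generality, you should replace the $\gamma$-filtration appeal with the operational nilpotence result the paper cites, and phrase the conclusion as ``$a$ acts as zero on $K_0(X)[1/n]$'' rather than ``$a=0$.''
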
 
\begin{proof}
	The existence of a section implies $\mathcal{X}\simeq X\times B\mu_n$. Choosing such an isomorphism gives an identification of $H^1(X, \mu_n)$ with the set of sections of $\mathcal{X}$. 
	
	Let $\mathcal{L}$ be the fixed weight $1$ line bundle on $B\mu_n$ corresponding to the standard representation of $\mu_n$. This gives a trivialization $K_0(\mathcal{X}) = \bigoplus_{i=0}^n \mathcal{L}^i \pi^*K_0(X)$ is a direct sum of copies of $K_0(X)$. Since $\mathcal{L}^{\otimes n}$ is trivial, for any section $\sigma$ the pullback $\sigma^*\mathcal{L}$ is an $n$-th root of $\mathcal{O}_X$. Here, we view $\mathcal{L}$ as an element of $K^{\circ}(\mathcal{X})$ (the Grothendieck group of perfect complexes) acting on $K_0(\mathcal{X})$ via the tensor product. 
	
	We claim that any $n$-th root of $\mathcal{O}_X$ acts as $1 = [\mathcal{O}_X]$ on $K_0(X)[1/n]$. Clearly, it is enough to show this for $p$-th power roots of unity for any prime $p|n$. 
	Therefore, suppose that $z^{q} = 1$ in $K^{\circ}(X)[1/n]$ where $q=p^r$ for some prime divisor $p$ of $n$, and that $z$ has rank one. Then $x:=z-1$ is contained in the augmentation ideal of $K^{\circ}(X)[1/n]$, and satisfies $(1+x)^q = 1$, therefore
	\[0 = x \left(\sum_{i=1}^q \binom{q}{i} x^{i-1}\right) = x (1 +\frac{1}{p^r}\sum_{i=1}^{q-1}\binom{q}{i+1} x^i).\]
	Since the augmentation ideal acts nilpotently on $K_0(X)$ \cite[Lemma 2.4]{edgr_riemroch}, the term in parentheses acts invertibly, and therefore $x=0$ as an endomorphism of $K_0(X)$. We conclude that $z = 1+x = 1 $ in $\End(K_0(X)[1/n])$. 
\end{proof}

\begin{remark}
	The conditions on $X$ are only needed to ensure that elements of $1 + I$ act as units on $K_0(X)$, where $I$ is the augmentation ideal. Thus, more generally, the conclusion of the Lemma still holds for an arbitrary algebraic stack $X$ if one replaces $K^{\circ}(X)$ and $K_0(X)$ by their completions along the augmentation ideal. 
\end{remark}

Lemma \ref{lem:pullback-lem} assumes that the class of the gerbe in $H^2(X,\mu_n)$ vanishes. We relax this assumption to require only that the its image $H^2(X,\bG_m)$ vanishes.

Note that for a prime number $p$ the formal identity $(1+X)^p = (1+Z)$ can be restated as
\[Z/p = F(X) = X +O(X^2)\]
when $p$ is assumed invertible. Hence, the series $F$ has a compositional inverse $G$ such that $X = G(Z/p)$. If $z$ is a nilpotent element of any ring in which $p$ is invertible, we can construct a $p$'th root of $1+z$ as $1 + G(z/p)$. Iterating this construction, we obtain a canonical $n$-th root of $1+z$ whenever $z$ is nilpotent and $n$ is invertible. 

\begin{construction}\label{constr:untwisted}
	Suppose that there exists a $1$-twisted line bundle $\mathcal{L}$ on $\mathcal{X}$. Then $\mathcal{L}^{\otimes n} = \pi^*L$ for some line bundle $L$ on $X$. We define $\widetilde{\pi}_{X,*}$ via its action on each summand:
	\begin{align*}
		\widetilde{\pi}_{X,*}: K_0^i(\mathcal{X})& \to K_0(X)[1/n]\\
		F& \mapsto L^{i/n} \otimes \pi_{X,*}(\mathcal{L}^{-i} \otimes F) 
	\end{align*} 
	Here $L^{1/n}$ denotes the $n$-th root construction described above, where we use that $L = 1 + (L-1)$, and that the augmentation ideal acts nilpotently on $K_0(X)$.
\end{construction}
\begin{lemma}
	The map $\widetilde{\pi}_{X,*}$ of Construction \ref{constr:untwisted} is independent of the choice of $\mathcal{L}$.
\end{lemma}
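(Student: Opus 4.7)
My plan is to compare two choices $\mathcal{L}, \mathcal{L}'$ directly and reduce the independence statement to a clean algebraic identity about the $n$-th root construction. The ratio $\mathcal{L}'\otimes \mathcal{L}^{-1}$ is a line bundle of weight $0$ on $\mathcal{X}$, so it descends uniquely along $\pi$: there exists a line bundle $M$ on $X$ with $\pi^*M \simeq \mathcal{L}'\otimes \mathcal{L}^{-1}$. Taking $n$-th powers gives $L' = L\otimes M^{\otimes n}$. For a class $F\in K_0^i(\mathcal{X})$, the projection formula then yields
\[
\widetilde{\pi}'_{X,*}(F) \;=\; L'^{\,i/n} \otimes \pi_*\!\left(\mathcal{L}'^{-i}\otimes F\right) \;=\; (L\otimes M^{\otimes n})^{i/n}\otimes M^{-i}\otimes \pi_*\!\left(\mathcal{L}^{-i}\otimes F\right).
\]
So independence of the choice comes down to the identity $(L\otimes M^{\otimes n})^{i/n} = L^{i/n}\otimes M^{\otimes i}$ in $K_0(X)[1/n]$, for arbitrary line bundles $L, M$ on $X$.

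To establish this identity, I would prove two properties of the $n$-th root construction applied to elements in $1 + I$, where $I\subset K^{\circ}(X)[1/n]$ is the augmentation ideal: namely (a) multiplicativity, $(ab)^{1/n} = a^{1/n}\cdot b^{1/n}$, and (b) the identity $(M^{\otimes n})^{1/n} = M$ for any line bundle $M$. Combining (a) with (b) and raising to the $i$-th power gives exactly the required formula. Both properties will follow from a single uniqueness principle: inside $1 + (\text{operators nilpotent on } K_0(X)[1/n])$ the only $n$-th root of the identity is the identity itself. Indeed, from $(1+x)^n = 1$ one gets $x\bigl(n + \binom{n}{2}x + \cdots + x^{n-1}\bigr) = 0$, and the parenthesized factor is a unit since it equals $n$ modulo a nilpotent and $n$ is invertible; so $x=0$. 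Then $a^{1/n} b^{1/n}$ and $(ab)^{1/n}$ lie in $1 + (\text{nilpotent})$ and both have $n$-th power $ab$, hence agree; similarly $M$ and $(M^{\otimes n})^{1/n}$ both have $n$-th power $M^{\otimes n}$, hence agree.

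The only technical point to verify carefully, and the one I consider the main potential obstacle, is that the series-theoretic definition of $L^{1/n}$ really does act on $K_0(X)[1/n]$ in a well-defined way, and that the manipulations above take place in a genuine ring. This is ensured by the hypothesis that $X$ is Noetherian, via \cite[Lemma 2.4]{edgr_riemroch}: the augmentation ideal $I$ acts nilpotently on $K_0(X)$, so every element $1 + z$ with $z\in I$ has all power series in $z$ (with $\mathbb{Z}[1/n]$-coefficients) acting via finite sums. With this in place, the uniqueness argument applies directly as endomorphisms of $K_0(X)[1/n]$, and the computation above is unambiguous.
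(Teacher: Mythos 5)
Your proof is correct and follows essentially the same path as the paper's: descend $\mathcal{L}'\otimes\mathcal{L}^{-1}$ to a line bundle on $X$, apply the projection formula, and reduce to the identity $(L\otimes M^{\otimes n})^{1/n}=L^{1/n}\otimes M$ acting on $K_0(X)[1/n]$. The paper simply asserts this identity (``We then have $(L')^{1/n}=L^{1/n}\otimes N$ as operators on $K_0(X)[1/n]$'') without comment, whereas you supply a clean justification via uniqueness of $n$-th roots in $1+(\text{nilpotents})$, which is a genuine improvement in completeness over the paper's terse treatment.
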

\begin{proof}
	Let $\mathcal{L}'$ be a different choice with $L'= (\mathcal{L}')^{\otimes n}$. Then $\mathcal{N}:=\mathcal{L}'\otimes \mathcal{L}^{\vee} = \pi_X^*N$ for some line bundle $N$ on $X$. We then have $(L')^{1/n} = L^{1/n}\otimes N$ as operators on $K_0(X)[1/n]$.  For any $F\in K_0^i(\mathcal{X})$, we calculate 
	\[(L')^{i/n}\otimes \pi_{X,*}(\mathcal{L}'^{-i} \otimes F) = L^{i/n} \otimes N^i \otimes \pi_{X,*}(\pi_X^*N^{-i} \otimes \mathcal{L}^{-i}\otimes F) = L^{1/n}\otimes \pi_{X,*}(\mathcal{L}^{-i}\otimes F),\]
	where the last equality used the projection formula.
\end{proof}

We are ready to state the main result.
\begin{theorem}\label{thm:untw}
	There is a unique collection of maps $\tilde{\pi}_{X,*}:K(\mathcal{X})\to K(X)[1/n]$ defined for any $\mu_n$-gerbe $\pi_X:\mathcal{X}\to X$ over a Noetherian algebraic space $X$ satisfying the following properties:
	\begin{enumerate}
		\item Whenever $\mathcal{X}$ admits a $1$-twisted line bundle, then $\widetilde{\pi}_{X,*}$ is given by Construction \ref{constr:untwisted}. 
		\item \label{item:untwii} For any 2-cartesian diagram 
		\begin{equation*}
			\begin{tikzcd}
				\mathcal{Z}\ar[r]\ar[d,"\pi_Z"]& \mathcal{X}\ar[d,"\pi_X"] \\
				Z\ar[r,"f"] & X
			\end{tikzcd}
		\end{equation*} 
		in which the vertical maps are $\mu_n$-gerbes over Noetherian algebraic spaces, the maps $\widetilde{\pi}_{X,*}$ and $\widetilde{\pi}_{Z,*}$ commute with flat pullback (resp. proper pushforward) if $f$ is flat (resp. proper). 
	\end{enumerate}
\end{theorem}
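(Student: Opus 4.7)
The strategy is to define $\widetilde{\pi}_{X,*}$ étale-locally via Construction \ref{constr:untwisted} and then descend. The plan has three stages.

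\emph{Stage 1: Compatibility in the trivial case.} First I would establish property (ii) when $\mathcal{X}$ admits a $1$-twisted line bundle $\mathcal{L}$ with $\mathcal{L}^{\otimes n}=\pi_X^*L$. For the cartesian diagram in the theorem with $g:\mathcal{Z}\to\mathcal{X}$ the base change of $f$, the pullback $g^*\mathcal{L}$ is a $1$-twisted line bundle on $\mathcal{Z}$ with $(g^*\mathcal{L})^{\otimes n}=\pi_Z^*f^*L$. Writing out Construction \ref{constr:untwisted} on both sides, the asserted compatibility under flat pullback (resp.\ proper pushforward) follows from flat base change along $\pi_X,\pi_Z$ (resp.\ from the projection formula). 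The one non-formal point is that the universal power series defining $L^{1/n}$ on the augmentation ideal in $K_0(X)[1/n]$ is compatible with pullback and pushforward of line bundles; this is immediate since $f^*$ is a ring map preserving the augmentation, and the relevant power series is defined by a universal arithmetic formula over $\bZ[1/n]$.

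\emph{Stage 2: Local definition and descent.} For a general $\mu_n$-gerbe $\pi_X:\mathcal{X}\to X$, choose an étale cover $p:U\to X$ on which $\mathcal{X}_U:=\mathcal{X}\times_X U$ admits a $1$-twisted line bundle $\mathcal{L}$. This is possible because the obstruction to the existence of such an $\mathcal{L}$ is the image of $[\mathcal{X}]$ under $H^2(X,\mu_n)\to H^2(X,\bG_m)$, which vanishes étale-locally. Define $\widetilde{\pi}_{U,*}$ on $\mathcal{X}_U$ via Construction \ref{constr:untwisted}. On $V:=U\times_X U$, the two projections $p_1,p_2:V\to U$ give two pullbacks of $\widetilde{\pi}_{U,*}$ to maps $K(\mathcal{X}_V)\to K(V)[1/n]$; by Stage 1 these are the Construction \ref{constr:untwisted} on $\mathcal{X}_V$ using $p_1^*\mathcal{L}$ and $p_2^*\mathcal{L}$ respectively, and by the already-established independence of the Construction on the choice of $1$-twisted line bundle, they coincide. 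This descent datum defines $\widetilde{\pi}_{X,*}$, and passing to a common refinement shows that the result is independent of the choice of $U$.

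\emph{Stage 3: Full functoriality, uniqueness, and the main obstacle.} With $\widetilde{\pi}_{X,*}$ now defined globally, property (ii) for an arbitrary cartesian diagram is established by further étale-localizing on the target so as to reduce to Stage 1. Uniqueness is then formal: any other collection satisfying (i) and (ii) agrees with ours after pullback along $p$ (where (i) pins them down on $\mathcal{X}_U$), hence by the same descent they agree on $X$. The main obstacle is the descent step in Stage 2, since étale descent for $K_0(-)[1/n]$ does not hold in full generality. I would resolve this by working with the weight decomposition $K(\mathcal{X})=\bigoplus_{i=0}^{n-1} K^i(\mathcal{X})$, where each $K^i(\mathcal{X})$ is the Grothendieck group of the abelian category of $\alpha^i$-twisted coherent sheaves on $X$ (for $\alpha$ the Brauer class of $\mathcal{X}$); these abelian categories satisfy fppf/étale descent (being locally the module category over an Azumaya algebra, or directly as a stack of modules), so the specific $K$-classes produced by the map $\widetilde{\pi}_{U,*}$ actually do descend, making the construction rigorous.
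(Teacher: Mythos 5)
Your Stage 1 is fine and essentially matches the first step of the paper's proof. But the strategy from Stage 2 onward has a genuine gap that your Stage 3 acknowledgment does not repair.

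The descent step requires the following: given the compatible collection $\widetilde{\pi}_{U,*}\bigl(x|_{\mathcal{X}_U}\bigr)\in K_0(U)[1/n]$ satisfying the cocycle condition on $U\times_X U$, there should exist a (unique) class in $K_0(X)[1/n]$ pulling back to it. This is precisely the assertion that $K_0(-)[1/n]$ is a sheaf for the étale topology, which is false: the equalizer of $K_0(U)\rightrightarrows K_0(U\times_X U)$ can be strictly larger than the image of $K_0(X)$, and $K_0(X)\to K_0(U)$ need not be injective. Your proposed remedy — that the abelian category of $\alpha^i$-twisted sheaves satisfies étale descent — is true, but it does not imply descent at the level of $K_0$. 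Descent of the abelian category gives an equivalence of the category on $X$ with the category of objects-with-descent-data on $U$, not an identification of the Grothendieck group of the former with the equalizer of the Grothendieck groups. (Even descent of the full $K$-theory spectrum would not yield what you need at the $K_0$ level without controlling $K_1$ of $U\times_X U$.) There is also no categorical lift of Construction B.2 to appeal to: the $n$-th root $L^{1/n}$ lives only in $K_0(X)[1/n]$, not as a sheaf, so you cannot descend a sheaf-valued construction and then take classes.

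The paper sidesteps this entirely by avoiding étale covers and instead ascending through morphisms along which $K_0$ has precise control: first a Brauer–Severi fibration $b:B\to X$ (proper, $Rb_*\mathcal{O}_B=\mathcal{O}_X$, so $b^*$ is split injective with splitting $b_*$), then a Jouanolou device $g:A\to X$ (vector bundle torsor, so $g^*$ is an isomorphism on $K_0$), and finally a proper Gillet envelope $q:W\to X$ together with the right-exactness of Gillet's sequence $K_0(W\times_X W)\to K_0(W)\to K_0(X)\to 0$. Each of these gives you an honest surjection or isomorphism on $K_0$, which is exactly what an étale cover does not provide. If you want to salvage something like your approach, you would need to replace the étale cover in Stage 2 by one of these proper/affine-bundle constructions, at which point you have effectively reproduced the paper's argument.
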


\begin{proof}
	We will first define the operation $\widetilde{\pi}_{X,*}$ and check the claimed compatibilities under the following assumptions on $\pi_X$. Once this is done, we check the claimed compatibilities. 
	\begin{enumerate}[label = \roman*)]
		\item \label{item:untw-constri} $\mathcal{X}$ admits a $1$-twisted line bundle.
		\item \label{item:untw-constrii} The class of $\pi_X$ in $H^2(X,\bG_m)$ is representable by an Azumaya algebra -- more generally, there exists a flat and proper map $b:B\to X$ from an algebraic space satisfying $Rb_*\mathcal{O}_B = \mathcal{O}_X$, such that the class of $\pi_X$ in $H^2(X,\bG_m)$ vanishes after pullback with $b$.  
		\item \label{item:untw-constriii} $X$ has the resolution property -- more generally, there exists a vector bundle torsor $g:A\to X$ on which the image of the class $\pi_X$ in $H^2(A,\bG_m)$ is representable by an Azumaya algebra.
		\item $X$ is an arbitrary Noetherian algebraic space.
	\end{enumerate}
	
	\paragraph{i):} We define $\widetilde{\pi}_{X,*}$ by Construction \ref{constr:untwisted}, which is well-defined by Lemma \ref{lem:pullback-lem}. If $f:Z\to X$ is a morphism as in \ref{item:untwii}, then the gerbe $\mathcal{Z}$ inherits a $1$-twisted line bundle. In this case, the construction of $\widetilde{\pi}_{X,*}$ is compatible with flat pullback (resp. proper pushforward) along $f$ and its base change, as one checks directly from the defining formula and the fact that these operations are preserve the decomposition into weights.
	
	\paragraph{ii):} If the class of $\pi_X$ in $H^2(X,\bG_m)$ is represented by an Azumaya algebra, let $b:B\to X$ be the associated Brauer--Severi variety. Then the morphism $b$ is smooth and proper and satisfies $Rb_*\mathcal{O}_B = \mathcal{O}_X$, hence $b^*:K_0(X)\to K_0(B)$ is injective and split by the proper pushforward $b_*$. 
	Moreover, the gerbe $\mathcal{B}:=B\times_X \mathcal{X}$ over $B$ has vanishing class in $H^2(B,\bG_m)$, thus admits a $1$-twisted line bundle, and we have a well-defined untwisted pushforward $\widetilde{\pi}_B: K_0(\mathcal{B})\to K_0(B)$ as in \ref{item:untw-constri}. Let $a:\mathcal{B} \to \mathcal{X}$ denote the projection.
	
	The flat pullback $b^*:K_0(X)\to K_0(B)$ is an injection and split by the proper pushforward $b_*:K_0(B)\to K_0(X)$. The analogous statement is true for the $K$-groups of the gerbes and $b'$ in place of $b$. We define $\widetilde{\pi}_{X,*}:K_0(\mathcal{X})\to K_0(X)[1/n]$ as the composition 
	\[K_0(\mathcal{X})\xrightarrow{a^*} K_0(\mathcal{B})\xrightarrow{\widetilde{\pi}_{B,*}} K_0(B)[1/n]\xrightarrow{b_*} K_0(X)[1/n].\]
	
	This is independent of choices: In fact, if $b_1:B_1\to X$ and $b_2:B_2\to X$ are arbitrary smooth and proper maps satisfying $Rb_{i,*}\mathcal{O}_{B_i} = \mathcal{O}_{X_i}$ and such that the class of $\pi_X$ in $H^2(X,\bG_m)$ vanishes after pullback along $b_i$, then the map $b_3:B_1\times_X B_2\to X$ satisfies the same properties. Then the result follows from the compatibility with flat pulllback and proper pushforward under assumption \ref{item:untw-constri}. In particular, if $\pi_X$ already satisfies \ref{item:untw-constri}, we recover the construction in that case. 
	
	Finally, suppose that $f:Z\to X$ is a flat (resp. proper) morphism. Then we may take the base change $b_Z:Z\times_X B \to Z$ of $b$ and use it to define $\widetilde{\pi}_{Z,*}$. From the compatibility shown in case \ref{item:untw-constri} and the fact that proper pushforward and flat pullback are mutually compatible, we obtain the desired compatibility of $\widetilde{\pi}_{X,*}$ and $\widetilde{\pi}_{Z,*}$.  
	
	\paragraph{iii)} 
	Now suppose that $X$ satisfies the resolution property. Using the equivariant Jouanolou trick of \cite[\S 3]{tot_resol}\footnote{See \cite[\S 2.5]{klt_dtpt} for a more detailed explanation.}, we can find a Joanolou device $g:A\to X$. By this, we mean that $g$ is a torsor for a vector bundle, and that $A$ is an affine scheme. For a vector bundle torsor, the flat pullback map $g^*:K_0(X)\to K_0(A)$ is an isomorphism (see e.g. \cite[Theorem 3.17]{khan_derived}). By Gabbers theorem \cite{dJ_result}, on an affine scheme any gerbe can be represented by an Azumaya-algebra. Therefore, in any case we have a well-defined map $\rho_{A,*}:K_0(\mathcal{A})\to K_0(A)[1/n]$ via \ref{item:untw-constrii}. Let $g':\mathcal{A}\to \mathcal{X}$ denote the base change of $g$. We define $\widetilde{\pi}_{X,*}$ as the composition
	\[K_0(\mathcal{X})\xrightarrow{g'^*} K_0(\mathcal{A})\xrightarrow{\widetilde{\pi}_{A,*}} K_0(A)[1/n] \xrightarrow{(g^*)^{-1}}K_0(X)[1/n].\]
	This is independent of choice of $g$, as the fiber product of any two vector bundle torsors is again one and so one may compare two different choices of $g$ by passing to their fiber product (which may not be an affine scheme, but will still satisfy the more general requirement). In particular, if the class of $\pi_X$ in $\bG_m$ can already be represented by an Azumaya algebra, one choose the trivial vector bundle torsor to conclude that the construction is compatible with the one in case \ref{item:untw-constrii}.
	
	For $f:Z\to X$ a flat (resp. proper morphism), and provided a vector bundle torsor on $X$ on which the class of $\pi_X$ becomes representable by an Azumaya algebra, we may use the its base change along $f$ to define $\widetilde{\pi}_{Z,*}$. Thus, compatibility of untwisted pushforward in this setting follows from the compatibility in case \ref{item:untw-constrii} and the fact that flat pullback and proper pushforward commute with flat pullback. 
	\paragraph{iv)} 
	Finally, we allow $X$ to be an arbitrary Noetherian algebraic space. By Lemma \ref{lem:envelope} below, we can find an \emph{envelope} (see Definition \ref{def:envelope}) $q:W\to X$ such that $W$ satisfies the resolution property. Let $\mathcal{W}:= W\times_X \mathcal{X}$. In particular $q$ is proper, and we have the untwisted pushforward for $\pi_W$ and $\pi_{W\times_X W}$ via \ref{item:untw-constriii}.
	
	Then by compatibility of the construction in \ref{item:untw-constriii}, we have a commutative diagram of solid arrows
		\begin{equation*}
		\begin{tikzcd}
			K(\mathcal{W}\times_{\mathcal{X}}\mathcal{W})\ar[r]\ar[d]& K(\mathcal{W})\ar[d]\ar[r]& K(\mathcal{X})\ar[d, dashed]\ar[r]& 0 \\
			K(W\times_X W)\ar[r] & K(W)\ar[r] & K(X)\ar[r]&0
		\end{tikzcd}
	\end{equation*}
	
	The rows are exact due to Lemmas \ref{lem:gillet} and \ref{lem:gillet-gerbe} below. Hence, there is a unique dashed arrow filling in the diagram, which we define to be $\widetilde{\pi}_{X,*}$. To see that this is independent of choices (and compatible with the definition in \ref{item:untw-constriii}), one uses that the fiber product of two envelopes is again one, and the same arguments as in the previous points. Similarly, compatibility with proper pushforward and flat pullback follows from the fact that the base change of an envelope is again an envelope and similar arguments as in the previous points. 
	
\end{proof}

\paragraph{Gillet--Envelopes}
\begin{definition}\label{def:envelope}
	Let $X$ be a Noetherian algebraic space. A proper morphism $q:X'\to X$ of Noetherian algebraic spaces is called an \emph{envelope} of $X$, if every point of $X$ admits a lift to $X'$. 
\end{definition}

\begin{lemma}\label{lem:envelope}
	Let $X$ be a Noetherian algebraic space. Then there exists an envelope of $X$ that has the resolution property. 
\end{lemma}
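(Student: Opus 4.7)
The plan is to proceed by Noetherian induction on closed subspaces of $X$ (which terminates since any descending chain of closed subspaces stabilizes), combined with two applications of Chow's lemma to reduce to a quasi-projective scheme, which automatically satisfies the resolution property.

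First, I would invoke Chow's lemma for Noetherian algebraic spaces (e.g.\ \cite[\href{https://stacks.math.columbia.edu/tag/088U}{Tag 088U}]{stacks-project}) to obtain a proper surjective morphism $p : X' \to X$ from a Noetherian scheme $X'$, such that $p$ is an isomorphism over a dense open subspace $U \subseteq X$. Applying the classical Chow's lemma to $X'$, I can further replace $X'$ by a projective birational cover $X'' \to X'$ with $X''$ quasi-projective over $\Spec \bZ$, and compose to obtain a proper morphism $q_U: X'' \to X$ that is an isomorphism over $U$. In particular $X''$ carries an ample line bundle, so by Thomason's criterion it has the resolution property. Moreover, because $q_U$ restricts to an isomorphism over $U$, every point of $U$ lifts to a point of $X''$ with the same residue field.

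It remains to handle points of $Z := (X \setminus U)^{\mathrm{red}}$, which is a Noetherian algebraic subspace strictly smaller than $X$. By the inductive hypothesis, $Z$ admits an envelope $q_Z : W_Z \to Z$ with $W_Z$ having the resolution property. I then take
\[
W := X'' \sqcup W_Z \xrightarrow{\; q_U \sqcup (i \circ q_Z)\;} X,
\]
where $i : Z \hookrightarrow X$ is the inclusion. This map is proper, and by construction every point of $X$ lifts to $W$ with the same residue field (via $X''$ if it lies in $U$, via $W_Z$ if it lies in $Z$), so $W \to X$ is an envelope. The resolution property is preserved under disjoint unions, so $W$ inherits it from its two components. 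The base case of the induction is $X = \emptyset$ (or a Noetherian algebraic space of dimension $-\infty$), which is trivial.

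The main technical input, and hence the main potential obstacle, is Chow's lemma for Noetherian algebraic spaces, which produces a scheme-theoretic Chow cover that is an isomorphism on a dense open; once that is in hand, the rest is a standard Noetherian-induction glueing argument. The other ingredient used tacitly is Thomason's theorem that a scheme with an ample family of line bundles (in particular any quasi-projective scheme over $\Spec \bZ$) has the resolution property.
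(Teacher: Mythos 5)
Your overall strategy — Noetherian induction, cover a dense open by a scheme with the resolution property, handle the closed complement by induction, and take a disjoint union — is exactly the paper's strategy; the paper just packages the inductive step into a single citation of the Stacks Project lemma \href{https://stacks.math.columbia.edu/tag/0GV5}{0GV5}. So in spirit the two proofs coincide. However, your inductive step has a genuine gap.

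The problem is step 2, the second application of Chow's lemma. Classical Chow's lemma produces a projective birational modification $X''\to X'$ with $X''$ quasi-projective over a base $S$ only when $X'$ is \emph{separated and of finite type over the Noetherian scheme $S$}. The lemma you are proving is for an arbitrary Noetherian algebraic space $X$, and the scheme-theoretic Chow cover $X'$ produced in your first step is merely a Noetherian scheme — it need not be separated, and it is in general not of finite type over $\Spec\bZ$ (e.g.\ $X'=\Spec\bC[[t]]$ already fails). Consequently you cannot conclude that $X''$ is quasi-projective over $\Spec\bZ$, and your invocation of Thomason's criterion to get the resolution property on $X''$ does not apply. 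This is not cosmetic: a general Noetherian scheme genuinely need not have the resolution property, so the reduction to schemes alone does not finish the inductive step. A correct version of the step replaces ``quasi-projective over $\Spec\bZ$'' with something that is available unconditionally for Noetherian schemes: for instance, shrink further to a dense open \emph{affine} $V\subset X'$ (one exists because the finitely many generic points of a Noetherian scheme have disjoint affine neighborhoods), and then take a proper morphism from a scheme with the resolution property that is an isomorphism over $V$ — this is essentially what the cited Stacks Project lemma supplies, and proving it from scratch needs a compactification/blowup argument rather than naively quoting Chow twice. There is also a minor, easily repaired slip: after your second Chow application the composite $X''\to X$ is an isomorphism only over a smaller dense open than your original $U$, so $U$ must be shrunk (and $Z$ enlarged) accordingly before invoking the inductive hypothesis; this does not affect well-foundedness of the Noetherian induction.
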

\begin{proof}
	This follows from repeated application of \cite[\href{https://stacks.math.columbia.edu/tag/0GV5}{Lemma 0GV5}]{stacks-project} by Noetherian induction. 
\end{proof}

\begin{lemma}[Gillet's exact sequence]\label{lem:gillet}
	Let $X$ be a Noetherian algebraic space and let $q:X'\to X$ be an envelope. Let $p_i:X'\times_X X'\to X'$ denote the projection onto the $i$-th factor. Then the following sequence is exact: 
	\begin{equation}\label{eq:gillet}
		K_0(X'\times_X X')\xrightarrow{p_{1,*}-p_{2,*}} K_0(X')\xrightarrow{q_*} K_0(X)\to 0.
	\end{equation}
\end{lemma}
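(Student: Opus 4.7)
The plan is to prove surjectivity of $q_*$ and exactness at $K_0(X')$ separately, with both parts reduced to statements about integral closed subspaces via Noetherian induction on the dimension of support. Throughout, I will use the filtration $F_d K_0(-)$ by the subgroups generated by classes of coherent sheaves with support of dimension $\leq d$, which behaves well under proper pushforward in the sense that $f_* F_d K_0 \subseteq F_d K_0$ for any proper map $f$.

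For surjectivity of $q_*$, I would argue by Noetherian induction on the support of a coherent sheaf. By the standard devissage through filtrations by integral closed subspaces, it suffices to show $[\mathcal{O}_Z] \in \im(q_*)$ for each integral closed subspace $Z \subseteq X$. Applying the envelope property to the generic point $\eta \in Z$ (viewed as a $k(\eta)$-point of $X$) yields a lift $\eta' \in X'$ with $k(\eta') = k(\eta)$. Taking $Z'$ to be the reduced scheme-theoretic closure of $\eta'$ in $X'$, the restriction $q|_{Z'} : Z' \to Z$ is proper and birational, so
\[q_*[\mathcal{O}_{Z'}] = [\mathcal{O}_Z] + [R]\]
with $R$ supported in dimension $< \dim Z$. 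The inductive hypothesis handles $[R]$.

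For exactness at $K_0(X')$, the containment $\im(p_{1,*}-p_{2,*}) \subseteq \ker(q_*)$ is immediate from $q\circ p_1 = q\circ p_2$. For the reverse, let $\alpha \in \ker(q_*)$; I would again proceed by induction on the dimension of the support of $\alpha$. Writing the top-dimensional part as $\alpha \equiv \sum_i n_i [\mathcal{O}_{W_i}] \pmod{F_{d-1} K_0(X')}$ with $W_i \subseteq X'$ integral of dimension $d$, one observes that only those $W_i$ which map generically finitely onto their images $Z_i := q(W_i)$ (necessarily of dimension $d$) contribute to $q_*\alpha$ modulo $F_{d-1}K_0(X)$, and that they contribute $n_i r_i [\mathcal{O}_{Z_i}]$ where $r_i$ is the degree of $W_i \to Z_i$. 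The assumption $q_*\alpha = 0$ forces $\sum_{W_i \to Z} n_i r_i = 0$ for each integral $Z \subseteq X$ of dimension $d$. The key geometric input is then that, for any two such $W_i, W_j$ dominating the same $Z$, the fiber product $W_i \times_Z W_j \subseteq X' \times_X X'$ projects generically finitely onto each factor with degrees $r_j$ and $r_i$ respectively, so a suitable integer combination of classes of irreducible components of $W_i \times_Z W_j$ lies in $K_0(X'\times_X X')$ and has image under $p_{1,*}-p_{2,*}$ equal to $r_j[\mathcal{O}_{W_i}] - r_i[\mathcal{O}_{W_j}]$ modulo $F_{d-1}K_0(X')$. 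Combined with the relation $\sum n_i r_i = 0$ from each $Z$, this exhibits the top-dimensional part of $\alpha$ as lying in $\im(p_{1,*}-p_{2,*}) + F_{d-1} K_0(X')$, which lets the induction proceed after verifying that the induction hypothesis applies to the residual class (using that the restriction of $q$ over each proper closed subspace of $X$ remains an envelope).

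The main obstacle is this last exactness step: producing the cancelling classes in $K_0(X'\times_X X')$ cleanly at each stage of the induction, and tracking the lower-dimensional error terms so that the induction actually closes. In practice this is essentially Gillet's cohomological descent theorem for $K$-theory of coherent sheaves along proper envelopes, and one may alternatively invoke that result directly in place of the hands-on argument above.
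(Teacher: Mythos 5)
Your surjectivity argument is the same as the paper's (Lemma \ref{lem:gillet-easy}) and is correct, and the overall strategy for exactness at $K_0(X')$ — reduce against a filtration, compare generic degrees, produce cancelling classes via fiber products inside $X'\times_X X'$ — is the classical Gillet strategy and is also what the paper does. But two steps in your exactness sketch do not close as stated.

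First, the pairwise relations $r_j[\mathcal{O}_{W_i}] - r_i[\mathcal{O}_{W_j}]$ do not span the kernel of $(n_i)\mapsto\sum_i n_i r_i$ over $\mathbb{Z}$. For instance with $r_1=4$, $r_2=6$, the only relation you produce is $(6,-4)$, which generates an index-$2$ sublattice of the kernel $\mathbb{Z}\cdot(3,-2)$; so the class $3[\mathcal{O}_{W_1}]-2[\mathcal{O}_{W_2}]$ lies in $\ker q_*$ modulo $F_{d-1}$ but your relations do not exhibit it in $\im(p_{1,*}-p_{2,*})+F_{d-1}$. What is missing is a second use of the envelope hypothesis \emph{inside} the induction step: choose an integral $\widetilde{Z}\subset X'$ mapping birationally (degree $1$) onto $Z$, and use the fiber products $W_i\times_Z \widetilde{Z}\subset X'\times_X X'$ to produce $[\mathcal{O}_{W_i}]-r_i[\mathcal{O}_{\widetilde{Z}}]$ in the image modulo lower terms; then $\sum_i n_i[\mathcal{O}_{W_i}]\equiv\bigl(\sum_i n_i r_i\bigr)[\mathcal{O}_{\widetilde{Z}}]=0$. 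This birational lift is the crux of the paper's Lemma \ref{lem:gillet-descent-step}. It also disposes of the components $W_i$ whose image in $X$ has dimension $<d$, which your sketch silently drops (they contribute nothing to $q_*$ modulo $F_{d-1}$, so the relation $\sum n_ir_i=0$ tells you nothing about their coefficients, yet they still have to be pushed into $\im(p_{1,*}-p_{2,*})+F_{d-1}$): pair each such $W_i$ with a birational lift of its image $q(W_i)$ and apply the same fiber-product device.

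Second, your induction is on the dimension filtration $F_dK_0$. For schemes of finite type over a field or a regular excellent base this is fine, but the lemma is stated for arbitrary Noetherian algebraic spaces, where the Krull dimension may be infinite (Nagata), so $F_dK_0$ need not be exhaustive and the induction need not terminate. The paper instead runs a Noetherian induction on the reduced image $Z\subseteq X$ of the support (and on the support $Y\subseteq X'$), which is more robust in this generality; you should either restrict the hypotheses or adopt that bookkeeping.
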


We first begin by some easier observations
\begin{lemma}\label{lem:gillet-easy}
	The sequence \eqref{eq:gillet} is a complex and exact on the right. 
\end{lemma}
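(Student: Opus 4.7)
The two claims will be handled separately.

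For the complex condition, the plan is a direct functoriality argument. Writing $\pi := q\circ p_1 = q\circ p_2: X'\times_X X' \to X$ (this is a single well-defined map to $X$, since the two projections equalise $q$), properness and functoriality of proper pushforward give
\[q_*\circ p_{1,*} \;=\; (q\circ p_1)_* \;=\; \pi_* \;=\; (q\circ p_2)_* \;=\; q_*\circ p_{2,*},\]
so $q_*\circ(p_{1,*}-p_{2,*})=0$. No further work is needed here.

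For right exactness, the plan is Noetherian induction on the support of a coherent sheaf. Since $K_0(X)$ is generated by classes $[F]$ of coherent sheaves, it suffices to show every such $[F]$ lies in the image of $q_*$. I will induct on the (reduced) support $Z = \operatorname{Supp}(F)$. The base case of empty support is trivial. For the inductive step, a standard dévissage (filtering $F$ so that each associated graded piece is pushed forward from an integral closed subspace) reduces us to showing $[\iota_{Z,*}\mathcal{O}_Z] \in \operatorname{Im}(q_*) + (\text{classes supported on } Z''\subsetneq Z)$, for $Z\subset X$ an integral closed subspace. Given this, the inductive hypothesis applied to the ``smaller support'' term concludes the argument.

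The key geometric input is the envelope property: applied at the generic point $\eta$ of $Z$ (viewed as a $k(\eta)$-valued point of $X$), it produces a lift $\eta'\in X'(k(\eta))$, and I take $Z'\subset X'$ to be the scheme-theoretic closure of $\eta'$ with its reduced structure. Then $q(Z')=Z$ (closed, contains $\eta$, hence equals $\overline{\{\eta\}}$), and the residue-field condition forces $k(Z')=k(\eta)=k(Z)$, so the induced proper map $q|_{Z'}:Z'\to Z$ is birational. Consequently $q_*\mathcal{O}_{Z'}$ agrees with $\mathcal{O}_Z$ on the dense open $U\subset Z$ where $q|_{Z'}$ is an isomorphism, and the ``defect'' $q_*\mathcal{O}_{Z'}-\mathcal{O}_Z$ in $K_0(Z)$ is represented by a virtual coherent sheaf supported on $Z\setminus U \subsetneq Z$. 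This defect is handled by the inductive hypothesis, and $[q_*\mathcal{O}_{Z'}] = q_*[\mathcal{O}_{Z'}]$ is tautologically in the image of $q_*$.

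The main technical point I expect to address is the dévissage step: for an arbitrary coherent sheaf $F$ on a Noetherian algebraic space, one must produce a finite filtration whose graded pieces are pushforwards from integral closed subspaces, so that the induction is set up with a well-defined ``support'' invariant. On a Noetherian algebraic space this is standard (filter by the finitely many associated integral closed subspaces, using the structure of coherent modules), and it works in exactly the same way as for schemes; no reliance on the resolution property or on further hypotheses on $X$ is needed. The rest of the argument is formal.
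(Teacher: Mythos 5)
Your proposal is correct and follows essentially the same line as the paper's proof: dévissage to reduce to irreducible support, the envelope property to produce a birational $Z'\to Z$, and the observation that the defect $[q_*\mathcal{O}_{Z'}]-[\mathcal{O}_Z]$ is supported on a proper closed subspace, closed up by Noetherian induction. The paper phrases this as a minimal-counterexample argument and works with a general sheaf $F$ on an irreducible $Z$ (taking $[F]-q_*[q_Z^*F]$) rather than reducing all the way to $\mathcal{O}_Z$, but this is only a cosmetic difference.
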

\begin{proof}
	That the sequence is a complex is immediate from $q\circ p_1 = q\circ p_2$ and functoriality of proper pushforwards. We argue that it is exact on the right. If not, there exists a class $x\in K_0(X)$ not in the image of $q_*$, and supported on a closed sub-space $Z\subseteq X$. Since $X$ is Noetherian, we may choose the pair $(x,Z)$ so that $Z$ is minimal. 
	Write $x = [F] - [G]$ with $F,G$ coherent sheaves on $Z$. Without loss of generality, we may assume $[G]=0$. Letting $Z_i$ run through the irreducible components of $Z$, we may write $[F] = \sum [F_i]$ with $F_i$ supported on $Z_i$. Thus, without loss of generality we may assume that $Z$ is irreducible.
	Let $\widetilde{Z}\subset W$ be an integral subspace that maps birationally onto $Z$ (which exists by assumption on $W$), and let $q_Z:\widetilde{Z}\to Z$ be the restriction of $q$. Then $[F]-q_*[q_Z^*F]$ is supported on a subspace strictly contained in $Z$, yielding a contradiction (Note that $q_Z^*$ really denotes the \emph{underived} pullback functor between sheaves).
\end{proof}

\begin{lemma}\label{lem:vanishing-generic}
	Let $Z$ be an irreducible and reduced Noetherian algebraic space with $\eta:\Spec K\to Z$ the generic point. Let $x\in K_0(Z)$ with $\eta^*x =0$. Then $x$ is supported on a proper closed subspace $Z'\subsetneq Z$. 
\end{lemma}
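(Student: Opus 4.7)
The plan is to reduce the claim to exhibiting a non-empty open subspace $U \subseteq Z$ on which $x|_U = 0$ in $K_0(U)$, and then invoke the localization exact sequence
\[K_0(Z \setminus U) \to K_0(Z) \to K_0(U) \to 0\]
for $G$-theory of Noetherian algebraic spaces to conclude that $x$ is in the image from $K_0(Z \setminus U)$, with $Z' := Z \setminus U$.

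First I would write $x = [F] - [G]$ for coherent sheaves $F, G$ on $Z$ (using that $K_0$ here is generated by classes of coherent sheaves). Since $\eta = \Spec K$ is the spectrum of a field, $K_0(\Spec K) \simeq \bZ$ via $[V] \mapsto \dim_K V$, so the hypothesis $\eta^*x = 0$ translates to $\dim_K F_\eta = \dim_K G_\eta$. Both stalks are finite-dimensional $K$-vector spaces (coherence), so there exists a $K$-linear isomorphism $\varphi_\eta : F_\eta \xrightarrow{\sim} G_\eta$.

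Next I would spread out $\varphi_\eta$ to an open neighborhood. The sheaf $\cHom(F,G)$ is coherent with stalk at $\eta$ equal to $\Hom_K(F_\eta, G_\eta)$, and this stalk is the filtered colimit of $\Hom(F|_U, G|_U)$ over non-empty opens $U \subseteq Z$ (non-empty opens being precisely the neighborhoods of $\eta$ by irreducibility). Hence $\varphi_\eta$ lifts to some $\varphi \in \Hom(F|_U, G|_U)$. The kernel and cokernel of $\varphi$ are coherent sheaves on $U$ whose stalks at $\eta$ vanish, so their supports are proper closed subspaces of $U$; replacing $U$ by the complement of these supports yields an actual isomorphism $F|_U \cong G|_U$, and hence $x|_U = [F|_U] - [G|_U] = 0$.

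The main obstacle is ensuring the localization sequence is available in the setting of Noetherian algebraic spaces rather than just schemes. For schemes this is Quillen's localization theorem; for Noetherian algebraic spaces the analogous sequence (for $G$-theory) holds and can be extracted either from Thomason's treatment or by étale-descent reduction to the scheme case. A minor secondary point is clarifying that the ``$K_0$'' in the statement means the Grothendieck group of coherent sheaves (i.e., $G_0$), which is consistent with both the generator-by-$[F]-[G]$ step and the surjectivity at the right of the localization sequence; for $K_0$ of perfect complexes the argument would require additional resolution-property input.
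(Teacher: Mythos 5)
Your proof is correct but takes a genuinely different route from the paper's. Both arguments begin identically: write $x = [F]-[G]$, note that $\eta^*x=0$ forces $F$ and $G$ to have the same generic rank, and produce an open $U\subseteq Z$ on which $F|_U\cong G|_U$ (you spread out a generic isomorphism of stalks; the paper shrinks $U$ so that the induced map on top wedge powers becomes an isomorphism of line bundles). The difference lies in the second step. You invoke Quillen's localization theorem for $G$-theory, $G_0(Z\setminus U)\to G_0(Z)\to G_0(U)\to 0$ exact, applied to Noetherian algebraic spaces — a valid but comparatively heavy tool, and one the paper conspicuously does \emph{not} use anywhere in the appendix (the whole proof of Gillet's exact sequence is deliberately done ``by hand''). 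The paper instead shrinks $U$ further so that it is a scheme (via \cite[\href{https://stacks.math.columbia.edu/tag/06NH}{Tag 06NH}]{stacks-project}), extends the morphism to a map $\mathcal{I}^n\otimes F\to G$ from a twist by a power of the ideal sheaf of $Z\setminus U$ (via \cite[\href{https://stacks.math.columbia.edu/tag/07UM}{Tag 07UM}]{stacks-project}), and then writes $x = ([F]-[\mathcal{I}^n\otimes F]) + ([\mathcal{I}^n\otimes F]-[G])$, both summands being differences of kernels and cokernels of maps that are isomorphisms over $U$, hence supported on $Z\setminus U$. This produces an explicit representative of $x$ supported off $U$ without any appeal to localization. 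Your approach is cleaner conceptually and requires less ad hoc construction, at the cost of importing the localization theorem for $G$-theory of Noetherian algebraic spaces (which, as you note, holds via Thomason/Quillen applied to the Serre quotient $\Coh(Z)/\Coh_{Z'}(Z)\simeq\Coh(U)$, but is a larger dependency). Your closing remark correctly identifies the relevant Grothendieck group as $G_0$, i.e.\ the one generated by coherent sheaves, which is also the one the paper uses throughout the appendix.
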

\begin{proof}
	Write $x = [F]-[G]$. Then $F$ and $G$ necessarily have the same rank $r$. Let $\mathcal{H}:=\mathcal{H}om(F,G)$ be their sheaf-hom, which is hence generically locally free of rank $r^2$. By \cite[\href{https://stacks.math.columbia.edu/tag/06NH}{Proposition 06NH}]{stacks-project}, the algebraic space $Z$ has a dense open subspace $U$ which is a scheme. Note that we have a map $\mathcal{H}\to \mathcal{H}om(\wedge^r F,\wedge^r G)$, where the latter is generically rank $1$. By possibly shrinking $U$, we may assume that we have a section $\varphi_U\in \Gamma(U,\mathcal{H}|_U)$ that maps to a generator of $\mathcal{H}om(\wedge^r F,\wedge^r G)|_U$, hence gives an isomorphism $\mathcal{F}|_U\to \mathcal{G}|_U$. 
	
	Let $\mathcal{I}$ be the ideal sheaf of the complement of $U$. By \cite[\href{https://stacks.math.columbia.edu/tag/07UM}{Lemma 07UM}]{stacks-project} there is a map $\varphi:\mathcal{I}^n \to \mathcal{H}$ for some $n$ that restricts to $\varphi|_U$ on $U$. This gives a morphism $\mathcal{I}^n\otimes F\to  G$ that is an isomorphism on $U$. Hence $[G]-[\mathcal{I}^n\otimes F]$ is supported on the complement of $U$. Since the natural map $\mathcal{I}^n\otimes F\to F$ is also an isomorphism on $U$, the same is true for $[\mathcal{I}^n\otimes F]-[F]$, and we are done.
\end{proof}

\begin{lemma}\label{lem:gillet-descent-step}
	Let $x\in K_0(X')$ be a class satisfying $q_*x =0$, supported on some non-empty closed subspace $Y\subset X'$ (wlog reduced) with image $Z\subset X$. Then there is a class $y\in x +\operatorname{Im}(p_{1,*} -p_{2,*})$ supported on a closed subspace $Y'$ with image $Z'\subsetneq Z$. 
\end{lemma}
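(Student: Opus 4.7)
The plan is to produce, for one generic point of $Z$ at a time, a class $y\in K_0(X'\times_X X')$ such that $x - (p_{1,*}y - p_{2,*}y)$ no longer hits that generic point in its image under $q$; iterating over the (finitely many) generic points of $Z$ then yields an image strictly contained in $Z$. Concretely, fix a generic point $\zeta\in Z$ of some irreducible component $Z_0 := \overline{\{\zeta\}}$. By the envelope hypothesis, $\zeta$ admits a lift $\tilde\zeta\in X'$ with $\kappa(\tilde\zeta)=\kappa(\zeta)$, and we let $V\subseteq X'$ be the closure of $\tilde\zeta$, so that $q|_V\colon V\to Z_0$ is a proper birational morphism.

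Let $W_1,\dots,W_s$ be the irreducible components of $Y$ that dominate $Z_0$, with generic points $w_i$; the remaining components of $Y$ already have image in a proper closed subspace of $Z$ (hence contribute nothing at $\zeta$) and can be ignored for this step. Write $n_i\in\mathbb Z$ for the generic multiplicity of $x$ along $W_i$, so that modulo classes whose image under $q$ does not contain $\zeta$ one has $x\equiv \sum_i n_i[\mathcal O_{W_i}]$. Assuming first — via a short preliminary Noetherian induction on the dimension of generic fibers — that all $W_i$ are generically finite over $Z_0$, set $d_i:=[\kappa(w_i):\kappa(\zeta)]$. The hypothesis $q_*x=0$, read at $\zeta$, then forces $\sum_i n_i d_i = 0$. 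I would take
$$y := \sum_i n_i\,[\mathcal O_{W_i\times_X V}]\in K_0(X'\times_X X').$$

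The crucial computation is then that the two projections have controlled behaviour on $y$: since $V\to Z_0$ is birational, $p_1\colon W_i\times_X V\to W_i$ is generically birational, while $p_2\colon W_i\times_X V\to V$ is generically finite of degree $d_i$. Hence, modulo classes whose images under $q$ miss $\zeta$,
$$p_{1,*}[\mathcal O_{W_i\times_X V}]\equiv [\mathcal O_{W_i}], \qquad p_{2,*}[\mathcal O_{W_i\times_X V}]\equiv d_i\,[\mathcal O_V].$$
Summing and using $\sum n_i d_i = 0$ gives $p_{1,*}y - p_{2,*}y \equiv \sum_i n_i [\mathcal O_{W_i}] \equiv x$ modulo such error terms, so $x - (p_{1,*}y - p_{2,*}y)$ has support mapping into $Z\setminus\{\zeta\}$, as desired. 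Invoking Lemma~\ref{lem:vanishing-generic} on the component $Z_0$ then yields a strictly smaller image.

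The main obstacle I expect is bookkeeping the meaning of ``modulo classes whose image misses $\zeta$'': one must verify that each $R^jp_{k,*}\mathcal O_{W_i\times_X V}$ differs from the stated leading term by a coherent sheaf supported on a proper closed subspace of $W_i$ (respectively $V$) whose image in $X$ does not contain $\zeta$, and similarly that the generic decomposition $x\equiv\sum n_i[\mathcal O_{W_i}]$ holds with the same quality of error. Both follow from standard generic flatness and properness arguments, but the non-formal input is precisely the envelope property, which guarantees $\kappa(\tilde\zeta)=\kappa(\zeta)$ and so eliminates any unwanted degree factor $[\kappa(\tilde\zeta):\kappa(\zeta)]$ that would otherwise force inverting an integer.
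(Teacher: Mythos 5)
Your approach is fundamentally the same as the paper's: both use the envelope hypothesis to produce a birational lift $V$ (the paper's $\widetilde{Z}_j$) of a component of $Z$, and both push forward along the correspondence $X'\times_X X'$ to remove the support over that generic point. The difference is in the bookkeeping. The paper first transports each dominating component $Y_i$ onto $V$ via $(p_{1,*}-p_{2,*})$ applied to the class $[p_1^*F_i]-[p_1^*G_i]$ on $Y_i\times_{Z_j}V$ (no degree computation is ever made), so that afterwards the only dominating component is $V$ itself; it then reads off the generic vanishing directly from $\eta^*q_*x=0$, because $V\to Z_0$ is birational, and invokes Lemma~\ref{lem:vanishing-generic} once. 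You instead build $y$ in one shot out of the multiplicities $n_i$ and generic degrees $d_i$ of the $W_i$ over $Z_0$.

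The gap is the reduction to the generically finite case. Your invocation of ``a short preliminary Noetherian induction on the dimension of generic fibers'' is not spelled out, and I do not see how to carry it out except by performing the paper's transport step, at which point the subsequent degree argument becomes redundant. The cleaner repair, staying with your strategy, is to replace $d_i$ throughout by the Euler characteristic $\chi_i := \chi(\mathcal{O}_{(W_i)_\zeta})$ of the generic fiber: by generic flatness and proper base change one has $\eta^*q_*[\mathcal{O}_{W_i}]=\chi_i$, and $p_{2,*}[\mathcal{O}_{W_i\times_X V}]\equiv\chi_i[\mathcal{O}_V]$ modulo classes missing $\zeta$, so $q_*x=0$ gives $\sum n_i\chi_i=0$ and the computation goes through with no finiteness assumption. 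In the generically finite case $\chi_i=d_i=[\kappa(w_i):\kappa(\zeta)]$, so this specializes to what you wrote.

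Two smaller points. First, the claim that the remaining components of $Y$ ``already have image in a proper closed subspace of $Z$'' is not quite right: their images may be entire other irreducible components of $Z$; what you actually use, and what holds, is merely that those images do not contain $\zeta$. Second, the closing appeal to Lemma~\ref{lem:vanishing-generic} ``on the component $Z_0$'' is out of place: once the image of the support of $x-(p_{1,*}y-p_{2,*}y)$ misses $\zeta$, it is already a proper closed subspace of $Z$. The lemma is needed, and is tacitly used, earlier and repeatedly, applied to the integral spaces $W_i$ and $V$, to justify each of the ``$\equiv$ modulo error'' estimates ($p_{1,*}[\mathcal{O}_{W_i\times_X V}]\equiv[\mathcal{O}_{W_i}]$, $p_{2,*}[\mathcal{O}_{W_i\times_X V}]\equiv\chi_i[\mathcal{O}_V]$, $x\equiv\sum n_i[\mathcal{O}_{W_i}]$).
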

\begin{proof}
	Write $x = [F] -[G]$ for coherent sheaves supported on $Y$. We may assume that at least one of $F,G$ has support strictly $Y$ (otherwise we are done).  Letting $Y_i$ denote the irreducible components of $Y$, we may write 
	\[[F] = \sum [F_i] \qquad \mbox{ and }\qquad [G] = \sum [G_i],\]
	where -- for each $i$ -- $F_i$ and $G_i$ are supported on $Y_i$.
	Let $Z_j$ be the irreducible components of $Z$ and for each $j$, and let $\tilde{Z}_j$ be a sub-space of $X'$ mapping birationally onto $Z_j$. For each $j$, there exists some $i$ such that $Y_i$ maps surjectively onto $Z_j$. 
	
	Assume first that -- for some $i$ -- there exists a surjective map $Y_i\to Z_j$  with $Y_i\neq\widetilde{Z}_j$. Consider the subspace $Y'_i:= Y_i\times_{Z_i} \widetilde{Z}_i \subseteq X'\times_X X'$.  Then the restriction $p_1|_{Y_i'}: Y_i'\to Y_i$ is birational. We also have the restriction $p_2|_{Y_i'}:Y_i'\to \widetilde{Z}_i$. 
	Let $y_i := [p_1|_{Y_i'}^*F_i]-[p_1|_{Y_i'}^*G_i]$. Then we may replace $x$ by $x-(p_{1,*}-p_{2,*})y_i$ replacing $Y$ by the union of $Y_j$ for $j\neq i$, and also $\widetilde{Z}_i$ and possibly further proper closed subspaces of $Y_i$ (where the difference of $[F_i]-[G_i]-p_{1,*}y_i$ is supported). After doing this for each such $Y_i$ at most once, we may assume that the connected components of $Y$ are exactly the $\widetilde{Z}_j$ and some other components that don't dominate any component of $Z$. 
	
	We claim that the difference $[F_1]-[G_1]$ is supported on a proper closed subspace of $\tilde{Z}_1$, which gives what we want. To see this, note that -- by assumption -- we have that
	\[\tilde{\eta}^*\left([F_1]-[G_1]\right) = \eta^*q_*y =0,\]
	where we use $\eta$ and $\tilde{\eta}$ to denote the inclusion of the generic point of $Z_1$ and $\tilde{Z}_1$ respectively. Now the result follows from Lemma \ref{lem:vanishing-generic}.  	
\end{proof}
\begin{proof}[Proof of Lemma \ref{lem:gillet}]
	By Lemma \ref{lem:gillet-easy}, all that is left to show is that \eqref{eq:gillet} is exact in the middle. Assume not. Then there exists a class $x\in \Ker(q_*)$, supported on some non-empty reduced closed subspace $Z\subseteq X$ that does not lie in the image of $p_{1,*}-p_{2,*}$. By the Noetherian assumption, we may choose an example with $Z$ minimal, so that there is no example supported on a proper subspace of $Z$, contradicting Lemma \ref{lem:gillet-descent-step}.  
\end{proof}
We also need the following strengthening of Lemma \ref{lem:gillet}.

\begin{lemma}[Gillet's exact sequence]\label{lem:gillet-gerbe}
	Let $X$ be a Noetherian algebraic space, let $q:X'\to X$ be an envelope and let $\pi_X:\mathcal{X}\to X$ be a gerbe for a diagonalizable group scheme on $X$. Let $q:\mathcal{X}'\to \mathcal{X}$ be the base change of $q$, and let $p_i:\mathcal{X}'\times_{\mathcal{X}} \mathcal{X}'\to \mathcal{X}'$ denote the projection onto the $i$-th factor. Then the following sequence is exact: 
	\begin{equation*}
		K_0(\mathcal{X}'\times_{\mathcal{X}} \mathcal{X}')\xrightarrow{p_{1,*}-p_{2,*}} K_0(\mathcal{X}')\xrightarrow{q_*} K_0(\mathcal{X})\to 0.
	\end{equation*}
\end{lemma}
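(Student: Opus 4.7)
The plan is to follow the proof of Lemma~\ref{lem:gillet} almost verbatim, working with the gerbe $\mathcal{X}$ in place of the algebraic space $X$. The enabling observation is that $\pi_X\colon \mathcal{X}\to X$ induces a homeomorphism of underlying topological spaces, so reduced closed substacks of $\mathcal{X}$ are exactly $\pi_X^{-1}(Z)$ for reduced closed $Z\subseteq X$ (and analogously over $X'$ and $X'\times_X X'$). This gives Noetherian induction on supports in $X$ direct access to the gerbe setting.

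First, $q_*\circ (p_{1,*}-p_{2,*})=0$ since $q\circ p_1=q\circ p_2$, so the sequence is a complex. For the surjectivity of $q_*$, I repeat the proof of Lemma~\ref{lem:gillet-easy}: assume a class $x\in K_0(\mathcal{X})$ not in the image is supported on a minimal $\pi_X^{-1}(Z)$ with $Z\subseteq X$ irreducible reduced, write $x=[\mathcal{F}]$, and pick an integral $\widetilde{Z}\subseteq X'$ mapping birationally onto $Z$. Setting $\widetilde{\mathcal{Z}}:=\pi_{X'}^{-1}(\widetilde{Z})$ and letting $\widetilde{q}\colon \widetilde{\mathcal{Z}}\to \pi_X^{-1}(Z)$ be the restriction of $q$, the map $\widetilde{q}$ is an isomorphism over the dense open substack lying above the birational locus, so $[\mathcal{F}]-q_*[\widetilde{q}^{\,*}\mathcal{F}]$ is supported on a strictly smaller substack, contradicting minimality.

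For exactness in the middle, I decompose by weight. Since $G$ is diagonalizable with character lattice $\Lambda$, each of $K_0(\mathcal{X})$, $K_0(\mathcal{X}')$, $K_0(\mathcal{X}'\times_{\mathcal{X}}\mathcal{X}')$ decomposes canonically as $\bigoplus_{\lambda\in\Lambda}K_0^\lambda$, and the three pushforwards preserve weight because the morphisms are $G$-equivariant. Hence it suffices to establish exactness in each weight $\lambda$. The weight-$0$ piece identifies, via $\pi^*$, with the classical Gillet sequence of Lemma~\ref{lem:gillet} for $X'\to X$. For $\lambda\ne 0$, I run the same argument inside the abelian category $\operatorname{Coh}^\lambda$ of $\lambda$-twisted coherent sheaves: this category has support theory inherited from $|X|$, behaves well under the (underived) pullbacks and proper pushforwards along $q, p_1, p_2$, and the internal $\mathcal{H}om$ of two $\lambda$-twisted sheaves is an honest (untwisted) coherent sheaf on $X$.

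The main obstacle is the twisted analog of Lemma~\ref{lem:vanishing-generic}: if $Z$ is irreducible reduced and $\mathcal{F},\mathcal{G}$ are $\lambda$-twisted coherent sheaves on $\pi_X^{-1}(Z)$ of equal generic rank, they must become isomorphic over a dense open substack. Because $\mathcal{H}om(\mathcal{F},\mathcal{G})$ is untwisted, the determinant-based construction of a generic isomorphism in the proof of Lemma~\ref{lem:vanishing-generic} carries over word-for-word, as does the extension-of-sections step via \cite[\href{https://stacks.math.columbia.edu/tag/07UM}{Lemma 07UM}]{stacks-project}. With this in hand, the analog of Lemma~\ref{lem:gillet-descent-step} follows identically, and the Noetherian induction concludes exactness in the middle.
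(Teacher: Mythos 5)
Your proof is correct and takes essentially the same approach as the paper: replicate the Noetherian induction of Lemma~\ref{lem:gillet} over the gerbe, with the key point being that after a weight decomposition the relevant $\mathcal{H}\!\mathit{om}$ sheaves in the analogue of Lemma~\ref{lem:vanishing-generic} are untwisted, so the determinant and section-extension arguments go through verbatim. The only difference is organizational: you decompose the entire Gillet complex into weight summands at the outset (identifying the weight-$0$ piece with the classical sequence and running the twisted argument for $\lambda\neq 0$), whereas the paper runs a single proof and only invokes the weight decomposition of $F$ and $G$ inside the analogue of Lemma~\ref{lem:vanishing-generic}; these amount to the same thing since the pushforwards along $q$, $p_1$, $p_2$ preserve weight.
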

\begin{proof}
	 The proof of Lemma \ref{lem:gillet} goes through with mostly straightforward changes (e.g. replacing ``subspace'' by ``substack'' and ``generic point'' with ``residual gerbe at the generic point'' etc.). In the proof of the analogue of Lemma \ref{lem:vanishing-generic}, one needs to first decompose $F$ and $G$ according to weight. Then $\mathcal{H}$ and $\mathcal{H}om(\wedge^rF, \wedge^r G)$ are untwisted sheaves and the same proof goes through. 
\end{proof}

\phantomsection
\addcontentsline{toc}{section}{References}

\begin{small}
\bibliographystyle{alpha}
\bibliography{paper}
\end{small}

\end{document}